\documentclass[]{amsart}
\usepackage{mathrsfs}
\usepackage{amsmath}
\usepackage{amssymb}
\usepackage{exscale}
\usepackage{relsize}
\usepackage{url}


\newtheorem{Theorem}{Theorem}[section]
\newtheorem{Corollary}[Theorem]{Corollary}
\newtheorem{Lemma}[Theorem]{Lemma}
\newtheorem{Proposition}[Theorem]{Proposition}

\newtheorem{Definition}[Theorem]{Definition}

\newtheorem{Conjecture}[Theorem]{Conjecture}

\newtheorem{Remark}[Theorem]{Remark}

\numberwithin{equation}{section}



\begin{document}
	\title
	{weighted versions of Saitoh's conjecture in fibration cases}
	
	\author{Qi'an Guan}
	\address{Qi'an Guan: School of Mathematical Sciences,
		Peking University, Beijing, 100871, China.}
	\email{guanqian@math.pku.edu.cn}
	
	\author{Gan Li}
	\address{Gan Li: School of Mathematical Sciences,
		Peking University, Beijing, 100871, China.}
	\email{2301110004@stu.pku.edu.cn}
	
	\author{Zheng Yuan}
	\address{Zheng Yuan: Institute of Mathematics, Academy of Mathematics and Systems
		Science, Chinese Academy of Sciences, Beijing 100190, China.}
	\email{yuanzheng@amss.ac.cn}
	
	\thanks{}
	
	\subjclass[2020]{32A10, 32A25, 32A35, 30H10}
	
	\keywords{Bergman kernel, Hardy space,  Saitoh's conjecture, fibration}
	
	\date{\today}
	
	\dedicatory{}
	
	\commby{}
	
	\maketitle
	\begin{abstract}
		In this article, we introduce some generalized Hardy spaces on fibrations of planar domains and fibrations of products of planar domains. We  consider the kernel functions on these spaces, and we prove some weighted versions of Saitoh's conjecture in fibration cases.  
	\end{abstract}
	
	\tableofcontents
	
	
	\section{Introduction}
	Let $D$ be a planar regular region with finite boundary components which are analytic Jordan curves (see \cite{saitoh,yamada}). Let us recall the definition of Hardy space $H^2(D)$ (see \cite{saitoh,rudin2}):  
	A holomorphic function $f\in\mathcal{O}(D)$ belongs to the $H^2(D)$  if there exists a harmonic majorant $U(z)$ such that
	$$|f(z)|^2\le U(z)\,\, \text{on}\,\,D.$$
	Each function $f(z)\in H^2(D)$ has Fatou's nontangential boundary value a.e. on $\partial D$  belonging to $L^2(\partial D)$ (see \cite{rudin2,duren}). Some properties about $H^2(D)$ can be seen in \cite{rudin2}. 
	
	Let $G_D(z,t)$ be the Green function on $D$. For fixed $t\in D$, $\frac{\partial G_D(z,t)}{\partial v_z}$ is  positive and continuous on $\partial D$ because of the analyticity of the boundary (see \cite{saitoh,guan-19saitoh}), where
	$\frac{\partial}{\partial v_z}$ denotes the derivative along the outer normal unit vector $v_z$. The conjugate Hardy $H^2$ kernel $\hat K_t(z,\overline w)$ (see \cite{nehari}) on $D$ was defined as 
	$$f(w)=\frac{1}{2\pi}\int_{\partial D}f(z)\overline{\hat K_t(z,\overline w)}\left(\frac{\partial G_D(z,t)}{\partial v_z}\right)^{-1}|dz|$$
	for any $f\in H^2(D)$.  
	When $t=w=z$, $\hat K(z)$ denotes $\hat K_t(z,\overline w)$ for simplicity. 
	
	In \cite{yamada}, Yamada listed the Suita's conjecture (the left part of inequality \eqref{eq:240909a}, see \cite{suita72}) and the Saitoh's conjecture (the left part of inequality \eqref{eq:240909a}, see \cite{saitoh}) as follows:
	\begin{Conjecture}
		If $D$ is not simple connected, then 
		\begin{equation}
			\label{eq:240909a}
			(c_\beta(z))^2<\pi B(z)< \hat K(z),
		\end{equation}
		where $c_{\beta}(z)$ is the logarithmic capacity (see \cite{S-O69}) and $B(z)$ is the Bergman kernel on $D$.
	\end{Conjecture}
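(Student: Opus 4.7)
My plan is to treat both inequalities through the sharp $L^2$ extension theorem with optimal constant (of Ohsawa--Takegoshi type, as developed by Guan--Zhou and related authors) together with a rigidity analysis of its equality case.

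\textbf{Left inequality (Suita).} Fix $z\in D$. The Green function admits the expansion $2G_D(w,z)=2\log|w-z|-2\log c_\beta(z)+o(1)$ near $z$. Applying the sharp $L^2$ extension with weight $e^{-2G_D(\cdot,z)}$ to extend the value $1$ from $\{z\}$ to $D$ yields a holomorphic $F\in\mathcal{O}(D)$ with $F(z)=1$ and
\[
\int_D |F|^2\, dA\le \frac{\pi}{(c_\beta(z))^2}.
\]
Combined with $1/B(z)=\inf\{\int_D |f|^2\,dA: f\in \mathcal{O}(D),\ f(z)=1\}$, this gives $(c_\beta(z))^2\le \pi B(z)$.

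\textbf{Right inequality (Saitoh).} The reproducing formula for $\hat K$ in the excerpt yields the extremal characterization
\[
\frac{1}{\hat K(z)}=\inf\left\{\frac{1}{2\pi}\int_{\partial D}|g(w)|^2\left(\frac{\partial G_D(w,z)}{\partial v_w}\right)^{-1}|dw|: g\in H^2(D),\ g(z)=1\right\}.
\]
To obtain $\pi B(z)\le \hat K(z)$, I would produce, from the $A^2$-extremal $f_0$ (with $f_0(z)=1$ and $\int_D|f_0|^2\,dA=1/B(z)$), an $H^2$-competitor $g$ with $g(z)=1$ satisfying
\[
\frac{1}{2}\int_{\partial D}|g(w)|^2\left(\frac{\partial G_D(w,z)}{\partial v_w}\right)^{-1}|dw|\le \int_D |f_0|^2\,dA,
\]
which gives $\pi/\hat K(z)\le 1/B(z)$. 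I would attempt this construction via a Hardy-space variant of the sharp $L^2$ extension theorem, using the Green function $G_D(\cdot,z)$ to generate the boundary weight and to track the optimal constant $\pi$; a Stokes-type identity against $G_D(\cdot,z)$ relating interior $A^2$ quantities to boundary integrals should provide the mechanism for passing from $dA$ to the measure $(\partial G_D/\partial v_w)^{-1}|dw|$.

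\textbf{Strictness and main obstacle.} For both inequalities, strictness reduces to the rigidity of the sharp $L^2$ extension: equality would force $-2G_D(\cdot,z)$ to admit a single-valued holomorphic primitive on $D$, so $D$ would be conformally a disk minus a polar set, contradicting the assumption that $D$ has more than one boundary component. The main obstacle I anticipate is the right inequality: the interior-to-boundary passage from an $A^2$-function to an $H^2$-function, with the correct constant $\pi$, is subtler than the Suita side (where the conformal model of equality is classical) and requires the Hardy-space version of the extension theorem together with a careful rigidity analysis to rule out equality when $D$ is not simply connected.
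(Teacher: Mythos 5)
This statement is recorded in the paper as a background Conjecture with no proof supplied; the left inequality is attributed to Guan--Zhou's optimal $L^2$ extension theorem and its equality analysis, and the right inequality to Guan's proof via the concavity property for minimal $L^2$ integrals (Theorem \ref{thm:saitoh}). Your sketch of the left (Suita) inequality follows the standard cited route and is correct in outline, but note that the non-strict inequality is the easy half: the strict inequality $(c_\beta(z))^2<\pi B(z)$ for multiply connected $D$ requires the full characterization of the equality case (equality holds iff $D$ is conformally a disc minus a closed polar set), which is the deep part of Guan--Zhou's work and which you only gesture at with the phrase ``rigidity of the sharp $L^2$ extension.''

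The genuine gap is in the right (Saitoh) inequality. The step you defer --- producing from the $A^2$-extremal $f_0$ an $H^2$-competitor $g$ with $g(z)=1$ and $\frac{1}{2}\int_{\partial D}|g|^2(\partial G_D/\partial v_w)^{-1}|dw|\le\int_D|f_0|^2$ --- is not a lemma you can appeal to; it is essentially a restatement of $\pi B(z)\le\hat K(z)$ itself, so as written the argument is circular, and no ``Hardy-space variant of the sharp $L^2$ extension theorem'' with the optimal constant $\pi$ is available off the shelf (an $A^2$ function need not even lie in $H^2$). The known proof does not pass from interior to boundary by an extension theorem at all. Instead one sets $G(t):=\inf\{\int_{\{2G_D(\cdot,z)<-t\}}|h|^2:\ h(z)=1\}$, uses the concavity of $G$ as a function of $e^{-t}$, identifies $\pi/\hat K(z)$ with the boundary derivative $\lim_{t\to0^+}\frac1t\int_{\{-t<2G_D(\cdot,z)<0\}}|f|^2$ of the Hardy extremal (this is where the measure $(\partial G_D/\partial v_w)^{-1}|dw|$ enters, via the level sets of the Green function), and compares with $G(0)=1/B(z)$; the strict inequality then follows because equality forces $G$ to be linear in $e^{-t}$, which by the Suita equality characterization would force $D$ to be a disc minus a polar set, contradicting multiple connectivity. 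Your proposal would need either to supply this concavity mechanism or to actually prove the interior-to-boundary estimate with constant $\pi$ together with its rigidity; neither is done.
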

	
In \cite{OT87}, Ohsawa and Takegoshi established the famous  $L^2$ extension theorem. After that, Ohsawa \cite{OhsawaObservation} observed that the $L^2$ extension theorem with an optimal estimate can prove the ``$\le$" part of  Suita's conjecture. Thus, to finding the optimal constant in the $L^2$ extension theorem is an interesting and important question.

Guan-Zhou-Zhu \cite{ZGZ} (see also \cite{GZZCRMATH}) firstly used a method of undetermined  function to study the optimal constant in the $L^2$ extension theorem. In \cite{Blocki-12}, Blocki used the same method and the
same undetermined  function  to obtain the same constant as G-Z-Z’s. Developing the equation of undetermined  function, Blocki \cite{Blocki-inv} proved an optimal $L^2$ extension theorem on bounded pseudoconvex domains in $\mathbb{C}^n$, which solved the ``$\le$" part of  Suita's conjecture on planer regions.
 Continuing the work \cite{GZZCRMATH,ZGZ}, Guan-Zhou \cite{guan-zhou CRMATH2012,GZsci} obtained the optimal estimate version
 of Ohsawa’s $L^2$ extension theorem with negligible weight, which proved ``$\le$" part of  Suita's conjecture on open Riemann surfaces (see \cite{suita72}).
 In \cite{guan-zhou13ap}, Guan-Zhou established an optimal $L^2$ extension theorem in a general setting, and used it to prove a necessary and sufficient condition of the holding of $(c_\beta(z))^2=\pi B(z)$ on open Riemann surfaces, which finished the proof of  Suita's conjecture.
	 
Using the concavity property for minimal $L^2$ integrals (see \cite{G16}) and the solution of Suita's conjecture, Guan \cite{guan-19saitoh} proved the Saitoh's conjecture:
	
	\begin{Theorem}
		[\cite{guan-19saitoh}]\label{thm:saitoh}
		If $D$ is not simple connected, then $\hat K(z)>\pi B(z)$.
	\end{Theorem}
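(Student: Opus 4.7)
The plan is to combine the concavity property of minimal $L^2$ integrals on sublevel sets of the Green function, from \cite{G16}, with the characterization of the equality case of Suita's conjecture from \cite{guan-zhou13ap}. Fix $z_0 \in D$ and set $\psi(z) := 2 G_D(z, z_0)$, $D_t := \{\psi < -t\}$ for $t \geq 0$; then $D_0 = D$ and $D_t \searrow \{z_0\}$ as $t \to \infty$. Define
\begin{equation*}
G(t) := \inf\left\{\int_{D_t}|f|^2\, d\lambda : f \in \mathcal{O}(D_t),\; f(z_0) = 1\right\} = \frac{1}{B_{D_t}(z_0)}.
\end{equation*}
Then $G(0) = 1/B(z_0)$, and a direct computation from the local behaviour of $G_D$ at $z_0$ yields $\lim_{t \to \infty} G(t)\, e^t = \pi/c_\beta(z_0)^2$. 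By \cite{G16}, the function $g(r) := G(-\log r)$ is concave on $(0, 1]$ and extends continuously to $g(0) = 0$.

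The key link to the Hardy kernel is a one-sided derivative computation at $t = 0$. Let $f_0 \in \mathcal{O}(D)$ be the Bergman extremal, $f_0(z_0) = 1$, $\|f_0\|_{L^2(D)}^2 = 1/B(z_0)$. Using $f_0|_{D_t}$ as a competitor for $G(t)$,
\begin{equation*}
G(0) - G(t) \geq \int_{D \setminus D_t}|f_0|^2\, d\lambda,
\end{equation*}
and the coarea formula, together with the identity $|\nabla\psi|^{-1}\,d\sigma = \tfrac{1}{2}(\partial G_D/\partial v)^{-1}\,|dz|$ on $\partial D$, gives $\lim_{t \to 0^+} t^{-1}\!\int_{D \setminus D_t}|f_0|^2 \, d\lambda = \pi\, \|f_0\|_{H^2(D)}^2$. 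Since $f_0$ is a valid competitor for the Hardy extremal problem, $\|f_0\|_{H^2(D)}^2 \geq 1/\hat K(z_0)$, so
\begin{equation*}
-G'(0^+) \;\geq\; \pi\,\|f_0\|_{H^2(D)}^2 \;\geq\; \pi/\hat K(z_0).
\end{equation*}

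To finish, I invoke the concavity of $g$ on $[0, 1]$ with $g(0) = 0$: since $g'$ is non-increasing with mean value $g(1)$ on $[0,1]$, one has $g'(1) \leq g(1) = 1/B(z_0)$, i.e., $-G'(0^+) \leq 1/B(z_0)$. Combining with the previous step,
\begin{equation*}
\pi/\hat K(z_0) \;\leq\; -G'(0^+) \;\leq\; 1/B(z_0),
\end{equation*}
which gives $\hat K(z_0) \geq \pi B(z_0)$. When $D$ is not simply connected, \cite{guan-zhou13ap} provides the strict Suita inequality $c_\beta(z_0)^2 < \pi B(z_0)$; equivalently, $g'(0^+) = \pi/c_\beta(z_0)^2 > 1/B(z_0) = g(1)$, so $g$ is not affine on $[0, 1]$. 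Strict monotonicity of $g'$ then forces $g'(1) < g(1)$, which upgrades the conclusion to the desired strict inequality $\hat K(z_0) > \pi B(z_0)$.

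The main obstacle is the concavity of $g$ itself, which is the content of \cite{G16} and proceeds via a sharp Ohsawa--Takegoshi-type extension across annular level-regions of $\psi$. A subsidiary technical point is the rigorous derivation of the one-sided derivative identity $-G'(0^+) \geq \pi\|f_0\|_{H^2(D)}^2$, which requires enough boundary regularity of the Bergman extremal $f_0$ to apply the coarea formula up to $\partial D$; this regularity is supplied by the analytic-boundary hypothesis on $D$.
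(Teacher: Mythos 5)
Theorem \ref{thm:saitoh} is only quoted in this paper from \cite{guan-19saitoh} and not reproved here, but your proposal is correct and follows essentially the same strategy as the original proof, which the introduction itself summarizes as combining the concavity property of minimal $L^2$ integrals from \cite{G16} with the solution of Suita's conjecture from \cite{guan-zhou13ap}: concavity of $r\mapsto G(-\log r)$ with $G(t)=1/B_{D_t}(z_0)$ gives $\pi/\hat K(z_0)\le -G'(0^+)\le g(1)=1/B(z_0)$, and strictness of Suita forces non-linearity of $g$ and hence the strict inequality. The only point you defer --- identifying $\liminf_{t\to0^+}t^{-1}\int_{D\setminus D_t}|f_0|^2$ with $\pi$ times the conjugate-Hardy boundary norm (weight $(\partial G_D/\partial v)^{-1}$), which requires showing $f_0$ lies in $H^2(D)$ and applying a Fatou-type argument on the level curves of $G_D$ --- is exactly the technical step handled in \cite{guan-19saitoh} via the analyticity of $\partial D$, so the plan is sound as stated.
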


In \cite{yamada}, Yamada posed a harmonic weight version of Suita's conjecture (called extended Suita conjecture), which was proved by Guan-Zhou in \cite{guan-zhou13ap}. In \cite{GM}, Guan-Mi proved a subharmonic weight version of  Suita's conjecture, and the the case of weights may not be subharmonic was proved by Guan-Yuan \cite{GY-concavity}. 
In \cite{GMY-concavity2}, Guan-Mi-Yuan  proved a weighted version of  Suita's conjecture for higher derivatives (some related results can be referred to \cite{LXZ,x-z}). For products of open Riemann surfaces, Guan-Yuan \cite{GY-conc4} proved a product version of weighted Suita's conjecture. Bao-Guan-Yuan proved a weighted Suita's conjecture on fibrations over open Riemann surfaces  in \cite{BGY-concavity5}, and proved a weighted Suita's conjecture on fibrations over  products of open Riemann surfaces in \cite{BGY-concavity6} (the negligible weights case can be seen in  \cite{BGMY-concavity7}).

	Recently, Guan-Yuan proved a weighted version of Saitoh's conjecture \cite{GY-weight} and some product versions of Saitoh's conjecture \cite{GY-Hardy and product}.  In this article, we study the weighted saitoh's conjecture in fibration cases.

	Let us recall the weighted version of Saitoh's conjecture in the following, and the product versions can be seen in Section \ref{sec:1.2} and \ref{sec:1.3}.

	Let  $z_0\in D$. Let $\psi$ be a Lebesgue measurable function on  $\overline D$, which satisfies that $\psi$ is  subharmonic on $D$, $\psi|_{\partial D}\equiv 0$
	and the Lelong number $v(dd^c\psi,z_0)>0$, where $d^c=\frac{\partial-\bar\partial}{2\pi\sqrt{-1}}$.
	Assume that $\psi\in C^1(U\cap\overline{D})$ for an open neighborhood $U$ of $\partial D$ and $\frac{\partial\psi}{\partial v_z}$ is positive on $\partial D$, where $\partial/\partial v_z$ denotes the derivative along the outer normal unit vector $v_z$. Let $\varphi$ be a Lebesgue measurable function on $\overline D$ which is continuous on $\partial D$ and satisfies that $\varphi+2\psi$ is subharmonic on $D$, and the Lelong number 
	$$v(dd^c(\varphi+2\psi),z_0)\ge2.$$ 
	Assume that  one of the following two statements holds:
	
	$(a)$ $(\psi-p_0G_{D}(\cdot,z_0))(z_0)>-\infty$, where $p_0=v(dd^c(\psi),z_0)>0$;
	
	$(b)$ $\varphi+2a\psi$ is subharmonic near $z_0$ for some $a\in[0,1)$.
	
	Let $c$ be a positive Lebesgue measurable function on $[0,+\infty)$ satisfying that $c(t)e^{-t}$ is decreasing on $[0,+\infty)$, $\lim_{t\rightarrow0+0}c(t)=c(0)=1$ and $\int_0^{+\infty}c(t)e^{-t}dt<+\infty$.
	
	Denote  
	$$\rho:=e^{-\varphi}c(-2\psi)\,\,\text{and}\,\,K_{\rho,\psi}(z):=K_{\rho\left(\frac{\partial \psi}{\partial v_z}\right)^{-1}}(z,\overline z).$$ 
	Assume that $\rho$ has a positive lower bound on any compact subset of $D\backslash Z$, where $Z\subset\{\psi=-\infty\}$ is a discrete subset of $D$. Let $B_{\rho}(z)$ be the weighted Bergman kernel on $D$ with the weight $\rho$ (see \cite{pasternak} or Section \ref{sec:admissible}).
	
	Let  $P:\Delta\rightarrow D$ be the universal covering from unit disc $\Delta$ to $D$.  We call a holomorphic function $f$ on $\Delta$ a multiplicative function (see \cite{chara}, also see \cite{GY-concavity}),
	if there is a representation of the fundamental group of $D$, whose character $\chi$ satisfies that $g^{*}f=\chi(g)f$ and $|\chi|=1$, where $g$ is an element of the fundamental group of $D$. Denote the set of such kinds of $f$ by $\mathcal{O}^{\chi}(D)$. 
	
	It is well-known that for any harmonic function $u$ on $ D$,
	there exists a character $\chi_{u}$ and a multiplicative function $f_u\in\mathcal{O}^{\chi_{u}}( D)$,
	such that $|f_u|=P^{*}\left(e^{u}\right)$. For Green function $G_D(\cdot,z_0)$, there exists a character $\chi_{z_0}$ and a multiplicative function $f_{z_0}\in\mathcal{O}^{\chi_{z_0}}( D)$,
	such that $|f_{z_0}|=P^{*}\left(e^{G_D(\cdot ,z_0)}\right)$.
	
	The weighted version of Saitoh's conjecture gives a relation between $K_{\rho,\psi}(z_0)$ and $B_{\rho}(z_0).$
	\begin{Theorem}
		[\cite{GY-weight}]
		\label{main theorem}
		\begin{equation}\nonumber
			K_{\rho,\psi}(z_0)\ge \left(\int_0^{+\infty}c(t)e^{-t}dt\right)\pi B_{\rho}(z_0)
		\end{equation}
		holds, and the equality holds if and only if the following statements holds:
		
		$(1)$ $\varphi+2\psi=2G_{D}(\cdot,z_0)+2u$, where $u$ is a harmonic function on $D$;
		
		$(2)$ $\psi=p_0G_{D}(\cdot,z_0)$, where $p_0=v(dd^c(\psi),z_0)>0$;
		
		$(3)$ $\chi_{z_0}=\chi_{-u}$, where $\chi_{-u}$ and $\chi_{z_0}$ are the  characters associated to the functions $-u$ and $G_{D}(\cdot,z_0)$ respectively.
	\end{Theorem}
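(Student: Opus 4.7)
The plan is to adapt the strategy of \cite{guan-19saitoh}, suitably extended to the present weighted setting. The reduction is to the concavity property of minimal $L^{2}$ integrals on the sub-level sets $\{2\psi<-t\}$, with the two endpoint values of the resulting concave function identified as $1/K_{\rho,\psi}(z_0)$ and $\bigl(\int_0^{+\infty}c(s)e^{-s}\,ds\bigr)/(\pi B_{\rho}(z_0))$ respectively.

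First I would introduce the family of minimal $L^{2}$ integrals
$$G(t):=\inf\left\{\int_{\{2\psi<-t\}}|\tilde f|^{2}e^{-\varphi}\,:\,\tilde f\in\mathcal{O}(\{2\psi<-t\}),\ \tilde f-1\in\mathcal{I}(\varphi+2\psi)_{z_0}\right\},$$
where the interpolation condition at $z_0$ is dictated by $v(dd^{c}(\varphi+2\psi),z_0)\ge 2$ and, under either of hypothesis (a) or (b), reduces to $\tilde f(z_0)=1$. The concavity theorem of \cite{G16} (in the weighted form used in \cite{GY-concavity,BGY-concavity5}) then says that $r\mapsto G(h^{-1}(r))$ is concave and non-increasing, where $h(t):=-\log\int_{t}^{+\infty}c(s)e^{-s}\,ds$.

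Next I would identify the two endpoints. As the sub-level set shrinks to $\{z_0\}$, a standard localization argument gives
$$\lim_{t\to+\infty}\frac{G(t)}{\int_{t}^{+\infty}c(s)e^{-s}\,ds}=\frac{1}{\pi B_{\rho}(z_0)}.$$
At $t=0$ the extremal $\tilde f_{0}$ realizing $G(0)$ should lie in a weighted Hardy-type space on $D$, and applying Stokes' theorem to $|\tilde f_{0}|^{2}e^{-\varphi}d^{c}\psi$ on the collar $\{-\varepsilon<\psi<0\}$ and letting $\varepsilon\to 0$---using $\psi|_{\partial D}\equiv 0$, the $C^{1}$-regularity and positive outer normal derivative of $\psi$ near $\partial D$, and $c(0)=1$---gives
$$G(0)=\frac{1}{K_{\rho,\psi}(z_0)}.$$
Here the boundary weight $\rho\bigl(\partial\psi/\partial v_{z}\bigr)^{-1}$ arises as the limiting density of $\rho=e^{-\varphi}c(-2\psi)$ converted to boundary arc-length $|dz|$. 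Concavity together with these two endpoint values then forces $G(0)\ge\bigl(\int_{0}^{+\infty}c(s)e^{-s}\,ds\bigr)/(\pi B_{\rho}(z_0))$, which is exactly the claimed inequality.

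For the equality case, the concave function $G\circ h^{-1}$ must be affine on its whole interval, and the rigidity statement of \cite{GY-concavity} forces $\psi=p_{0}G_{D}(\cdot,z_0)$ and $\varphi+2\psi=2G_{D}(\cdot,z_0)+2u$ with $u$ harmonic on $D$, which are conditions (1) and (2). For (3), I would pull the extremal $\tilde f_{0}$ back to $\Delta$ via $P$: $P^{*}\tilde f_{0}$ differs from $f_{z_0}\cdot P^{*}e^{u}$ by a single-valued holomorphic factor, so its automorphy character on $\pi_{1}(D)$ is $\chi_{z_0}\chi_{-u}^{-1}$, and the requirement that $\tilde f_{0}$ itself be single-valued on $D$ is precisely $\chi_{z_0}=\chi_{-u}$. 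The main obstacle is the endpoint identification $G(0)=1/K_{\rho,\psi}(z_0)$: this is where the boundary-integral definition of the conjugate Hardy kernel meets the interior minimal $L^{2}$ problem, and it demands boundary regularity of both $\psi$ and the extremal function, a careful collar computation with $c(-2\psi)$ well-approximated by $c(0)=1$ near $\partial D$, and verification that $\tilde f_{0}$ genuinely belongs to the weighted Hardy space so that Fatou-type boundary values exist.
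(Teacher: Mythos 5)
This paper does not actually prove Theorem \ref{main theorem}: it is imported verbatim from \cite{GY-weight} and used as a black box (the new content here is the reduction of the fibration statements to it via kernel decomposition formulas). So your proposal can only be judged against the strategy of \cite{GY-weight} and \cite{guan-19saitoh}, whose overall architecture --- concavity of the minimal $L^2$ integrals $G(t)$ on $\{2\psi<-t\}$, with one endpoint producing the Bergman quantity and a boundary computation producing the Hardy quantity --- you have correctly recalled.

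The genuine gap is that your identification of the two endpoints is reversed, and this is not cosmetic. Since $\psi$ is subharmonic, non-constant and $\psi|_{\partial D}\equiv 0$, the maximum principle gives $\{2\psi<0\}=D$, so $G(0)=\inf\{\int_D|\tilde f|^2e^{-\varphi}c(-2\psi):\tilde f(z_0)=1\}=\inf\{\int_D|\tilde f|^2\rho\}$ is \emph{by definition} the Bergman quantity $1/B_\rho(z_0)$; being a two-dimensional integral over all of $D$ it cannot equal the boundary quantity $1/K_{\rho,\psi}(z_0)$. Conversely, $\lim_{t\to+\infty}G(t)/\int_t^{+\infty}c(s)e^{-s}ds$ only sees the germ of the data at $\{\psi=-\infty\}$ (it is the Suita/logarithmic-capacity type endpoint) and cannot recover the global Bergman kernel of $D$. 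The Hardy kernel in fact enters through the one-sided derivative at $t=0$: with $r=\int_t^{+\infty}c(s)e^{-s}ds$ as the concavity variable (this, not its negative logarithm, is the correct reparametrization from \cite{G16,GY-concavity}), concavity yields $G(0)\ge\bigl(\int_0^{+\infty}c(s)e^{-s}ds\bigr)\cdot\lim_{t\to 0^+}\frac{G(0)-G(t)}{\int_0^t c(s)e^{-s}ds}$, and the coarea formula on the collar $\{-t<2\psi<0\}$ --- not Stokes applied to $|\tilde f_0|^2e^{-\varphi}d^c\psi$, which puts $\partial\psi/\partial v_z$ in the numerator rather than the denominator --- identifies that derivative with $\frac12\int_{\partial D}|\tilde f_0^*|^2e^{-\varphi}\bigl(\partial\psi/\partial v_z\bigr)^{-1}|dz|\ge\pi/K_{\rho,\psi}(z_0)$, where $\tilde f_0$ is the extremal for $G(0)$. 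Chaining these with $G(0)=1/B_\rho(z_0)$ gives the theorem. With your identifications the concavity inequality would instead deliver a bound of the form $K_{\rho,\psi}(z_0)\ge\pi B_\rho(z_0)/\int_0^{+\infty}c(t)e^{-t}dt$ (indeed your final line, read with $G(0)=1/K_{\rho,\psi}(z_0)$, even points the inequality the wrong way), which differs from the claimed sharp constant whenever $\int_0^{+\infty}c(t)e^{-t}dt\neq 1$ and is strictly stronger than the sharp statement --- hence false in the equality configurations --- when that integral is less than $1$. The equality analysis inherits the same defect, since conditions (1)--(3) must be extracted from rigidity at the correctly identified endpoints.
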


	In this article, we consider the fibration cases over planar domains and fibration cases over products of planar domains. We generalize the Hardy space, and prove some weighted versions of Saitoh's conjecture in fibration cases.

	\subsection{Space $H^2_\theta(D\times U, \partial D\times U)$}\label{sec:main}
	Let  $D$ be a planar regular region with  finite  boundary components,  which are analytic Jordan curves. 
	Let $\lambda({z})$ be a positive and continuous function on $\partial D$, and $\gamma(u)$  be  an admissible weight (the definition can be seen in Section \ref{sec:admissible}) on $U$, where $U$ is a domain in $\mathbb{C}^m$.  Denote $$\theta(z,u):=\lambda({z})\gamma(u)$$ on $\partial D\times U$.
	
	We give a generalization of the Hardy space on $D\times U$.
	\begin{Definition}
		\label{D1}
		Let $\mathscr{F}$ denote the space defined as the set of all  holomorphic functions $f(z,u)$ on $D\times U$, which satisfy $f(\cdot,u)\in H^2(D)$ for any fixed $u\in U$.
		Let $H^2_\theta(D\times U, \partial D\times U)$ denote the space defined as the set of all  holomorphic functions $f(z,u)\in\mathscr{F}$  with finite norms
		\begin{equation}\nonumber
			\parallel f\parallel_{\partial D\times U,\theta}^2=\frac{1}{2\pi}\int_{\partial D\times U}|f(z,u)|^2\theta(z,u)|dz|dV_{U},
		\end{equation} 
		where  $dV_U$ is the Lebesgue measure on U.     
	\end{Definition}

	We define a weighted  kernel function $K_{\partial D\times U,\theta}((\zeta,u),(\overline{z},\overline{w}))$ (see Section 2.3) as follows: for every $(z,w)\in D\times U$, there exists $K_{\partial D\times U,\theta}((\zeta,u),(\overline{z},\overline{w}))\in H^2_\theta(D\times U, \partial D\times U)$  such that  
	\begin{equation}\nonumber
		f(z,w)=\frac{1}{2\pi}\int_{\partial D\times U}f(\zeta,u)\overline{K_{\partial D\times U,\theta}((\zeta,u),(\overline{z},\overline{w}))}\theta(\zeta,u)|d\zeta|dV_U 
	\end{equation}
	holds for $f\in H^2_\theta(D\times U, \partial D\times U)$.
	
	Let $B_{D\times U,\eta}((\zeta,u),(\overline{z},\overline{w}))$ denote the weighted Bergman kernel on $D\times U$ with an admissible weight (see Section \ref{sec:admissible}) $\eta(\zeta,u)$, where $(\zeta,u),(z,w)\in D\times U$.
	
	We present a weighted version of Saitoh's conjecture in the fibration case as follows:
	\begin{Theorem}\label{main1-theorem}
		Assume that $B_{D\times U,\eta}((z_0,u),(\overline{z}_0,\overline{u}))>0.$ Then
		\begin{equation}\label{2:E39}    
			\left(\int_0^{+\infty}c(t)e^{-t}dt\right)\pi B_{D\times U,\eta}((z_0,u),(\overline{z}_0,\overline{u}))\leq K_{\partial D\times U,\theta}((z_0,u),(\overline{z}_0,\overline{u}))
		\end{equation} 
		holds for $(z_0,u)\in D\times U$, where $\eta=\rho\gamma$, $\theta=\lambda\gamma$, $\lambda=\rho\left(\frac{\partial \psi}{\partial v_p}\right)^{-1}$ and $\psi,\rho$ are the same as those appearing in  Theorem \ref{main theorem}. 
		
		Furthermore, inequality \eqref{2:E39} becomes an equality if and only if the following statements hold:
		
		$(1)$ $\varphi+2\psi=2G_{D}(\cdot,z_0)+2u$, where $u$ is a harmonic function on $D$;
		
		$(2)$ $\psi=p_0G_{D}(\cdot,z_0)$, where $p_0=v(dd^c(\psi),z_0)>0$;
		
		$(3)$ $\chi_{z_0}=\chi_{-u}$, where $\chi_{-u}$ and $\chi_{z_0}$ are the  characters associated to the functions $-u$ and $G_{D}(\cdot,z_0)$ respectively. 
	\end{Theorem}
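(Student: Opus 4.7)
The overall plan is to reduce Theorem \ref{main1-theorem} to the purely planar weighted Saitoh inequality of Theorem \ref{main theorem} by exploiting the product structure of the weights $\eta=\rho\gamma$ and $\theta=\lambda\gamma$. Concretely, I expect both kernels on $D\times U$ to factor as products of their one-variable analogues:
\begin{equation*}
K_{\partial D\times U,\theta}((\zeta,u),(\overline{z},\overline{w})) = K_{\lambda}(\zeta,\overline{z})\, B_{U,\gamma}(u,\overline{w}),
\end{equation*}
\begin{equation*}
B_{D\times U,\eta}((\zeta,u),(\overline{z},\overline{w})) = B_{D,\rho}(\zeta,\overline{z})\, B_{U,\gamma}(u,\overline{w}),
\end{equation*}
where $K_{\lambda}$ denotes the weighted conjugate Hardy kernel on $D$ with weight $\lambda$ (so that $K_{\lambda}(z_0,\overline{z}_0) = K_{\rho,\psi}(z_0)$), $B_{D,\rho}$ is the weighted Bergman kernel on $D$ with weight $\rho$, and $B_{U,\gamma}$ is the weighted Bergman kernel on $U$ with the admissible weight $\gamma$. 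Evaluating both formulas at $((z_0,u),(\overline{z}_0,\overline{u}))$ and dividing through by $B_{U,\gamma}(u,\overline{u})$ (which is positive by the hypothesis $B_{D\times U,\eta}((z_0,u),(\overline{z}_0,\overline{u}))>0$) reduces inequality \eqref{2:E39} to the planar case.

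To establish the factorization of $K_{\partial D\times U,\theta}$, I would fix $(z,w)\in D\times U$ and consider $F(\zeta,u):=K_{\lambda}(\zeta,\overline{z})\,B_{U,\gamma}(u,\overline{w})$. Then $F$ is holomorphic in $(\zeta,u)$, and $F(\cdot,u)\in H^2(D)$ for each $u$ because $K_{\lambda}(\cdot,\overline{z})\in H^2(D)$. A Fubini computation combined with the planar and Bergman reproducing properties gives
\begin{equation*}
\|F\|_{\partial D\times U,\theta}^2 = K_{\lambda}(z,\overline{z})\cdot B_{U,\gamma}(w,\overline{w}) < +\infty,
\end{equation*}
so $F\in H^2_{\theta}(D\times U,\partial D\times U)$. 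The reproducing identity for $F$ is first checked on separable elements $f(\zeta,u)=a(\zeta)b(u)$ by direct Fubini and the two single-variable reproducing properties, and then extended to all of $H^2_{\theta}(D\times U,\partial D\times U)$ by density. Uniqueness of reproducing kernels yields the factorization. The corresponding formula for $B_{D\times U,\eta}$ is the classical product formula for weighted Bergman kernels on products.

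Given the factorizations, inequality \eqref{2:E39} follows by applying Theorem \ref{main theorem} to $K_{\rho,\psi}(z_0)$ and $B_{\rho}(z_0)$ and multiplying through by $B_{U,\gamma}(u,\overline{u})>0$. The equality case of Theorem \ref{main theorem} translates verbatim into conditions (1)--(3) here, since those conditions involve only data on $D$ and the common factor $B_{U,\gamma}(u,\overline{u})$ cancels cleanly. I expect the main technical obstacle to be the density of separable elements in $H^2_{\theta}(D\times U,\partial D\times U)$, given the mixed boundary/interior nature of the norm; this should be handled by fixing a Hilbert basis $\{e_j\}$ of the weighted Bergman space on $U$, writing $f(\zeta,u)=\sum_j a_j(\zeta)e_j(u)$ with $a_j(\zeta)=\int_U f(\zeta,u)\overline{e_j(u)}\gamma(u)\,dV_U$ (which lies in $H^2(D)$ by Fubini and the fiberwise $H^2$-condition on $f$), and verifying the reproducing identity by term-by-term convergence.
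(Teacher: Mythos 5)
Your proposal is correct and takes essentially the same approach as the paper: both reduce the statement to Theorem \ref{main theorem} via the two factorizations $K_{\partial D\times U,\theta}((\zeta,u),(\overline{z},\overline{w}))=K_{D,\lambda}(\zeta,\overline{z})B_{U,\gamma}(u,\overline{w})$ and $B_{D\times U,\eta}=B_{D,\rho}\cdot B_{U,\gamma}$ (the latter being Lemma \ref{Le-719}), then cancel the positive factor $B_{U,\gamma}(u,\overline{u})$. The only difference is that the paper verifies the Szeg\"o-type factorization directly for an arbitrary $f\in H^2_\theta(D\times U,\partial D\times U)$ by first showing $f(z,\cdot)\in A^2(U,\gamma)$ for each $z\in D$ and then applying Fubini and the two one-variable reproducing properties in succession, which sidesteps the separable-elements density step that you identify as the main technical obstacle.
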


	\subsection{Space $H^2_\kappa(M\times U, \partial M\times U)$}\label{sec:1.2} In this section, we discuss the product case of planar domains and fibration cases over products of planar domains.
	
	Let $D_j$ ($1\le j\le n$) be a planar regular region with finite boundary components which are analytic Jordan curves (see \cite{saitoh}, \cite{yamada}). Denote $M :=\prod_{j=1}^{n}D_j$. 
	Firstly, we recall the Hardy space $H^2_\rho(M,\partial M)$ (see \cite{GY-Hardy and product}).
	
	Let $M_j=\prod_{1\leq l\leq n,l\neq j}D_l.$ Then $M=D_j\times M_j$ and  $\partial M=\bigcup_{j=1}^n\partial D_j\times \overline{M_j}.$ Let $\rho$ be a positive Lebesgue measurable function on $\partial M$ satisfying that $\inf_{\partial M}\rho>0$. Assume that $z_j\in D_j$ for any $1\leq j\leq n$. Recall that $H^2(D_j)$ denotes the Hardy space on $D_j$ and there exists a  linear map 
	$$\gamma_j: H^2(D_j)\rightarrow L^2(\partial D_j)$$ and  $\gamma_j(f)$ denotes the nontangential boundary values of $f$ a.e. on $\partial D_j$
	for any $f\in H^2(D_j)$.    
	
	Let $d\mu_j$ denote the Lebesgue measure on $M_j$ for $j=1,2,\cdots, n$, and let $d\mu$ denote the Lebesgue measure on $\partial M$. Then we have 
	$$\int_{\partial M}hd\mu=\sum\limits_{1\leq j\leq n}\frac{1}{2\pi}\int_{M_j}\int_{\partial D_j}h(w_j,\hat{w}_j)|dw_j|d\mu_j(\hat{w}_j)$$
	for every $h\in L^1(\partial M).$  For simplicity, denote $d\mu\big|_{\partial D_j\times M_j}$ also by $d\mu$. Denote
	\[
	\begin{split}
		\left\{
		f\in L^2(\partial D_j\times M_j,\rho d\mu):\exists f^*\in \mathcal{O}(M),\; \text{s.t.}\; f^*(\cdot,\hat{w_j})\in H^2(D_j)\; \mbox{for any} \; \hat{w}_j\in M_j \right.\\\&\left. f=\gamma_j(f^*)\;\mbox{a.e.}  \;\mbox{on}\; \partial D_j\times M_j
		\right\}
	\end{split}    
	\]
	by $H_\rho^2(M,\partial D_j\times M_j)$, which is a Hilbert space with the inner product $\ll\cdot,\cdot\gg_{\partial D_j\times M_j,\rho}$
	(see \cite{GY-Hardy and product}) defined as follows:
	$$\ll f,g\gg_{\partial D_j\times M_j,\rho}: =\frac{1}{2\pi}\int_{M_j\times \partial D_j}f\overline{g}\rho(w_j,\hat{w}_j)d\mu_j(\hat{w}_j)|dw_j|$$
	for any $f,g\in H_\rho^2(M,\partial D_j\times M_j)$. For each $f\in H_\rho^2(M,\partial D_j\times M_j)$,  the norm is defined by
	$$||f||^2_{\partial D_j\times M_j,\rho}:=\frac{1}{2\pi}\int_{\partial D_j\times M_j }|f(w_j,\hat{w}_j)|^2\rho(w_j,\hat{w}_j)|dw_j|d\mu_j(\hat{w}_j).$$ 
	
	In \cite{GY-Hardy and product}, the following space  $H^2_\rho(M,\partial M)$  was introduced.
	\begin{Definition}[\cite{GY-Hardy and product}]
		Let $f\in L^2(\partial M,\rho d\mu)$. We call $f\in H^2_\rho(M,\partial M)$ if there exists $f^*\in\mathcal{O}(M)$ such that for $1\leq j\leq n$, $f^*(\cdot,\hat{w}_j)\in H^2(D_j)$ for any $\hat{w}_j\in M_j$ and $f=\gamma_j(f^*)\; a.e.$ on $\partial D_j\times M_j$.
	\end{Definition}
	There exists   a linear injective map $P_{\partial M}$ from $H^2_\rho(M,\partial M)$ to $\mathcal{O}(M)$ (see \cite{GY-Hardy and product}) satisfying that $P_{\partial M}(f)=f$ for any $f\in \mathcal{O}(M)\cap C(\overline{D})\cap L^2(\partial M,\rho)$. Denote  $f^*:=P_{\partial M}(f)$ for any $f\in H^2_\rho(M,\partial M)$.
	$H^2_\rho(M,\partial M)$ is a Hilbert space equipped with the inner product $\ll\cdot,\cdot\gg_{\partial M,\rho}$ (see \cite{GY-Hardy and product}), defined by
	$$\ll f,g\gg_{\partial M,\rho} :=\int_{\partial M}f\overline{g}\rho d\mu.$$
	And for each $f\in H^2_\rho(M,\partial M)$, denote its norm by
	$$||f||^2_{\partial M,\rho}=\int_{\partial M}|f|^2\rho d\mu.$$
	When $\rho\equiv 1$, $H^2(M,\partial M)$ denotes $H^2_\rho(M,\partial M).$

	We define a kernel function $K_{\partial M,\rho}(z,\overline{w})$ (see \cite{GY-Hardy and product}) as follows:
	$$K_{\partial M,\rho}(z,\overline{w}) :=\sum\limits_{m=1}^{+\infty}P_{\partial M}(e_m)(z)\overline{P_{\partial M}(e_m)(w)}$$
	for $(z,w)\in M\times M\subset \mathbb{C}^{2n}$, where $\left\{e_m\right\}_{m\in\mathbb{Z}_{\geq 1}}$ is a complete orthonormal basis of $H^2_\rho(M,\partial M)$. The definition of $K_{\partial M,\rho}(z,\overline{w})$ is independent of the choice of $\left\{e_m\right\}_{m\in\mathbb{Z}_{\geq 1}}$ (see \cite{GY-Hardy and product}). Denote  $K_{\partial M,\rho}(z) :=K_{\partial M,\rho}(z,\overline{z})$.
	
	Let $z_0=(z_1,\cdots,z_n)\in M$, and 
	$$\psi(w_1,\cdots,w_n) = \max\limits_{1\leq j\leq n}\left\{2p_jG_{D_j}(w_j,z_j)\right\},$$
	which is a plurisubharmonic function on $M$, where $G_{D_j}(\cdot,z_j)$ is the Green function on $D_j$ and $p_j>0$ is a constant for any $1\leq j\leq n$.
	
	Let $\varphi_j$ be a function on $\overline{D}_j$,  which is subharmonic on $D_j$, and continuous at $w_j$ for all $w_j\in \partial D_j$. Now $\rho$ is given as follows: 
	$$\rho(w_1,\cdots,w_n)|_{\partial D_j\times \overline{M_j}} \;:=\frac{1}{p_j}\bigg(\frac{\partial G_{D_j}(w_j,z_j)}{\partial\nu_{w_j}}\bigg)^{-1}\times\prod\limits_{1\leq l\leq n}e^{-\varphi_l(w_l)},$$
	where $\partial/\partial\nu_{w_j}$ denotes the derivative along the outer normal unit $\nu_{w_j}$. Let $c$ be a positive function on $[0,+\infty)$, which satisfies that $c(t)e^{-t}$ is decreasing on $[0,+\infty)$, $\lim\limits_{t\rightarrow 0+0}c(t)=c(0)=1$ and $\int_{0}^{+\infty}c(t)e^{-t}dt<+\infty.$ 
	
	Take 
	$$\tilde{\rho} :=c(-\psi)\prod\limits_{1\leq j\leq n}e^{-\varphi_j}$$
	on $M$, which is an admissible weight (see Section \ref{sec:admissible}). 
	Denote that $B_{\tilde{\rho}}(z)$ is the Bergman kernel on $M$ with the weight $\tilde{\rho}$.

	Let us recall a product version of Saitoh's conjecture \cite{GY-Hardy and product}.

	\begin{Theorem}[\cite{GY-Hardy and product}]
		\label{key-Theorem}
		Assume that $B_{\tilde{\rho}}(z_0)>0$ and $n>1$. Then
		\begin{equation}\nonumber
			K_{\partial M,\rho}(z_0)\geq\bigg(\int_{0}^{+\infty}c(t)e^{-t}dt\bigg)\pi B_{\tilde{\rho}}(z_0)
		\end{equation}
		
		holds, and the equality holds if and only if the following statements hold:
		
		$(1)$ $\sum_{j=1}^{n}\frac{1}{p_j}=1;$
		
		$(2)$ $\varphi_j=2\log|g_j|+2u_j$ on $D_j$ for $1\leq j\leq n$, where $u_j$ a is harmonic function on $D_j$ and $g_j$ is a holomorphic function on $\mathbb{C}$ such that $g_j(z_j)\neq 0;$
		
		$(3)$ $\chi_{j,z_j}=\chi_{j,-u_j}$, where $\chi_{j,-u_j}$ and $\chi_{j,z_j}$ are the characters associated to the functions $-u_j$ and $G_{D_j}(\cdot,z_j)$ respectively.
	\end{Theorem}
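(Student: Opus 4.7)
My plan is to adapt the concavity-of-minimal-$L^2$-integrals approach of \cite{G16,GY-weight} to this product-weighted Hardy setting, using Theorem \ref{main theorem} as the slicewise building block on each factor $D_j$ and the max-structure of $\psi=\max_j\{2p_jG_{D_j}(\cdot,z_j)\}$ to piece the factorwise inequalities together. Broadly the argument parallels the unweighted product Saitoh of \cite{GY-Hardy and product}, with Theorem \ref{main theorem} upgrading the unweighted slicewise input to its weighted version.

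By the reproducing-kernel characterizations of $K_{\partial M,\rho}(z_0)$ and $B_{\tilde\rho}(z_0)$, the desired inequality is equivalent to the lifting statement: for every $g\in\mathcal{O}(M)\cap L^2(M,\tilde\rho)$ with $g(z_0)=1$ there exists $f\in H^2_\rho(M,\partial M)$ with $f^*(z_0)=1$ and
\[
\|f\|^2_{\partial M,\rho}\;\le\;\frac{1}{\bigl(\int_0^{+\infty}c(t)e^{-t}\,dt\bigr)\pi}\,\|g\|^2_{\tilde\rho}.
\]
I would construct such $f$ by studying the minimal $L^2$ integral $G(t):=\inf\{\int_{\{-\psi<t\}}|h|^2\tilde\rho:h\in\mathcal{O}(M),\,h(z_0)=1\}$. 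The concavity of $G$ after a suitable change of variables, as in \cite{G16,GY-concavity}, connects the two endpoints: as $t\to+\infty$, $G\to 1/B_{\tilde\rho}(z_0)$; as $t\to 0^+$, a Green-type identity on each factor $D_j$ (exploiting $\psi|_{\partial D_j}\equiv 0$ and $\partial\psi/\partial\nu_{w_j}=p_j\,\partial G_{D_j}/\partial\nu_{w_j}>0$) recovers the product-Hardy kernel $K_{\partial M,\rho}(z_0)$ with the correct $\pi$-normalization. The factor $\int_0^{+\infty}c(t)e^{-t}\,dt$ arises as the integrating weight pairing the concavity inequality with $c(-\psi)$, and the slicewise content on each $E_j:=\{w\in M:\psi(w)=2p_jG_{D_j}(w_j,z_j)\}$ is supplied by Theorem \ref{main theorem} with $\psi\mapsto p_jG_{D_j}(\cdot,z_j)$ and $\varphi\mapsto\varphi_j$.

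For the equality case I trace backward: equality forces $G$ to be affine in the relevant variable, which in turn forces equality in the slicewise applications of Theorem \ref{main theorem}. The one-variable rigidity yields items (2) and (3) of the conclusion (namely $\varphi_j=2\log|g_j|+2u_j$ with $g_j(z_j)\neq 0$, and $\chi_{j,z_j}=\chi_{j,-u_j}$), while item (1), $\sum_j 1/p_j=1$, emerges from the Lelong-number normalization of $\psi$ at $z_0$ required to make the max in $\psi$ and the $c(-\psi)$ weighting sharp in the concavity endpoint computation. The main obstacle I anticipate is the non-smoothness of $\psi$ along the coincidence loci $\{2p_iG_{D_i}=2p_jG_{D_j}\}$: the Green-type identity in the $t\to 0^+$ step is cleanest when $\psi$ is smooth near $\partial M$, so I would regularize via a $\log$-sum-exp approximation $\psi_\varepsilon$, run the argument for $\psi_\varepsilon$, and pass to the limit $\varepsilon\to 0$ via dominated convergence, following the fibration technique in \cite{BGY-concavity5,BGY-concavity6}.
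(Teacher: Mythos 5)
First, a point of reference: the paper does not prove Theorem \ref{key-Theorem} at all --- it is quoted from \cite{GY-Hardy and product} and used as a black box in the proof of Theorem \ref{main2} --- so there is no in-paper argument to compare yours against; the comparison must be with the strategy of the cited reference. Your overall mechanism for the inequality is the right one and matches that reference: form the minimal $L^2$ integral $G(t)$ over $\{\psi<-t\}$ for $\psi=\max_j\{2p_jG_{D_j}(\cdot,z_j)\}$, use concavity of $G(h^{-1}(r))$ to get $G(0)-G(t)\leq \frac{\int_0^t c(s)e^{-s}ds}{\int_0^{+\infty}c(s)e^{-s}ds}\,G(0)$, and identify $\lim_{t\to 0^+}\bigl(\int_0^t c(s)e^{-s}ds\bigr)^{-1}\int_{\{-t<\psi<0\}}|F|^2\tilde\rho=\pi\|F\|^2_{\partial M,\rho}$ for the Bergman minimizer $F$; the piecewise definition of $\rho$ on $\bigcup_j\partial D_j\times\overline{M_j}$ is exactly what makes the $\pi$-normalization come out. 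Two steps you gloss over would need real work (showing the Bergman minimizer lies in $H^2_\rho(M,\partial M)$, and the inclusion--exclusion showing the overlaps $\{2p_iG_{D_i}>-t\}\cap\{2p_lG_{D_l}>-t\}$ contribute only $O(t^2)$ to the shell integral), but your proposed log-sum-exp regularization of $\psi$ is unnecessary for this and actively harmful: it destroys the exact match between $\psi$ near $\partial M$ and the piecewise weight $\rho$, and it perturbs $\mathcal{I}(\psi)_{z_0}$.

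The genuine gap is the equality case. You propose to obtain it from ``equality in the slicewise applications of Theorem \ref{main theorem}'' on the coincidence sets $E_j=\{\psi=2p_jG_{D_j}\}$. But Theorem \ref{main theorem} is a one-variable statement on a single planar domain, and the $E_j$ are regions of $M$ cut out by inequalities among the Green functions, not slices on which that theorem can be invoked; more tellingly, condition $(1)$, $\sum_j 1/p_j=1$, has no one-variable counterpart and cannot emerge from any factorwise rigidity. What the cited proof actually uses is the characterization of when $r\mapsto G(h^{-1}(r))$ is \emph{linear} for the max-type weight $\psi$ on the product, i.e., the multi-variable linearity theorem for products of open Riemann surfaces in \cite{GY-conc4}: there condition $(1)$ appears as the jumping-number condition $\sum_j(\alpha_j+1)/p_j=1$ for the monomial $\alpha=0$, and conditions $(2)$--$(3)$ come from the factorization of the extremal function into multiplicative functions on the factors. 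Without importing that linearity characterization, your plan proves the inequality but not the ``only if'' half of the equality statement.
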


	Now, we study the fibration case.
	
	Let $U$ be a domain in $\mathbb{C}^m$.
	Let $\gamma(u)$  be  an admissible weight on $U$ (the definition can be seen in Section \ref{sec:admissible}). 
	
	\begin{Definition}
		Let $H^2_\kappa(M\times U, \partial M\times U)$ denote the space defined as the set of all holomorphic functions $f(z,u)$ on $M\times U$, which satisfy that for every $u\in U$, we have  $P_{\partial M}^{-1}(f(\cdot,u))\in H_\rho^2(M,\partial M)$,  with finite norms
		\begin{equation*}
			\begin{split}
				\parallel f\parallel^2_{\partial M\times U,\kappa}:&=\int_{\partial M\times U}|P^{-1}_{\partial M}(f(w,u))|^2\kappa(w,u)d\mu(w)dV_U\\
				&=\int_U\gamma(u)dV_U\int_{\partial M}|P^{-1}_{\partial M}(f(w,u))|^2\rho(w)d\mu(w)\\
				&=\int_U\gamma(u)dV_U\sum\limits_{1\leq j\leq n}\frac{1}{2\pi}\int_{M_j}\int_{\partial D_j}|\gamma_j(f(w_j,\hat{w}_j,u))|^2\rho(w_j,\hat{w}_j)|dw_j|d\mu_j(\hat{w}_j),
			\end{split}
		\end{equation*}
		where $\kappa(z,u)=\rho(z)\gamma(u)$, and  $\rho(z)$ is a positive Lebesgue measurable function on $\partial M$ satisfying $\inf_{\partial M}\rho>0$.         
	\end{Definition}

	In  Section  \ref{sec:3}, we will prove that $H^2_\kappa(M\times U, \partial M\times U)$ is a Hilbert space and each evaluation functional is bounded, so we have the following reproducing kernel:
	for every $(z,w)\in M\times U$, there exists $K_{\partial M\times U,\kappa}((\zeta,u),(\overline{z},\overline{w}))\in H^2_\kappa(M\times U, \partial M\times U) $ such that
	$$f(z,w)=\int_{\partial M\times U}P_{\partial M}^{-1}(f(\zeta,u))\overline{P_{\partial M}^{-1}(K_{\partial M\times U,\kappa}((\zeta,u),(\overline{z},\overline{w})))}\kappa(\zeta,u)d\mu dV_U$$
	for any $f \in H^2_\kappa(M\times U, \partial M\times U)$.

	Let $\eta=\tilde{\rho}\gamma$ and $\kappa=\rho\gamma$, where $\rho$ and $\tilde{\rho}$ are the same as those appearing in  Theorem \ref{key-Theorem}. Let $B_{M\times U,\eta}$ denote the weighted Bergman kernel on $M\times U$ with the weight $\eta(z)$.
	We present a weighted version of Saitoh's conjecture on products of planar domains in the fibration case as follows:
	\begin{Theorem}\label{main2}
		Assume that $B_{M\times U,\eta}((z_0,u_0),(\overline{z}_0,\overline{u}_0))>0$ and $n>1$. Then
		\begin{equation}\nonumber
			\bigg(\int_{0}^{+\infty}c(t)e^{-t}dt\bigg)\pi B_{M\times U,\eta}((z_0,u_0),(\overline{z}_0,\overline{u}_0))\leq K_{\partial M\times U,\kappa}((z_0,u_0),(\overline{z}_0,\overline{u}_0))
		\end{equation}
		holds for $(z_0,u_0)\in M\times U$, and
		the  equality holds if and only if the following statements hold: 
		
		$(1)$ $\sum_{j=1}^{n}\frac{1}{p_j}=1;$
		
		$(2)$ $\varphi_j=2\log|g_j|+2u_j$ on $D_j$ for $1\leq j\leq n$, where $u_j$ a is harnonic function on $D_j$ and $g_j$ is a holomorphic function on $\mathbb{C}$ such that $g_j(z_j)\neq 0;$
		
		$(3)$ $\chi_{j,z_j}=\chi_{j,-u_j}$, where $\chi_{j,-u_j}$ and $\chi_{j,z_j}$ are the characters associated to the functions $-u_j$ and $G_{D_j}(\cdot,z_j)$ respectively.     
	\end{Theorem}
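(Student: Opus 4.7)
The strategy is to reduce Theorem \ref{main2} to the product version (Theorem \ref{key-Theorem}) by showing that the fibration structure decouples the $M$-variables from the $U$-variables at the level of both Hilbert spaces involved, so that their reproducing kernels factor into an $M$-kernel and a $U$-kernel. Once the kernels factor, the weighted Bergman kernel of $U$ appears as a common positive factor on both sides of the desired inequality, and the statement collapses to Theorem \ref{key-Theorem}.

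Concretely, I would first establish the tensor-product identifications
\begin{equation*}
    H^2_\kappa(M\times U,\partial M\times U)\cong H^2_\rho(M,\partial M)\,\widehat{\otimes}\, A^2(U,\gamma)
\end{equation*}
and, for the weighted Bergman space on the product,
\begin{equation*}
    A^2(M\times U,\eta)\cong A^2(M,\tilde\rho)\,\widehat{\otimes}\, A^2(U,\gamma),
\end{equation*}
where $A^2$ denotes a weighted Bergman space with the indicated weight. The first identification is driven by the Fubini-type formula for $\|\cdot\|^2_{\partial M\times U,\kappa}$ built into the definition of $H^2_\kappa$: if $\{e_k\}$ and $\{\phi_l\}$ are complete orthonormal bases of $H^2_\rho(M,\partial M)$ and $A^2(U,\gamma)$ respectively, then the functions $(z,u)\mapsto P_{\partial M}(e_k)(z)\phi_l(u)$ form a complete orthonormal basis of $H^2_\kappa$. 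The second identification is the standard Fubini decomposition of a weighted Bergman space on a product. Using the series representation of the reproducing kernels, both identifications immediately yield
\begin{equation*}
    K_{\partial M\times U,\kappa}\bigl((z,u),(\overline{z_0},\overline{u_0})\bigr)=K_{\partial M,\rho}(z,\overline{z_0})\,B_{U,\gamma}(u,\overline{u_0})
\end{equation*}
and
\begin{equation*}
    B_{M\times U,\eta}\bigl((z,u),(\overline{z_0},\overline{u_0})\bigr)=B_{M,\tilde\rho}(z,\overline{z_0})\,B_{U,\gamma}(u,\overline{u_0}).
\end{equation*}

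Evaluating at $(z_0,u_0)$, the hypothesis $B_{M\times U,\eta}((z_0,u_0),(\overline{z_0},\overline{u_0}))>0$ forces both $B_{U,\gamma}(u_0)>0$ and $B_{M,\tilde\rho}(z_0)>0$. Dividing the desired inequality by the positive factor $\pi B_{U,\gamma}(u_0)$ then reduces it, with equality preserved, to the inequality of Theorem \ref{key-Theorem}; both the inequality itself and the characterization of the equality case by conditions (1)--(3) follow at once from that theorem.

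The main technical obstacle is the rigorous verification of the first tensor decomposition, since elements of $H^2_\kappa$ are holomorphic on $M\times U$ but controlled through the inverse boundary operator $P_{\partial M}^{-1}$ in the $z$-variable. For $f\in H^2_\kappa$, I will need to show that the Fourier coefficients $a_k(u)$ of $P_{\partial M}^{-1}(f(\cdot,u))$ against $\{e_k\}$ depend holomorphically on $u\in U$ and belong to $A^2(U,\gamma)$. Holomorphy in $u$ should follow from Fubini together with the holomorphy of $f(z,\cdot)$ for each fixed $z\in M$, by writing the inner product as an integral over $\partial M$ and differentiating under the integral; the $A^2(U,\gamma)$-membership and the Parseval identity $\|f\|^2_{\partial M\times U,\kappa}=\sum_{k,l}|\alpha_{k,l}|^2$ then follow again by Fubini. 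After this point, the proof is essentially formal.
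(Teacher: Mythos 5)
Your proposal is correct and follows essentially the same route as the paper: factor both kernels as $K_{\partial M\times U,\kappa}=K_{\partial M,\rho}\,B_{U,\gamma}$ and $B_{M\times U,\eta}=B_{M,\tilde\rho}\,B_{U,\gamma}$, note that the hypothesis forces $B_{U,\gamma}(u_0)>0$, cancel the common factor, and invoke Theorem \ref{key-Theorem}. The only cosmetic difference is that the paper obtains the Hardy-kernel factorization (Proposition \ref{Pro-28}) by directly verifying the reproducing property of the product kernel, whereas you derive it from the orthonormal-basis tensor decomposition — which is precisely the paper's Lemma \ref{basis-comp}, proved there for use in the ideal version, so your technical program is already carried out in the paper.
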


	Let $I$ be an ideal of $\mathcal{O}_{z_0}$ such that $\mathcal{I}(\psi)_{z_0}\subset I\neq \mathcal{O}_{z_0}$, where $\mathcal{I}(\psi)$ is the multiplier ideal sheaf  (see \cite{demailly2010}), which is the sheaf of germs of holomorphic functions $h$ such that $|h|^2e^{-\psi}$ is locally integrable. Let $h_0$ be a holomorphic function on a neighborhood $V_0$ of $z_0$ and assume that $(h_0,z_0)\notin I$. Denote 
	$$K_{\partial M,\rho}^{I,h_0}(z_0) :=\frac{1}{\inf\left\{||f||^2_{\partial M,\rho} :f\in H^2_\rho(M,\partial M)\;\& \;(P_{\partial M}(f)-h_0,z_0)\in I\right\}}$$
	and
	$$B_{\tilde{\rho}}^{I,h_0}(z_0) :=\frac{1}{\inf\left\{\int_M|f|^2\tilde{\rho}: f\in\mathcal{O}(M)\;\&\; (f-h_0,z_0)\in I\right\}},$$ 
	which are generalizations of $K_{\partial M,\rho}(z_0)$ and the Bergman kernel $B_{\tilde{\rho}}(z_0)$   respectively.
	When $I$ takes the maximal ideal of $\mathcal{O}_{z_0}$ and $h_0(z_0)=1$, we have $K_{\partial M,\rho}^{I,h_0}(z_0)=K_{\partial M,\rho}(z_0)$ (see section 2.1 of \cite{GY-Hardy and product}). 
	
	There exists a unique $f_0\in H^2_\rho(M,\partial M)$ (see \cite{GY-Hardy and product}) such that $(P_{\partial M}(f_0)-h_0,z_0)\in I$ and
	$$K_{\partial M,\rho}^{I,h_0}(z_0)=\frac{1}{||f_0||^2_{\partial M,\rho}}.$$
	Let $E=\left\{(\alpha_1,\cdots,\alpha_n):\sum_{j=1}^{n}\frac{\alpha_j+1}{p_j}=1\;\&\;\alpha_j\in \mathbb{Z}_{\geq 0}\right\}$.
	
	In \cite{GY-Hardy and product}, Guan-Yuan gave a generalization of Theorem  \ref{key-Theorem} as follows:
	\begin{Theorem}[\cite{GY-Hardy and product}]\label{key-Theorem2}
		Assume that $B_{\tilde{\rho}}^{I,h_0}(z_0)>0$. Then
		\begin{equation}\nonumber
			K_{\partial M,\rho}^{I,h_0}(z_0)\geq\left(\int_{0}^{+\infty}c(t)e^{-t}dt\right)\pi B_{\tilde{\rho}}^{I,h_0}(z_0)
		\end{equation}
		holds, where $\rho,\tilde{\rho}$ are the same as those appearing in Theorem \ref{key-Theorem}.
		Furthermore, the equality holds if and only if the following statements hold:
		
		$(1)$ $P_{\partial M}(f_0)=\sum_{\alpha\in E}d_\alpha w^\alpha +g_0$ near $z_0$ for any $t\geq 0$, where $d_\alpha\in\mathbb{C}$ such that $\sum_{\alpha\in E}|d_\alpha|\neq 0$ and $g_0$ is a holomorphic function near $z_0$ such that $(g_0,z_0)\in \mathcal{I}(\psi)_{z_0}$;
		
		$(2)$ $\varphi_j=2\log|g_j|+2u_j$ on $D_j$ for any $1\leq j\leq n$, and $g_1\equiv 1$ when $n=1$, where $u_j$ is a harmonic function on $D_j$ and $g_j$ is a holomorphic function on $\mathbb{C}$ such that $g_j(z_j)\neq 0$;
		
		$(3)$ $\chi_{j,z_j}^{\alpha_j+1}=\chi_{j,-u_j}$ for any $1\leq j\leq n$ and $\alpha\in E$ satisfying $d_\alpha\neq 0$, where $\chi_{j,z_j}$ and $\chi_{j,-u_j}$ are the characters associated to functions $G_{\Omega_j}(\cdot,z_j)$ and $-u_j$ respectively.      
	\end{Theorem}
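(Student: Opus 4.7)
The plan is to apply Guan's concavity theorem for minimal $L^2$ integrals (\cite{G16}) along the plurisubharmonic exhaustion $\psi=\max_{1\le j\le n}\{2p_jG_{D_j}(\cdot,z_j)\}$ with the ideal condition $(F-h_0,z_0)\in I$, and to interpret the two ends of the resulting concave function in terms of $B^{I,h_0}_{\tilde\rho}(z_0)$ and $K^{I,h_0}_{\partial M,\rho}(z_0)$. Setting $\varphi:=\sum_{j=1}^n\varphi_j\circ\mathrm{pr}_j$ and $h(t):=\int_t^{+\infty}c(s)e^{-s}\,ds$, define
$$G(t):=\inf\left\{\int_{\{\psi<-t\}}|F|^2e^{-\varphi}\,dV_M:F\in\mathcal O(\{\psi<-t\}),\ (F-h_0,z_0)\in I\right\}.$$
The hypothesis $\mathcal I(\psi)_{z_0}\subset I\neq\mathcal O_{z_0}$ makes each $G(t)$ finite and forces $\lim_{t\to+\infty}G(t)=0$. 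By the concavity theorem (as deployed in \cite{GY-concavity,GY-conc4,BGY-concavity5,BGY-concavity6,GY-weight,GY-Hardy and product}), $G$ is concave as a function of $h$ on $[0,h(0)]$.

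Next I would identify the two endpoints. A layer-cake decomposition that writes $\int_M|F|^2e^{-\varphi}c(-\psi)\,dV_M$ as a $c(t)e^{-t}$-weighted average of interior integrals over $\{\psi<-t\}$, applied to every admissible $F$ and then passed to the infimum, yields $G(0)=h(0)/B^{I,h_0}_{\tilde\rho}(z_0)$. For the boundary slope, let $f_0\in H^2_\rho(M,\partial M)$ be the unique Hardy minimizer supplied by \cite{GY-Hardy and product}, and restrict the infimum in $G(t)$ to $F=P_{\partial M}(f_0)$. On the smooth face $\partial D_j\times M_j$ of $\partial\{\psi<0\}$ the co-area factor $-d\psi/dt$ equals $2p_j\,\partial G_{D_j}/\partial\nu_{w_j}$, which exactly cancels the reciprocal weight $\frac1{p_j}(\partial G_{D_j}/\partial\nu_{w_j})^{-1}$ inside $\rho|_{\partial D_j\times M_j}$, leaving the pure Hardy density $\frac1{2\pi}\prod_l e^{-\varphi_l}|dw_j|\,d\mu_j$. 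Summing over $j$ gives
$$\liminf_{t\to 0^+}\frac{G(0)-G(t)}{h(0)-h(t)}\le\frac1\pi\|f_0\|^2_{\partial M,\rho}=\frac1{\pi K^{I,h_0}_{\partial M,\rho}(z_0)}.$$
Concavity of $G$ in $h$ forces $G(0)/h(0)\le\liminf_{t\to 0^+}(G(0)-G(t))/(h(0)-h(t))$, and chaining with the endpoint identification yields $K^{I,h_0}_{\partial M,\rho}(z_0)\ge(\int_0^{+\infty}c(t)e^{-t}\,dt)\pi B^{I,h_0}_{\tilde\rho}(z_0)$.

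For the equality direction, saturation forces $G$ to be affine in $h$ on $[0,h(0)]$, whereupon the rigidity attached to Guan's concavity collapses the family of sublevel-set minimizers into a single global extremal $F_\star\in\mathcal O(M)$. Expanding $F_\star$ at $z_0$ modulo $I$ and using $\mathcal I(\psi)_{z_0}\subset I$ with the monomial structure of $E$ yields $P_{\partial M}(f_0)=\sum_{\alpha\in E}d_\alpha w^\alpha+g_0$ near $z_0$ with $(g_0,z_0)\in\mathcal I(\psi)_{z_0}$, which is condition (1). That $F_\star$ simultaneously minimizes on every sublevel set splits across the product and forces each $\varphi_j=2\log|g_j|+2u_j$ with $g_j$ entire and $u_j$ harmonic on $D_j$ (condition (2)). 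Finally, single-valuedness on $M$ of the lifted candidate extremal $\prod_j(f_{z_j})^{\alpha_j+1}\cdot\prod_je^{-u_j\circ P_j}$ on the universal cover of each factor forces the character identity $\chi_{j,z_j}^{\alpha_j+1}=\chi_{j,-u_j}$ for every $\alpha\in E$ with $d_\alpha\neq 0$ (condition (3)); conversely, (1)--(3) write down an explicit pair saturating both inequalities.

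The main obstacle is the boundary-slope identification: $\partial\{\psi<-t\}$ has corners along $\{2p_jG_{D_j}(w_j,z_j)=2p_kG_{D_k}(w_k,z_k)\}$ of real codimension at least $2$, and one must show that the corner contribution to the slope vanishes in the $t\to 0^+$ limit while the smooth-face contribution reassembles exactly $\|f_0\|^2_{\partial M,\rho}/\pi$. The equality side is also delicate because the concavity rigidity produces only \emph{local} structure at $z_0$; propagating to the global character identity requires the multiplicative-function analysis on the universal cover, carried out factor-wise across the $n$-fold product, in the spirit of the one-factor arguments of \cite{guan-19saitoh,GY-weight}.
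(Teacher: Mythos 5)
First, note that this paper does not prove Theorem \ref{key-Theorem2} at all: it is quoted from \cite{GY-Hardy and product} and used as a black box in the proof of Theorem \ref{key-Theorem14}, so there is no in-paper proof to compare against; the cited proof does, however, run along the concavity-of-minimal-$L^2$-integrals line you chose, which makes the directional errors in your write-up the decisive issue.

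The genuine gap is that you apply the concavity inequality backwards and swap the roles of the two minimizers, so your chain does not yield the stated inequality. For $g(r):=G(h^{-1}(r))$ concave on $[0,h(0)]$ with $g(0^+)=0$ and nondecreasing, chord slopes decrease to the right, so $\liminf_{t\to 0^+}\frac{G(0)-G(t)}{h(0)-h(t)}\le \frac{G(0)}{h(0)}$ --- the opposite of what you assert. Correspondingly, the boundary slope must be bounded \emph{from below}: one takes the minimizer $F_0$ of $G(0)$ (Bergman side, with the gain $c(-\psi)$ included in the integrand so that $G(0)=1/B^{I,h_0}_{\tilde\rho}(z_0)$), uses $G(0)\ge G(t)+\int_{\{-t\le\psi<0\}}|F_0|^2e^{-\varphi}c(-\psi)$, proves that the shell integrals divided by $\int_0^t c(s)e^{-s}ds$ converge (from below) to $\pi$ times the $\|\cdot\|_{\partial M,\rho}$-norm of the boundary values of $F_0$ --- which requires showing those boundary values define an element of $H^2_\rho(M,\partial M)$ satisfying the germ condition, hence a competitor for $K^{I,h_0}_{\partial M,\rho}(z_0)$ --- and then concludes $\frac{G(0)}{h(0)}\ge \frac{\pi}{K^{I,h_0}_{\partial M,\rho}(z_0)}$, i.e.\ the theorem. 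Your version instead inserts the Hardy minimizer $f_0$ as a competitor to get an \emph{upper} bound on the slope (which in any case does not follow from restricting the infimum in $G(t)$ alone, since that bounds $G(t)$ from above, not from below) and then invokes the reversed concavity inequality; if one actually chains your three displayed claims, using $\|f_0\|^2_{\partial M,\rho}=1/K^{I,h_0}_{\partial M,\rho}(z_0)$ and your endpoint claim $G(0)=h(0)/B^{I,h_0}_{\tilde\rho}(z_0)$, one obtains $\pi K^{I,h_0}_{\partial M,\rho}(z_0)\le B^{I,h_0}_{\tilde\rho}(z_0)$, which is not the assertion of the theorem. In addition, your endpoint identification is unjustified as stated: with $c(-\psi)$ omitted from the integrand there is no layer-cake identity equating $\inf\int_M|F|^2e^{-\varphi}$ with $h(0)\cdot\inf\int_M|F|^2e^{-\varphi}c(-\psi)$; the standard setup keeps the gain inside $G(t)$ precisely so that the $t=0$ endpoint is $1/B^{I,h_0}_{\tilde\rho}(z_0)$ on the nose. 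The equality discussion inherits these problems (the rigidity statement you need is linearity of $G\circ h^{-1}$ together with the characterization of when the boundary-slope lower bound is attained), so the characterization $(1)$--$(3)$ is not established by the sketch either.
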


	Let $w_0=(z_0,u_0)\in M\times U$.
	Let $b_0$ be a holomorphic function  on a neighborhood of $u_0$, $l_0=\prod_{1\leq j\leq n}l_j$, where
	$l_j$ is a holomorphic function on a neighborhood of $z_j\in D_j$, and let $\hat{h}_0=l_0b_0$.   
	
	Denote  $|\alpha|:=\sum_{i=1}^{n}\alpha_{i}$ for $\alpha=(\alpha_1,\cdots,\alpha_n)\in\mathbb{N}^n$.   
	For any $\alpha=(\alpha_1,\cdots,\alpha_n)\in\mathbb{N}^n$ and any $\beta=(\beta_1,\cdots,\beta_n)\in\mathbb{N}^n,$ 
	we say $\alpha <\beta$   if $|\alpha|< |\beta|$ or $|\alpha|=|\beta|$ and there exists some $k$, $1\leq k\leq n$, such that $\alpha_1=\beta_1,\cdots,\alpha_{k-1}= \beta_{k-1},\;\alpha_k<\beta_k $. For any holomorphic $f\in\mathcal{O}(M)$ ($f\not\equiv0$), 
	denote 
	$$ord_{z_0}f:=\min\{\alpha\in\mathbb{N}^n:f^{(\alpha)}(z_0)\neq 0\},$$
	where $f^{(\alpha)}(z_0):=\frac{\partial^{|\alpha|}f}{(\partial w_1)^{\alpha_{1}}\cdots(\partial w_n)^{\alpha_n}}(z_0)$.
	
	Let  $\beta=(\beta^\prime,\beta^{\prime\prime})$, where $\beta^\prime=(\beta_1,\cdots,\beta_{n})$, $\beta_j= ord_{z_j}(l_j)$ for $1\leq j\leq n$ and $\beta^{\prime\prime}=(\beta_{n+1},\cdots,\beta_{n+m})=ord_{u_0}b_0$. 
	Let $\tilde{\beta}^\prime=(\tilde{\beta_1},\cdots,\tilde{\beta_n})\in\mathbb{N}^n$ and $\tilde{\beta}^{\prime\prime}=(\tilde{\beta}_{n+1},\cdots,\tilde{\beta}_{n+m})\in\mathbb{N}^{m}$, which satisfy that $\tilde{\beta_j}\geq\beta_j$ for any $1\leq j\leq n$ and $\beta^{\prime\prime}\leq \tilde{\beta}^{\prime\prime}$.      Denote 
	$$I_1:=\left\{(g,z_0)\in \mathcal{O}_{M,z_0}: g=\sum_{\alpha\in \mathbb{N}^{n}}b_\alpha(w-z_0)^\alpha \mbox{ near }z_0 \mbox{ s.t. } b_\alpha=0 \mbox{ for }\alpha\in L_{\tilde{\beta}^\prime}\right\},$$ where $L_{\tilde{\beta}^\prime}=\left\{\alpha=(\alpha_{1},\cdots,\alpha_{n})\in \mathbb{N}^{n}:\alpha_j\leq \tilde{\beta}_j\;\mbox{for}\;1\leq j\leq n \right\}$. To make sure that $I_1\supset\mathcal{I}(\psi)_{z_0}$, assume that  $\sum_{j=1}^{n}\frac{\tilde\beta_j+1}{p_j}\le1$ (see \cite{guan}, see also \cite{GY-conc4,GY-Hardy and product}).    
	Denote 
	$$I^\prime=\left\{(g,w_0)\in\mathcal{O}_{M,\times U,w_0} : g=\sum_{\alpha\in \mathbb{N}^{n+m}}b_\alpha(w-w_0)^\alpha\;\mbox{near}\;w_0\;\mbox{s.t.}\;b_\alpha=0\;\mbox{for}\;\alpha\in L_{\tilde{\beta}}\right\},$$
	where  $L_{\tilde{\beta}}=\left\{(\alpha_1,\cdots,\alpha_{m+n})\in \mathbb{N}^{n+m} :\alpha_j\leq \tilde{\beta}_j\;\mbox{for}\,1\leq j\leq  n\;\&\;(\alpha_{n+1},\cdots,\alpha_{n+m})\leq \beta^{\prime\prime}\right\}$ and $\tilde{\beta}=(\tilde{\beta}^\prime,\tilde{\beta}^{\prime\prime})\in\mathbb{N}^{n+m}$. It is clear that $(\hat{h}_0,w_0)\notin I^\prime$.    Denote 
	$$K_{\partial M\times U,\rho\gamma}^{I^\prime,\hat{h}_0}(w_0) :=\frac{1}{\inf\left\{||f||^2_{\partial M\times U,\rho\gamma} :f\in H^2_\kappa(M\times U,\partial M\times U)\;\& \;(f-\hat{h}_0,w_0)\in I^\prime\right\}}$$
	and
	$$B_{M\times U,\tilde{\rho}\gamma}^{I^\prime,\hat{h}_0}(w_0) :=\frac{1}{\inf\left\{\int_{M\times U}|f|^2\tilde{\rho}\gamma: f\in\mathcal{O}(M\times U)\;\&\; (f-\hat{h}_0,w_0)\in I^\prime\right\}}.$$ 
	When $I^\prime$ takes the maximal ideal of $\mathcal{O}_{w_0}$ and $\hat{h}_0(w_0)=1$, we have $K_{\partial M\times U,\kappa}^{I^\prime,\hat{h}_0}(w_0)=K_{\partial M\times U,\rho}(w_0)$.    
	
	Now we give a fibration version of Theorem \ref{key-Theorem2} as follows.
	\begin{Theorem}\label{key-Theorem14}
		Assume that $B_{M\times U,\tilde{\rho}\gamma}^{I^\prime,\hat{h}_0}(w_0)>0$. Then
		\begin{equation}\nonumber
			K_{\partial M\times U,\rho\gamma}^{I^\prime,\hat{h}_0}(w_0)\geq\left(\int_{0}^{+\infty}c(t)e^{-t}dt\right)\pi B_{M\times U,\tilde{\rho}\gamma}^{I^\prime,\hat{h}_0}(w_0)
		\end{equation}
		holds, and the equality holds if and only if the following statements hold:
		
		$(1)$ $\sum_{1\le j\le n}\frac{\beta_j'+1}{p_j}=\sum_{1\le j\le n}\frac{\beta_j+1}{p_j}=1$;
		
		$(2)$ $\varphi_j=2\log|g_j|+2u_j$ on $D_j$ for any $1\leq j\leq n$, and $g_1\equiv 1$ when $n=1$, where $u_j$ is a harmonic function on $D_j$ and $g_j$ is a holomorphic function on $\mathbb{C}$ such that $g_j(z_j)\neq 0$;
		
		$(3)$ $\chi_{j,z_j}^{\alpha_j+1}=\chi_{j,-u_j}$ for any $1\leq j\leq n$ and $\alpha\in E$ satisfying $d_\alpha\neq 0$, where $\chi_{j,z_j}$ and $\chi_{j,-u_j}$ are the characters associated to functions $G_{\Omega_j}(\cdot,z_j)$ and $-u_j$ respectively.
	\end{Theorem}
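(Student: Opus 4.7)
My plan is to adapt the proof of Theorem \ref{main2}, replacing Theorem \ref{key-Theorem} with Theorem \ref{key-Theorem2} in the $M$-factor, and using a Taylor expansion in the $u$-variable at $u_0$ to translate the ideal constraint $I'$ into constraints on the $M$-factor alone.

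First, I would establish the Fubini-type factorization of both norms,
\[
\|f\|^2_{\partial M\times U,\kappa}=\int_U\gamma(u)\|f(\cdot,u)\|^2_{\partial M,\rho}\,dV_U,\qquad \int_{M\times U}|F|^2\tilde\rho\gamma=\int_U\gamma(u)\int_M|F(z,u)|^2\tilde\rho(z)\,dV_M(z)\,dV_U(u),
\]
which identifies $H^2_\kappa(M\times U,\partial M\times U)$ with the Hilbert tensor product of $H^2_\rho(M,\partial M)$ and the weighted Bergman space on $U$ with weight $\gamma$, and identifies the ambient Bergman space as the tensor product of the two weighted Bergman spaces. In particular, the reproducing kernels split as pointwise products: $K_{\partial M\times U,\kappa}=K_{\partial M,\rho}\cdot B_{U,\gamma}$ and $B_{M\times U,\tilde\rho\gamma}=B_{M,\tilde\rho}\cdot B_{U,\gamma}$.

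Next I would translate $(F-\hat h_0,w_0)\in I'$ into Taylor-coefficient conditions. Writing $\hat h_0=l_0b_0$ with $b_0(u)=\sum_{\alpha''\geq\beta''}c_{\alpha''}(u-u_0)^{\alpha''}$ (so $c_{\beta''}\neq 0$) and $F(z,u)=\sum_{\alpha''\in\mathbb{N}^m}F_{\alpha''}(z)(u-u_0)^{\alpha''}$, the ideal condition becomes: for each $\alpha''\leq\beta''$, $(F_{\alpha''}-c_{\alpha''}l_0,z_0)\in I_1$ (with $c_{\alpha''}:=0$ for $\alpha''<\beta''$); and no restriction on $F_{\alpha''}$ for $\alpha''\not\leq\beta''$. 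The inequality $K^{I',\hat h_0}\geq C_0\pi B^{I',\hat h_0}$, with $C_0:=\int_0^{+\infty}c(t)e^{-t}dt$, is equivalent to $\inf_{\mathrm{Berg}}\|F\|^2\geq C_0\pi\inf_{\mathrm{Hardy}}\|f\|^2$. To prove this, I would take an arbitrary Bergman-admissible $F$ and construct a Hardy-admissible $f(z,u)=\sum_{\alpha''}f_{\alpha''}(z)(u-u_0)^{\alpha''}$ as follows: for $\alpha''\leq\beta''$, apply Theorem \ref{key-Theorem2} on $M$ to $F_{\alpha''}$ (with ideal $I_1$ and target germ $c_{\alpha''}l_0$) to obtain a Hardy element $f_{\alpha''}\in H^2_\rho(M,\partial M)$ with $\|f_{\alpha''}\|^2_{\partial M,\rho}\leq\frac{1}{C_0\pi}\int_M|F_{\alpha''}|^2\tilde\rho$; for $\alpha''\not\leq\beta''$, set $f_{\alpha''}:=F_{\alpha''}$. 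Summing against the tensor-product norm from the first step yields $\|f\|^2_{\partial M\times U,\kappa}\leq\frac{1}{C_0\pi}\int|F|^2\tilde\rho\gamma$. The equality case will follow by tracking equality through each invocation of Theorem \ref{key-Theorem2}, forcing conditions (1)--(3).

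The main obstacle is that the monomials $\{(u-u_0)^{\alpha''}\}$ are not orthogonal in the weighted Bergman space on $U$, so the two $L^2$-norms above do not split neatly coefficient-by-coefficient in the Taylor basis. I would address this by choosing an orthonormal basis $\{e_k\}$ of the weighted Bergman space on $U$ obtained by Gram--Schmidt from the Taylor basis at $u_0$, so that $\{e_k\}$ respects the filtration by vanishing order at $u_0$ and the ideal condition is encoded in finitely many leading coefficients $G_k(z)$ of $F(z,u)=\sum_k G_k(z)e_k(u)$. Applying Theorem \ref{key-Theorem2} on $M$ to each such constrained $G_k$ individually and leaving the remaining $G_k$ unchanged diagonalizes the estimate; summing over $k$ and using orthogonality then yields the claimed inequality and, via the equality case of Theorem \ref{key-Theorem2}, the stated characterization of equality.
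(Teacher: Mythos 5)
Your proposal follows essentially the same route as the paper. The paper's proof consists of two exact factorization identities,
$K_{\partial M\times U,\rho\gamma}^{I^\prime,\hat h_0}(w_0)=K_{\partial M,\rho}^{I_1,l_0}(z_0)B_{U,\gamma}^{I_2,b_0}(u_0)$ (Proposition \ref{pf of 1}) and
$B_{M\times U,\tilde\rho\gamma}^{I^\prime,\hat h_0}(w_0)=B_{M,\tilde\rho}^{I_1,l_0}(z_0)B_{U,\gamma}^{I_2,b_0}(u_0)$ (Lemma \ref{Le-44-1}), both proved with precisely the device you arrive at in your final paragraph: a complete orthonormal basis $\{g_\tau\}$ of $A^2(U,\gamma)$ with $ord_{u_0}(g_\tau)=\tau$ (Lemma \ref{A_basis}), so that the $I^\prime$-condition constrains only the finitely many coefficient functions $f_\tau$ with $\tau$ in the relevant range, forcing $(f_\tau-c_\tau l_0,z_0)\in I_1$. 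Dividing the two identities by the common positive factor $B_{U,\gamma}^{I_2,b_0}(u_0)$ reduces both the inequality and its equality case to Theorem \ref{key-Theorem2}. This packaging is what makes the equality analysis immediate; your plan to ``track equality through each invocation'' would in effect have to reprove both identities anyway, so you should state and prove them explicitly rather than only the one-sided estimate.

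One step of your construction fails as written: for the unconstrained indices you set $f_{\alpha^{\prime\prime}}:=F_{\alpha^{\prime\prime}}$ (later, ``leaving the remaining $G_k$ unchanged''). A coefficient $G_k\in A^2(M,\tilde\rho)$ of a Bergman-admissible $F$ need not belong to $H^2_\rho(M,\partial M)$ at all, and even when it does its Hardy norm is not controlled by the required multiple of its Bergman norm, so the resulting $f$ is neither guaranteed to lie in $H^2_\kappa(M\times U,\partial M\times U)$ nor to satisfy your norm estimate. The correct move is to discard these components entirely: a basis element of $A^2(U,\gamma)$ whose vanishing order at $u_0$ lies outside the constrained range contributes nothing to the Taylor coefficients indexed by $L_{\tilde\beta}$, so the truncated competitor $\sum_{\tau\in A}f_\tau g_\tau$ is still $I^\prime$-admissible, and dropping the extra terms only decreases the norm. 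With that correction (and the explicit factorizations above) your argument coincides with the paper's.
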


	\subsection{Space $H^2_{\lambda\gamma}(M\times U, S\times U)$} \label{sec:1.3}
	Let $D_j$ be a planar regular region with finite boundary components which are analytic Jordan curves for any $1\leq j\leq n$. Let $M=\prod_{1\leq l\leq n}D_j$. 
	
	Denote  $S :=\prod_{1\leq l\leq n}\partial D_j.$ Let $\lambda$ be a Lebesgue measurable function on $S$ such that $\inf_{S}\lambda>0.$ Let us recall the definition of the Hardy space $H_\lambda^2(M,S)$ over $S$.
	
	\begin{Definition}[\cite{GY-Hardy and product}]
		Let $f\in L^2(S,\lambda d\sigma),$ where $d\sigma :=\frac{1}{(2\pi)^n}|dw_1|\cdots|dw_n|.$ We call $f\in H_\lambda^2(M,S)$ if there exists $\left\{f_m\right\}_{m\in\mathbb{Z}_{\geq 0}}\subset \mathcal{O}(M)\cap C(\overline{M})\cap L^2(S,\lambda d\sigma)$ such that $$\lim\limits_{m\rightarrow+\infty}||f_m-f||^2_{S,\lambda}=0,$$
		where $||g||_{S,\lambda} :=\big(\int_S|g|^2\lambda d\sigma\big)^{\frac{1}{2}}$ for any $g\in L^2(S,\lambda d\sigma).$
	\end{Definition}
	$H_\lambda^2(M,S)$ is a Hilbert space equipped with the inner product $\ll\cdot,\cdot\gg_{S,\lambda}$ (see \cite{GY-Hardy and product}), defined by 
	$$\ll f,g\gg_{S,\lambda}=\frac{1}{(2\pi)^n}\int_Sf\overline{g}\lambda |dw_1|\cdots|dw_n|.$$
	There exists a linear injective map $P_S : H_\lambda^2(M,S)\rightarrow \mathcal{O}(M)$ satisfying that $P_S(f)=f$ for any $f\in \mathcal{O}(M)\cap C(\overline{M})\cap L^2(S,\lambda d\sigma)$ (see \cite{GY-Hardy and product}).
	
	We recall a kernel function $K_{S,\lambda}(z,\overline{w})$ (see \cite{GY-Hardy and product}) defined as follows:
	$$K_{S,\lambda}(z,\overline{w}) :=\sum\limits_{m=1}^{+\infty}P_S(e_m)(z)\overline{P_S(e_m)(w)}$$
	for $(z,w)\in M\times M\subset \mathbb{C}^{2n},$ where ${\left\{e_m\right\}}_{m\in\mathbb{Z}_{\geq 1}}$ is a complete orthonormal basis of $H_\lambda^2(M,S)$. The definition of $K_{S,\lambda}(z,\overline{w})$ is independent of the choice of ${\left\{e_m\right\}}_{m\in\mathbb{Z}_{\geq 1}}$ (see \cite{GY-Hardy and product}). Denote that $K_{S,\lambda}(z) :=K_{S,\lambda}(z,\overline{z}).$
	
	Let $z_0=(z_1,\cdots,z_n)\in M$.   
	Let $\varphi_j$ be a function on $\overline{D}_j$,  which is subharmonic on $D_j$, and continuous at $w_j$ for all $w_j\in \partial D_j$.  $\rho$ is a Lebesgue measurable function on $\partial M$ defined by: 
	$$\rho(w_1,\cdots,w_n)=\frac{1}{p_j}\bigg(\frac{\partial G_{D_j}(w_j,z_j)}{\partial\nu_{w_j}}\bigg)^{-1}\times\prod\limits_{1\leq l\leq n}e^{-\varphi_l(w_l)}$$
	on $\partial D_j\times \overline{M_j}$, where $\partial/\partial\nu_{w_j}$ denotes the derivative along the outer normal unit $\nu_{w_j}$ and $p_j>0$ is a constant for any $1\leq j\leq n$. Let 
	$$\lambda(w_1,\cdots,w_n)=\prod\limits_{1\leq j\leq n}\bigg(\frac{\partial G_{D_j}(w_j,z_j)}{\partial\nu_{w_j}}\bigg)^{-1}e^{-\varphi_j(w_j)}$$
	on $S=\prod\limits_{1\leq j\leq n}\partial D_j$, thus $\lambda>0$ is continuous on $S$.

	When $n=1$,  $K_{S,\lambda}(z_0)=\frac{1}{p_1}K_{\partial M,\rho}(z_0)$ by definitions.
	In \cite{GY-Hardy and product}, Guan-Yuan  
	gave a relation between $K_{S,\lambda}(z_0)$ and $K_{\partial M,\rho}(z_0) $ when $n>1.$
	
	\begin{Theorem}[\cite{GY-Hardy and product}]\label{key2-reference}
		Assume that $n>1$ and $K_{\partial M,\rho}(z_0)>0.$ Then
		\begin{equation}\nonumber
			K_{S,\lambda}(z_0)\geq \bigg(\sum\limits_{1\leq j\leq n}\frac{1}{p_j}\bigg)\pi^{n-1}K_{\partial M,\rho}(z_0)
		\end{equation}
		holds, and the equality holds if and only if the following statements hold:
		
		$(1)$ $\varphi_j$ is harmonic on $D_j$ for $1\leq j\leq n;$
		
		$(2)$ $\chi_{j,z_j}=\chi_{j,-\frac{\varphi_j}{2}},$ where $\chi_{j,z_j}\;\mbox{and}\;\chi_{j,-\frac{\varphi_j}{2}}$ are the characters associated to the functions   $G_{D_j}(\cdot,z_j)$ and $-\frac{\varphi_j}{2}$ respectively.
	\end{Theorem}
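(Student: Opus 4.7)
The plan is to prove the inequality by producing, for every candidate $f$ in the extremal problem defining $K_{\partial M,\rho}(z_0)^{-1}$, a lower bound $\|f\|^2_{\partial M,\rho}\ge D/K_{S,\lambda}(z_0)$ with $D:=\bigl(\sum_j 1/p_j\bigr)\pi^{n-1}$. Five ingredients should assemble to give this: the product factorization of the distinguished-boundary kernel, a face-by-face Fubini decomposition of the $H^2_\rho(M,\partial M)$ norm, a pointwise factor Hardy-kernel bound in the $w_j$-variable, a weighted Bergman-kernel bound on the complementary product $M_j=\prod_{l\ne j}D_l$, and the scalar weighted Saitoh inequality (Theorem \ref{main theorem}) applied on each planar factor to convert Bergman kernels into weighted Hardy kernels.

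Concretely, because $\lambda(w)=\prod_j\lambda_j(w_j)$ is a product weight and $H^2_\lambda(M,S)$ is the Hilbert tensor product of the $H^2_{\lambda_j}(D_j)$, one has
$$K_{S,\lambda}(z_0)=\prod_{j=1}^n K_{\partial D_j,\lambda_j}(z_j).$$
For any $f\in H^2_\rho(M,\partial M)$ with $P_{\partial M}(f)(z_0)=1$, the explicit form of $\rho$ on each face $\partial D_j\times M_j$ together with Fubini yields
$$\|f\|^2_{\partial M,\rho}=\sum_{j=1}^n\frac{1}{p_j}\int_{M_j}\Bigl(\prod_{l\ne j}e^{-\varphi_l(w_l)}\Bigr)\bigl\|f(\cdot,\hat w_j)\bigr\|^2_{H^2_{\lambda_j}(D_j)}\,d\mu_j(\hat w_j).$$
Applying the factor Hardy-kernel bound $\|f(\cdot,\hat w_j)\|^2_{H^2_{\lambda_j}}\ge|f(z_j,\hat w_j)|^2/K_{\partial D_j,\lambda_j}(z_j)$ and then the Bergman-kernel bound on $M_j$, combined with the product factorization $B_{M_j,\prod e^{-\varphi_l}}(\hat z_j)=\prod_{l\ne j}B_{D_l,e^{-\varphi_l}}(z_l)$ (using $f(z_j,\hat z_j)=f(z_0)=1$), gives
$$\|f\|^2_{\partial M,\rho}\ge\sum_{j=1}^n\frac{1}{p_j\,K_{\partial D_j,\lambda_j}(z_j)\prod_{l\ne j}B_{D_l,e^{-\varphi_l}}(z_l)}.$$
Finally, invoke Theorem \ref{main theorem} with $c\equiv 1$ and $\psi=G_{D_l}(\cdot,z_l)$ on each factor $D_l$ to obtain $\pi B_{D_l,e^{-\varphi_l}}(z_l)\le K_{\partial D_l,\lambda_l}(z_l)$; substituting and summing collapses the chain to $\|f\|^2_{\partial M,\rho}\ge D/K_{S,\lambda}(z_0)$, which on passing to the infimum is the claimed kernel inequality.

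The equality analysis follows by tracing the extremizers through the chain. The factor Hardy-kernel step forces $f(\cdot,\hat w_j)$ to be a scalar multiple of $K_{\partial D_j,\lambda_j}(\cdot,\bar z_j)$ for almost every $\hat w_j\in M_j$; the Bergman step forces $f(z_j,\cdot)$ to be a scalar multiple of the reproducing kernel on $M_j$; together these fix $f$ (up to a constant) as the product $\prod_j K_{\partial D_j,\lambda_j}(\cdot,\bar z_j)$. The Saitoh step forces equality on every factor, which by the equality part of Theorem \ref{main theorem} is equivalent to $\varphi_l$ being harmonic and the character identity $\chi_{l,z_l}=\chi_{l,-\varphi_l/2}$ --- exactly the conditions (1) and (2) of the theorem. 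Conversely, if these hold, the product Hardy kernel saturates each inequality in the chain.

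The main obstacle I expect is not conceptual but rather bookkeeping: one must justify the face-by-face Fubini decomposition and the pointwise reproducing-kernel estimates for an arbitrary $f\in H^2_\rho(M,\partial M)$ (not merely one continuous up to $\overline M$), and verify that $f(z_j,\cdot)$ lies in the appropriate weighted Bergman class on $M_j$ so that the Bergman-kernel bound applies. These facts should follow from the construction of $P_{\partial M}$ and the density of $\mathcal{O}(M)\cap C(\overline M)\cap L^2(\partial M,\rho\,d\mu)$ used in \cite{GY-Hardy and product}, but they need careful handling to keep the chain of inequalities valid in the full generality of the theorem.
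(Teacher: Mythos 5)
First, note that this paper does not actually prove Theorem \ref{key2-reference}: it is quoted from \cite{GY-Hardy and product}, so there is no in-paper proof to compare against. On its merits, your argument for the inequality itself is sound and is the natural route: the face-by-face expression of $\|f\|^2_{\partial M,\rho}$ is just the definition of the norm; the weighted Szeg\"o reproducing bound on each $D_j$, the weighted Bergman reproducing bound on $M_j$ together with the product formula of Lemma \ref{Le-719}, and the scalar inequality $K_{\partial D_l,\lambda_l}(z_l)\ge\pi B_{D_l,e^{-\varphi_l}}(z_l)$ from Theorem \ref{main theorem} (with $c\equiv 1$ and $\psi=G_{D_l}(\cdot,z_l)$, whose hypotheses are indeed satisfied here) combine exactly as you write to give $\|f\|^2_{\partial M,\rho}\ge \pi^{n-1}\bigl(\sum_j 1/p_j\bigr)\prod_l K_{\partial D_l,\lambda_l}(z_l)^{-1}$, and the factorization $K_{S,\lambda}(z_0)=\prod_jK_{\partial D_j,\lambda_j}(z_j)$ (the $S$-analogue of Lemma \ref{S-U product}, which does need the density of product functions in $H^2_\lambda(M,S)$) finishes it. The ``only if'' half of the equality statement also goes through: applying the chain to the extremal $f_0$ forces term-by-term equality in the last step, hence $\pi B_{D_l,e^{-\varphi_l}}(z_l)=K_{\partial D_l,\lambda_l}(z_l)$ for every $l$ (here $n>1$ is used to cover all indices), which by the equality part of Theorem \ref{main theorem} is equivalent to conditions $(1)$ and $(2)$.

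The genuine gap is in the converse. Knowing the diagonal equalities $\pi B_{D_l,e^{-\varphi_l}}(z_l)=K_{\partial D_l,\lambda_l}(z_l)$ for all $l$ does not by itself show that the product of Szeg\"o extremals $\prod_l\sigma_l$, with $\sigma_l=K_{\partial D_l,\lambda_l}(\cdot,\overline{z_l})/K_{\partial D_l,\lambda_l}(z_l)$, attains the bound: computing $\|\prod_l\sigma_l\|^2_{\partial M,\rho}$ produces the factors $\int_{D_l}|\sigma_l|^2e^{-\varphi_l}$, and one needs each of these to equal $1/B_{D_l,e^{-\varphi_l}}(z_l)$, i.e.\ one needs the Szeg\"o extremal to be simultaneously the weighted Bergman extremal (equivalently $K_{\partial D_l,\lambda_l}(\cdot,\overline{z_l})=\pi B_{D_l,e^{-\varphi_l}}(\cdot,\overline{z_l})$ as functions on $D_l$, not merely at the diagonal point). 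The reproducing inequality only gives $\int_{D_l}|\sigma_l|^2e^{-\varphi_l}\ge 1/B_{D_l,e^{-\varphi_l}}(z_l)$, which is the wrong direction for showing the test function is small enough. This coincidence of extremals is indeed true under $(1)$ and $(2)$, but it belongs to the proof of the equality case of Theorem \ref{main theorem} (via the explicit representation of both extremals in terms of the multiplicative functions $f_{z_l}$ and $f_{-u_l}$, or via the linearity of the minimal $L^2$ integrals), not to its statement; you need to either carry out that explicit computation or extract the off-diagonal identity from the scalar equality analysis before the ``if'' direction closes.
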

	
	Let $U$ be a domain in $\mathbb{C}^m$. We consider a space $H^2_{\lambda\gamma}(M\times U, S\times U)$ on $M\times U$.
	
	\begin{Definition}Let $\lambda$ be a positive and continuous function on $S$, and let $\gamma$ be an admissible weight on $U$.        $H^2_{\lambda\gamma}(M\times U, S\times U)$ denotes the set of holomorphic functions $f$ on $M\times U$ such that $P_S^{-1}(f(\cdot,u))\in H^2_\lambda(M,S)$ for every fixed $u\in U$ and $P_S^{-1}(f(z,u))$ is  Lebesgue mesurable on $S\times U$, with finite norms
		$$||f||_{S\times U,\lambda\gamma}: =\bigg(\int_U\gamma(u)dV_U\int_S|P_S^{-1}(f(\cdot,u))|^2\lambda d\sigma\bigg)^{\frac{1}{2}}.$$
	\end{Definition}
	
	$H^2_{\lambda\gamma}(M\times U, S\times U)$ is a Hilbert space with the following inner product:
	$$\ll f,g\gg_{S\times U,\lambda\gamma}: =\int_{S\times U}P_S^{-1}(f(z,u))\overline{P_S^{-1}(g(z,u))}\lambda(z)\gamma(u) d\sigma dV_U,$$
	and for any $(z,u)\in M\times U,$  the evaluation map $e_{(z,u)} : f\mapsto  f(z,u)$  from $f\in H^2_{\lambda\gamma}(M\times U, S\times U)$ to $\mathbb{C}$ is bounded (see Section \ref{sec:5.1}). Then  by  Riesz representation theorem,  for each $(z,u)\in M\times U$, there exists a unique $K_{S\times U,\lambda\gamma}((\cdot,\cdot),(\overline{z},\overline{u}))\in H^2_{\lambda\gamma}(M\times U, S\times U)$ such that
	$$f(z,u)=\int_{S\times U}P_S^{-1}(f(\zeta,w))\overline{P_S^{-1}(K_{S\times U,\lambda\gamma}((\zeta,w),(\overline{z},\overline{u})))}\lambda(\zeta)\gamma(w) d\sigma dV_U$$
	for every $f\in H^2_{\lambda\gamma}(M\times U, S\times U)$.
	Denote  $K_{S\times U,\lambda\gamma}(z,u) :=K_{S\times U,\lambda\gamma}((z,u),(\overline{z},\overline{u}))$.
	
	In the following, assume that $\lambda$ and $\rho$ are the same as those appearing in Theorem \ref{key2-reference}.  We present a fibration version of Theorem \ref{key2-reference} as follows:
	\begin{Theorem}\label{k-main2}
		Assume that $K_{\partial M\times U,\rho\gamma}(z_0,u_0)>0$ and $n>1$. Then
		\begin{equation}\nonumber
			K_{S\times U,\lambda\gamma}(z_0,u_0)\geq \bigg(\sum\limits_{1\leq j\leq n}\frac{1}{p_j}\bigg)\pi^{n-1}K_{\partial M\times U,\rho\gamma}(z_0,u_0)
		\end{equation}
		holds, and the equality  holds if and only if the following statements hold:
		
		$(1)$ $\varphi_j$ is harmonic on $D_j$ for $1\leq j\leq n;$
		
		$(2)$ $\chi_{j,z_j}=\chi_{j,-\frac{\varphi_j}{2}},$ where $\chi_{j,z_j}\;\mbox{and}\;\chi_{j,-\frac{\varphi_j}{2}}$ are the characters associated to the functions  $G_{D_j}(\cdot,z_j)$ and $-\frac{\varphi_j}{2}$ respectively.
	\end{Theorem}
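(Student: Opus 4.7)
The plan is to reduce Theorem \ref{k-main2} to its non-fibration counterpart, Theorem \ref{key2-reference}, by showing that the two reproducing kernels appearing in the inequality both split as a product of the corresponding kernel on $M$ with the weighted Bergman kernel $B_{U,\gamma}$ on $U$. Then the inequality and its equality case are obtained from the corresponding statement on $M$ by multiplying through by the positive factor $B_{U,\gamma}(u_0)$.

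First I would establish the product decomposition
\begin{equation*}
K_{S\times U,\lambda\gamma}\bigl((z,u),(\overline{z}_0,\overline{u}_0)\bigr)=K_{S,\lambda}(z,\overline{z}_0)\,B_{U,\gamma}(u,\overline{u}_0).
\end{equation*}
For any $f\in H^2_{\lambda\gamma}(M\times U,S\times U)$ and any fixed $z_0\in M$, the reproducing property of $K_{S,\lambda}$ and the bound $|h(z_0)|^2\le K_{S,\lambda}(z_0)\|h\|_{S,\lambda}^2$ (applied to $h=P_S^{-1}(f(\cdot,u))$) give
\begin{equation*}
\int_U|f(z_0,u)|^2\gamma(u)\,dV_U\le K_{S,\lambda}(z_0)\,\|f\|_{S\times U,\lambda\gamma}^2<+\infty,
\end{equation*}
so $u\mapsto f(z_0,u)$ belongs to the weighted Bergman space $A^2(U,\gamma)$. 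Applying the reproducing property of $B_{U,\gamma}$ in $u$ and then of $K_{S,\lambda}$ in $z$, together with a Fubini argument justified by the above $L^2$ bound, I obtain
\begin{equation*}
f(z_0,u_0)=\int_{S\times U}P_S^{-1}(f(\zeta,w))\,\overline{P_S^{-1}(K_{S,\lambda}(\zeta,\overline{z}_0))\,B_{U,\gamma}(w,\overline{u}_0)}\,\lambda(\zeta)\gamma(w)\,d\sigma\,dV_U,
\end{equation*}
and the uniqueness of the reproducing kernel $K_{S\times U,\lambda\gamma}$ yields the claimed factorization.

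An entirely parallel argument, or equivalently the analogous factorization already used for the space $H^2_\kappa(M\times U,\partial M\times U)$ established earlier in the paper (Section~\ref{sec:3}), gives
\begin{equation*}
K_{\partial M\times U,\rho\gamma}\bigl((z,u),(\overline{z}_0,\overline{u}_0)\bigr)=K_{\partial M,\rho}(z,\overline{z}_0)\,B_{U,\gamma}(u,\overline{u}_0).
\end{equation*}
The hypothesis $K_{\partial M\times U,\rho\gamma}(z_0,u_0)>0$ then forces both $K_{\partial M,\rho}(z_0)>0$ and $B_{U,\gamma}(u_0)>0$. Applying Theorem \ref{key2-reference} to the pair $(K_{S,\lambda}(z_0),K_{\partial M,\rho}(z_0))$ and multiplying the resulting inequality by the positive number $B_{U,\gamma}(u_0)$ gives the desired inequality for $K_{S\times U,\lambda\gamma}(z_0,u_0)$ and $K_{\partial M\times U,\rho\gamma}(z_0,u_0)$.

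For the equality characterization, the factorization shows that equality in Theorem \ref{k-main2} is equivalent to equality in Theorem \ref{key2-reference}, whose conditions (harmonicity of each $\varphi_j$ on $D_j$ and the character matching $\chi_{j,z_j}=\chi_{j,-\varphi_j/2}$) are exactly those listed in the statement; no extra condition from the fibration direction arises because the $U$-dependent factor $B_{U,\gamma}(u_0)$ is common to both sides. The main technical obstacle is justifying the two kernel factorizations: one needs the measurability condition built into the definition of $H^2_{\lambda\gamma}$, the $L^2(U,\gamma)$ integrability of the slice $u\mapsto P_S^{-1}(f(\cdot,u))$ (in norm), and a Fubini interchange on $S\times U$, all of which follow from the estimates above together with standard properties of admissible weights recalled in Section \ref{sec:admissible}.
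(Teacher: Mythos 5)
Your proposal is correct and follows essentially the same route as the paper: establish the factorizations $K_{S\times U,\lambda\gamma}=K_{S,\lambda}\,B_{U,\gamma}$ and $K_{\partial M\times U,\rho\gamma}=K_{\partial M,\rho}\,B_{U,\gamma}$ (the paper's Propositions \ref{key-decomp1p} and \ref{Pro-28}, proved exactly by the slice-integrability and Fubini argument you describe), then divide out the common positive factor $B_{U,\gamma}(u_0)$ and invoke Theorem \ref{key2-reference} for both the inequality and the equality characterization.
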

	
	Let $l_0=\prod_{1\leq j\leq n}l_j$, where
	$l_j$ is a holomorphic function on a neighborhood of $z_j\in D_j$. Let  $\beta=(\beta_1,\cdots,\beta_{n})$, where $\beta_j:= ord_{z_j}(l_j)$ for any $1\leq j\leq n$. Let $\tilde{\beta}=(\tilde{\beta}_1,\cdots,\tilde{\beta}_n)\in\mathbb{N}^n$ satisfying that $\tilde{\beta_j}\geq\beta_j$ for any $1\leq j\leq n$.
	
	Denote $z_0:=(z_1,\cdots,z_n)\in M$, and    
	let 
	$$I_1=\left\{(g,z_0)\in \mathcal{O}_{M,z_0}: g=\sum_{\alpha\in \mathbb{N}^{n}}b_\alpha(w-z_0)^\alpha \mbox{ near }z_0 \mbox{ s.t. } b_\alpha=0 \;\mbox{for}\;\alpha\in L_{\tilde{\beta}^\prime}\right\},$$ where $$L_{\tilde{\beta}^\prime}=\left\{(\alpha_{1},\cdots,\alpha_{n})\in \mathbb{N}^{n}:\alpha_j\leq \tilde{\beta}_j\;\mbox{for}\;1\leq j\leq n \right\}.$$  It is clear that $I_1$ is an ideal of $\mathcal{O}_{M,z_0}$ and $(l_0,z_0)\notin I_1$. Denote 
	$$K_{S,\lambda}^{I_1,l_0}(z_0) :=\frac{1}{\inf\left\{||f||^2_{S,\lambda} :f\in H^2_\lambda(M,S) \&\;(P_S(f)-l_0,z_0)\in I_1\right\}}$$
	and
	$$K_{\partial M,\rho}^{I_1,l_0}(z_0) :=\frac{1}{\inf\left\{||f||^2_{\partial M,\rho} :f\in H^2_\rho(M,\partial M)\;\& \;(P_{\partial M}(f)-l_0,z_0)\in I_1\right\}}.$$

	Let $\lambda$ and $\rho$ be the same as those appearing in Theorem \ref{key2-reference}.    Guan-Yuan \cite{GY-Hardy and product} gave the following result.
	\begin{Theorem}[\cite{GY-Hardy and product}]\label{S-M-th1}
		Assume that $n>1$ and $K_{\partial M,\rho}^{I_1,l_0}(z_0)>0$. Then
		\begin{equation}\label{S-M}
			\left(\prod\limits_{1\leq j\leq n}(\tilde{\beta_j}+1)\right)K_{S,\lambda}^{I_1,l_0}(z_0)\geq\left(\sum\limits_{1\leq j\leq n}\frac{\tilde{\beta_j}+1}{p_j}\right)\pi^{n-1}K_{\partial M,\rho}^{I_1,l_0}(z_0)
		\end{equation}
		holds, and the equality holds if and only  if the following statements hold:
		
		$(1)$ $\beta=\tilde{\beta}$;
		
		$(2)$ $\varphi_j$ is harmonic on $D_j$ for any $1\leq j\leq n$;
		
		$(3)$ $\chi_{j,z_j}^{\tilde{\beta_j}+1}=\chi_{j,-\frac{\varphi_j}{2}}$, where $\chi_{j,-\frac{\varphi_j}{2}}$ and $\chi_{j,z_j}$ are the characters associated to the functions $-\frac{\varphi_j}{2}$ and $G_{D_j}(\cdot,z_j)$ respectively.
	\end{Theorem}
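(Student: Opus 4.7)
The argument exploits two structural facts: $\lambda=\prod_j\lambda_j$ with $\lambda_j=(\partial G_{D_j}/\partial\nu)^{-1}e^{-\varphi_j}$ factors over the Cartesian product $S=\prod_j\partial D_j$, while $\rho$ restricted to each face $\partial D_j\times\overline{M_j}$ of $\partial M$ reads $\frac{1}{p_j}\lambda_j\prod_{k\ne j}e^{-\varphi_k}$. Moreover $I_1=\sum_{j=1}^n(w_j-z_j)^{\tilde\beta_j+1}\mathcal{O}_{M,z_0}$ and $l_0=\prod_j l_j$ both respect the coordinate separation. My first step would be to use this product structure to establish
$$\frac{1}{K^{I_1,l_0}_{S,\lambda}(z_0)}=\prod_{j=1}^n\frac{1}{\hat K^{(j)}_{\tilde\beta_j,l_j}(z_j)},$$
where $\hat K^{(j)}_{\tilde\beta_j,l_j}$ is the one-dimensional conjugate Hardy kernel on $D_j$ with weight $\lambda_j$ subject to the one-variable jet constraint $(\mathfrak{m}_{z_j}^{\tilde\beta_j+1},l_j)$. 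The factorization follows from the unitary isomorphism $H^2_\lambda(M,S)\cong\bigotimes_{j=1}^n H^2_{\lambda_j}(D_j,\partial D_j)$ and the orthogonal decomposition of the constrained extremal problem along this tensor splitting.

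In the second step I would compare the two extremal problems through a single test function. Let $f^*$ be the extremal for $K^{I_1,l_0}_{\partial M,\rho}(z_0)$ and set $F=P_{\partial M}(f^*)\in\mathcal{O}(M)$; then $F$ matches $l_0$ modulo $I_1$, so after verifying (via Fatou-type convergence from the face boundary values) that $F$ has $L^2(\lambda d\sigma)$ traces on $S$, the function $P_S^{-1}(F)$ is an admissible competitor for the $S$-problem. The central inequality to prove is
$$\int_S|F|^2\lambda\,d\sigma\ \le\ \frac{\prod_{j=1}^n(\tilde\beta_j+1)}{\pi^{n-1}\sum_{j=1}^n(\tilde\beta_j+1)/p_j}\,\|f^*\|^2_{\partial M,\rho}.$$
To obtain this I would apply the product Saitoh inequality (Theorem \ref{key2-reference}), in an ideal-constrained form, slice-by-slice on $M_j=\prod_{k\ne j}D_k$: for each fixed $w_j\in\partial D_j$, the slice $F(w_j,\cdot)$ is holomorphic on $M_j$ and, after subtracting the Taylor jet dictated by $l_0$ modulo $I_1$, gives a competitor for the $(n-1)$-variable extremal problem on $M_j$ with weight $\prod_{k\ne j}e^{-\varphi_k}$ and ideal $\sum_{k\ne j}\mathfrak{m}_{z_k}^{\tilde\beta_k+1}$; the constrained Saitoh inequality then converts the $M_j$-Lebesgue integral into the distinguished-boundary integral over $\prod_{k\ne j}\partial D_k$ with the factor $\pi^{n-1}/\sum_{k\ne j}(\tilde\beta_k+1)/p_k$. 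Integrating in $w_j$ against $\lambda_j|dw_j|/(2\pi)$ recomposes $\int_S|F|^2\lambda\,d\sigma$. The combinatorial coefficient $\tilde\beta_j+1$ on face $j$ emerges from the $(\tilde\beta_j+1)$-dimensional freedom of admissible jets in the $w_j$ coordinate, while the overall $\prod_j(\tilde\beta_j+1)$ is the dimension of $\mathcal{O}_{M,z_0}/I_1$ contributed by Step 1. Summing over $j$ with weights $(\tilde\beta_j+1)/p_j$ yields the displayed inequality, and taking reciprocals produces \eqref{S-M}.

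For the equality analysis I would trace tightness through each ingredient: equality in the $(n-1)$-variable Saitoh inequality on each $M_j$ forces $\varphi_k$ harmonic with the character matching $\chi_{k,z_k}^{\tilde\beta_k+1}=\chi_{k,-\varphi_k/2}$ for every $k\ne j$, and varying $j$ covers all indices, giving (2) and (3). Equality in the tensor decomposition of Step 1 forces the $\partial M$-extremal to be a pure product at leading order, which forces $\beta=\tilde\beta$ (so (1) holds) since $l_0=\prod_j l_j$ must realize the leading jet exactly at the boundary of $I_1$. The main obstacle is the ideal-constrained version of Theorem \ref{key2-reference}, which is not stated in the excerpt and must first be established along the lines of the upgrade from Theorem \ref{key-Theorem} to Theorem \ref{key-Theorem2}, using concavity of minimal $L^2$ integrals together with the jet-matching optimal $L^2$ extension for the ideal $\sum_{k\ne j}\mathfrak{m}_{z_k}^{\tilde\beta_k+1}$ on $M_j$. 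Once this auxiliary result is in hand, the combinatorial coefficient $\tilde\beta_j+1$ drops out naturally as the multiplicity in the transverse direction, and the remaining argument is Fubini bookkeeping.
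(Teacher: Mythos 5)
Two preliminary remarks. First, Theorem \ref{S-M-th1} is not proved in this paper at all: it is quoted from \cite{GY-Hardy and product}, and here it only serves as an input which, combined with the decomposition formulas of Propositions \ref{pf of 1} and \ref{p:1211}, yields the fibration version Theorem \ref{Th4.2}. So there is no in-paper proof to compare against, and your proposal has to stand on its own. Your Step 1, the factorization of $K_{S,\lambda}^{I_1,l_0}(z_0)$ into one-variable constrained conjugate-Hardy kernels, is plausible: $\lambda$, $S$, $l_0$ and the box ideal $I_1$ all respect the product structure, and the argument would be of the same type as Proposition \ref{p:1211}.

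The genuine gap is in Step 2. Your ``central inequality'' bounds $\int_S|F|^2\lambda\,d\sigma$ for the \emph{specific} $\partial M$-extremal $F=P_{\partial M}(f^*)$ by a constant times $\|f^*\|^2_{\partial M,\rho}$, and the proposed mechanism is to apply Theorem \ref{key2-reference} ``slice-by-slice'' to the individual slices $F(w_j,\cdot)$ on $M_j$. But Theorem \ref{key2-reference} (like every Saitoh-type statement) is an inequality between reproducing kernels, i.e.\ between infima of norms over all competitors with a prescribed value or jet; it gives no inequality between the distinguished-boundary integral and the area integral of one fixed holomorphic function. For an individual function such an estimate is false in general --- already on the disc a Bergman-space function need not have $H^2$ boundary values --- so the step converting the $M_j$-Lebesgue integral into the integral over $\prod_{k\neq j}\partial D_k$ collapses, and it is not even clear that $F\in H^2_\lambda(M,S)$. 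Replacing each slice by the minimal-norm element with the same jet would repair the norm bound but destroy the property of being the boundary values of a single holomorphic competitor, severing the link with $K_{S,\lambda}^{I_1,l_0}(z_0)$. Moreover, the ``ideal-constrained form of Theorem \ref{key2-reference}'' you invoke on $M_j$ is essentially Theorem \ref{S-M-th1} itself in $n-1$ variables, so as written the argument is circular, and you acknowledge but do not supply this auxiliary result. Finally, in the equality analysis, Step 1 is an identity rather than an inequality, so ``equality in the tensor decomposition'' cannot force $\beta=\tilde\beta$; condition $(1)$ must come from the equality part of a one-variable weighted Saitoh-type theorem with higher-order jets (with the Green-function weight $\frac{1}{p_j}(\partial G_{D_j}/\partial\nu)^{-1}$), which your outline never actually engages. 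A correct route has to stay at the level of the extremal problems themselves, comparing in each variable the constrained kernels and then summing the contributions of the $n$ faces of $\partial M$ to produce the coefficient $\sum_j(\tilde\beta_j+1)/p_j$; this is precisely the content supplied in \cite{GY-Hardy and product}.
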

	When $n=1$, inequality (\ref{S-M}) becomes an equality by definitions.
	
	Let $U\subset\mathbb{C}^m$ be a domain. 
	Fix  $u_0\in U$, and  denote  $w_0:=(z_0,u_0)\in M\times U$.
	Let $b_0$ be a holomorphic funtion  on a neighborhood of $u_0$. Denote that $\hat{h}_0:=l_0b_0$.

	Denote  $\beta :=(\beta^\prime,\beta^{\prime\prime})$, where $\beta^\prime=(\beta_1,\cdots,\beta_{n})$, $\beta_j := ord_{z_j}(l_j)$ for $1\leq j\leq n$ and $\beta^{\prime\prime}=(\beta_{n+1},\cdots,\beta_{n+m}):=ord_{u_0}b_0$. 
	Let $\tilde{\beta}^\prime=(\tilde{\beta_1},\cdots,\tilde{\beta_n})\in\mathbb{N}^n$ and $\tilde{\beta}^{\prime\prime}=(\tilde{\beta}_{n+1},\cdots,\tilde{\beta}_{n+m})\in\mathbb{N}^m$, where $\tilde{\beta_j}\geq\beta_j$ for any $1\leq j\leq n$ and $\beta^{\prime\prime}\leq \tilde{\beta}^{\prime\prime}$.       
	Denote  $\tilde{\beta}:=(\tilde{\beta}^\prime,\tilde{\beta}^{\prime\prime})\in\mathbb{N}^{n+m}$.          
	Let 
	$$I^\prime=\left\{(g,w_0)\in\mathcal{O}_{M\times U,w_0} : g=\sum_{\alpha\in \mathbb{N}^{n+m}}b_\alpha(w-w_0)^\alpha\;\mbox{near}\;w_0\;\mbox{s.t.}\;b_\alpha=0\;\mbox{for}\;\alpha\in L_{\tilde{\beta}}\right\},$$
	where $$L_{\tilde{\beta}}=\left\{(\alpha_1,\cdots,\alpha_{m+n})\in \mathbb{N}^{n+m} :\alpha_j\leq \tilde{\beta}_j\mbox{ for }1\leq j\leq  n\;\&\;(\alpha_{n+1},\cdots,\alpha_{n+m})\leq \beta^{\prime\prime}\right\}.$$ It is clear that $(\hat{h}_0,w_0)\notin I^\prime$.     
	Denote 
	$$K_{\partial M\times U,\rho\gamma}^{I^\prime,\hat{h}_0}(w_0) :=\frac{1}{\inf\left\{||f||^2_{\partial M\times U,\rho\gamma} :f\in H^2_\kappa(M\times U,\partial M\times U)\;\& \;(f-\hat{h}_0,w_0)\in I^\prime\right\}}$$ 
	and
	$$K_{S\times U,\lambda\gamma}^{I^\prime,\hat{h}_0}(w_0):=\frac{1}{\inf\left\{||f||_{S\times U,\lambda\gamma}^2 :f\in H^2_{\lambda\gamma}(M\times U, S\times U)\;\&\;(f-\hat{h}_0,w_0)\in I^\prime\right\}}.$$
	
	Now, we give a fibration version of Theorem \ref{S-M-th1} as follows:
	\begin{Theorem}\label{Th4.2}
		Assume that $K_{\partial M\times U,\rho\gamma}^{I^\prime,\hat{h}_0}(w_0)>0$ and $n>1$. Then
		\begin{equation}\nonumber
			\left(\prod\limits_{1\leq j\leq n}(\tilde{\beta_j}+1)\right)K_{S\times U,\lambda\gamma}^{I^\prime,\hat{h}_0}(z_0)\geq\left(\sum\limits_{1\leq j\leq n}\frac{\tilde{\beta_j}+1}{p_j}\right)\pi^{n-1}K_{\partial M\times U,\rho\gamma}^{I^\prime,\hat{h}_0}(z_0)
		\end{equation}
		holds, and the equality holds if and only  if the following statements hold:
		
		$(1)$ $\beta=\tilde{\beta}$;
		
		$(2)$ $\varphi_j$ is harmonic on $D_j$ for any $1\leq j\leq n$;
		
		$(3)$ $\chi_{j,z_j}^{\tilde{\beta_j}+1}=\chi_{j,-\frac{\varphi_j}{2}}$, where $\chi_{j,-\frac{\varphi_j}{2}}$ and $\chi_{j,z_j}$ are the characters associated to the functions $-\frac{\varphi_j}{2}$ and $G_{D_j}(\cdot,z_j)$ respectively.
	\end{Theorem}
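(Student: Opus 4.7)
\emph{Plan.} My plan is to reduce Theorem~\ref{Th4.2} directly to its non-fibered counterpart Theorem~\ref{S-M-th1} by separating out the ``fiber'' contribution from $U$, which appears identically on both sides and therefore cancels. Three structural facts make this possible: the weights factor ($\kappa=\rho\gamma$, and the $S\times U$-weight equals $\lambda\gamma$); the constraint index set factors as a Cartesian product $L_{\tilde{\beta}}=L_{\tilde{\beta}'}\times\{\alpha''\leq\beta''\}$; and the target germ factors as $\hat{h}_0=l_0b_0$. Under these structures the two extremal problems defining the generalized Hardy-type kernels on $M\times U$ each decompose as a product of a base factor on $M$ and a common fiber factor on $U$.

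\emph{Step 1 (tensor-product decomposition of the spaces).} First I would establish, with $A^2_\gamma(U)$ the weighted Bergman space on $U$, the Hilbert-space isomorphisms
\begin{gather*}
H^2_\kappa(M\times U,\partial M\times U)\;\cong\;H^2_\rho(M,\partial M)\,\hat{\otimes}\,A^2_\gamma(U),\\
H^2_{\lambda\gamma}(M\times U,S\times U)\;\cong\;H^2_\lambda(M,S)\,\hat{\otimes}\,A^2_\gamma(U).
\end{gather*}
Concretely, for a complete orthonormal basis $\{h_\nu\}$ of $A^2_\gamma(U)$ every $f$ in the fibered space admits an expansion $f(z,u)=\sum_\nu F_\nu(z)h_\nu(u)$ with $F_\nu$ in the corresponding base space, together with the norm identity $\|f\|^2=\sum_\nu\|F_\nu\|^2$. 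The coefficients $F_\nu$ are recovered by integrating against $\overline{h_\nu(u)}\gamma(u)$ in the $u$-variable, and a Fubini-type argument together with boundary-trace compatibility shows that they actually belong to the base Hardy space.

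\emph{Step 2 (factorization of the extremal problems and conclusion).} The Taylor coefficient functional at $w_0$ with multi-index $(\alpha',\alpha'')$ restricted to simple tensors is the product of the $\alpha'$-Taylor functional at $z_0$ on the base factor and the $\alpha''$-Taylor functional at $u_0$ on $A^2_\gamma(U)$, so its Riesz representer on the tensor-product space is the tensor product of the two factor representers. Consequently the Gram matrix of the finitely many linear constraints that cut out $I'$ is the Kronecker product $G_1\otimes G_2$ of the Gram matrices corresponding to $I_1$ on $M$ and to $I_2:=\{g=\sum b_{\alpha''}(u-u_0)^{\alpha''}:b_{\alpha''}=0\text{ for }\alpha''\leq\beta''\}$ on $U$; and since $\hat{h}_0=l_0b_0$ the constraint data vector is itself a pure tensor $v=v_1\otimes v_2$. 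The Kronecker-product identity $(v_1\otimes v_2)^{T}(G_1\otimes G_2)^{-1}(v_1\otimes v_2)=(v_1^{T}G_1^{-1}v_1)(v_2^{T}G_2^{-1}v_2)$ then yields
\begin{gather*}
\frac{1}{K^{I',\hat{h}_0}_{\partial M\times U,\rho\gamma}(w_0)}=\frac{1}{K^{I_1,l_0}_{\partial M,\rho}(z_0)}\cdot\frac{1}{K^{I_2,b_0}_\gamma(u_0)},\\
\frac{1}{K^{I',\hat{h}_0}_{S\times U,\lambda\gamma}(w_0)}=\frac{1}{K^{I_1,l_0}_{S,\lambda}(z_0)}\cdot\frac{1}{K^{I_2,b_0}_\gamma(u_0)},
\end{gather*}
where $K^{I_2,b_0}_\gamma(u_0)$ denotes the analogous generalized reproducing kernel for $A^2_\gamma(U)$. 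Dividing these two identities cancels the common fiber factor and gives
\[
\frac{K^{I',\hat{h}_0}_{S\times U,\lambda\gamma}(w_0)}{K^{I',\hat{h}_0}_{\partial M\times U,\rho\gamma}(w_0)}=\frac{K^{I_1,l_0}_{S,\lambda}(z_0)}{K^{I_1,l_0}_{\partial M,\rho}(z_0)},
\]
so the inequality in Theorem~\ref{Th4.2} becomes exactly that of Theorem~\ref{S-M-th1}, and the three equality conditions (which involve only the base data $\beta$, $\tilde{\beta}$, $\varphi_j$, and the characters on $D_j$) transfer verbatim. I expect the main obstacle to be Step~1: verifying the density of the algebraic tensor product in $H^2_\kappa(M\times U,\partial M\times U)$ and $H^2_{\lambda\gamma}(M\times U,S\times U)$ in their respective norms, and controlling the boundary traces of the Fourier coefficients $F_\nu$. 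Once this tensor-product structure is cleanly in place, the factorization of the extremal problem in Step~2 is a purely linear-algebraic identity on the reproducing-kernel tensor product, and the theorem follows.
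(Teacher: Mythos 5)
Your proposal is correct and follows essentially the same route as the paper: the two product decompositions you derive in Step 2 are exactly the paper's Propositions \ref{pf of 1} and \ref{p:1211} (established there via the product orthonormal bases of Lemmas \ref{basis-comp} and \ref{S-U product} together with a vanishing-order-adapted basis of $A^2(U,\gamma)$, rather than your Kronecker-product Gram-matrix computation, but to the same effect), and dividing them to cancel the common factor $B^{I_2,b_0}_{U,\gamma}(u_0)$ and then invoking Theorem \ref{S-M-th1}, with the equality characterization transferring unchanged, is precisely the paper's final step.
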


	\section{Hardy spaces on $D$ and on $D\times U$}

	In this section, we  firstly review some results about the Hardy space $H^2(D)$. Then we recall the definition of the admissible weight and its equivalent statement as well as a sufficient condition for a weight to be an admissible weight. As a consequence, we give a proof of the Bergman kernel decomposition formula. In the third subsection, we introduce the space $H^2_\theta(D\times U, \partial D\times U)$ and give some properties about the space. Finally, we finish the proof of Theorem \ref{main1-theorem}.

	\subsection{Some results about the space $H^2(D)$}
	For any domain $D\subset\mathbb{C}$, the Hardy space $H^2(D)$ (see \cite{rudin2}) denotes the set of all functions $f$ which are holomorphic on $D$, and for which there exists a harmonic function $u$ on $D$ such that $|f(z)|^2\leq u(z)$ for any $z\in D$. 
	
	The following lemma gives an sufficient and necessary condition for $f\in H^2(D)$.
	\begin{Lemma}[see \cite{rudin2}]
		\label{P1}
		Fix a point  $t\in D$, and let $f$ be a holomorphic function on $D$. Let $\Omega$ be any domain with smooth boundary $\Gamma$, such that $\Omega\cup\Gamma\subset D,$ and $t\in\Omega$. Then $f\in H^2(D)$ if and only if there exists a constant $M$, independent of $\Omega$, such that
		\begin{equation}\label{2:E1}
			\frac{1}{2\pi}\int_{\Gamma}|f(z)|^2\frac{\partial G_{\Omega}(z,t)}{\partial \nu_z}|dz|\leq M.
		\end{equation}
		Here $G_{\Omega}(z,t)$ is the Green function of  $\Omega$, the derivative is taken along the outer normal and a ``smooth" boundary is the union of a finite number of continuously differentiable curves.
		\begin{proof}
			Let $v$ be harmonic in $\Omega$, with boundary values $|f|^2$; then the left member of inequality \eqref{2:E1} is equal to $v(t)$. If $f\in H^2(D)$, let $u$ be a harmonic majorant of $|f|^2$. Since $|f|^2$  is subharmonic and $v(t)\leq u(t)$,  inequality \eqref{2:E1} holds with $M=u(t)$.
			
			Conversely, let $\left\{\Omega_k\right\}$ be an increasing sequence of domains with smooth boundary $\Gamma_k$, such that $\Omega_k\cup\Gamma_k\subset D,$ $t\in\Omega_1$ and $\bigcup_k\Omega_k=D$. Denote  
			$$v_k(w):=\frac{1}{2\pi}\int_{\Gamma_k}|f(z)|^2\frac{\partial G_{\Omega_k}(z,w)}{\partial \nu_z}|dz|,$$
			which is harmonic on $\Omega_k$.
			As $|f|^2$ is subharmonic and $v_k(t)\le M$ for any $k$, $\left\{v_k\right\}$ is increasing with respect to $k$ and converges to a harmonic function $u\not\equiv+\infty$ by Harnack's principle (see \cite{rudin}), and it is easy to see that this $u$ is a majorant  of $|f|^2$ on $D$.
		\end{proof}
	\end{Lemma}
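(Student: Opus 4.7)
The plan is to identify the boundary integral as the value at $t$ of the harmonic extension of $|f|^2|_\Gamma$ to $\Omega$, and then run the standard two-sided argument: existence of a global harmonic majorant bounds the integrals uniformly, while a uniform bound produces a majorant via an exhaustion and Harnack's principle.

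The first step I would carry out is the identification
$$v_\Omega(w) \;:=\; \frac{1}{2\pi}\int_\Gamma |f(z)|^2 \frac{\partial G_\Omega(z,w)}{\partial \nu_z}\,|dz|,$$
which by the Poisson representation expressed through the Green function is exactly the harmonic function on $\Omega$ whose boundary values on $\Gamma$ are $|f|^2$. The quantity appearing in the statement is therefore $v_\Omega(t)$. For the forward direction, I would assume $f \in H^2(D)$ with harmonic majorant $u$, and observe that on $\Gamma$ one has $v_\Omega = |f|^2 \le u$; since $v_\Omega$ and $u$ are harmonic on $\Omega$, the maximum principle gives $v_\Omega \le u$ throughout $\Omega$, hence $v_\Omega(t) \le u(t) =: M$ with $M$ independent of $\Omega$.

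For the converse, I would fix an exhaustion $\Omega_1 \Subset \Omega_2 \Subset \cdots$ of $D$ by smoothly bounded domains, each containing $t$, and form $v_k := v_{\Omega_k}$. The key point is monotonicity $v_k \le v_{k+1}$ on $\Omega_k$: since $|f|^2$ is subharmonic on $\Omega_{k+1}$ and agrees with the harmonic function $v_{k+1}$ on $\Gamma_{k+1}$, the maximum principle gives $v_{k+1} \ge |f|^2$ throughout $\Omega_{k+1}$; restricting to $\Gamma_k$ yields $v_{k+1} \ge |f|^2 = v_k$ there, and a second application of the maximum principle to the harmonic function $v_{k+1} - v_k$ on $\Omega_k$ propagates this inequality inward. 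With the hypothesis $v_k(t) \le M$ at hand, Harnack's principle (the dichotomy for increasing sequences of harmonic functions) forces $\{v_k\}$ to converge locally uniformly on $D$ to a harmonic function $u \not\equiv +\infty$. Passing to the limit in $v_k \ge |f|^2$ on $\Omega_k$ then gives $u \ge |f|^2$ throughout $D$, which is the required harmonic majorant.

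The main obstacle, mild but load-bearing, is the monotonicity step: it must be assembled from two separate applications of the maximum principle that crucially exploit subharmonicity of $|f|^2$; once this is in place, the exhaustion/Harnack package is routine. I would also verify that the exhaustion can be chosen so that each $\Gamma_k$ is smooth enough for $\partial G_{\Omega_k}/\partial \nu_z$ to be classically defined on $\Gamma_k$ and for the Poisson representation of $v_k$ via the Green function to be valid, which is automatic for $C^1$ boundaries.
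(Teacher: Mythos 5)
Your proposal is correct and follows essentially the same route as the paper: identify the boundary integral as the value at $t$ of the harmonic extension of $|f|^2$ on $\Omega$, bound it by a global harmonic majorant for the forward direction, and for the converse use an exhaustion, the monotonicity of the harmonic extensions (which the paper leaves as ``easy to see'' and you spell out via the maximum principle and subharmonicity of $|f|^2$), and Harnack's principle to produce the majorant. No substantive difference in method.
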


	\begin{Remark}\label{least}In the proof of Lemma \ref{P1}, $u$ is the least harmonic majorant of $|f|^2$ on $D$.
		\begin{proof}
			For arbitrary $V(z)$ which is a harmonic majorant of $|f|^2$, we have 
			$$v_k(w)\leq \frac{1}{2\pi}\int_{\Gamma_k}V(z)\frac{\partial G_{\Omega_k}(z,w)}{\partial\nu_z}|dz|=V(w)$$ for any $w\in D$. Letting $k\rightarrow+\infty$, we have $u(w)\leq V(w).$
			We finish the proof.
		\end{proof} 
	\end{Remark}

	Now, we recall a norm on $H^2(D)$. Fixed  a point $t\in D$, for $f\in H^2(D)$, take
	\begin{equation}\nonumber
		\parallel f\parallel_{H^2(D)}=(u(t))^{1/2},
	\end{equation}
	where $u$ is the least harmonic majorant of $|f|^2$.  Let $\left\{D_k\right\}$ be an increasing sequence of domains with smooth boundaries $C_k$, such that $t\in D_1$ and $D=\bigcup_k D_k$. Let $G_{D_k}(z,t)$ be the Green's function of $D_k$. It follows from Remark \ref{least} that
	\begin{equation}\label{1:E3}
		\parallel f\parallel_{H^2(D)}=\lim\limits_{k\rightarrow\infty}\left\{\frac{1}{2\pi}\int_{C_k}|f(z)|^2\frac{\partial G_{D_k}(z,t)}{\partial \nu_z}|dz|\right\}^{1/2},
	\end{equation}
	where the limit is independent of the choice of $\left\{D_k\right\}$.\par
	
	The following lemma gives two properties about the norm $\|\cdot\|_{H^2(D)}$.
	\begin{Lemma}[see \cite{rudin2}]
		\label{1:C1}
		$(a)$ Let $K$ be a compact subset of $D$. There exists a constant $M(D,K,t)$, indenpendent of $f$, such that
		\begin{equation}\nonumber
			|f(z)|\leqslant M(D,K,t)\parallel f\parallel_{H^2(D)}
		\end{equation}
		for any $f\in H^2(D)$ and $z\in K$.
		
		$(b)$ If $\parallel f_n-f\parallel_{H^2(D)}\rightarrow 0$ as $n\rightarrow\infty$, then $f_n\rightarrow f$ uniformly on every compact subset of $D$.
	\end{Lemma}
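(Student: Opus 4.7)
The plan is to deduce both statements from Harnack's inequality applied to the least harmonic majorant. For part $(a)$, fix $f\in H^2(D)$ and let $u$ denote its least harmonic majorant, so by definition of the norm $\|f\|_{H^2(D)}^2=u(t)$ and $|f(z)|^2\le u(z)$ for all $z\in D$. Since $|f|^2\ge 0$, every harmonic majorant of $|f|^2$ is non-negative, and in particular $u\ge 0$ on $D$. Thus $u$ is a non-negative harmonic function on the connected open set $D$, and Harnack's inequality yields, for any compact $K\subset D$, a constant $C(D,K,t)$ (depending only on $D$, $K$, and the base point $t$, not on $u$ or $f$) with $u(z)\le C(D,K,t)\,u(t)$ for all $z\in K$. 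Combining these inequalities,
\begin{equation}\nonumber
|f(z)|^2\le u(z)\le C(D,K,t)\,u(t)=C(D,K,t)\,\|f\|_{H^2(D)}^2,
\end{equation}
so $M(D,K,t):=\sqrt{C(D,K,t)}$ works.

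For part $(b)$, I would first note that $H^2(D)$ is a linear subspace of $\mathcal{O}(D)$: if $u_1,u_2$ are harmonic majorants of $|f_1|^2,|f_2|^2$, then $2(u_1+u_2)$ majorizes $|f_1+f_2|^2$ (via $|f_1+f_2|^2\le 2|f_1|^2+2|f_2|^2$), and $|c|^2u$ majorizes $|cf|^2$. Hence $f_n-f\in H^2(D)$, and applying part $(a)$ to $f_n-f$ on a given compact $K\subset D$ gives
\begin{equation}\nonumber
\sup_{z\in K}|f_n(z)-f(z)|\le M(D,K,t)\,\|f_n-f\|_{H^2(D)}\longrightarrow 0,
\end{equation}
which is precisely uniform convergence on $K$.

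There is essentially no main obstacle; the only small subtlety is that if $K$ does not contain $t$ one simply replaces $K$ by a compact $K'\supset K\cup\{t\}$ still contained in $D$ so that Harnack applies on a connected compact linking $t$ to $K$, and the resulting constant also bounds $M(D,K,t)$. One should also handle the degenerate case $u(t)=0$ separately: the minimum principle then forces $u\equiv 0$, hence $f\equiv 0$, so the inequality in $(a)$ is trivial. No properties of the space beyond the characterization of the norm via the least harmonic majorant (already recorded in Remark \ref{least} and equation \eqref{1:E3}) and basic potential theory are required.
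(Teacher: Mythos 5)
Your proof is correct: the paper states this lemma with only a citation to \cite{rudin2} and gives no proof of its own, and your Harnack-inequality argument applied to the least harmonic majorant (together with the elementary observation that $H^2(D)$ is a vector space, so $(b)$ follows from $(a)$ applied to $f_n-f$) is precisely the standard argument behind the cited result.
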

	Minkowski's inequality may be applied to the integrals in equality $(\ref{1:E3})$, then the triangle inequality holds:
	\begin{equation}
		\parallel f+g\parallel_{H^2(D)}\leq\parallel f\parallel_{H^2(D)}+\parallel g\parallel_{H^2(D)}.\nonumber
	\end{equation}
	
	From now on, assume that $D$ is a bounded domain, whose boundary $\partial D$ consists of finite analytic Jordan curves.\par
	Let $L^2(\partial D)$ denote the space of complex-valued measurable functions $f^*$ on $\partial D$, normed by
	\begin{equation}
		\label{norm}
		\parallel f^*\parallel_{L^2(\partial D)}=\left\{\frac{1}{2\pi}\int_{\partial D}|f^*(z)|^2\frac{\partial G_D(z,t)}{\partial \nu_z}|dz|\right\}^{1/2}.
	\end{equation}
	
	We let $A(D)$ denote the set of all functions which are holomorphic in the closure  of $D$.
	We let $H^2(\partial D)$ denote the (evidently closed) subspace of $L^2(\partial D)$ consisting of those $f^*$ for which
	\begin{equation}\nonumber
		\int_{ \partial D}f^*(z)\phi(z)dz = 0
	\end{equation} 
	whenever $\phi\in A(D)$.

	To prove that the space $H^2(D)$ is a Banach space, we need the following lemma.
	
	\begin{Lemma}[see \cite{rudin2}]
		\label{1:L4}
		$(a)$ If $f\in H^2(D),$ there is a function $f^*$, defined on $\partial D$, such that $f$ has nontangential boundary values $f^*$ almost everywhere on $\partial D$.\\
		$(b)$ The mapping $\Sigma:f\rightarrow f^*$ is a norm-preserving isomorphism from $H^2(D)$ into $L^2(\partial D)$.\\
		$(c)$  The range of $\Sigma$ is $H^2(\partial D)$, and the inverse of $\Sigma$ is given by
		\begin{equation}\nonumber
			f(z)=\frac{1}{2\pi i}\int_{\partial D}\frac{f^*(w)}{w-z}dw,\qquad\qquad (z\in D)
		\end{equation}
		and also by
		\begin{equation}\label{1:E12}
			f(t)=\frac{1}{2\pi}\int_{\partial D}f^*(\zeta)\frac{\partial G_D(\zeta,t)}{\partial \nu_\zeta}|d\zeta|,
		\end{equation}
		where $G_D$ is the Green's function on $D$, and the derivative is taken along the outer normal. 
	\end{Lemma}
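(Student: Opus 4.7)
The plan is to prove the three parts (a), (b), (c) in order, leveraging the assumption that $\partial D$ consists of finitely many analytic Jordan curves and the exhaustion of $D$ by subdomains $D_k$ already introduced above equation \eqref{1:E3}.

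For part (a), my approach is to localize to a boundary arc and reduce to Fatou's theorem on the unit disc. By analyticity of each boundary component, every point $\zeta_0\in \partial D$ has an open neighborhood $V\subset\mathbb{C}$ together with a univalent conformal map $\Phi:V\to W$ taking $V\cap\overline{D}$ onto a half-disc $W^+=W\cap\{\mathrm{Im}\,w>0\}$ and $V\cap\partial D$ onto a diameter. Pulling back $f\in H^2(D)$ gives $g=f\circ\Phi^{-1}$ on $W^+$; the harmonic majorant $u$ of $|f|^2$ pulls back to a harmonic majorant of $|g|^2$ on $W^+$, so by Carathéodory's characterization and the classical theory on the half-plane (equivalently, the disc via an auxiliary Möbius map), $g$ has nontangential boundary values a.e. on the diameter. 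Transporting back by $\Phi^{-1}$ produces the boundary function $f^*$ on $V\cap\partial D$, and the finitely many boundary components are covered by finitely many such charts, giving $f^*$ a.e.\ on $\partial D$.

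For part (b), I first establish norm preservation. Given $f\in H^2(D)$, apply the representation on each $D_k$: since $|f|^2$ is subharmonic on $D_k$ and its least harmonic majorant is precisely the harmonic extension of $|f|^2|_{C_k}$ (by Remark \ref{least}), equation \eqref{1:E3} gives $\|f\|_{H^2(D)}^2=\lim_k v_k(t)$ where $v_k(t)=\tfrac{1}{2\pi}\int_{C_k}|f|^2\tfrac{\partial G_{D_k}(\cdot,t)}{\partial\nu}|dz|$. Choosing the $D_k$ to shrink towards $\partial D$ along nontangential approach regions and using part (a) together with Fatou's lemma in one direction and monotone convergence of the increasing sequence $\{v_k\}$ in the other, one obtains $\|f\|_{H^2(D)}=\|f^*\|_{L^2(\partial D)}$ with $\|\cdot\|_{L^2(\partial D)}$ defined by \eqref{norm}. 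Injectivity of $\Sigma$ is then automatic: if $f^*\equiv 0$ a.e., then $\|f\|_{H^2(D)}=0$, so $f\equiv 0$ on $D$.

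For part (c), I would combine a Cauchy-integral limit argument with the orthogonality characterization of $H^2(\partial D)$. On each exhausting subdomain $D_k$, Cauchy's integral formula and the Riesz representation for harmonic measure give $f(z)=\tfrac{1}{2\pi i}\int_{C_k}\tfrac{f(w)}{w-z}dw$ and $f(t)=\tfrac{1}{2\pi}\int_{C_k}f(\zeta)\tfrac{\partial G_{D_k}(\zeta,t)}{\partial\nu_\zeta}|d\zeta|$ for $z\in D_k$. Using part (a), part (b), the uniform convergence of $\tfrac{\partial G_{D_k}}{\partial\nu}$ to $\tfrac{\partial G_D}{\partial\nu}$ on compacts (valid because $\partial D$ is analytic), and the Cauchy–Schwarz bound $|\int_{C_k}(f-f^*)\cdot (\,\cdot\,)|\le \|f-f^*\|_{L^2(C_k)}\cdot \|\cdot\|_{L^2(C_k)}$, one passes to the limit to obtain both formulas on $D$. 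The inclusion $\Sigma(H^2(D))\subset H^2(\partial D)$ follows because if $\phi\in A(D)$, then $\int_{C_k}f\phi\,dw=0$ by Cauchy's theorem on $D_k$, and $f\to f^*$, $\phi\to\phi$ in $L^2(C_k)$ in an appropriate sense, giving $\int_{\partial D}f^*\phi\,dw=0$. Conversely, given $f^*\in H^2(\partial D)$, define $f$ by the Cauchy integral; the orthogonality to $A(D)$ forces $f$ to be holomorphic on $D$ (and not to have an unwanted holomorphic component on the complement), while the $L^2$-Cauchy integral estimate shows $f\in H^2(D)$, and by the already-proved boundary formula, $\Sigma(f)=f^*$.

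The main obstacle is part (a): establishing nontangential boundary values a.e. requires combining the analyticity of $\partial D$ with the classical Fatou theory in a way that is compatible with the local geometry, and then patching local boundary functions into one well-defined $f^*$. A second subtler point is justifying the limits $C_k\to\partial D$ in part (c), where one must control both $|f-f^*|$ and the Green's function normal derivatives simultaneously; here the analyticity of $\partial D$ is essential since it ensures the normal derivatives $\tfrac{\partial G_{D_k}}{\partial\nu}$ remain bounded and converge uniformly on compact subsets of $\partial D$ as $k\to\infty$.
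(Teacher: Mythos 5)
First, note that the paper does not prove this lemma at all: it is quoted verbatim from Rudin \cite{rudin2}, so there is no internal proof to compare against; your sketch has to be judged as a self-contained argument for Rudin's theorem. Part (a) is essentially fine (local conformal charts across the analytic boundary, conformal invariance of the harmonic-majorant condition, Fatou on the disc, and uniqueness of nontangential limits to patch the charts), but parts (b) and (c) have genuine gaps at exactly the points where the classical proof does real work.

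The main gap is the norm identity in (b). Fatou's lemma along the exhaustion gives only $\|f^*\|_{L^2(\partial D)}\le\|f\|_{H^2(D)}$, and invoking ``monotone convergence of the increasing sequence $\{v_k\}$'' does not supply the reverse inequality: that monotone limit is, by \eqref{1:E3}, just the definition of $\|f\|_{H^2(D)}^2=u(t)$; it does not identify $u(t)$ with $\frac{1}{2\pi}\int_{\partial D}|f^*|^2\frac{\partial G_D}{\partial\nu}\,|dz|$. The missing statement is that the least harmonic majorant of $|f|^2$ has no singular boundary part, equivalently that the restrictions of $f$ to the level curves $C_k$ converge to $f^*$ in $L^2$ and not merely almost everywhere; for the disc this is proved via Taylor coefficients (or uniform integrability), and for multiply connected $D$ one obtains it by Rudin's device of decomposing $f$ into a sum of functions each holomorphic across all but one boundary curve and transplanting to the disc by conformal mapping. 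This $L^2$-convergence on level curves is also precisely what your limit passages in (c) use (``$f\to f^*$ in $L^2(C_k)$ in an appropriate sense''), so as written (c) rests on the unproved part of (b). A second gap is the surjectivity claim in (c): given $f^*\in H^2(\partial D)$, asserting that ``the $L^2$-Cauchy integral estimate shows $f\in H^2(D)$'' and that its nontangential boundary values recover $f^*$ is exactly the nontrivial jump/boundary-behavior theorem for Cauchy integrals of $L^2$ densities (or, alternatively, requires approximating $f^*$ in $L^2(\partial D)$ by elements of $A(D)$ and using the already-established isometry); neither ingredient is indicated. With the level-curve $L^2$-convergence and the Cauchy-transform boundary behavior supplied — both available from the disc theory via the decomposition and the analyticity of $\partial D$ — the rest of your outline goes through, and formula \eqref{1:E12} follows from the harmonic-measure representation on $D_k$ together with the uniform convergence of $\partial G_{D_k}/\partial\nu$ that you correctly flag.
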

	From the Lemma \ref{1:L4} $(b)$, we know that for every $f\in H^2(D)$,
	\begin{equation}\nonumber
		\begin{split}
			\parallel f\parallel^2_{H^2(D)}
			&=\lim\limits_{k\rightarrow\infty}\frac{1}{2\pi}\int_{C_k}|f(z)|^2\frac{\partial G_{D_k}(z,t)}{\partial \nu_z}|dz|\\
			&=\frac{1}{2\pi}\int_{\partial D}|f^*(z)|^2\frac{\partial G_D(z,t)}{\partial \nu_z}|dz|.
		\end{split}
	\end{equation}

	Lemma \ref{1:L4} $(b)$ implies the following result.
	\begin{Lemma}[see \cite{rudin2}]\label{L1:4}
		$H^2(D)$ is a Banach space with the norm $\parallel\cdot\parallel_{H^2(D)}$.
	\end{Lemma}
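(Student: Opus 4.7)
The plan is to realize $H^2(D)$ as the isometric image of a closed subspace of the weighted Hilbert space $L^2(\partial D)$ via the boundary-value map $\Sigma$ of Lemma \ref{1:L4}, and thereby transfer completeness. By Lemma \ref{1:L4}(b)(c), $\Sigma : H^2(D) \to H^2(\partial D)$ is a norm-preserving bijection, so the entire proof reduces to checking that $\|\cdot\|_{H^2(D)}$ is indeed a norm and that $H^2(\partial D)$ is closed in $L^2(\partial D)$.

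The norm axioms are routine: nonnegativity and absolute homogeneity are immediate from \eqref{1:E3}, the triangle inequality is Minkowski applied to that limit (as noted in the paragraph preceding the statement), and definiteness follows because $\|f\|_{H^2(D)}=0$ forces $f^* = 0$ a.e.\ on $\partial D$ by \eqref{norm}, whence $f \equiv 0$ on $D$ by the reproducing formula \eqref{1:E12}.

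For closedness, I would write $H^2(\partial D) = \bigcap_{\phi \in A(D)} \ker T_\phi$, where $T_\phi(f^*) := \int_{\partial D} f^*(z)\phi(z)\,dz$. Since $\partial G_D(\cdot,t)/\partial \nu_z$ is positive and continuous on the compact set $\partial D$ by analyticity of the boundary, it is bounded above and below by positive constants, so the weighted norm \eqref{norm} is equivalent to the unweighted $L^2(\partial D)$ norm. Combined with the boundedness of $\phi \in A(D)$ on $\partial D$, the Cauchy--Schwarz inequality yields $|T_\phi(f^*)| \leq C_\phi \|f^*\|_{L^2(\partial D)}$, so each $T_\phi$ is continuous and each $\ker T_\phi$ is closed. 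Hence $H^2(\partial D)$ is closed in the complete space $L^2(\partial D)$, and therefore complete; pulling back by $\Sigma$ shows $H^2(D)$ is complete under $\|\cdot\|_{H^2(D)}$.

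I do not anticipate a real obstacle: the delicate analytic ingredient---existence of nontangential boundary values and the recovery of $f$ from $f^*$ through the Green-function kernel---is exactly what Lemma \ref{1:L4} provides, and everything else is formal manipulation of the isometry and elementary functional-analytic facts. An alternative, more self-contained, route would be to take a Cauchy sequence $\{f_n\}\subset H^2(D)$, use Lemma \ref{1:C1}(a) to extract a locally uniform limit $f\in\mathcal{O}(D)$, and then use the characterization of Lemma \ref{P1} applied to an exhaustion $\{\Omega_k\}$ together with uniform convergence on $\partial\Omega_k$ to conclude both $f\in H^2(D)$ and $\|f_n-f\|_{H^2(D)}\to 0$; I would only pursue this if some step in the isometry argument turned out to be circular.
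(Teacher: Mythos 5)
Your argument is correct and follows essentially the same route as the paper, which simply notes that Lemma \ref{1:L4} $(b)$ (and implicitly $(c)$) yields the result: $\Sigma$ is a norm-preserving isomorphism onto the closed subspace $H^2(\partial D)$ of the complete space $L^2(\partial D)$, so completeness transfers back. Your write-up merely makes explicit the verification of the norm axioms and the closedness of $H^2(\partial D)$ that the paper leaves to the reference.
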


	\subsection{Some results about admissible weights}\label{sec:admissible}
	In this section, we recall some results on admissible weights (see \cite{pasternak}).
	
	Let $X$ be an open set in $\mathbb{C}^n$. We say that $\mu$ is  a weight on $X$ if it is a real-valued and almost everywhere positive Lebesgue measurable function on $X$, and we   denote the set of all weights on $X$ by $W(X)$. If $\mu\in W(X)$, we denote by $L^2(X,\mu)$ the space of all Lebesgue measurable complex-valued functions $f$ satisfying that $\int_{ X}|f|^2\mu<+\infty$.  Denote
	$$A^2(X,\mu):=\left\{f\in \mathcal{O}(X):f\in L^2(X,\mu)\right\}.$$
	We define the evaluation functional $E_x$ on $A^2(X,\mu)$ by the formula $$E_x(f):=f(x), \qquad f\in A^2(X,\mu).$$
	
	A weight $\mu\in W(X)$ is called an admissible weight (see \cite{pasternak}), if the following two conditions hold:
	
	$(1)$ $A^2(X,\mu)$ is a closed subspace of $L^2(X,\mu)$;
	
	$(2)$   for any $x\in X$, the evaluation functional $E_x$ is continuous on $A^2(X,\mu)$. 
	
	\
	
	The set of all admissible weights on $X$ is denoted by $AW(X)$.  
	
	\begin{Lemma}[\cite{pasternak}]
		\label{e-c}
		Let $\mu\in W(X)$. The following are equivalent:
		
		$(1)$ $\mu$ is an admissible weight;
		
		$(2)$ for any compact set $Y\subset X$ there exists a constant $C_Y$ such that for any $z\in Y$ and each $f\in A^2(X,\mu),$ $$|E_z(f)|\leq C_Y\parallel f\parallel_\mu;$$
		
		$(3)$ for any $x\in X$ there exists a neighborhood $V_x$ of $x$ in $X$ and a constant $C_x>0$ such that for any $w\in V_x$ and each $f\in A^2(X,\mu)$ $$|E_w(f)|\leq C_x\parallel f\parallel_\mu.$$   
	\end{Lemma}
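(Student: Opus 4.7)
The plan is to establish the cyclic chain $(1)\Rightarrow(2)\Rightarrow(3)\Rightarrow(1)$, with $(2)\Rightarrow(3)$ being immediate (pick $V_x$ to be a compact neighborhood of $x$ and apply $(2)$).

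For $(1)\Rightarrow(2)$, I would fix a compact set $Y\subset X$ and view the family $\{E_z\}_{z\in Y}$ as bounded linear functionals on $A^2(X,\mu)$. By hypothesis $(1)$, $A^2(X,\mu)$ is closed in the Hilbert space $L^2(X,\mu)$ and is therefore itself a Banach space, and each $E_z$ is continuous. For fixed $f\in A^2(X,\mu)$, the function $f$ is holomorphic, hence continuous on $Y$, so $\sup_{z\in Y}|E_z(f)|=\sup_{z\in Y}|f(z)|<+\infty$. The Banach--Steinhaus (uniform boundedness) principle then produces a constant $C_Y$ with $\|E_z\|\le C_Y$ uniformly over $z\in Y$, which is exactly $(2)$.

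For $(3)\Rightarrow(1)$, the continuity of each $E_x$ is the content of $(3)$ itself, so only the closedness of $A^2(X,\mu)$ in $L^2(X,\mu)$ needs proof. Given a sequence $\{f_n\}\subset A^2(X,\mu)$ with $f_n\to f$ in $L^2(X,\mu)$, the sequence is $\|\cdot\|_\mu$-Cauchy. Applying $(3)$ to $f_n-f_m$ at each $x\in X$ yields $\sup_{w\in V_x}|f_n(w)-f_m(w)|\le C_x\|f_n-f_m\|_\mu\to 0$, so $\{f_n\}$ is uniformly Cauchy on a neighborhood of every point; a finite-cover argument then shows it is uniformly Cauchy on every compact subset of $X$. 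The locally uniform limit $g$ of $\{f_n\}$ is holomorphic on $X$, and a subsequence of $\{f_n\}$ converging pointwise a.e.\ to $f$ identifies $f=g$ a.e., so $f$ has a holomorphic representative and belongs to $A^2(X,\mu)$.

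The main (mild) obstacle is making sure the Banach--Steinhaus application in $(1)\Rightarrow(2)$ is legitimate: this requires that $A^2(X,\mu)$ be complete, which is precisely the closedness half of the admissibility hypothesis. The other steps are essentially standard functional-analytic and complex-analytic manipulations (in particular, a locally uniform limit of holomorphic functions is holomorphic, and $L^2$-convergence gives an a.e.-convergent subsequence).
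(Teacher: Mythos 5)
Your argument is correct. The paper itself gives no proof of this lemma (it is quoted from Pasternak--Winiarski's paper), and your chain $(1)\Rightarrow(2)$ via Banach--Steinhaus on the closed (hence complete) subspace $A^2(X,\mu)$, $(2)\Rightarrow(3)$ trivially, and $(3)\Rightarrow(1)$ via locally uniform convergence of $L^2$-Cauchy sequences of holomorphic functions is exactly the standard proof of this equivalence.
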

	Now we recall two sufficient conditions for a weight to be an admissible weight.
	
	\begin{Lemma}[\cite{pasternak}]
		\label{suffi-condition}
		Let $\mu\in W(X)$, and $V$ be an open subset of $X$. Assume that there exists a number $a>0$ such that the function $\mu^{-a}$ is integrable on $V$ with respect to the Lebesgue measure. Then for any $z\in V$, there exists a neighborhood $V_z$ of $z$ in $X$ and a constant $C_z>0$ such that for each $w\in V_z$ and each $f\in A^2(X,\mu)$
		$$|E_w(f)|\leq C_z\parallel f\parallel_\mu.$$
	\end{Lemma}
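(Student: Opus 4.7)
The plan is to combine the sub-mean-value inequality for plurisubharmonic functions with a single application of H\"older's inequality, tuning one free exponent so that the integrability hypothesis on $\mu^{-a}$ and the norm $\parallel f\parallel_\mu$ appear on the right-hand side.

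Since $V$ is open and $z\in V$, I first pick $r>0$ so small that the closed polydisc (or ball) $\overline{B(z,2r)}$ is contained in $V$, and set $V_z:=B(z,r)$. For each $w\in V_z$ one then has $B(w,r)\subset V\subset X$. For any $b>0$ the function $|f|^{2b}$ is plurisubharmonic on $X$ (because $f$ is holomorphic), so the sub-mean-value inequality on the polydisc $B(w,r)$ yields
$$|f(w)|^{2b}\le\frac{1}{|B(w,r)|}\int_{B(w,r)}|f(\zeta)|^{2b}\,d\lambda(\zeta),$$
where $d\lambda$ denotes Lebesgue measure on $\mathbb{C}^n$ and $|B(w,r)|$ is its volume.

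Next I choose $b:=a/(1+a)\in(0,1)$, together with conjugate H\"older exponents $p:=1/b=(1+a)/a$ and $q:=1/(1-b)=1+a$; these are tuned exactly so that $bp=1$ and $bq=a$. Writing $|f|^{2b}=(|f|^{2b}\mu^{b})\cdot\mu^{-b}$ and applying H\"older,
$$\int_{B(w,r)}|f|^{2b}\,d\lambda\le\Bigl(\int_{B(w,r)}|f|^{2}\mu\,d\lambda\Bigr)^{\!1/p}\Bigl(\int_{B(w,r)}\mu^{-a}\,d\lambda\Bigr)^{\!1/q}\le\parallel f\parallel_\mu^{2/p}\Bigl(\int_{V}\mu^{-a}\,d\lambda\Bigr)^{\!1/q},$$
since $B(w,r)\subset V$. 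Inserting this bound into the sub-mean-value estimate and raising both sides to the power $1/(2b)$, using translation invariance of Lebesgue measure together with $1/(2bp)=1/2$ and $1/(2bq)=1/(2a)$, produces
$$|f(w)|\le\frac{1}{|B(0,r)|^{1/(2b)}}\Bigl(\int_{V}\mu^{-a}\,d\lambda\Bigr)^{\!1/(2a)}\parallel f\parallel_\mu,$$
which is the required estimate with a constant $C_z$ independent of $w\in V_z$ and of $f$.

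The only step that requires any thought is the choice of the exponent $b$: it must simultaneously force the first H\"older factor to equal $\parallel f\parallel_\mu$ (requiring $bp=1$) and the second factor to match the given hypothesis that $\int_V\mu^{-a}<+\infty$ (requiring $bq=a$). The conjugacy relation $1/p+1/q=1$ then uniquely forces $b=a/(1+a)$; after that all the bookkeeping is routine. The conceptual input enabling this flexibility is just that $|f|^{2b}$ is plurisubharmonic for every $b>0$, not only for $b=1$.
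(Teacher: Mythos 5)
Your proof is correct. The paper does not actually reprove this lemma---it only cites Pasternak--Winiarski \cite{pasternak}---and your argument (the sub-mean-value inequality for the plurisubharmonic function $|f|^{2a/(1+a)}$ on a small ball, followed by H\"older's inequality with the conjugate exponents $(1+a)/a$ and $1+a$ so that the two factors become $\parallel f\parallel_\mu^{2/p}$ and $\bigl(\int_V\mu^{-a}\bigr)^{1/q}$) is precisely the standard proof from that reference, yielding the uniform constant $C_z=|B(0,r)|^{-1/(2b)}\bigl(\int_V\mu^{-a}\,d\lambda\bigr)^{1/(2a)}$ for all $w\in B(z,r)$.
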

	
	\begin{Lemma}[see \cite{GY-Hardy and product}]
		\label{2:C2}
		Let 
		$\mu\in W(X),$ and $S$ be an analytic subset of $X$. Assume that  for any subset $V\Subset X\backslash S$, there exists a number $a>0$ such that the function $\mu^{-a}$ is integrable on $V$ with respect to the Lebesgue measure. Then $\mu$ is an admissible weight on $X$.
	\end{Lemma}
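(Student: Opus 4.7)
The plan is to verify condition $(3)$ of Lemma \ref{e-c} — namely, to produce for every $w\in X$ an open neighborhood $V_w$ of $w$ in $X$ and a constant $C_w>0$ such that $|f(z)|\le C_w\|f\|_\mu$ for all $z\in V_w$ and $f\in A^2(X,\mu)$ — since the equivalences in Lemma \ref{e-c} then force $\mu$ to be admissible. For a point $w\in X\setminus S$, because $S$ is closed (analytic subsets are closed), I pick an open set $V$ with $w\in V$ and $V\Subset X\setminus S$; the hypothesis supplies an $a>0$ with $\mu^{-a}\in L^1(V)$, and Lemma \ref{suffi-condition} directly yields the required $V_w$ and $C_w$.

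For $w\in S$ the hypothesis conveys no integrability in any neighborhood of $w$, so Lemma \ref{suffi-condition} is unusable; the plan is to transfer the pointwise control across $S$ by a one-variable maximum-principle argument. I choose a complex line $L\subset\mathbb{C}^n$ through $w$ with $L\not\subset S$ (available because $S$ is a proper analytic subset, hence of codimension at least one), and a disk $\Delta\subset L$ centered at $w$ of sufficiently small radius so that $\overline\Delta\subset X$ and $\overline\Delta\cap S\subset\{w\}$; the latter is possible because $L\cap S$ is discrete near $w$ once $L\not\subset S$. Then $\partial\Delta$ is a compact subset of $X\setminus S$, and the preceding case supplies, for each $\zeta\in\partial\Delta$, a neighborhood $V_\zeta$ in $X$ and a constant $C_\zeta$ with the evaluation bound on $V_\zeta$. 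Extracting a finite subcover $V_{\zeta_1},\ldots,V_{\zeta_N}$ of $\partial\Delta$ and taking $\epsilon>0$ small enough that, for every $w'\in B(w,\epsilon)$, the translated closed disk $\overline\Delta+(w'-w)$ still lies in $X$ and the translated circle $\partial\Delta+(w'-w)$ still lies in $\bigcup_i V_{\zeta_i}$, I apply the one-variable maximum principle to $f$ restricted to the holomorphic disk $\overline\Delta+(w'-w)\subset L+(w'-w)$ to obtain
\[
|f(w')|\le \sup_{\partial\Delta+(w'-w)}|f|\le \bigl(\max_{1\le i\le N}C_{\zeta_i}\bigr)\|f\|_\mu,
\]
so $V_w:=B(w,\epsilon)$ and $C_w:=\max_i C_{\zeta_i}$ serve as valid data.

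The principal obstacle is precisely the case $w\in S$, since the integrability assumption furnishes no information across $S$, forcing the argument to exploit the holomorphy of $f$ itself via the one-dimensional maximum principle on a complex line meeting $S$ only at the isolated point $w$. Two additional subtleties need attention: one must handle a whole neighborhood of $w$ in $X$ rather than just the point, which is accomplished by translating a single fixed disk (keeping its boundary inside a fixed finite cover of $\partial\Delta$) rather than choosing a fresh disk for each $w'$, as the latter would preclude a uniform constant; and one must ensure that the local bounds from Lemma \ref{suffi-condition} at the various points of $\partial\Delta$ aggregate to a uniform one, which is handled by the standard compactness / finite-subcover argument on $\partial\Delta$.
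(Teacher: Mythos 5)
Your proof is correct. Since the paper only quotes this lemma from \cite{GY-Hardy and product} and reproduces no argument, there is no in-paper proof to compare against; your two-step scheme --- Lemma \ref{suffi-condition} giving locally uniform evaluation bounds at points off $S$, then the one-variable maximum principle on a fixed translated analytic disc whose boundary circle stays inside a finite union of such neighborhoods to cross $S$ --- is exactly the natural way the cited statement is proved, and each step (existence of a complex line through $w$ whose germ at $w$ is not contained in $S$, discreteness of $L\cap S$ near $w$, the finite subcover of $\partial\Delta$ with a uniform $\epsilon$ for the translations, and the reduction of admissibility to condition $(3)$ of Lemma \ref{e-c}) checks out. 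Two minor remarks: ``$L\not\subset S$'' should be read as ``the germ of $L$ at $w$ is not contained in $S$'' (this is what makes $L\cap S$ discrete near $w$, and it is what your codimension argument actually provides); and the statement implicitly requires $S$ to be a proper (nowhere dense) analytic subset, as you assume --- if a component of $X$ lay in $S$ the hypothesis would be vacuous there and the conclusion could fail --- but this is a defect of the statement as quoted, not of your argument.
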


	Let $X_1$, $X_2$ be two domains of $\mathbb{C}^n$ and $\mathbb{C}^m$ respectively, and $\varphi_1,$ $\varphi_2$ be admissible weights on $X_1,$ $X_2$ respectively. 
	\begin{Lemma}
		$\varphi_1\varphi_2$ is an admissible weight on $X_1\times X_2$. 
	\end{Lemma}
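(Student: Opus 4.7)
The plan is to verify condition $(3)$ of Lemma \ref{e-c} for $\mu := \varphi_1\varphi_2$ on $X_1\times X_2$, since that lemma gives the equivalence with admissibility. That $\mu\in W(X_1\times X_2)$ is immediate: measurability follows from the separate measurability of $\varphi_1,\varphi_2$, and a.e.\ positivity follows by Fubini applied to the zero set.

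Fix $(x_1,x_2)\in X_1\times X_2$. Using the admissibility of $\varphi_j$ on $X_j$, first produce neighborhoods $V_{x_j}$ of $x_j$ and constants $C_j>0$ such that $|g(w_j)|\le C_j\,\|g\|_{\varphi_j}$ for every $w_j\in V_{x_j}$ and every $g\in A^2(X_j,\varphi_j)$. Now take $f\in A^2(X_1\times X_2,\mu)$. Fubini gives
\[
\int_{X_2}\Bigl(\int_{X_1}|f(z_1,z_2)|^2\varphi_1(z_1)\,dV_{X_1}(z_1)\Bigr)\varphi_2(z_2)\,dV_{X_2}(z_2)=\|f\|_\mu^2<+\infty,
\]
so $f(\cdot,z_2)\in A^2(X_1,\varphi_1)$ for a.e.\ $z_2\in X_2$. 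For any fixed $w_1\in V_{x_1}$, the pointwise estimate for $\varphi_1$ yields
\[
|f(w_1,z_2)|^2\le C_1^2\int_{X_1}|f(z_1,z_2)|^2\varphi_1(z_1)\,dV_{X_1}(z_1)\quad\text{for a.e. }z_2\in X_2.
\]
Integrating against $\varphi_2\,dV_{X_2}$ and applying Fubini once more gives $\|f(w_1,\cdot)\|_{\varphi_2}\le C_1\|f\|_\mu$, so the holomorphic slice $f(w_1,\cdot)$ belongs to $A^2(X_2,\varphi_2)$. Invoking the pointwise estimate for $\varphi_2$ at $x_2$ now produces
\[
|f(w_1,w_2)|\le C_2\,\|f(w_1,\cdot)\|_{\varphi_2}\le C_1C_2\,\|f\|_\mu
\]
for every $(w_1,w_2)\in V_{x_1}\times V_{x_2}$, which is exactly condition $(3)$ of Lemma \ref{e-c}; by that lemma $\mu$ is an admissible weight.

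The one step that needs care is the Fubini bookkeeping: the pointwise estimate at $w_1$ only holds a priori for a.e.\ $z_2$, so we must first absorb this a.e.\ inequality into an $L^2$-bound in the second variable, and only then apply the admissibility of $\varphi_2$ to a slice that we now know is genuinely in $A^2(X_2,\varphi_2)$. Once this local pointwise bound is established, the closedness of $A^2(X_1\times X_2,\mu)$ in $L^2(X_1\times X_2,\mu)$—the other half of the definition of an admissible weight—is automatic from locally uniform convergence of holomorphic functions, so no separate argument is needed.
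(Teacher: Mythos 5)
Your proof is correct and follows essentially the same route as the paper's: Fubini plus the evaluation-functional characterization of admissibility (Lemma \ref{e-c}), applied successively in each variable. The only cosmetic difference is that the paper combines the two a.e.\ slice estimates and then upgrades from a.e.\ to everywhere using continuity of $|f|$, whereas you integrate the first estimate so that the slice $f(w_1,\cdot)$ lands honestly in $A^2(X_2,\varphi_2)$ before applying the second estimate --- a slightly cleaner handling of the same bookkeeping.
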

	\begin{proof}
		By Fubini's theorem, we know that if $f(z_1,z_2)\in A^2(X_1\times X_2,\varphi_1\varphi_2)$, then for almost all $z_2\in X_2$, $|f(z_1,z_2)|^2$ is integrable on $X_1$. Thus, for almost all $z_2\in X_2$, it follows from  Lemma \ref{e-c} that for any open subset $Y_1\subset\subset X_1$, there exists $C_{Y_1}$ such that
		$$
		\sup\limits_{z_1\in Y_1}|f(z_1,z_2)|^2\leq C_{Y_1}\int_{X_1}|f(z_1,z_2)|^2\varphi_1d\lambda_1(z_1).
		$$
		Similarly, for any open subset $Y_2\subset\subset X_2$ and almost all $z_1\in X_1$, there exists $C_{Y_2}$ such that
		$$
		\sup\limits_{z_2\in Y_2}|f(z_1,z_2)|^2\leq C_{Y_2}\int_{X_2}|f(z_1,z_2)|^2\varphi_2d\lambda_2(z_2).
		$$
		Combining with the above two inequalities, we obtain that 
		\begin{equation}
			\label{eq:0816a}|f(z_1,z_2)|^2\leq C_{Y_1}C_{Y_2}\int_{X_1\times X_2}|f(z_1,z_2)|^2\varphi_1\varphi_2d\lambda_1(z_1)d\lambda_2(z_2)
		\end{equation}
		holds for almost all $(z_1,z_2)\in Y_1\times Y_2$.
		As $|f(z_1,z_2)|$ is continous on $X_1\times X_2$, we know that  inequality \eqref{eq:0816a} holds for all $(z_1,z_2)\in Y_1\times Y_2$.
		Hence, by  Lemma \ref{e-c}, we know that $\varphi_1\varphi_2$ is an admissible weight on $X_1\times X_2$.   
	\end{proof}

	The following lemma says that the product of bases of $A^2(X_1,\varphi_1)$ and $A^2(X_2,\varphi_2)$  makes up of a basis of $A^2(X_1\times X_2,\varphi_1\varphi_2)$. 
	
	\begin{Lemma}[see \cite{boundary-minimal-concavity}]
		\label{A-composite-basis}
		Let $\left\{f_i(z)\right\}_{i\in\mathbb{N}}$ and $\left\{g_j(w)\right\}_{j\in\mathbb{N}}$ be the complete orthonormal basis of $A^2(X_1,\varphi_1)$ and $A^2(X_2,\varphi_2)$ respectively. Then $\left\{f_i(z)g_j(w)\right\}_{i,j\in\mathbb{N}}$ is a complete orthonormal basis of $A^2(X_1\times X_2,\varphi_1\varphi_2)$.     
	\end{Lemma}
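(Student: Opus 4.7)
The lemma has two parts: orthonormality of $\{f_ig_j\}$ and completeness. The plan is to verify orthonormality directly by Fubini, then establish completeness by a standard ``slice and expand'' argument, where the one-dimensional obstacle is showing that the partial inner products define holomorphic, $L^2$ functions on $X_1$.

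First, I would note that for any $i,j,k,l$,
\begin{equation*}
\langle f_ig_j, f_kg_l\rangle_{\varphi_1\varphi_2}
=\int_{X_1\times X_2}f_i\overline{f_k}\,g_j\overline{g_l}\,\varphi_1\varphi_2\,d\lambda_1d\lambda_2
=\delta_{ik}\delta_{jl},
\end{equation*}
by Fubini applied to the nonnegative integrand $|f_if_k g_jg_l|\varphi_1\varphi_2$ (which is finite by Cauchy--Schwarz and the fact that $f_i,f_k\in A^2(X_1,\varphi_1)$ and $g_j,g_l\in A^2(X_2,\varphi_2)$). Hence $\{f_ig_j\}_{i,j\in\mathbb N}$ is orthonormal.

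For completeness, take $h\in A^2(X_1\times X_2,\varphi_1\varphi_2)$ with $\langle h, f_ig_j\rangle=0$ for every $i,j$, and define the partial inner product
\begin{equation*}
c_j(z_1):=\int_{X_2}h(z_1,z_2)\overline{g_j(z_2)}\varphi_2(z_2)\,d\lambda_2(z_2),
\end{equation*}
which exists for almost every $z_1\in X_1$ by Fubini. The main technical step, and the principal obstacle, is to verify that $c_j\in A^2(X_1,\varphi_1)$. For holomorphy I would use Morera's theorem combined with Fubini: for any small closed triangle $T\subset X_1$, $\int_T c_j(z_1)\,dz_1=\int_{X_2}\big(\int_T h(z_1,z_2)\,dz_1\big)\overline{g_j(z_2)}\varphi_2\,d\lambda_2=0$, since $h(\cdot,z_2)$ is holomorphic for almost every $z_2$ (after passing to a representative that is holomorphic everywhere by the mean value property, using the local bound from admissibility of $\varphi_1\varphi_2$). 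The interchange is justified by the pointwise bound $|h(z_1,z_2)\overline{g_j(z_2)}|\le C_T|h(z_1,z_2)|\,|g_j(z_2)|$ on $T\times X_2$ and Cauchy--Schwarz. For the $L^2$ bound, Bessel's inequality applied to $h(z_1,\cdot)$ in $A^2(X_2,\varphi_2)$ gives $\sum_j|c_j(z_1)|^2\le \int_{X_2}|h(z_1,z_2)|^2\varphi_2\,d\lambda_2$, so integrating against $\varphi_1$ yields $\sum_j\|c_j\|_{\varphi_1}^2\le\|h\|_{\varphi_1\varphi_2}^2<+\infty$.

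Finally, the orthogonality assumption reads $0=\langle h,f_ig_j\rangle=\int_{X_1}c_j(z_1)\overline{f_i(z_1)}\varphi_1(z_1)\,d\lambda_1$ for every $i$, so by completeness of $\{f_i\}$ in $A^2(X_1,\varphi_1)$ we get $c_j\equiv 0$ for each $j$. Consequently, for almost every $z_1\in X_1$ the slice $h(z_1,\cdot)\in A^2(X_2,\varphi_2)$ is orthogonal to every $g_j$, hence vanishes identically by completeness of $\{g_j\}$; continuity of $h$ in $(z_1,z_2)$ then forces $h\equiv 0$ on $X_1\times X_2$. This proves completeness, and together with the orthonormality above establishes the lemma.
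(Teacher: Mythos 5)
Your proof is correct, and the paper itself only cites this lemma from the literature without reproducing a proof; your slice-and-expand argument (orthonormality by Fubini, then showing the partial inner products $c_j$ are holomorphic and square-integrable, and killing them by completeness of $\{f_i\}$ and then $\{g_j\}$) is exactly the strategy the paper uses for its analogous results, Lemma \ref{basis-comp} and Lemma \ref{S-U product}. The only cosmetic remark is that when $X_1\subset\mathbb{C}^n$ with $n>1$ the Morera step should be applied in each variable separately and combined with Hartogs' theorem, as the paper does in Claim $\#2$ of Lemma \ref{basis-comp}.
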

	
	Let $X$ be an open set of $\mathbb{C}^n$, $\varphi_0$ be  an admissible weight on $X$. Denote  $B_{X,\varphi_0}(\zeta,\overline{z})$ the weighted Bergman kernel on $X$ with the weight $\varphi_0$ if
	$$f(z)=\int_{ X}f(\zeta)\overline{B_{X,\varphi_0}(\zeta,\overline{z})}\varphi_0(\zeta)d\zeta$$ for any  $f\in A^2(X,\varphi_0)$.
	When $\zeta=z$, $B_{X,\varphi_0}(z)$ denotes $B_{X,\varphi_0}(z,\overline{z})$ for simplicity.
	
	In the following, we recall a decomposition formula for the Bergman kernel on $X_1\times X_2$, for which we give a proof since we could not track down  a reference in the literature.
	\begin{Lemma}\label{Le-719}
		\begin{equation}\label{3:E8}
			B_{X_1\times X_2,\varphi_1\varphi_2}((\zeta,u),(\overline{z},\overline{w}))=B_{X_1,\varphi_1}(\zeta,\overline{z})B_{X_2,\varphi_2}(u,\overline{w})
		\end{equation}
		holds for $(\zeta,u),(z,w)\in X_1\times X_2$.
	\end{Lemma}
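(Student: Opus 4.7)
The plan is to reduce the identity to a basis computation via Lemma \ref{A-composite-basis}. First, fix complete orthonormal bases $\{f_i\}_{i\in\mathbb{N}}$ of $A^2(X_1,\varphi_1)$ and $\{g_j\}_{j\in\mathbb{N}}$ of $A^2(X_2,\varphi_2)$. By Lemma \ref{A-composite-basis}, the family $\{f_i(z)g_j(w)\}_{i,j\in\mathbb{N}}$ is a complete orthonormal basis of $A^2(X_1\times X_2,\varphi_1\varphi_2)$. Since $\varphi_1$, $\varphi_2$, and $\varphi_1\varphi_2$ are all admissible weights, each of the three relevant spaces is a reproducing kernel Hilbert space and its kernel admits the standard orthonormal basis expansion.

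Next, I would write out these kernel expansions explicitly. For the factor spaces:
\begin{equation*}
B_{X_1,\varphi_1}(\zeta,\overline{z})=\sum_{i}f_i(\zeta)\overline{f_i(z)},\qquad B_{X_2,\varphi_2}(u,\overline{w})=\sum_{j}g_j(u)\overline{g_j(w)},
\end{equation*}
and for the product space:
\begin{equation*}
B_{X_1\times X_2,\varphi_1\varphi_2}((\zeta,u),(\overline{z},\overline{w}))=\sum_{i,j}f_i(\zeta)g_j(u)\overline{f_i(z)g_j(w)}.
\end{equation*}
The desired equality \eqref{3:E8} then follows by regrouping the double series as a product of the two single series. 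This regrouping is the place where one needs some care: I would justify it by the absolute convergence of $\sum_i|f_i(\zeta)|^2=B_{X_1,\varphi_1}(\zeta,\overline{\zeta})<\infty$ and the analogous sum for $X_2$, combined with the Cauchy--Schwarz inequality, which gives absolute convergence of the double series and hence permits Fubini-type reindexing.

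The only mildly nontrivial step is verifying that the standard basis expansion $B(\zeta,\bar z)=\sum_i f_i(\zeta)\overline{f_i(z)}$ really represents the reproducing kernel in the convention used here, where $\overline{B_{X,\varphi_0}(\zeta,\overline{z})}$ appears in the integrand; a quick calculation shows it does, since for $f\in A^2(X,\varphi_0)$ one has $\int f(\zeta)\overline{\sum_i f_i(\zeta)\overline{f_i(z)}}\varphi_0(\zeta)d\zeta=\sum_i f_i(z)\langle f,f_i\rangle = f(z)$. No serious obstacle is anticipated, and the whole argument should fit in a few lines once the basis expansions are in place.
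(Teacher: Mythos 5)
Your argument is correct, but it takes a different route from the paper. The paper does not use the basis expansion at all: it verifies directly that the candidate kernel $B_{X_1,\varphi_1}(\zeta,\overline{z})B_{X_2,\varphi_2}(u,\overline{w})$ reproduces every $f\in A^2(X_1\times X_2,\varphi_1\varphi_2)$ via an iterated integral, and concludes by the uniqueness of the reproducing kernel. The bulk of the paper's proof is devoted to the preliminary claim that the slices $f(\zeta,\cdot)$ and $f(\cdot,z_2)$ lie in $A^2(X_2,\varphi_2)$ and $A^2(X_1,\varphi_1)$ respectively for \emph{every} (not merely almost every) point, which is needed to apply the one-variable reproducing property inside the inner integral; this is where the admissibility of $\varphi_1$ and $\varphi_2$ enters. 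Your proof instead leans on Lemma \ref{A-composite-basis} together with the standard expansion $B(\zeta,\overline{z})=\sum_i f_i(\zeta)\overline{f_i(z)}$, and your justification of the regrouping of the double series (Bessel giving $\sum_i|f_i(\zeta)|^2=B_{X_1,\varphi_1}(\zeta,\overline{\zeta})<\infty$, then Cauchy--Schwarz for absolute convergence) is sound; your check that the expansion matches the paper's conjugation convention is also the right thing to verify. What each approach buys: yours is shorter and cleaner \emph{given} that Lemma \ref{A-composite-basis} is already available in the paper, while the paper's is self-contained at this point and avoids invoking the completeness of the product basis (whose proof in the cited reference involves essentially the same slicewise considerations). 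Both are valid proofs of the lemma.
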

	\begin{proof}
		For every $f\in A^2(X_1\times X_2,\varphi_1\varphi_2)$,  let $g(\zeta)=\int_{ X_2}|f(\zeta,z_2)|^2{\varphi_2(z_2)}d\lambda_2(z_2)$, which is finite for almost all $\zeta\in X_1$ by Fubini's theorem.
		We claim that
		\begin{equation}\label{2:E33}
			g(\zeta)<+\infty
		\end{equation}
		holds for all $\zeta\in X_1$, i.e., $f(\zeta,\cdot)\in A^2(X_2,\varphi_2)$ for all $\zeta\in X_1$. 
		
		Firstly, $\int_{X_1}|f(z_1,z_2)|^2\varphi_1(z_1)d\lambda(z_1)<+\infty$ holds for a.e. $z_2\in X_2$ by Fubini's theorem. On the other hand, for $\varphi_1(z_1)$ is an admissible weight, by   Lemma \ref{e-c},  there exists $C_{Y_1}$, independent of $f$ and $z_2$, such that
		\begin{equation}
			\sup\limits_{z_1\in Y_1} |f(z_1,z_2)|^2\leq C_{Y_1}\int_{ X_1}|f(\zeta,z_2)|^2\varphi_1(\zeta)d\lambda_1(\zeta),\nonumber
		\end{equation}
		where $Y_1$ is a compact subset of $X_1$. Thus,
		$$\int_{X_2}|f(z_1,z_2)|^2\varphi_2(z_2)d\lambda(z_2)\leq C_{Y_1}\int_{ X_1\times X_2}|f(\zeta,z_2)|^2\varphi_1(\zeta)\varphi_2(z_2)d\lambda(\zeta)d\lambda(z_2)<+\infty, $$
		where $z_1\in Y_1$.
		Hence, inequality (\ref{2:E33}) holds for all $\zeta\in X_1$. 
		Similarly, we get 
		\[\int_{X_1}|f(z_1,z_2)|^2{\varphi_1(z_1)}d\lambda(z_1)<+\infty\]
		holds for all $z_2\in X_2$, i.e., $f(\cdot,z_2)\in A^2(X_1,\varphi_1)$ for all $z_2\in X_2$.
		Then we obtain equality (\ref{3:E8})
		because of the equality
		\begin{equation}
			\begin{aligned}
				&\int_{ X_1\times X_2}f(\zeta,u)\overline{B_{X_1,\varphi_1}(\zeta,\overline{z})B_{X_2,\varphi_2}(u,\overline{w})}{(\varphi_1(\zeta)\varphi_2(u))}d\lambda(\zeta) d\lambda(u)\\
				&=\int_{ X_1}\overline{B_{X_1,\varphi_1}(\zeta,\overline{z})}{\varphi_1(\zeta)}d\lambda(\zeta)\int_{ X_2}f(\zeta,u)\overline{B_{X_2,\varphi_2}(u,\overline{w})}{\varphi_2(u)}d\lambda(u)\\
				&=\int_{ X_1}f(\zeta,w)\overline{B_{X_1,\varphi_1}(\zeta,\overline{z})}{\varphi_1(\zeta)}d\lambda(\zeta)\\
				&=f(z,w).
			\end{aligned}
			\nonumber
		\end{equation}  
	\end{proof}

	\subsection{Some results about the space $H^2_\theta(D\times U, \partial D\times U)$}
	Firstly, we recall some notations. Let
	$D$ be a planar regular region with  finite  boundary components,  which are analytic Jordan curves. Let $U$ be a domian in $\mathbb{C}^m$.  let
	$\lambda({z})$ be a positive and continuous function on $\partial D$, and $\gamma(u)$   be  an admissible weight on $U$. Let $\theta(z,u)=\lambda({z})\gamma(u)$ on $\partial D\times U$.

	Let $H^2_\theta(D\times U, \partial D\times U)$ denote the space defined as the set of all  holomorphic functions $f(z,u)\in\mathscr{F}$  with finite norms
	\begin{equation}\nonumber
		\parallel f\parallel_{\partial D\times U,\theta}^2=\frac{1}{2\pi}\int_{\partial D\times U}|f(z,u)|^2\theta(z,u)|dz|dV_{U},
	\end{equation} 
	where  $dV_U$ is the Lebesgue measure on $U$ and $\mathscr{F}$ denotes the set of all  holomorphic functions $f(z,u)$ on $D\times U$, which satisfy
	$f(\cdot,u)\in H^2(D)$ for every fixed $u\in U$.

	For $f\in H^2_\theta(D\times U, \partial D\times U)$,  let $W_f(u)=\frac{1}{2\pi}\int_{\partial D}|f(z,u)|^2\lambda({z})|dz|$.
	\begin{Lemma}\label{2L:5}
		For any compact subset $K$ of $ U$,
		there is a positive constant ${C(K)}$, independent of $f$, such that
		\begin{equation}\label{2:E5}
			\sup\limits_{u\in K} W_f(u)\leq {C(K)}\parallel f\parallel^2_{\partial D\times U,\theta}.
		\end{equation}
	\end{Lemma}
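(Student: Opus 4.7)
The plan is to reduce the weight $\lambda$ on $\partial D$ to the normal-derivative weight $\lambda_0(z) := \partial G_D(z,t_0)/\partial\nu_z$ for some fixed $t_0 \in D$, move the boundary integral to the interior via an exhaustion of $D$ by smooth subdomains, and then exploit that for each fixed interior $z$ the function $u \mapsto f(z,u)$ is holomorphic on $U$, so that the admissibility of $\gamma$ applies directly to it.

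First, both $\lambda$ and $\lambda_0$ are positive and continuous on the compact set $\partial D$, so $c_1\lambda_0 \le \lambda \le c_2\lambda_0$ for some $c_1,c_2>0$. Hence $W_f$ is comparable to $\tilde W_f(u) := \|f(\cdot,u)\|_{H^2(D)}^2$, and it suffices to bound $\tilde W_f$. Next, pick an exhaustion $\{D_k\}$ of $D$ by smooth subdomains with $t_0 \in D_1$, and set $\tilde W_{f,k}(u) := \frac{1}{2\pi}\int_{\partial D_k} |f(z,u)|^2 \frac{\partial G_{D_k}(z,t_0)}{\partial\nu_z}|dz|$. The argument in the proof of Lemma~\ref{P1}, together with Remark~\ref{least}, shows that $\tilde W_{f,k}(u) \nearrow \tilde W_f(u)$ for every $u \in U$.

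For each $z \in D$, the slice $u \mapsto f(z,u)$ is holomorphic on $U$, and Lemma~\ref{1:C1} gives $|f(z,u)|^2 \le M_z^2 \tilde W_f(u)$ for some constant $M_z$. Integrating against $\gamma$ and using $\int_U \tilde W_f\,\gamma\,dV_U \le c_1^{-1}\|f\|^2_{\partial D\times U,\theta} < \infty$ shows $f(z,\cdot)\in A^2(U,\gamma)$. Lemma~\ref{e-c} then supplies a constant $C_K>0$, depending only on $K$ and $\gamma$ (in particular not on $z$), with
\begin{equation*}
\sup_{u\in K}|f(z,u)|^2 \le C_K \int_U |f(z,u)|^2 \gamma(u)\,dV_U \qquad \text{for every } z \in D.
\end{equation*}

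Finally, for each $k$, pushing the sup over $u\in K$ inside the boundary integral and combining the above with Tonelli's theorem yields
\begin{equation*}
\sup_{u\in K}\tilde W_{f,k}(u) \le C_K\int_U \tilde W_{f,k}(u)\,\gamma(u)\,dV_U \le C_K c_1^{-1}\|f\|^2_{\partial D\times U,\theta}.
\end{equation*}
Since the right-hand side is independent of $k$ and $\tilde W_{f,k}(u)\nearrow \tilde W_f(u)$ pointwise, one concludes $\sup_{u\in K}\tilde W_f(u) \le C_K c_1^{-1}\|f\|^2_{\partial D\times U,\theta}$, and the comparison $W_f \le c_2\tilde W_f$ finishes the proof. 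The only subtle point is that the constant $C_K$ coming from admissibility must be uniform in $z\in D$; this is built into Lemma~\ref{e-c}, whose constant depends only on the target compact $K$ and the ambient weight, not on the particular holomorphic function evaluated there.
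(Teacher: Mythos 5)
Your proposal is correct and follows essentially the same route as the paper: compare $\lambda$ with $\partial G_D(\cdot,t)/\partial\nu_z$ on the compact boundary, exhaust $D$ by smooth subdomains so that the truncated boundary integrals increase to the $H^2$ norm, verify $f(z,\cdot)\in A^2(U,\gamma)$ for each $z$, and then apply the admissibility of $\gamma$ (Lemma \ref{e-c}) with a constant uniform in $z$ before passing to the limit in $k$. The only cosmetic difference is that you invoke Lemma \ref{1:C1} to get $f(z,\cdot)\in A^2(U,\gamma)$, while the paper uses the representation \eqref{1:E12} with Cauchy--Schwarz; both are equally valid.
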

	\begin{proof}
		Let $t\in D,$ and  $Q(t,u):=\frac{1}{2\pi}\int_{\partial D}|f(\zeta,u)|^2\frac{\partial G_{D}(\zeta,t)}{\partial\nu_\zeta}|d\zeta|$. As $f\in H^2_\theta(D\times U, \partial D\times U)$, we have $Q(t,u)<+\infty$ for any $ u\in U$.  For fixed $t\in D$, let $D_{k}=\left\{z\in D|G_{D}(z,t)\leq \log(1-\frac{1}{k})\right\}$ with smooth boundary $\partial D_{k}$, which is  an increasing sequence of domains  with respect to $k$  such that  $D=\bigcup_k D_{k}$.  It is well known that $G_{D_{k}}(\cdot,t)=G_{D}(\cdot,t)-\log(1-\frac{1}{k})$ is the Green function on $D_{k}$.
		
		By equalities (\ref{norm}), (\ref{1:E3}) and Lemma \ref{1:L4} $(b)$, we know that
		\begin{equation}\label{eq:0826a}
			Q(t,u)=\lim\limits_{k\rightarrow\infty}\frac{1}{2\pi}\int_{\partial D_k}|f(z,u)|^2\frac{\partial G_{D_k}(z,t)}{\partial\nu_z}|dz|.
		\end{equation}
		Denote 
		$$Q_k(t,u):=\frac{1}{2\pi}\int_{\partial D_k}|f(z,u)|^2\frac{\partial G_{D_k}(z,t)}{\partial\nu_z}|dz|.$$ Then $Q_k(t,u)$ increasingly tends to $Q(t,u)$ as $k\rightarrow\infty$ by equality \eqref{eq:0826a} and the subharmonicity of $|f(z,u)|^2$ with respect to $z$.

		By equality (\ref{1:E12}), we get
		\[f(z,u)=\frac{1}{2\pi}\int_{\partial D}f(\zeta,u)\frac{\partial G_D(\zeta,z)}{\partial \nu_\zeta}|d\zeta|.\]
		Then it follows from  the Cauchy-Schwarz inequality that
		\begin{equation}\nonumber
			|f(z,u)|^2\leq \frac{1}{2\pi}\int_{\partial D}|f(\zeta,u)|^2\frac{\partial G_D(\zeta,z)}{\partial \nu_\zeta}|d\zeta|
		\end{equation}
		holds. Thus, we obtain
		\begin{equation}\nonumber
			\begin{aligned}
				&\int_{ U}|f(z,u)|^2\gamma(u)dV_U\\
				&\leq \int_{ U}\frac{1}{2\pi}\int_{\partial D}|f(\zeta,u)|^2\frac{\partial G_D(\zeta,z)}{\partial \nu_\zeta}|d\zeta|\gamma(u)dV_U\\
				&=\frac{1}{2\pi}\int_{\partial D\times U}|f(z,u)|^2\frac{\partial G_{D}(z,t)}{\partial\nu_z}{\gamma(u)}|dz|dV_{U}\\
				&<+\infty,
			\end{aligned}
		\end{equation}    
		i.e., $f(z,\cdot)\in A^2(U,\gamma)$ for any $z\in D$. On the other hand, $\gamma$ is an admissible weight,
		so by Lemma \ref{e-c} $(2)$, for any compact subset $K\subset U$, we have
		\begin{equation}\label{evalue-ineq}
			|f(z,w)|^2\leq C_K\int_{ U}|f(z,u)|^2\gamma(u)dV_U,\qquad w\in K,
		\end{equation}
		where $C_K$ is a constant independent of $z\in D$ and $f$.
		Inequality (\ref{evalue-ineq}) implies that        \begin{equation}\nonumber
			Q_k(t,w)=\frac{1}{2\pi}\int_{\partial D_k}|f(z,w)|^2\frac{\partial G_{D_k}(z,t)}{\partial\nu_z}|dz|\leq C_K\int_{ U}Q_k(t,u)\gamma(u)dV_U \qquad w\in K. 
		\end{equation}
		Let $k\rightarrow \infty$, and it follows from the monotone convergence theorem that
		\begin{equation}\label{2:E18}
			Q(t,w)\leq C_K \frac{1}{2\pi}\int_{\partial D\times U}|f(z,u)|^2\frac{\partial G_{D}(z,t)}{\partial\nu_z}{\gamma(u)}|dz|dV_{U}\quad w\in K.
		\end{equation}

		Since $(\partial G_D/\partial \nu_z)(z,t)$ and $\lambda(\zeta)$ are positive and continuous on $\partial D$,  it follows from the compactness of $\partial D$ that there exist positive $m$ and $M$ such that
		\begin{equation}\label{2:E05}
			m\leq\frac{(\partial G_D/\partial \nu_z)(z,t)}{\lambda(\zeta)}\leq M. 
		\end{equation}  Thus,
		Combining with  inequalities (\ref{2:E18}) and (\ref{2:E05}), we obtain that
		inequality (\ref{2:E5}) holds with ${C(K)}=\frac{MC_K}{m}$. 
		
		Thus, Lemma \ref{2L:5} holds.
	\end{proof}

	Using Lemma \ref{2L:5}, we have the following convergence result.    
	
	\begin{Corollary}\label{2:C1}
		If $\left\{f_n\right\}$ is a Cauchy sequence in $ H^2_\theta(D\times U, \partial D\times U)$, then for every fixed $u\in U$, there exists a function $f(\cdot,u)\in H^2(D)$ such that $f_n(\cdot,u)\rightarrow f(\cdot,u)$ uniformly on every compact subset of $D$ and
		\begin{equation}\nonumber
			\lim\limits_{k\rightarrow\infty}\parallel f_k(\cdot,u)-f(\cdot,u)\parallel_{H^2(D)}=0.
		\end{equation} 
	\end{Corollary}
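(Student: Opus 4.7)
The approach is to leverage Lemma \ref{2L:5} together with the completeness of the classical Hardy space $H^2(D)$ from Lemma \ref{L1:4}, combined with the uniform comparability of the boundary weight $\lambda(z)$ and the Green-function derivative $\partial G_D(z,t)/\partial\nu_z$ on the compact boundary $\partial D$ that was already observed in the proof of Lemma \ref{2L:5}.

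The plan is as follows. First, I fix any $u_0\in U$ and choose a compact neighborhood $K\subset U$ containing $u_0$. Applying Lemma \ref{2L:5} to the differences $f_n-f_m$, which lie in $H^2_\theta(D\times U,\partial D\times U)$ by linearity, yields
\[
W_{f_n-f_m}(u_0)\;\leq\;\sup_{v\in K}W_{f_n-f_m}(v)\;\leq\;C(K)\,\|f_n-f_m\|_{\partial D\times U,\theta}^{2},
\]
and the right-hand side tends to $0$ as $m,n\to\infty$ by the Cauchy hypothesis. Next, fixing $t\in D$ and using the uniform inequality $\partial G_D(z,t)/\partial\nu_z \leq M\,\lambda(z)$ on $\partial D$ from \eqref{2:E05}, this translates to
\[
\|f_n(\cdot,u_0)-f_m(\cdot,u_0)\|_{H^2(D)}^{2}\;\leq\;M\,W_{f_n-f_m}(u_0)\;\longrightarrow\;0,
\]
so $\{f_n(\cdot,u_0)\}$ is a Cauchy sequence in $H^2(D)$.

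Since $H^2(D)$ is a Banach space by Lemma \ref{L1:4}, the sequence has a limit $f(\cdot,u_0)\in H^2(D)$ with $\|f_n(\cdot,u_0)-f(\cdot,u_0)\|_{H^2(D)}\to 0$. Finally, Lemma \ref{1:C1}(b) promotes this $H^2(D)$-norm convergence to uniform convergence on every compact subset of $D$, yielding both conclusions of the corollary simultaneously. There is no genuine obstacle here: the argument is a routine wiring-together of Lemma \ref{2L:5}, the uniform equivalence of $\lambda$ and $\partial G_D/\partial\nu_z$ on $\partial D$, and the completeness of $H^2(D)$, with the only subtle point being the need to pass from boundedness in the $\theta$-weighted norm on $\partial D\times U$ to uniform control in $u$ over a compact set, which is precisely the content of the preceding lemma.
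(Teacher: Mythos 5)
Your proposal is correct and follows essentially the same route as the paper: apply Lemma \ref{2L:5} to $f_n-f_m$, use the two-sided comparison \eqref{2:E05} between $\lambda$ and $\partial G_D/\partial\nu_z$ to transfer the bound to the $H^2(D)$-norm, and then invoke completeness of $H^2(D)$ (Lemma \ref{L1:4}) together with Lemma \ref{1:C1}(b). Your version even records the comparison constant in the correct direction, whereas the paper's displayed bound has a harmless typo ($\tfrac{1}{M}S(u)$ in place of $MS(u)$).
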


	\begin{proof}
		Let $S(u):=\frac{1}{2\pi}\int_{\partial D}|f_n(z,u)-f_m(z,u)|^2\lambda(z)|dz|$. By Lemma \ref{2L:5} and inequality (\ref{2:E05}), then we have 
		\[
		\sup\limits_{u\in K}S(u)\leq{C(K)}\parallel f_n-f_m\parallel^2_{\partial D\times U,\theta}\] 
		and
		\begin{equation*}
			\parallel f_n(\cdot,u)-f_m(\cdot,u)\parallel^2_{H^2(D)}\leq\frac{1}{M}S(u)\leq \frac{C(K)}{M}\parallel f_n-f_m\parallel^2_{\partial D\times U,\theta}.
		\end{equation*}
		So $\left\{f_n(\cdot,u)\right\}$ is a Cauchy sequence in $H^2(D)$ for every fixed $u$. Following from Lemma \ref{L1:4} and Lemma \ref{1:C1} $(b)$, there exists a function $f(\cdot,u)\in H^2(D)$ such that $f_n(\cdot,u)\rightarrow f(\cdot,u)$ uniformly on every compact subset of $D$ and
		$
		\lim\limits_{k\rightarrow\infty}\parallel f_k(\cdot,u)-f(\cdot,u)\parallel_{H^2(D)}=0.
		$
		The proof is finished.
	\end{proof}

	For any $(z,u)\in D\times U$ and $f\in H^2_\theta(D\times U, \partial D\times U)$, let $e_{(z,u)}$ be an evaluation functional from $ H^2_\theta(D\times U, \partial D\times U)$ to $\mathbb{C}$, defined as $$e_{(z,u)}(f):=f(z,u).$$
	
	Let $\pi$ be the nature projection from $V\times U$ to $V$, and $\pi_2$ be the nature projection from $V\times U$ to $U$, where $V$ is a neighborhood of $\overline{D}$. Now we prove that the evaluation functional is bounded.
	
	\begin{Lemma}
		\label{2:L3}
		For any compact subset K of $D\times U$ and $f\in H^2_\theta(D\times U, \partial D\times U)$, there exists a positive constant  $M(K)$, independent of $f$, such that  
		\begin{equation}\label{2:E4}
			|f(z,u)|\leqslant {M}(K)\parallel f\parallel_{\partial D\times U,\theta}
		\end{equation}
		for any $(z,u)\in K.$
		In particular,  $e_{(z,u)}$ is a bounded functional on $H^2_\theta(D\times U, \partial D\times U)$. 
		
		\begin{proof}
			It follows from the Lemma \ref{1:C1} $(a)$ and Lemma \ref{2L:5} that
			\begin{align*}
				&\int_{\partial D\times U }|f(\zeta,u)|^2\lambda(\zeta)\gamma(u)|d\zeta|dV_{U}\\
				&\geqslant C_1\int_{\partial D}|f(\zeta,u)|^2\lambda(\zeta)|d\zeta| \\
				&\geqslant  C_1m_1\int_{\partial D}|f(\zeta,u)|^2\frac{\partial G_D(\zeta,u)}{\partial\nu_{\zeta}}|d\zeta|\\
				&\geqslant 2\pi C_1m_1 C_{2}|f(z,w)|^2
			\end{align*}
			holds
			{for any} $(z,w)\in K$, {where }$m_1=\inf_{\partial D}\frac{\lambda(\zeta)}{\frac{\partial G_D(\zeta,u)}{\partial\nu_{\zeta}}}$, $C_1$ and $C_2$ are constants  independent of $f$.
			Thus, inequality \eqref{2:E4} holds with $M(K)=\sqrt{2\pi C_1m_1 C_{2}}$. 
		\end{proof}
	\end{Lemma}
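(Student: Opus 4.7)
The plan is to factor the estimate through the two directions $z$ and $u$ separately, combining the one-variable Hardy space pointwise bound (Lemma \ref{1:C1}(a)) with the boundary-integral estimate already proved in Lemma \ref{2L:5}. For a compact set $K \Subset D \times U$, I will set $K_1 := \pi(K) \Subset D$ and $K_2 := \pi_2(K) \Subset U$, so that $K \subset K_1 \times K_2$ and it suffices to prove the inequality for $(z,u) \in K_1 \times K_2$.

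For each fixed $u \in U$, the section $f(\cdot,u)$ lies in $H^2(D)$ by the definition of $\mathscr{F}$. Fixing some base point $t \in D$ (used in the definition of the norm $\|\cdot\|_{H^2(D)}$), Lemma \ref{1:C1}(a) provides a constant $C_1 = M(D,K_1,t)$, independent of $f$ and $u$, such that
\begin{equation*}
|f(z,u)|^2 \le C_1^2 \, \|f(\cdot,u)\|_{H^2(D)}^2 = \frac{C_1^2}{2\pi} \int_{\partial D} |f(\zeta,u)|^2 \frac{\partial G_D(\zeta,t)}{\partial \nu_\zeta} |d\zeta|
\end{equation*}
for all $z \in K_1$, using the boundary representation of the $H^2$-norm from Lemma \ref{1:L4}(b).

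Next, since $\partial D$ is compact and both $\lambda(\zeta)$ and $\partial G_D(\zeta,t)/\partial \nu_\zeta$ are positive continuous functions on $\partial D$, their ratio is squeezed between positive constants $m_1, M_1$. This lets me replace the Green-function weight by $\lambda$ at the cost of a multiplicative constant, yielding $|f(z,u)|^2 \le (C_1^2 M_1/m_1) \cdot W_f(u)$ for $(z,u) \in K_1 \times U$, where $W_f(u) = (2\pi)^{-1}\int_{\partial D} |f(\zeta,u)|^2 \lambda(\zeta)|d\zeta|$ as in Lemma \ref{2L:5}. Applying that lemma to $K_2 \Subset U$ furnishes a constant $C(K_2)$ with $\sup_{u \in K_2} W_f(u) \le C(K_2) \|f\|^2_{\partial D \times U,\theta}$. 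Chaining the two estimates gives the claim with $M(K)^2 := (C_1^2 M_1 C(K_2))/m_1$, and the boundedness of the evaluation functional $e_{(z,u)}$ follows immediately.

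I do not anticipate any real obstacle: the argument is a straightforward chaining of previously established lemmas, and the only point requiring minor attention is that the base point $t$ used implicitly in the $H^2(D)$ norm and in the Green-function weight must be fixed once and for all so that the constants $C_1$, $m_1$, $M_1$ depend only on $D$, $K_1$, $\lambda$ and $t$, not on $u$ or $f$.
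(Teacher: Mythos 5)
Your proposal is correct and follows essentially the same route as the paper: it chains Lemma \ref{1:C1}(a) (the pointwise evaluation bound in $H^2(D)$ applied fiberwise), the comparability of $\lambda(\zeta)$ and $\partial G_D(\zeta,t)/\partial\nu_\zeta$ on the compact boundary $\partial D$, and Lemma \ref{2L:5} (the uniform bound on $W_f(u)$ over compact subsets of $U$). Your write-up is in fact slightly more careful than the paper's, since you fix the base point $t\in D$ explicitly so that all constants are independent of $u$ and $f$.
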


	Lemma \ref{2:L3} implies the following result.
	\begin{Corollary}
		\label{bound-evaluation}
		If $\left\{f_n(z,u)\right\}$ is a Cauchy sequence in $H^2_\theta(D\times U, \partial D\times U)$, then the sequence converges uniformly on any compact subset of $D\times U$, whose limit  is holomorphic on $D\times U$. 
	\end{Corollary}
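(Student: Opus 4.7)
The plan is to deduce the corollary directly from the evaluation bound in Lemma~\ref{2:L3}. Since $H^2_\theta(D\times U,\partial D\times U)$ is a linear subspace (the pointwise sum of holomorphic functions is holomorphic, and $\|\cdot\|_{\partial D\times U,\theta}$ satisfies the triangle inequality by Minkowski), for every $n,m$ the difference $f_n-f_m$ again lies in $H^2_\theta(D\times U,\partial D\times U)$. Applying inequality \eqref{2:E4} to $f_n-f_m$, for any compact $K\subset D\times U$ and any $(z,u)\in K$,
\[
|f_n(z,u)-f_m(z,u)|\le M(K)\,\|f_n-f_m\|_{\partial D\times U,\theta},
\]
with the constant $M(K)$ independent of $n,m$. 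Because $\{f_n\}$ is Cauchy in the norm, the right-hand side tends to $0$ as $n,m\to\infty$, so $\{f_n\}$ is uniformly Cauchy on every compact subset of $D\times U$.

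Next I would extract the limit. For each compact $K\subset D\times U$, uniform completeness of continuous $\mathbb{C}$-valued functions produces a continuous limit $f_K$ on $K$; since the sequence itself is independent of $K$, these local limits agree on overlaps and glue to a single continuous function $f$ on $D\times U$ with $f_n\to f$ uniformly on compact subsets.

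Finally I would verify holomorphy of $f$. Each $f_n$ is holomorphic on the domain $D\times U\subset\mathbb{C}^{1+m}$ (holomorphy in $z$ is built into the definition of $\mathscr{F}$ via membership in $H^2(D)$, while holomorphy in $u$ is given). Locally uniform limits of holomorphic functions on a domain in $\mathbb{C}^{1+m}$ are holomorphic, by the several-variable Weierstrass convergence theorem (apply Cauchy's integral formula in each variable, or equivalently apply Morera together with Hartogs' theorem on separate analyticity). Therefore $f\in\mathcal{O}(D\times U)$, completing the proof.

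There is no genuine obstacle here: the argument is a standard consequence of the reproducing-type estimate in Lemma~\ref{2:L3}, and the only point requiring a moment's care is that the bound $M(K)$ in that lemma is uniform over the whole space $H^2_\theta(D\times U,\partial D\times U)$, which is exactly what is asserted, so it applies to the differences $f_n-f_m$ with the same constant.
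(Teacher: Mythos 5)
Your argument is correct and is essentially the paper's own proof: the paper likewise applies the evaluation bound of Lemma~\ref{2:L3} to the differences $f_n-f_m$ to get uniform convergence on compact subsets, and then invokes the standard fact that a locally uniform limit of holomorphic functions is holomorphic. You have simply spelled out the routine details that the paper leaves implicit.
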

	\begin{proof}
		By Lemma \ref{2:L3}, applying inequality (\ref{2:E4}) with $f_n-f_m$, we know that $\left\{f_m\right\}$ converges uniformly  on each compact subset of $D\times U$, whose limit  is holomorphic on $D\times U$. The proof is finished.
	\end{proof}

	The following proposition shows that  $H^2_\theta(D\times U, \partial D\times U)$ is a Hilbert space.

	\begin{Proposition}\label{2:L4}
		$H^2_\theta(D\times U, \partial D\times U)$ is a Hilbert space with the following inner product:
		\begin{equation*}
			\ll f,g\gg_{\partial D\times U,\theta} 
			:\;=\frac{1}{2\pi}\int_{\partial D\times U}f(z,u)\overline{g(z,u)}\theta(z,u)|dz|dV_U.
		\end{equation*}
	\end{Proposition}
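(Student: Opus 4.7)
The plan is to verify the two remaining properties needed for a Hilbert space: that $\ll\cdot,\cdot\gg_{\partial D\times U,\theta}$ is genuinely an inner product, and that $H^2_\theta(D\times U,\partial D\times U)$ is complete under the induced norm. Sesquilinearity and conjugate symmetry are immediate from the definition. For positive definiteness, I would note that if $\|f\|_{\partial D\times U,\theta}=0$, then Lemma \ref{2:L3} yields $|f(z,u)|\le M(K)\|f\|_{\partial D\times U,\theta}=0$ on every compact $K\subset D\times U$, so $f\equiv 0$ on $D\times U$.

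For completeness, let $\{f_n\}$ be a Cauchy sequence in $H^2_\theta(D\times U,\partial D\times U)$. By Corollary \ref{bound-evaluation}, $\{f_n\}$ converges uniformly on every compact subset of $D\times U$ to some holomorphic function $f\in\mathcal{O}(D\times U)$. Corollary \ref{2:C1} moreover guarantees that for each fixed $u\in U$ the sequence $f_n(\cdot,u)$ converges in $H^2(D)$ to a function in $H^2(D)$; by Lemma \ref{1:C1}(b) this $H^2(D)$-limit coincides with $f(\cdot,u)$. Hence $f(\cdot,u)\in H^2(D)$ for every $u\in U$, that is, $f\in\mathscr{F}$.

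It remains to show that $f\in H^2_\theta(D\times U,\partial D\times U)$ and that $\|f_n-f\|_{\partial D\times U,\theta}\to 0$. Using the isometry of Lemma \ref{1:L4}(b) together with the two-sided estimate \eqref{2:E05} comparing $\lambda(z)$ and $\partial G_D(z,t)/\partial\nu_z$, the $H^2(D)$-convergence $f_m(\cdot,u)\to f(\cdot,u)$ yields
\[
\lim_{m\to\infty}\frac{1}{2\pi}\int_{\partial D}|f_m(z,u)-f_n(z,u)|^2\lambda(z)\,|dz|=\frac{1}{2\pi}\int_{\partial D}|f(z,u)-f_n(z,u)|^2\lambda(z)\,|dz|
\]
for every fixed $u\in U$ and every $n$. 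Applying Fatou's lemma with respect to the measure $\gamma(u)\,dV_U$ on $U$ gives
\[
\|f-f_n\|_{\partial D\times U,\theta}^{2}\le\liminf_{m\to\infty}\|f_m-f_n\|_{\partial D\times U,\theta}^{2}.
\]
Given $\varepsilon>0$, the right-hand side is at most $\varepsilon$ once $n$ is large enough by the Cauchy condition. This shows simultaneously that $f-f_n\in H^2_\theta(D\times U,\partial D\times U)$ (and hence so does $f$) and that $\|f_n-f\|_{\partial D\times U,\theta}\to 0$.

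The main delicate point is the Fatou step: it requires knowing that the $L^2(\partial D,\lambda|dz|)$-limit of the boundary values of $f_m(\cdot,u)-f_n(\cdot,u)$ as $m\to\infty$ agrees with the boundary values of $f(\cdot,u)-f_n(\cdot,u)$, which is exactly the content of the isometric identification of $H^2(D)$ with $H^2(\partial D)\subset L^2(\partial D)$ furnished by Lemma \ref{1:L4}(b). Everything else is bookkeeping.
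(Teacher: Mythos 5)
Your proof is correct and follows essentially the same route as the paper: Corollary \ref{bound-evaluation} and Corollary \ref{2:C1} produce the candidate limit $f\in\mathscr{F}$, and Fatou's lemma on $U$ (with the fiberwise $L^2(\partial D,\lambda|dz|)$ convergence supplied by Lemma \ref{1:L4}(b) and the two-sided comparison \eqref{2:E05}) upgrades the fiberwise convergence to convergence in $\|\cdot\|_{\partial D\times U,\theta}$. Your single application of Fatou to $|f_m-f_n|^2$ combined with the Cauchy condition is a cleaner packaging of the paper's two-step argument (which first shows $\int_U V(u)\gamma\,dV_U<+\infty$ via a Cauchy--Schwarz estimate and then runs a separate Fatou argument with the dominating functions $2(V_m(u)+V(u))-g_m(u)$), and it delivers $f\in H^2_\theta(D\times U,\partial D\times U)$ and $\|f_n-f\|_{\partial D\times U,\theta}\rightarrow 0$ simultaneously.
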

	\begin{proof}It suffices to prove the any  Cauchy sequence in $H^2_\theta(D\times U, \partial D\times U)$ has  a limit.      Let $\left\{f_m\right\}$ be a  Cauchy sequence in $H^2_\theta(D\times U, \partial D\times U)$. Then  by Corollary \ref{bound-evaluation}, $\left\{f_m\right\}$ converges uniformly on any compact subset of $D\times U$ to a holomorphic function $f$  on $D\times U$. On the other hand, it follows from Corollary \ref{2:C1} that, for any $u\in U$, there exists an  $f(\cdot,u)\in H^2(D)$ such that $\left\{f_m(\cdot,u)\right\}$ converges uniformly to $f(\cdot,u)$ on any compact subset  of $D$.  Hence, $f$ is holomorphic on $D\times U$ and $f(\cdot,u)\in H^2(D)$, i.e., $f\in\mathscr{F}.$ 
		
		For the present, we show that $f\in H^2_\theta(D\times U, \partial D\times U)$ and $\parallel f_m-f\parallel_{\partial D\times U,\theta}\rightarrow 0$ as $m\rightarrow \infty $, which implies $H^2_\theta(D\times U, \partial D\times U)$ is a Hilbert space. It suffices to prove  
		\begin{equation*}
			\parallel f\parallel_{\partial D\times U,\theta}<+\infty
		\end{equation*}
		and
		$$\parallel f-f_m\parallel_{\partial D\times U,\theta}\rightarrow 0\;\mbox{as}\;m\rightarrow \infty.$$
		
		Let $$V_m(u)=\int_{ \partial D}|f_m(z,u)|^2\lambda(z)|dz|.$$  
		Then  it follows from Lemma \ref{2L:5} that, for any compact subset $K$ of $ U$, there exists $C(K)$ such that
		\begin{equation}\nonumber
			\sup\limits_{u\in K}V_m(u)\leq {C(K)}\parallel f_m\parallel_{\partial D\times U,\theta}^{2}
		\end{equation}
		for any $m$. 
		As $\left\{f_m\right\}$ is a Cauchy sequence in $H^2_\theta(D\times U, \partial D\times U)$, there is a positive constant $C$ such that 
		\begin{equation}\nonumber
			\parallel f_m\parallel_{\partial D\times U,\theta}^2\leq C.
		\end{equation}    
		Let 
		\[V(u)=\frac{1}{2\pi}\int_{\partial D}|f(\zeta,u)|^2\lambda(\zeta)|d\zeta|.\]
		By Corollary \ref{2:C1}, we have
		$
		\lim\limits_{m\rightarrow\infty}V_m(u)=V(u).
		$
		
		By Fatou's lemma and Cauchy-Schwarz inequality, we have
		\begin{equation}\label{key-ine}
			\begin{aligned}
				&\int_{ U}|V(u)-V_n(u)|{\gamma(u)}dV_U\\
				&\leq\liminf\limits_{m\rightarrow\infty}\int_{ U}|V_m(u)-V_n(u)|{\gamma(u)}dV_U\\
				&\leq\liminf\limits_{m\rightarrow\infty}\int_{ U}\int_{ \partial D}\big||f_m(z,u)|^2-|f_n(z,u)|^2\big|\lambda(z)|dz|{\gamma(u)}dV_U\\
				&=\liminf\limits_{m\rightarrow\infty}\int_{ \partial D\times U}|f_m(z,u)-f_n(z,u)|(|f_m(z,u)|+|f_n(z,u)|)\lambda(z){\gamma(u)}|dz|dV_U\\
				&\leq\liminf\limits_{m\rightarrow\infty}\bigg(\int_{ \partial D\times U}|d|^2\lambda(z){\gamma(u)}|dz|dV_U\bigg)^{1/2}
				\bigg(\int_{ \partial D\times U}a^2\lambda(z){\gamma(u)}|dz|dV_U\bigg)^{1/2} \\
				&\leq\liminf\limits_{m\rightarrow\infty}\parallel f_m-f_n\parallel_{\partial D\times U,\theta}\bigg(\int_{ U}2(V_m(u)+V_n(u)){\gamma(u)}dV_U\bigg)^{1/2}\\
				&\leq2\sqrt{C}\liminf\limits_{m\rightarrow\infty}\parallel f_m-f_n\parallel_{\partial D\times U,\theta}\\
				&<+\infty, 
			\end{aligned}
		\end{equation}
		where $a=|f_n(z,u)|+|f_m(z,u)|$ and $d=f_n(z,u)-f_m(z,u)$. 
		Then it follows from inequality (\ref{key-ine}) that
		\begin{equation}\label{2:E34}
			\parallel f\parallel_{\partial D\times U,\theta}^2=\int_{ U}V(u)\gamma(u)dV_U=\lim\limits_{m\rightarrow\infty}\int_{ U}V_m(u)\gamma(u)dV_U<+\infty.
		\end{equation}
		
		Let $g_m(u)= \int_{\partial D}|f(z,u)-f_m(z,u)|^2\lambda(z)|dz|$. Then we know
		$$\lim\limits_{m\rightarrow\infty}g_m(u)=0$$ by Corollary \ref{2:C1}, 
		and 
		\begin{equation}\label{control}
			\begin{aligned}
				g_m(u)&= \int_{\partial D}|f(z,u)-f_m(z,u)|^2\lambda(z)|dz| \\
				&\leq 2\int_{\partial D}(|f(z,u)|^2+|f_m(z,u)|^2)\lambda(z)|dz|\\
				&= 2(V_m(u)+V(u)).
			\end{aligned}
		\end{equation}
		
		Let $h_m(u)=2(V_m(u)+V(u))-g_m(u)$, and we have $h_m(u)\geq 0$ by inequality (\ref{control}). Then by Fautou's Lemma, we obtain
		\begin{equation}
			\int_{ U}\liminf\limits_{m\rightarrow\infty}h_m(u)\gamma(u)dV_U\leq\liminf\limits_{m\rightarrow\infty}\int_{ U}h_m(u)\gamma(u)dV_U,\nonumber
		\end{equation}
		i.e.,
		\begin{equation}
			4\int_{ U}V(u)\gamma(u)dV_U\leq\liminf\limits_{m\rightarrow\infty}\int_{ U}2(V_m(u)+V(u))\gamma(u)-g_m(u)\gamma(u)dV_U.\nonumber
		\end{equation}
		By equality (\ref{2:E34}), we have
		\begin{equation*}
			\limsup\limits_{m\rightarrow\infty}\int_{ U}g_m(u)\gamma(u)dV_U=0,
		\end{equation*}
		i.e., $$\lim_{m\rightarrow\infty}\parallel f-f_m\parallel_{\partial D\times U,\theta}=0.$$
		The proof is finished.  
	\end{proof}

	Now, let us define a kernel function     $K_{\partial D\times U,\theta}$  on $D\times U$.

	By Lemma \ref{2:L3}, we know that the evaluation functional is bounded.
	Proposition \ref{2:L4} tells us  that $H^2_\theta(D\times U, \partial D\times U)$ is a Hilbert space with the inner product $\ll \cdot,\cdot\gg_{\partial D\times U,\theta}$. 
	Hence, by  Riesz Representation Theorem, for every $(z,w)\in D\times U$, there is a unique function $K_{\partial D\times U,\theta}((\cdot,\cdot),(\overline{z},\overline{w}))\in H^2_\theta(D\times U, \partial D\times U)$, with the reproducing property
	\begin{equation}
		f(z,w)=\frac{1}{2\pi}\int_{\partial D\times U,}f(\zeta,u)\overline{K_{\partial D\times U,\theta}((\zeta,u),(\overline{z},\overline{w}))}\theta(\zeta,u)|d\zeta|dV_U
		\nonumber 
	\end{equation}
	for $f\in H^2_\theta(D\times U, \partial D\times U)$.

	\subsection{Proof of Theorem \ref{main1-theorem}} In this section, we prove Theorem \ref{main1-theorem}.    
	
	We call  $K_{ D,\lambda}(z,\overline w)$ (see \cite{nehari}) the weighted Szeg\"o kernel if 
	$$f(w)=\frac{1}{2\pi}\int_{\partial D}f(z)\overline{K_{D,\lambda}(z,\overline w)}\lambda(z)|dz|$$
	holds for any $f\in H^2(D)$.
	
	For the present, we give two decompostition formula.
	
	Choosing any $f\in H^2_\theta(D\times U, \partial D\times U)$, we have  $f(\cdot,u)\in H^2(D)$,  thus 
	$$f(w,u)=\frac{1}{2\pi}\int_{\partial D}f(z,u)\overline{K_{D,\lambda}(z,\overline{w})}\lambda(z)|dz|.$$
	For fixed $z\in D$, $(\partial G_D/\partial \nu_\zeta)(\zeta,z)$ and $\lambda(\zeta)$ are positive and continuous on $\partial D$. Then it follows from the compactness of $\partial D$ that there exists a positive constant  $M$ such that 
	\[\frac{1}{M}\leq\frac{(\partial G_D/\partial \nu_\zeta)(\zeta,z)}{\lambda(\zeta)}\leq M.\] 
	By equality (\ref{1:E12}), we get
	\[f(z,u)=\frac{1}{2\pi}\int_{\partial D}f(\zeta,u)\frac{\partial G_D(\zeta,z)}{\partial \nu_\zeta}|d\zeta|.\]
	Then it follows from  the Cauchy-Schwarz inequality that
	\begin{equation*}
		|f(z,u)|^2\leq \frac{1}{2\pi}\int_{\partial D}|f(\zeta,u)|^2\frac{\partial G_D(\zeta,z)}{\partial \nu_\zeta}|d\zeta|.
	\end{equation*}
	Thus, we obtain
	\begin{equation*}
		\begin{aligned}
			&\int_{ U}|f(z,u)|^2\gamma(u)dV_U\\
			&\leq \int_{ U}\frac{1}{2\pi}\int_{\partial D}|f(\zeta,u)|^2\frac{\partial G_D(\zeta,z)}{\partial \nu_\zeta}|d\zeta|\gamma(u)dV_U\\
			&\leq M\int_{ U}\frac{1}{2\pi}\int_{\partial D}|f(\zeta,u)|^2\lambda(\zeta)|d\zeta|\gamma(u)dV_U\\
			&=M\parallel f\parallel^2_{\partial D\times U,\theta}\\
			&<+\infty,
		\end{aligned}
	\end{equation*}   
	i.e., $f(z,\cdot)\in A^2(U,\gamma)$ for any $z\in D$. 
	Then
	\begin{equation}\label{3:E4}
		K_{\partial D\times U,\theta}((\zeta,u),(\overline{z},\overline{w}))=K_{D,\lambda}(\zeta,\overline{z})B_{U,\gamma}(u,\overline{w}) 
	\end{equation}
	because of the following equality
	\begin{equation}       \begin{aligned}
			&\frac{1}{2\pi}\int_{\partial D\times U}f(\zeta,u)\overline{K_{D,\lambda}(\zeta,\overline{z})B_{U,\gamma}(u,\overline{w})}\lambda(\zeta)\gamma(u) d|\zeta|dV_U\\
			&=\int_{ U}\overline{B_{U,\gamma}(u,\overline{w})}\gamma(u)dV_U\frac{1}{2\pi}\int_{\partial D}f(\zeta,u)\overline{K_{D,\lambda}(\zeta,\overline{z})}\lambda(\zeta)|d\zeta|\\
			&=\int_{ U}f(z,u)\overline{B_{U,\gamma}(u,\overline{w})}\gamma(u)dV_U\\
			&=f(z,w)\\
			&=\frac{1}{2\pi}\int_{\partial D\times U}f(\zeta,u)\overline{K_{D\times U,\theta}((\zeta,u),(\overline{z},\overline{w}))}\lambda(\zeta)\gamma(u)|d\zeta|dV_U.
		\end{aligned}
		\nonumber
	\end{equation} 
	By Lemma \ref{Le-719}, we have 
	\begin{equation}\label{eq:0826b}
		\begin{aligned}
			&B_{D\times U,\eta}((z_0,u),(\overline{z_0},\overline{u}))=B_{D,\rho}(z_0)B_{U,\gamma}(u). 
		\end{aligned}
	\end{equation}
	
	Note that $B_{D\times U,\eta}((z_0,u),(\overline{z_0},\overline{u}))>0$. Following from equalities \eqref{3:E4} and \eqref{eq:0826b},
	Theorem \ref{main theorem} implies Theorem \ref{main1-theorem}.

	\section{Hardy spaces on $M\times U$ over $\partial M\times U$}\label{sec:3}
	In this section, we  give some properties about the space  $H^2_\kappa(M\times U, \partial M\times U)$ and  prove Theorem \ref{main2} and Theorem \ref{key-Theorem14}.
	
	\subsection{Some results about the space $H^2_\kappa(M\times U, \partial M\times U)$}
	Let $M:=\prod_{j=1}^{n}D_j$, where $D_j$ is a planar regular region with finite boundary components which are analytic Jordan curves (see \cite{saitoh,yamada}). Let $U$ be a domain in $\mathbb{C}^m$. Recall that $H^2(D_j)$ denotes the Hardy space on $D_j$ and there exists a  linear map 
	$$\gamma_j: H^2(D_j)\rightarrow L^2(\partial D_j)$$ and $\gamma_j(f)$ denotes the nontangential boundary values of $f$ a.e. on $\partial D_j$
	for any $f\in H^2(D_j)$.    
	
	Set $\kappa(z,u)=\rho(z)\times\gamma(u),$ where $\gamma(u)$ is an admissible weight on $U$, and $\rho(z)$ is a positive Lebesgue measurable function on $\partial M$ such that $\inf_{\partial M}\rho>0$. 
	Let $M_j=\prod_{1\le l\le n,l\not=j}D_l.$ The definitions of $H_\rho^2(M,\partial D_j\times M_j)$, $H_\rho^2(M,\partial M)$, $P_{\partial M}$ and $H^2_\kappa(M\times U, \partial M\times U)$ can be seen in Section \ref{sec:1.2}.

	We recall a lemma about $H^2_\rho(M,\partial M)$.
	
	\begin{Lemma}[see \cite{GY-Hardy and product}]
		\label{lower-evaluation}
		For any compact subset $K$ of $M$, there exists a positive constant $C_K$ such that
		\begin{equation}
			|f^*(z)| \leq C_K ||f||_{\partial M,\rho}
			\nonumber
		\end{equation}
		holds for any $z\in K$ and $f\in H^2_\rho(M,\partial M).$
	\end{Lemma}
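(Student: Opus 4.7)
The plan is to combine the one-variable Hardy-space point-evaluation bound (Lemma~\ref{1:C1}) in one ``vertical'' variable $z_j$ with a mean-value (plurisubharmonic) average in the remaining ``horizontal'' variables, and then to exploit $\inf_{\partial M}\rho>0$ to dominate the resulting integral by the $j$-th summand of $\|f\|^2_{\partial M,\rho}$. Since any compact $K\subset M$ is contained in a finite union of closed product boxes $K_1\times\cdots\times K_n$ with $K_j\Subset D_j$, it suffices to prove the estimate for a single such box.

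Fix one box and take $j=1$ for concreteness. Since $f^*\in\mathcal{O}(M)$, for every fixed $z_1\in D_1$ the function $\hat u\mapsto|f^*(z_1,\hat u)|^2$ is plurisubharmonic on $M_1$. Choose $r>0$ small enough that every polydisc $\Delta^{n-1}(\hat w,r)$ centered at a point of $K_2\times\cdots\times K_n$ lies in a fixed compact set $K''\Subset M_1$; the polydisc mean-value inequality yields, uniformly in $\hat w\in K_2\times\cdots\times K_n$,
\begin{equation*}
|f^*(z_1,\hat w)|^2 \le \frac{1}{v}\int_{K''}|f^*(z_1,\hat u)|^2\,d\lambda(\hat u), \qquad v:=\operatorname{vol}\Delta^{n-1}(0,r)>0.
\end{equation*}

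For each fixed $\hat u\in M_1$ the slice $f^*(\cdot,\hat u)$ belongs to $H^2(D_1)$, so Lemma~\ref{1:C1} applied to the compact set $K_1\Subset D_1$ supplies a constant $C_1=M(D_1,K_1,t_1)^2$, independent of $\hat u$ and of $f$, with
\begin{equation*}
\sup_{z_1\in K_1}|f^*(z_1,\hat u)|^2 \le C_1\cdot\frac{1}{2\pi}\int_{\partial D_1}|\gamma_1(f^*)(\zeta_1,\hat u)|^2\frac{\partial G_{D_1}(\zeta_1,t_1)}{\partial\nu_{\zeta_1}}|d\zeta_1|.
\end{equation*}
Positivity and continuity of $\partial G_{D_1}/\partial\nu$ on $\partial D_1$ allow this factor to be absorbed into a constant $C_2$. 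Substituting into the previous display, enlarging $K''$ to $M_1$, and using $\inf_{\partial M}\rho=:c>0$ gives
\begin{equation*}
|f^*(z_1,\hat w)|^2 \le \frac{C_2}{vc}\cdot\frac{1}{2\pi}\int_{M_1}\int_{\partial D_1}|\gamma_1(f^*)(\zeta_1,\hat u)|^2\rho(\zeta_1,\hat u)\,|d\zeta_1|\,d\mu_1(\hat u)\le \frac{C_2}{vc}\|f\|^2_{\partial M,\rho},
\end{equation*}
which is the claimed estimate with $C_K^2:=C_2/(vc)$.

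The main subtlety is that the boundary-value slice $\gamma_1(f^*)(\zeta_1,\cdot)$ is only measurable in the transverse variables and is not holomorphic there, so one cannot apply subharmonicity directly to the boundary values. The workaround is to take the mean-value average of the \emph{interior} modulus $|f^*(z_1,\cdot)|^2$ first (where plurisubharmonicity is at hand) and only afterwards pass slice-by-slice to the boundary via the one-variable $H^2(D_1)$ estimate; everything else is a bookkeeping use of Tonelli's theorem on nonnegative integrands.
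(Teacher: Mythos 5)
Your argument is correct, and in fact the paper itself offers no proof to compare against: Lemma \ref{lower-evaluation} is simply quoted from \cite{GY-Hardy and product}. Your route — apply the one-variable evaluation bound of Lemma \ref{1:C1}$(a)$ (together with the boundary representation of the $H^2(D_1)$-norm from Lemma \ref{1:L4}$(b)$ and the boundedness of $\partial G_{D_1}/\partial\nu$ on $\partial D_1$) to the slice $f^*(\cdot,\hat u)\in H^2(D_1)$, after first averaging the interior modulus $|f^*(z_1,\cdot)|^2$ over polydiscs in the transverse variables, and then absorb everything into the $j=1$ summand of $\|f\|^2_{\partial M,\rho}$ using $\inf_{\partial M}\rho>0$ — is sound, and it is consistent with the slice-plus-average technique the present paper uses in its own analogous estimates (Lemmas \ref{2:L3}, \ref{Le-24-1}, \ref{Le-33-1}). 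Two small points you handle correctly but should keep explicit: the definition of $H^2_\rho(M,\partial M)$ gives $f^*(\cdot,\hat u)\in H^2(D_1)$ for \emph{every} $\hat u$, while the identification $\gamma_1\bigl(f^*(\cdot,\hat u)\bigr)=f(\cdot,\hat u)$ on $\partial D_1$ only holds for a.e.\ $\hat u$ (by Fubini from the a.e.\ equality on $\partial D_1\times M_1$), which is exactly enough since you only use it under the $\hat u$-integral; and the reduction to a product box is even simpler than stated, since one may take $K\subset\pi_1(K)\times\cdots\times\pi_n(K)$ with each projection compact in $D_j$.
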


	From now on, assume that $\rho$ is a function on $\partial M$ satisfying that
	$\rho\big|_{\partial D_j\times {M}_j}=g_jh_j$, where $g_j$ is  continuous and positive function on $\partial D_j$ and $h_j$ is a Lebesgue measurable function on ${M}_j$ with a postive lower bound on ${M}_j$.
	
	For $f\in H^2_\kappa(M\times U, \partial M\times U)$, 
	let $$\widetilde{W}_f(u):=\int_{\partial M}|f(\cdot,u)|^2\rho d\mu.$$ 
	
	\begin{Lemma}\label{Le-24-1}
		For any compact subset $K$ of $U$,
		there is a positive constant ${L(K)}$, independent of $f$, such that
		\begin{equation}\label{key-ine1}
			\widetilde{W}_f(u)\leq {L(K)}\parallel f\parallel^2_{\partial M\times U,\kappa}
		\end{equation}
		for any $ u\in K$.
	\end{Lemma}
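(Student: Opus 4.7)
The plan is to mirror the strategy of Lemma~\ref{2L:5}, combining the Hardy-space theory on each factor $D_j$ with the admissibility of $\gamma$ on $U$, after first reducing to a pointwise admissibility bound in $u$ that is uniform in $z \in M$. I first plan to show that $f(z,\cdot) \in A^2(U,\gamma)$ for every $z \in M$: applying Lemma~\ref{lower-evaluation} at the compact singleton $\{z\}$ to the slice $f(\cdot,u) \in H^2_\rho(M,\partial M)$ yields $|f(z,u)|^2 \le C_{\{z\}}^2 \widetilde W_f(u)$, which upon integration against $\gamma\,dV_U$ gives $\int_U |f(z,u)|^2\gamma(u)dV_U \le C_{\{z\}}^2 \parallel f\parallel^2_{\partial M\times U,\kappa} < +\infty$. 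Lemma~\ref{e-c} then furnishes a constant $C'_K$, depending only on $K$ and not on $z$, such that
\begin{equation*}
|f(z,u)|^2 \le C'_K \int_U |f(z,u')|^2 \gamma(u')dV_U\qquad\text{for all }z\in M,\; u\in K.
\end{equation*}

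Next I split $\widetilde W_f(u) = \sum_{j=1}^{n}\widetilde W_{f,j}(u)$ with $\widetilde W_{f,j}(u) := \frac{1}{2\pi}\int_{M_j}\int_{\partial D_j}|\gamma_j(f)|^2\rho\,|dw_j|d\mu_j$ and attack each summand via an interior approximation in the $j$-th factor. Fix $t_j\in D_j$ and choose increasing smooth subdomains $D_{j,k}\uparrow D_j$ with boundaries $C_{j,k}$. By Lemma~\ref{P1} applied to $f(\cdot,\hat w_j,u)\in H^2(D_j)$ and equation~(\ref{1:E3}), the integrals
\begin{equation*}
V_{f,j,k}(u,\hat w_j) := \frac{1}{2\pi}\int_{C_{j,k}} |f(w_j,\hat w_j,u)|^2 \frac{\partial G_{D_{j,k}}(w_j,t_j)}{\partial\nu_{w_j}}|dw_j|
\end{equation*}
increase monotonically to $V_{f,j}(u,\hat w_j) := \frac{1}{2\pi}\int_{\partial D_j}|\gamma_j(f)|^2\frac{\partial G_{D_j}(w_j,t_j)}{\partial\nu_{w_j}}|dw_j|$ as $k\to\infty$. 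Because $C_{j,k}$ lies in the interior of $D_j$, the pointwise bound above applies on $C_{j,k}$; integrating it over $C_{j,k}$ against the harmonic-measure factor, using Fubini on nonnegative integrands, and passing to the limit via monotone convergence produces $V_{f,j}(u,\hat w_j) \le C'_K\int_U V_{f,j}(u',\hat w_j)\gamma(u')dV_U$. The hypothesis $\rho|_{\partial D_j\times M_j}=g_jh_j$ with $g_j$ continuous and positive on the compact $\partial D_j$ yields positive constants $m_j \le g_j(w_j)/(\partial G_{D_j}(w_j,t_j)/\partial\nu_{w_j}) \le M_j$, hence the sandwich $m_j\int_{M_j}V_{f,j}h_j d\mu_j \le \widetilde W_{f,j}(u) \le M_j\int_{M_j}V_{f,j}h_j d\mu_j$. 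Integrating the preceding inequality over $M_j$ against $h_j d\mu_j$ and chaining through both sides of the sandwich gives $\widetilde W_{f,j}(u) \le (M_j/m_j)C'_K\int_U \widetilde W_{f,j}(u')\gamma(u')dV_U$, and summing over $j$ with $\int_U\widetilde W_f(u')\gamma(u')dV_U = \parallel f\parallel^2_{\partial M\times U,\kappa}$ closes the estimate with $L(K) = C'_K \max_j(M_j/m_j)$.

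The main subtlety is that admissibility of $\gamma$ supplies pointwise-in-$u$ bounds only for functions in $A^2(U,\gamma)$, whereas the boundary traces $\gamma_j(f)(\cdot,\hat w_j,u)$ are defined merely a.e.\ on $\partial D_j$ and are not a priori holomorphic in $u$, so the admissibility bound cannot be invoked directly at the boundary. The interior approximation via Lemma~\ref{P1} is precisely what resolves this: it lets me apply the bound on the smooth curves $C_{j,k}\subset D_j$ where $f$ is genuinely holomorphic in both $z$ and $u$, after which monotone convergence transfers the estimate to the boundary. The discrepancy between the $\rho$-measure and the natural Hardy measure $\partial G_{D_j}/\partial\nu_{w_j}$ is handled routinely via their uniform comparability on the compact arc $\partial D_j$, so the interchange of limits and integrations in the passage to the boundary is the one step that needs careful bookkeeping.
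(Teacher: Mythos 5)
Your proposal is correct and follows essentially the same route as the paper's proof: decompose the boundary norm over the factors $\partial D_j\times M_j$, exhaust each $D_j$ by smooth subdomains so that the admissibility bound for $\gamma$ can be applied where $f$ is genuinely holomorphic in $(z,u)$, pass to the boundary by monotone convergence, and absorb the discrepancy between $\rho$ and the harmonic measure $\partial G_{D_j}/\partial\nu_{w_j}$ via their uniform comparability on the compact arcs. Your preliminary verification that $f(z,\cdot)\in A^2(U,\gamma)$ via Lemma \ref{lower-evaluation} is a small addition the paper leaves implicit, but it does not change the argument.
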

	\begin{proof}
		By definition, we have
		$$\int_{\partial M}|f(z,u)|^2\rho d\mu=\sum\limits_{1\leq j\leq n}\frac{1}{2\pi}\int_{M_j}\int_{\partial D_j}|\gamma_j(f(w_j,\hat{w}_j,u))|^2\rho(w_j,\hat{w}_j)|dw_j|d\mu_j(\hat{w}_j)$$
		and
		\begin{equation*}
			\begin{split}
				\parallel f\parallel^2_{\partial M\times U,\kappa}&=\int_{\partial M\times U}|f(w,u)|^2\kappa(w,u)d\mu(w)dV_U\\
				&=\int_U\gamma(u)dV_U\int_{\partial M}|f(w,u)|^2\rho(w)d\mu(w)\\
				&=\int_U\gamma(u)dV_U\sum\limits_{1\leq j\leq n}\frac{1}{2\pi}\int_{M_j}\int_{\partial D_j}|\gamma_j(f(w_j,\hat{w}_j,u))|^2\rho(w_j,\hat{w}_j)|dw_j|d\mu_j(\hat{w}_j)
			\end{split}
		\end{equation*}
		for every $f\in H^2_\kappa(M\times U, \partial M\times U)$.  Thus, we only need to show that there exists  a positive $C(K)$, independent of $f$, such that
		\begin{equation*}
			\begin{split}
				&\int_{M_j}\int_{\partial D_j}|\gamma_j(f(w_j,\hat{w}_j,u))|^2\rho(w_j,\hat{w}_j)|dw_j|d\mu_j(\hat{w}_j)\\
				&\leq C(K)\int_U\gamma(u)dV_U\int_{M_j}\int_{\partial D_j}|\gamma_j(f(w_j,\hat{w}_j,u))|^2\rho(w_j,\hat{w}_j)|dw_j|d\mu_j(\hat{w}_j).
			\end{split}
		\end{equation*}  
		
		Let   $\left\{D_{j,l}\right\}$ be an increasing sequence of domains with smooth boundaries $\partial D_{j,l}$, such that $D_j=\bigcup_l D_{j,l}$. Take  $z_j\in D_{j,1}$ for any $j$. Let 
		$$\widetilde{W}(z_j,\hat{w}_j,u)=\frac{1}{2\pi}\int_{\partial D_j}|\gamma_j(f(w_j,\hat{w}_j,u))|^2\frac{\partial G_{D_j}(w_j,z_j)}{\partial\nu_{w_j}}
		|dw_j|$$
		and $$\widetilde{W}_l(z_j,\hat{w}_j,u)=\frac{1}{2\pi}\int_{\partial D_{j,l}}| f(w_j,\hat{w}_j,u)|^2\frac{\partial G_{D_{j,l}}(w_j,z_j)}{\partial\nu_{w_j}}
		|dw_j|.$$ 
		Then  $\widetilde{W}_l(z_j,\hat{w}_j,u)$ increasingly tends to $\widetilde{W}(z_j,\hat{w}_j,u)$ as ${l\rightarrow+\infty}$ as $|f(z,u)|^2$ is plurisubharmonic with resprect to $z$, where  $G_{D_{j,l}}$ is the Green function on $D_{j,l}$.
		For  any  compact subset $K$  of $U$,  there exists a constant $C_1(K)$, independent of $(w_j,\hat{w}_j)$ and $f$, such that
		$$\sup\limits_{u\in K}|f(w_j,\hat{w}_j,u)|^2\leq C_1(K)\int_U|f(w_j,\hat{w}_j,u)|^2\gamma(u)dV_U$$
		as $\gamma$ is an admissible weight and $f(z,u)\in \mathcal{O}(U)$ for each fixed $z\in M$.
		It follows from monotone convergence theorem that
		\begin{equation*}
			\begin{split}
				&\frac{1}{2\pi}\int_{M_j}\int_{\partial D_j}|\gamma_j(f(w_j,\hat{w}_j,u))|^2\frac{\partial G_{D_j}(w_j,z_j)}{\partial \nu_{w_j}}|dw_j|h_j d\mu_j(\hat{w}_j) \\
				&=\lim\limits_{l\rightarrow+\infty}\int_{M_j}\frac{1}{2\pi}\int_{\partial D_{j,l}}|(f(w_j,\hat{w}_j,u))|^2\frac{\partial G_{D_{j,l}}(w_j,z_j)}{\partial\nu_{w_j}}
				|dw_j|h_jd\mu_j(\hat{w}_j) \\
				&\leq\lim\limits_{l\rightarrow+\infty} C_1(K)\frac{1}{2\pi}\int_U\gamma(u)dV_U\int_{M_j}\int_{\partial D_{j,l}}|f(w_j,\hat{w}_j,u)|^2\frac{\partial G_{D_{j,l}}(w_j,z_j)}{\partial\nu_{w_j}}|dw_j| h_jd\mu_j(\hat{w}_j)\\
				&=C_1(K)\frac{1}{2\pi}\int_U\gamma(u)dV_U\int_{M_j}\int_{\partial D_{j}}|\gamma_j(f(w_j,\hat{w}_j,u))|^2\frac{\partial G_{D_j}(w_j,z_j)}{\partial \nu_{w_j}}|dw_j| h_jd\mu_j(\hat{w}_j).
			\end{split}
		\end{equation*}  
		There exist two positive constants $l_j$ and $L_j$ such that 
		$$l_j\leq\frac{g_j}{\frac{\partial G_{D_j}(\cdot,z_j)}{\partial \nu_\zeta}}\leq L_j$$ on $\partial D_j$.
		Thus, we obtain 
		\begin{equation*}
			\begin{split}
				&\frac{1}{2\pi}\int_{M_j\times \partial D_j}|\gamma_j(f(w_j,\hat{w}_j,u))|^2g_jh_jd\mu_j(\hat{w}_j)\\
				&\leq \frac{L_j}{2\pi}\int_{M_j\times \partial D_j}|\gamma_j(f(w_j,\hat{w}_j,u))|^2\frac{\partial G_{D_j}(w_j,z_j)}{\partial \nu_{w_j}}h_j|dw_j|d\mu_j(\hat{w}_j)\\
				&\leq C_1(K)\frac{L_j}{2\pi}\int_U\gamma(u)dV_U\int_{M_j}\int_{\partial D_{j}}|\gamma_j(f(w_j,\hat{w}_j,u))|^2\frac{\partial G_{D_j}(w_j,z_j)}{\partial \nu_{w_j}}|dw_j| h_jd\mu_j(\hat{w}_j)\\
				&\leq C_1(K)\frac{L_j}{2\pi l_j}\int_U\gamma(u)dV_U\int_{M_j}\int_{\partial D_j}|\gamma_j(f(w_j,\hat{w}_j,u))|^2g_jh_j|dw_j|d\mu_j(\hat{w}_j).
			\end{split}
		\end{equation*}
		Hence, inequality (\ref{key-ine1}) holds.
	\end{proof}

	Using Lemma \ref{Le-24-1}, we give a convergence property.

	\begin{Lemma}\label{key-Coro2}
		Let $\left\{f_n\right\}$ be a Cauchy sequence in $ H^2_\kappa(M\times U, \partial M\times U)$. Then for every fixed $u\in U$, there exists an $f(\cdot,u)\in H^2_{\rho}(M,\partial M)$ such that $f_n(\cdot,u)\rightarrow f^*(\cdot,u)$ uniformly on every compact subset of $M$ and
		\begin{equation}\nonumber
			\lim\limits_{k\rightarrow\infty}\parallel P_{\partial M}^{-1}f_k(\cdot,u)-f(\cdot,u)\parallel_{\partial M,\rho}=0.
		\end{equation}
	\end{Lemma}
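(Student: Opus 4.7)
The plan is to adapt the approach of Corollary \ref{2:C1} to the product setting, using Lemma \ref{Le-24-1} and Lemma \ref{lower-evaluation} as the main tools. Fix $u\in U$. The definition of $H^2_\kappa(M\times U,\partial M\times U)$ ensures that $P_{\partial M}^{-1}(f_n(\cdot,u))$ is a well-defined element of $H^2_\rho(M,\partial M)$ for each $n$ and each $u$, so the quantities appearing in the statement make sense.

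First, I would apply Lemma \ref{Le-24-1} to the difference $f_n-f_m\in H^2_\kappa(M\times U,\partial M\times U)$. Observing that
$$\widetilde{W}_{f_n-f_m}(u)=\|P_{\partial M}^{-1}(f_n(\cdot,u))-P_{\partial M}^{-1}(f_m(\cdot,u))\|^2_{\partial M,\rho},$$
Lemma \ref{Le-24-1} yields, for any compact subset $K\subset U$ and any $u\in K$, the estimate
$$\|P_{\partial M}^{-1}(f_n(\cdot,u))-P_{\partial M}^{-1}(f_m(\cdot,u))\|^2_{\partial M,\rho}\le L(K)\,\|f_n-f_m\|^2_{\partial M\times U,\kappa}.$$
Since $\{f_n\}$ is Cauchy in the $\|\cdot\|_{\partial M\times U,\kappa}$-norm, the right-hand side tends to $0$, so for every fixed $u\in U$ the sequence $\{P_{\partial M}^{-1}(f_n(\cdot,u))\}$ is Cauchy in $H^2_\rho(M,\partial M)$. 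Because $H^2_\rho(M,\partial M)$ is a Hilbert space (as established in \cite{GY-Hardy and product}), there exists a limit $f(\cdot,u)\in H^2_\rho(M,\partial M)$ satisfying
$$\lim_{k\to\infty}\|P_{\partial M}^{-1}(f_k(\cdot,u))-f(\cdot,u)\|_{\partial M,\rho}=0,$$
which is the second conclusion of the lemma.

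To obtain the uniform convergence on compact subsets of $M$, I would invoke Lemma \ref{lower-evaluation} applied to $P_{\partial M}^{-1}(f_k(\cdot,u))-f(\cdot,u)\in H^2_\rho(M,\partial M)$. Writing $f^*(\cdot,u):=P_{\partial M}(f(\cdot,u))$ and recalling that $f_k(\cdot,u)=P_{\partial M}(P_{\partial M}^{-1}(f_k(\cdot,u)))$, the lemma gives, for every compact $K\subset M$, a constant $C_K$ with
$$\sup_{z\in K}|f_k(z,u)-f^*(z,u)|\le C_K\,\|P_{\partial M}^{-1}(f_k(\cdot,u))-f(\cdot,u)\|_{\partial M,\rho},$$
and the right-hand side tends to $0$ by the previous step. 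No step here should present genuine difficulty; the main obstacle is mostly bookkeeping, namely keeping track of the identifications between $f_n(\cdot,u)$ viewed as a holomorphic function on $M$ and $P_{\partial M}^{-1}(f_n(\cdot,u))$ viewed as a boundary-value element of $H^2_\rho(M,\partial M)$, and verifying that the estimate from Lemma \ref{Le-24-1} (stated for one $f$) transfers correctly to the difference $f_n-f_m$ uniformly in the parameter $u$ over compact subsets of $U$.
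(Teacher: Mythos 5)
Your proposal is correct and follows essentially the same route as the paper: apply Lemma \ref{Le-24-1} to the differences $f_n-f_m$ to see that $\{P_{\partial M}^{-1}(f_n(\cdot,u))\}$ is Cauchy in the Hilbert space $H^2_\rho(M,\partial M)$, take the limit there, and then use Lemma \ref{lower-evaluation} to upgrade norm convergence to uniform convergence on compact subsets of $M$. The bookkeeping points you flag are handled in the paper exactly as you describe.
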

	\begin{proof}
		For each compact subset $K\subset U$, it follows from Lemma \ref{Le-24-1} that there exists a constant $L(K)>0$ such that
		$$\sup_{u\in K}\int_{\partial  M}|P_{\partial M}^{-1}(f_n(\cdot,u))-P_{\partial M}^{-1}(f_m(\cdot,u))|^2\rho d\mu\leq L(K)||f_n-f_m||^2_{\partial M\times U,\kappa},$$
		which implies that
		$\left\{P^{-1}_{\partial M}(f_n(\cdot,u))\right\}$ is a Cauchy sequence in $H_\rho^2(M,\partial M)$.
		Note that $H_\rho^2(M,\partial M)$ is a Hilbert space. Then  there exists an $f(\cdot,u)\in H_\rho^2(M,\partial M)$ such that    
		$$\lim\limits_{n\rightarrow+\infty}||P_{\partial M}^{-1}(f_n(\cdot,u)-f(\cdot,u))||_{\partial M,\rho}=0$$
		and it follows from Lemma \ref{lower-evaluation}  that  $f_n(\cdot,u)\rightarrow f^*(\cdot,u)$ uniformly on every compact subset of $M$.
	\end{proof}

	In the following, we show that  for each fixed $(z,u)\in M\times U$, the evaluation map from $ H^2_\kappa(M\times U, \partial M\times U)$ to $\mathbb{C}$ defined by:
	$f\mapsto f(z,u)$    is a bounded linear map. 
	\begin{Lemma}\label{evaluation-ine}
		For every $f\in H^2_\kappa(M\times U, \partial M\times U)$ and $K=K_1\times K_2$, where $K_1$ and $K_2$ are compact subsets of $M$ and $U$ respectively, there exists a constant $C(K)$, independent of $f$, such that
		\begin{equation}\label{key-ine3}
			|f(z,u)| \leq C(K)||f||_{\partial M\times U,\kappa}
		\end{equation}
		for every $(z,u)\in K.$
	\end{Lemma}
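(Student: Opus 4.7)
The plan is to combine the two preceding results of the subsection. Fix $(z,u) \in K_1 \times K_2$. By the defining property of $H^2_\kappa(M\times U, \partial M\times U)$, the function $P^{-1}_{\partial M}(f(\cdot,u))$ lies in $H^2_\rho(M,\partial M)$ for every $u \in U$, and applying $P_{\partial M}$ recovers $f(\cdot,u)$ as a holomorphic function on $M$. Consequently, Lemma \ref{lower-evaluation} applied to this element and the compact set $K_1$ yields a constant $C_{K_1}>0$, independent of $u$ and $f$, such that
$$|f(z,u)|^2 \leq C_{K_1}^2\, \|P^{-1}_{\partial M}(f(\cdot,u))\|^2_{\partial M,\rho} = C_{K_1}^2\, \widetilde{W}_f(u),$$
where the last equality uses that $\widetilde{W}_f(u)$ is precisely the $\partial M,\rho$-norm-squared of the boundary element $P^{-1}_{\partial M}(f(\cdot,u))$ (by the sum-over-$j$ formula recorded in the proof of Lemma \ref{Le-24-1}).

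Next I would invoke Lemma \ref{Le-24-1} to bound $\widetilde{W}_f(u)$ uniformly on the compact set $K_2 \subset U$: there exists $L(K_2)>0$, independent of $f$, such that $\widetilde{W}_f(u)\leq L(K_2)\,\|f\|^2_{\partial M\times U,\kappa}$ for every $u \in K_2$. Combining the two displays yields
$$|f(z,u)| \leq C_{K_1}\, L(K_2)^{1/2}\, \|f\|_{\partial M\times U,\kappa}$$
for every $(z,u)\in K_1\times K_2$, which is inequality \eqref{key-ine3} with $C(K) := C_{K_1}\, L(K_2)^{1/2}$.

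I do not anticipate a real obstacle; this statement is the product-space analogue of Lemma \ref{2:L3}, and both ingredients (an interior estimate for the factor $M$ via Lemma \ref{lower-evaluation}, and a uniform boundary-norm estimate in $u$ via Lemma \ref{Le-24-1}) have already been set up for exactly this purpose. The only care needed is the consistent identification between the holomorphic function $f(\cdot,u)$ on $M$ and its boundary representative $P^{-1}_{\partial M}(f(\cdot,u))$ on $\partial M$, which is built into the definition of the space.
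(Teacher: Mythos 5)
Your proposal is correct and follows exactly the paper's own argument: apply Lemma \ref{lower-evaluation} to $P_{\partial M}^{-1}(f(\cdot,u))\in H^2_\rho(M,\partial M)$ for the interior estimate in $z$, then Lemma \ref{Le-24-1} for the uniform bound of $\widetilde{W}_f(u)$ over $K_2$. The only difference is cosmetic bookkeeping of the constants (your $C_{K_1}^2 L(K_2)$ versus the paper's $C(K_1)L(K_2)$).
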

	\begin{proof}
		As $P_{\partial M}^{-1}(f(\cdot,u))\in H^2_\rho(M,\partial M)$ for fixed $u$, by Lemma \ref{lower-evaluation}, there exists a constant $C(K_1)$ such that
		\begin{equation*}
			\begin{split}
				|f(z,u)|^2 &\leq C(K_1)||P_{\partial M}^{-1}(f(\cdot,u))||^2_{\partial M,\rho}\\
				&\leq C(K_1)L(K_2)||f||^2_{\partial M\times U,\kappa}  \qquad(\mbox{by Lemma \ref{Le-24-1}}).
			\end{split} 
		\end{equation*}
		Hence, inequality (\ref{key-ine3}) holds with $C(K)=\sqrt{C(K_1)L(K_2)}.$
	\end{proof}

	Now, we prove that $H^2_\kappa(M\times U, \partial M\times U)$ is a Hilbert space with the following inner product:
	\begin{equation}
		\ll f,g\gg_{\partial M\times U,\kappa} 
		=\int_{\partial M\times U}P_{\partial M}^{-1}(f(z,u) )\overline{P_{\partial M}^{-1}(g(z,u))}\kappa(z,u)d\mu(z) dV_U
		\nonumber
	\end{equation}
	for $f,g\in H^2_\kappa(M\times U, \partial M\times U).$
	
	\begin{Proposition}\label{completion}
		$H^2_\kappa(M\times U, \partial M\times U)$ is a Hilbert space.
	\end{Proposition}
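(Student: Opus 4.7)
The plan is to follow the template of Proposition \ref{2:L4} in the product setting, using the three tools of this subsection: Lemma \ref{Le-24-1} (slice-wise norm control), Lemma \ref{key-Coro2} (slice-wise completeness via $H^2_\rho(M,\partial M)$) and Lemma \ref{evaluation-ine} (boundedness of the evaluation functional). The Hermitian-sesquilinear properties of $\ll\cdot,\cdot\gg_{\partial M\times U,\kappa}$ are immediate from the definition, and positive-definiteness follows since $\|f\|_{\partial M\times U,\kappa}=0$ forces $P_{\partial M}^{-1}(f(\cdot,u))=0$ for a.e.\ $u\in U$, hence $f\equiv0$ on $M\times U$ by Lemma \ref{evaluation-ine}. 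So only completeness requires work.

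Given a Cauchy sequence $\{f_m\}\subset H^2_\kappa(M\times U,\partial M\times U)$, I would first use Lemma \ref{evaluation-ine} to deduce that $\{f_m\}$ converges uniformly on compact subsets of $M\times U$ to a function $f\in\mathcal{O}(M\times U)$. For each fixed $u\in U$, Lemma \ref{key-Coro2} gives some $g(\cdot,u)\in H^2_\rho(M,\partial M)$ such that $\|P_{\partial M}^{-1}(f_m(\cdot,u))-g(\cdot,u)\|_{\partial M,\rho}\to0$ and $f_m(\cdot,u)\to g^*(\cdot,u)$ uniformly on compacta of $M$. Uniqueness of pointwise limits forces $g^*(\cdot,u)=f(\cdot,u)$, so $P_{\partial M}^{-1}(f(\cdot,u))=g(\cdot,u)\in H^2_\rho(M,\partial M)$.

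Next I would establish $\|f\|_{\partial M\times U,\kappa}<+\infty$ and $\|f_m-f\|_{\partial M\times U,\kappa}\to 0$. Setting
\[
V_m(u):=\int_{\partial M}|P_{\partial M}^{-1}(f_m(\cdot,u))|^2\rho\,d\mu,\qquad V(u):=\int_{\partial M}|P_{\partial M}^{-1}(f(\cdot,u))|^2\rho\,d\mu,
\]
Lemma \ref{key-Coro2} gives $V_m(u)\to V(u)$ for every $u\in U$. Mimicking the Cauchy--Schwarz / Fatou computation in the proof of Proposition \ref{2:L4}, I would bound $\int_U|V-V_n|\gamma\,dV_U$ by $2\sqrt{C}\,\liminf_m\|f_m-f_n\|_{\partial M\times U,\kappa}$, where $C:=\sup_m\|f_m\|^2_{\partial M\times U,\kappa}$, to conclude finiteness of $\|f\|_{\partial M\times U,\kappa}$. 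Then I would apply Fatou to $2(V_m+V)-g_m\ge 0$, where
\[
g_m(u):=\int_{\partial M}|P_{\partial M}^{-1}((f-f_m)(\cdot,u))|^2\rho\,d\mu,
\]
to obtain $\limsup_m\int_U g_m\gamma\,dV_U=0$, which is exactly $\|f-f_m\|_{\partial M\times U,\kappa}\to 0$.

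The main technical obstacle is verifying that $P_{\partial M}^{-1}(f(w,u))$ is jointly Lebesgue measurable on $\partial M\times U$, since a priori it is only defined slice-by-slice in $u$. For each $j$, on $\partial D_j\times M_j$ the boundary value $\gamma_j(f(\cdot,u))$ is obtained as a nontangential limit, and Lemma \ref{Le-24-1} together with the Cauchy property yield $L^2(\partial D_j\times M_j\times U,\rho\gamma\,d\mu_j|dw_j|dV_U)$-convergence of a subsequence $\{f_{m_k}\}$, whose joint measurability is built into the definition of $H^2_\kappa(M\times U,\partial M\times U)$; passing to an a.e.-convergent sub-subsequence then produces a measurable representative of $P_{\partial M}^{-1}(f(\cdot,u))$ on each $\partial D_j\times M_j\times U$, which agrees slice-wise with $g(\cdot,u)$ by uniqueness of boundary values. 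Patching these across $j$ yields the required measurability on $\partial M\times U$, and the proof is complete.
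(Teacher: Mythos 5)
Your proposal is correct and follows essentially the same route as the paper's proof: uniform convergence on compacta via the evaluation bound, slice-wise convergence in $H^2_\rho(M,\partial M)$ via Lemma \ref{key-Coro2}, and the Cauchy--Schwarz/Fatou argument applied to $W_m$, $W$ and $g_m$ to get norm convergence. Your additional paragraph on the joint measurability of $P_{\partial M}^{-1}(f(\cdot,u))$ addresses a point the paper passes over silently, and your subsequence argument for it is sound.
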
 
	\begin{proof}It suffices to prove the any  Cauchy sequence in $H^2_\kappa(M\times U, \partial M\times U)$ has  a limit in $H^2_\kappa(M\times U, \partial M\times U)$. Let $\left\{f_n\right\}$ be a Cauchy sequence in $H^2_\kappa(M\times U, \partial M\times U)$. By Lemma \ref{key-Coro2}, there exists a function $f$ defined on $M\times U$ such that $f(z,u)$ is holomorphic on $M$,  $P_{\partial M}^{-1}(f(\cdot,u))\in H^2_\rho(M,\partial M)$ for each fixed $u\in U$, and
		\begin{equation}
			\label{eq:0827a}
			\lim_{n\rightarrow+\infty}\|P_{\partial M}^{-1}(f_n(\cdot,u))-P_{\partial M}^{-1}(f(\cdot,u))\|_{\partial M,\rho}=0.
		\end{equation}
		It suffices to show that $f(z,\cdot)$ is holomorphic on $U$ for any fixed $z\in M$ and $||f-f_n||_{\partial M\times U,\kappa} \rightarrow 0 $ when $n\rightarrow \infty.$ 
		
		For the former, by Lemma \ref{evaluation-ine}, we know that $\left\{f_n\right\}$ converges to a  $g(z,u)\in \mathcal{O}(M\times U)$ uniformly on each compact subset of $M\times U$. As $f(z,u)$ and $g(z,u)$ both are the limits of $\left\{f_n(z,u)\right\}$ for fixed $u\in U$, we have $f=g$, which implies that $f(z,u)$ is holomorphic on $M \times U$. In the following, we denote $P_{\partial M}^{-1}(f_n(z,u))$ and $P_{\partial M}^{-1}(f(z,u))$ by $f_n(z,u)$ and $f(z,u)$ respectively for simplicity.
		
		For the latter, let
		$$W_n(u):=\int_{\partial  M}|f_n(z,u)|^2\rho d\mu$$ for $n=1,2,\cdots,$ 
		and 
		$$W(u):=\int_{\partial  M}|f(z,u)|^2\rho d\mu.$$
		As $\left\{f_n\right\}$ is a Cauchy sequence, there exists a constant $L$ such that
		$$||f_n||^2_{\partial M\times U,\kappa} \leq L$$ 
		for arbitrary $n\geq 1$.  
		Thus, we have
		\begin{equation}\label{estimate}
			\begin{split}
				&\bigg(\int_U\int_{\partial  M}\bigg||f_n(z,u)|+|f_m(z,u)|\bigg|^2\rho(z)\gamma(u)d\mu(z)dV_U\bigg)^{1/2}\\
				&\leq \bigg(\int_U\int_{\partial  M}2(|f_n(z,u)|^2+|f_m(z,u)|^2)\rho(z)\gamma(u)d\mu(z)dV_U\bigg)^{1/2}\\
				&\leq {2}\sqrt{L}.
			\end{split}
		\end{equation}
		By equality \eqref{eq:0827a}, we get
		\begin{equation}\label{W-con}
			\lim\limits_{n\rightarrow+\infty}W_n(u)=W(u).
		\end{equation}     
		Besides, we have
		\begin{equation}\nonumber
			\begin{split}
				&\int_U|W_n(u)-W(u)|\gamma(u)dV_U\\ &=\int_U\bigg|\int_{\partial  M}\bigg(|f_n(z,u)|^2-|f(z,u)|^2\bigg)\rho(z)d\mu(z)\bigg|\gamma(u)dV_U\\
				&\leq\liminf\limits_{m\rightarrow\infty}\int_U\bigg|\int_{\partial  M}\bigg(|f_n(z,u)|^2-|f_m(z,u)|^2)\bigg)\rho(z)d\mu(z)\bigg|\gamma(u)dV_U\\
				&\leq \liminf\limits_{m\rightarrow\infty}\int_U\int_{\partial  M}\bigg||f_n(z,u)|^2-|f_m(z,u)|^2\bigg|\rho(z) d\mu(z)\gamma(u)dV_U\\
				&=\liminf\limits_{m\rightarrow\infty}\int_U\int_{\partial  M}\bigg||f_n(z,u)|-|f_m(z,u)|\bigg|(|f_n(z,u)|+|f_m(z,u)|)\rho(z)\gamma(u)d\mu(z)dV_U\\
				&\leq \liminf\limits_{m\rightarrow\infty}\bigg(\int_U\int_{\partial  M}|d|^2\rho(z)\gamma(u)d\mu(z)dV_U\bigg)^{1/2}
				\bigg(\int_U\int_{\partial  M}|a|^2\rho(z)\gamma(u)d\mu(z)dV_U\bigg)^{1/2} \\
				&\leq {2}\sqrt{L}\liminf\limits_{m\rightarrow\infty}||f_n-f_m||_{\partial M\times U,\kappa}\quad\mbox{(by inequality (\ref{estimate}))},
			\end{split}
		\end{equation}
		where $a=|f_n(z,u)|+|f_m(z,u)|,d=|f_n(z,u)|-|f_m(z,u)|$, the first $``\leq"$ holds because of $ \mbox{equality (\ref{W-con}) and Fatou's Lemma}$,  and the third $``\leq"$ holds due to Cauchy-Schwarz inequality. It follows that
		$$\lim\limits_{n\rightarrow\infty}\int_U|W_n(u)-W(u)|\gamma(u)dV_U=0,$$ which implies that
		\begin{equation}\label{W-equality}
			\lim\limits_{n\rightarrow\infty}\int_UW_n(u)\gamma(u)dV_U=\int_UW(u)\gamma(u)dV_U<+\infty.
		\end{equation}
		Let $$g_m(u)=\int_{\partial  M}\bigg|f(z,u)-f_m(z,u)\bigg|^2\rho(z)d\mu(z).$$
		Then it follows from equality (\ref{eq:0827a}) that
		$$\lim\limits_{m\rightarrow\infty}g_m(u)=0.$$
		On the other hand, we know that
		
		\begin{equation*}
			\begin{split}
				g_m(u)&=\int_{\partial  M}\bigg|f(z,u)-f_m(z,u)\bigg|^2\rho(z)d\mu(z)\\
				&\leq 2\int_{\partial  M}(|f(z,u)|^2+|f_n(z,u)|^2)\rho(z)d\mu(z)\\
				&=2(W_m(u)+W(u)),
			\end{split}
		\end{equation*}
		which  indicates that $$2(W_m(u)+W(u))-g_m(u)\geq 0.$$
		Then by Fatou's Lemma, we obtain
		\begin{equation*}
			\begin{split}
				&\int_U\liminf\limits_{m\rightarrow\infty}(2(W_m(u)+W(u))-g_m(u))\gamma(u)dV_U\\
				&\leq \liminf\limits_{m\rightarrow\infty}\int_U(2(W_m(u)+W(u))-g_m(u))\gamma(u)dV_U, 
			\end{split}
		\end{equation*} 
		which implies that
		$$\limsup\limits_{m\rightarrow\infty}\int_Ug_m(u)\gamma(u)dV_U=0$$
		with the help of equality (\ref{W-equality}).
		Namely,
		$$\lim\limits_{m\rightarrow\infty}||f-f_m||_{\partial M\times U,\kappa}=0.$$
		The proof is finished.
	\end{proof}

	We define a weighted  kernel $K_{\partial M\times U,\kappa}((\cdot,\cdot),(\overline{z},\overline{u}))$ on $M\times U$. 
	By Lemma \ref{evaluation-ine}, we know that the evaluation functional is bounded.
	Proposition \ref{completion} shows that $H^2_\kappa(M\times U, \partial M\times U)$ is a Hilbert space with the inner product $\ll \cdot,\cdot\gg_{\partial M\times U,\kappa}$. 
	Hence, by  Riesz Representation Theorem, for every $(z,w)\in M\times U$, there is a unique function $K_{\partial M\times U,\kappa}((\cdot,\cdot),(\overline{z},\overline{w}))\in H^2_\kappa(M\times U, \partial M\times U)$, with the reproducing property
	\begin{equation*}\nonumber
		f(z,w)=\int_{\partial M\times U}P_{\partial M}^{-1}(f(z,u))\overline{P_{\partial M}^{-1}(K_{\partial M\times U,\kappa}((\zeta,u),(\overline{z},\overline{w})))}\kappa(\zeta,u)d\mu(\zeta)dV_U 
	\end{equation*}
	for every $f\in H^2_\kappa(M\times U, \partial M\times U)$.
	For simplicity, denote 
	$$K_{\partial M\times U,\kappa}({z},{u}):=K_{\partial M\times U,\kappa}((z,u),(\overline{z},\overline{u})).$$
	
	Finally, we give the following decomposition fomula.

	\begin{Proposition}\label{Pro-28}
		$K_{\partial M\times U,\kappa}((\zeta,u),(\overline{z},\overline{w}))=K_{\partial M,\rho}(\zeta,\overline{z})B_{U,\gamma}(u,\overline{w}).$        
	\end{Proposition}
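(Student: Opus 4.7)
The plan is to mimic the argument used in Section 2.4 for the decomposition \eqref{3:E4}, replacing the single-factor Szeg\"o kernel $K_{D,\lambda}$ by the product-domain kernel $K_{\partial M,\rho}$. The target is to verify that $K_{\partial M,\rho}(\zeta,\overline z)\,B_{U,\gamma}(u,\overline w)$ sits in $H^2_\kappa(M\times U,\partial M\times U)$ and reproduces values at $(z,w)$; uniqueness of the reproducing kernel then forces the identity.

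First, I would check the slicewise integrability that makes Fubini legitimate. Fix $f\in H^2_\kappa(M\times U,\partial M\times U)$ and $z\in M$; I need $f(z,\cdot)\in A^2(U,\gamma)$. By Lemma \ref{lower-evaluation} applied with $K=\{z\}$, there is a constant $C_z>0$ with $|f(z,u)|^2\le C_z\,\|P_{\partial M}^{-1}(f(\cdot,u))\|^2_{\partial M,\rho}$ for every $u$, so
\begin{equation*}
\int_U|f(z,u)|^2\gamma(u)\,dV_U\le C_z\int_U\gamma(u)\,dV_U\int_{\partial M}|P_{\partial M}^{-1}(f(\cdot,u))|^2\rho\,d\mu=C_z\|f\|^2_{\partial M\times U,\kappa}<+\infty,
\end{equation*}
which gives $f(z,\cdot)\in A^2(U,\gamma)$ for every $z\in M$. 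Symmetrically, for each fixed $u\in U$, the definition of $H^2_\kappa(M\times U,\partial M\times U)$ already guarantees $P_{\partial M}^{-1}(f(\cdot,u))\in H^2_\rho(M,\partial M)$.

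Next I would run the reproducing computation. For fixed $(z,w)\in M\times U$, set $F(\zeta,u):=K_{\partial M,\rho}(\zeta,\overline z)\,B_{U,\gamma}(u,\overline w)$. Since $K_{\partial M,\rho}(\cdot,\overline z)\in H^2_\rho(M,\partial M)$ and $B_{U,\gamma}(\cdot,\overline w)\in A^2(U,\gamma)$, it follows that $F\in H^2_\kappa(M\times U,\partial M\times U)$ with $\|F\|^2_{\partial M\times U,\kappa}=K_{\partial M,\rho}(z)\,B_{U,\gamma}(w)$. Using Fubini (justified by the finite norms above), the reproducing property of $K_{\partial M,\rho}$ in the $\zeta$-variable, and then the reproducing property of $B_{U,\gamma}$ in the $u$-variable, I compute
\begin{equation*}
\int_{\partial M\times U}P_{\partial M}^{-1}(f(\zeta,u))\overline{F(\zeta,u)}\,\kappa(\zeta,u)\,d\mu(\zeta)\,dV_U=\int_U f(z,u)\overline{B_{U,\gamma}(u,\overline w)}\gamma(u)\,dV_U=f(z,w).
\end{equation*}
Thus $F$ satisfies the reproducing identity that characterizes $K_{\partial M\times U,\kappa}((\cdot,\cdot),(\overline z,\overline w))$. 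Uniqueness of the reproducing kernel in $H^2_\kappa(M\times U,\partial M\times U)$, guaranteed by Proposition \ref{completion} together with Lemma \ref{evaluation-ine} and the Riesz representation theorem, yields $K_{\partial M\times U,\kappa}((\zeta,u),(\overline z,\overline w))=F(\zeta,u)$, which is the claimed formula.

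The only delicate point is the slicewise membership $f(z,\cdot)\in A^2(U,\gamma)$; the bound $|f(z,u)|\le C_z\|P_{\partial M}^{-1}(f(\cdot,u))\|_{\partial M,\rho}$ from Lemma \ref{lower-evaluation} is the one nontrivial ingredient. Once that is in hand, Fubini and the two reproducing identities combine routinely, so I do not expect any genuine obstacle beyond bookkeeping.
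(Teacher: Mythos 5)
Your proposal is correct and follows essentially the same route as the paper: both establish $f(z,\cdot)\in A^2(U,\gamma)$ via Lemma \ref{lower-evaluation}, then apply Fubini together with the reproducing properties of $K_{\partial M,\rho}$ and $B_{U,\gamma}$ in turn, and conclude by uniqueness of the reproducing kernel. The only (harmless) addition is your explicit verification that the candidate product lies in $H^2_\kappa(M\times U,\partial M\times U)$, which the paper leaves implicit.
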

	\begin{proof}
		Let $f\in H^2_\kappa(M\times U, \partial M\times U).$
		For $P_{\partial M}^{-1}(f(\cdot, u))\in H^2_\rho(M,\partial M)$, it follows from Lemma \ref{lower-evaluation} that for any compact $ K\subset M,$ there exists a constant $C_K$ such that
		$$\sup\limits_{z\in K}|f(z,u)|^2\leq C_K||P_{\partial M}^{-1}(f(\cdot, u))||^2_{\partial M,\rho}.$$
		Therefore, we have
		\begin{equation*}
			\begin{split}
				\int_U|f(z,u)|^2\gamma(u)dV_U \leq C_K\int_{\partial M\times U}|P_{\partial M}^{-1}(f(z,u))|^2\gamma(u)\rho(z) d\mu dV_U
				<+\infty.
			\end{split}
		\end{equation*}
		Hence,  $f(z,\cdot)\in A^2(U,\gamma)$ for any fixed $z\in M$.
		Then, we have        \begin{equation}
			\begin{split}
				&\int_{\partial M\times U}P_{\partial M}^{-1}(f(\zeta,u))\overline{K_{\partial M,\rho}(\zeta,\overline{z})B_{U,\gamma}(u,\overline{w})}\rho(\zeta) \gamma(u)d\mu dV_U \\
				&=\int_U\overline{B_{U,\gamma}(u,\overline{w})}\gamma(u)dV_U\int_{\partial M}P_{\partial M}^{-1}(f(\zeta,u))\overline{K_{\partial M,\rho}(\zeta,\overline{z})}\rho(\zeta)  d\mu\\
				&=\int_Uf(z,u)\overline{B_{U,\gamma}(u,\overline{w})}\gamma(u)dV_U
				=f(z,w),
			\end{split}
			\nonumber
		\end{equation}
		which shows that Proposition \ref{Pro-28} holds.  
	\end{proof}

	The notations in this section follow from Section \ref{sec:1.2}.
	
	The following lemma will be used in the proof of Lemma \ref{basis-comp}.

	\begin{Lemma}[see \cite{GY-Hardy and product}]
		\label{partial caculation}
		Let $f\in L^2(\partial D_j\times M_j,d\mu)$. Assume that there is $f^*\in\mathcal{O}(M)$ such that $f^*(\cdot,\hat{w}_j)\in H^2(D_j)$ for a.e. $\hat{w}_j\in M_j$ and $f=\gamma_j(f^*)$ a.e. on $\partial D_j\times M_j$. Then we have $f^*(\cdot,\hat{w}_j)\in H^2(D_j)$ for any $\hat{w}_j\in M_j$.        
	\end{Lemma}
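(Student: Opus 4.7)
The plan is to verify the hypothesis of Lemma \ref{P1} at each fixed slice $f^*(\cdot,\hat w_j^0)$ with $\hat w_j^0\in M_j$, using the plurisubharmonicity of $|f^*|^2$ in the parameter $\hat w_j$ to upgrade the ``a.e.''\ conclusion of the hypothesis to an ``everywhere'' conclusion. Fix $\hat w_j^0\in M_j$ and a base point $t_j\in D_j$, and take an increasing exhaustion $\{D_{j,k}\}_{k\ge 1}$ of $D_j$ by relatively compact subdomains with smooth boundary, all containing $t_j$. Define
\[
W_k(\hat w_j):=\frac{1}{2\pi}\int_{\partial D_{j,k}}|f^*(z,\hat w_j)|^2\,\frac{\partial G_{D_{j,k}}(z,t_j)}{\partial\nu_z}\,|dz|,
\]
which is non-decreasing in $k$ for each $\hat w_j$ by subharmonicity of $|f^*(\cdot,\hat w_j)|^2$. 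By Lemma \ref{P1} it suffices to exhibit a constant $M(\hat w_j^0)$, independent of $k$, with $W_k(\hat w_j^0)\le M(\hat w_j^0)$.

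Since $f^*$ is holomorphic on $M$, $\hat w_j\mapsto|f^*(z,\hat w_j)|^2$ is plurisubharmonic on $M_j$ for each fixed $z$. Choose a polydisc $P\Subset M_j$ centered at $\hat w_j^0$; the sub-mean-value inequality, multiplied by the non-negative density $(2\pi)^{-1}(\partial G_{D_{j,k}}(z,t_j)/\partial\nu_z)$ and integrated over $\partial D_{j,k}$, gives via Fubini
\[
W_k(\hat w_j^0)\le\frac{1}{V(P)}\int_P W_k(\hat w_j)\,dV(\hat w_j).
\]
For a.e.\ $\hat w_j$ the hypothesis yields $f^*(\cdot,\hat w_j)\in H^2(D_j)$, and then by equality \eqref{1:E3} together with Lemma \ref{1:L4}(b),
\[
\lim_{k\to\infty}W_k(\hat w_j)=\frac{1}{2\pi}\int_{\partial D_j}|\gamma_j(f^*)(z,\hat w_j)|^2\,\frac{\partial G_{D_j}(z,t_j)}{\partial\nu_z}\,|dz|.
\]
Because $\gamma_j(f^*)=f$ a.e.\ on $\partial D_j\times M_j$ and $C_0:=\sup_{z\in\partial D_j}\partial G_{D_j}(z,t_j)/\partial\nu_z<\infty$ (positive continuous function on the analytic boundary), monotone convergence and Fubini give
\[
\int_P W_k(\hat w_j)\,dV(\hat w_j)\le C_0\int_{\partial D_j\times P}|f(z,\hat w_j)|^2\,\frac{|dz|\,d\mu_j(\hat w_j)}{2\pi}\le C_0\,\|f\|^2_{L^2(\partial D_j\times M_j,d\mu)},
\]
uniformly in $k$, using that $dV$ and $d\mu_j$ are both Lebesgue measure on $M_j$. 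Combining with the previous display yields the uniform bound $W_k(\hat w_j^0)\le V(P)^{-1}C_0\|f\|^2_{L^2(\partial D_j\times M_j,d\mu)}$, and Lemma \ref{P1} concludes that $f^*(\cdot,\hat w_j^0)\in H^2(D_j)$.

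The main conceptual obstacle is recognizing that plurisubharmonicity of $|f^*|^2$ in the parameter $\hat w_j$---not any regularity of the boundary values $\gamma_j(f^*)$---is what dominates $W_k(\hat w_j^0)$ by a local average of $W_k(\hat w_j)$ over nearby slices; only then can the $L^2$-assumption $f\in L^2(\partial D_j\times M_j,d\mu)$ be brought in to control this average. Once that direction is fixed, the remainder is a routine Fubini and monotone-convergence exercise.
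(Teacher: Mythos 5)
This lemma is imported from \cite{GY-Hardy and product} and the present paper gives no proof of it, so there is no in-paper argument to compare against; I can only assess your proposal on its own terms, and it is correct. The key points all check out: $W_k(\hat w_j)$ is nondecreasing in $k$ by subharmonicity of $|f^*(\cdot,\hat w_j)|^2$ (exactly as in the proof of Lemma \ref{P1}, whose ``conversely'' direction shows that a uniform bound along a single exhaustion already suffices, so restricting to the $D_{j,k}$ rather than all smooth subdomains is legitimate); $|f^*(z,\cdot)|^2$ is plurisubharmonic on $M_j$ for fixed $z$, so the polydisc sub-mean-value inequality applies, and Tonelli justifies the interchange giving $W_k(\hat w_j^0)\le V(P)^{-1}\int_P W_k$; on the full-measure set where the hypothesis holds, $W_k(\hat w_j)\le\lim_kW_k(\hat w_j)=\|\gamma_j(f^*)(\cdot,\hat w_j)\|^2_{L^2(\partial D_j)}$ by \eqref{1:E3} and Lemma \ref{1:L4}(b), which is controlled by $\|f\|^2_{L^2(\partial D_j\times M_j,d\mu)}$ since $\partial G_{D_j}(\cdot,t_j)/\partial\nu_z$ is bounded on the analytic boundary. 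The resulting bound is uniform in $k$, and Lemma \ref{P1} closes the argument. Your method --- averaging the slice functional over nearby fibers via plurisubharmonicity and then invoking the global $L^2$ hypothesis --- is precisely the mechanism the paper uses elsewhere for the same kind of ``a.e.\ to everywhere'' upgrade (compare the proof of Lemma \ref{Le-719}, where the admissible-weight evaluation bound plays the role of your sub-mean-value inequality), so the proposal is both correct and stylistically consistent with the authors' toolkit.
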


	Let $\gamma(u)$  be  an admissible weight on $U$, and let $\rho(z)$ be a positive Lebesgue measurable function on $\partial M$ satisfying $\inf_{\partial M}\rho>0$.  Denote $\kappa=\gamma\rho$ on $M\times U$. We give a  product property of the space $H^2_\kappa(M\times U,\partial M\times U)$.

	\begin{Lemma}\label{basis-comp}
		Let $\left\{e_m\right\}_{m\in\mathbb{N}_{>0}}$ be  a complete orthonormal basis of $H_\rho^2(M,\partial M)$   and $\left\{\tilde{e}_l\right\}_{l\in\mathbb{N}_{>0}}$ be a complete orthonormal basis of $A^2(U,\gamma)$. Then $\left\{P_{\partial M}(e_m)\tilde{e}_l\right\}_{m,l\in\mathbb{N}_{>0}}$ is a complete orthonormal basis of $H^2_\kappa(M\times U,\partial M\times U)$.
	\end{Lemma}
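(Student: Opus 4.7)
The proof plan has three parts: (i) each $P_{\partial M}(e_m)\tilde e_l$ belongs to $H^2_\kappa(M\times U,\partial M\times U)$, (ii) the collection is orthonormal, and (iii) its closed linear span exhausts the space. Parts (i) and (ii) are essentially bookkeeping. The function $P_{\partial M}(e_m)\tilde e_l$ is holomorphic on $M\times U$; for each fixed $u\in U$, linearity of $P_{\partial M}^{-1}$ gives $P_{\partial M}^{-1}((P_{\partial M}(e_m)\tilde e_l)(\cdot,u))=\tilde e_l(u)e_m\in H^2_\rho(M,\partial M)$, and a Fubini computation factors both the norm and the inner product as a product of the corresponding quantities in $H^2_\rho(M,\partial M)$ and $A^2(U,\gamma)$, yielding $\langle P_{\partial M}(e_m)\tilde e_l,P_{\partial M}(e_{m'})\tilde e_{l'}\rangle_{\partial M\times U,\kappa}=\delta_{mm'}\delta_{ll'}$.

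For (iii), I would argue by duality: suppose $f\in H^2_\kappa(M\times U,\partial M\times U)$ is orthogonal to every $P_{\partial M}(e_m)\tilde e_l$, and show $f=0$. Set $F(u):=P_{\partial M}^{-1}(f(\cdot,u))\in H^2_\rho(M,\partial M)$ and $a_m(u):=\langle F(u),e_m\rangle_{\partial M,\rho}$. Fubini (justified by finiteness of $\|f\|_{\partial M\times U,\kappa}$) rewrites the hypothesis as $\int_U a_m(u)\overline{\tilde e_l(u)}\gamma(u)\,dV_U=0$ for every $l$, and Parseval in $H^2_\rho(M,\partial M)$ combined with the definition of the norm on $H^2_\kappa(M\times U,\partial M\times U)$ gives $\int_U|a_m|^2\gamma\,dV_U\leq\|f\|^2<\infty$. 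Once I know that each $a_m$ is holomorphic on $U$ (the main point, discussed next), this places $a_m$ in $A^2(U,\gamma)$, so completeness of $\{\tilde e_l\}$ forces $a_m\equiv 0$. Then $F(u)\equiv 0$ in $H^2_\rho(M,\partial M)$ for almost every $u$, so $f(\cdot,u)\equiv 0$ on $M$ for almost every $u$, and the joint continuity of $f$ on $M\times U$ gives $f\equiv 0$.

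The only substantive step is the holomorphicity of $a_m$. I would prove it by showing that $F:U\to H^2_\rho(M,\partial M)$ is holomorphic as a Hilbert-space-valued map; then $a_m(u)=\langle F(u),e_m\rangle$ is a composition with a bounded linear functional and is automatically holomorphic. For any $z\in M$, the evaluation functional $g\mapsto P_{\partial M}(g)(z)$ on $H^2_\rho(M,\partial M)$ is bounded by Lemma \ref{lower-evaluation}, so its Riesz representer $k_z$ satisfies $\langle F(u),k_z\rangle_{\partial M,\rho}=P_{\partial M}(F(u))(z)=f(z,u)$, which is holomorphic in $u$ because $f\in\mathcal O(M\times U)$. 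The family $\{k_z:z\in M\}$ has dense linear span in $H^2_\rho(M,\partial M)$, since $g\perp k_z$ for all $z\in M$ means $P_{\partial M}(g)(z)=0$ for all $z\in M$, and injectivity of $P_{\partial M}$ then forces $g=0$. Combined with the local boundedness $\sup_{u\in K}\|F(u)\|_{\partial M,\rho}\leq L(K)^{1/2}\|f\|_{\partial M\times U,\kappa}$ furnished by Lemma \ref{Le-24-1}, a standard Vitali-type argument extends the holomorphy of $u\mapsto\langle F(u),g\rangle$ from $g$ in this dense set to all $g\in H^2_\rho(M,\partial M)$; weak holomorphicity into a Hilbert space is strong holomorphicity, which closes the argument. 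The principal difficulty is precisely this passage from interior-point holomorphicity to strong holomorphy of $F$, and it is Lemma \ref{Le-24-1} (uniform $H^2_\rho$-control of the $u$-slices on compact sets in $U$) that makes the Vitali approximation uniform and removes the obstacle.
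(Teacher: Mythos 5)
Your proof is correct, but it projects in the opposite direction from the paper, and this changes where the technical work lands. The paper fixes the $U$-index $l$ and forms the partial coefficient $f_l(z)=\int_U P_{\partial M}^{-1}(f(z,u))\overline{\tilde e_l(u)}\gamma(u)\,dV_U$, a function on $\partial M$; the bulk of its argument is then devoted to showing that $f_l$ actually lies in the Hardy space $H^2_\rho(M,\partial M)$ --- one must construct a holomorphic extension $f_l^*$ to $M$ (via Cauchy's formula, difference-quotient estimates and Hartogs' theorem), check $H^2(D_j)$-membership of the slices, and identify the boundary values using orthogonality to $A(D_j)$ and Lemma \ref{partial caculation}. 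You instead fix the $M$-index $m$ and form $a_m(u)=\langle F(u),e_m\rangle_{\partial M,\rho}$, a function on $U$; the only substantive point is holomorphy of $a_m$, which you obtain from weak (hence strong) holomorphy of the vector-valued map $u\mapsto F(u)$, using the reproducing vectors $k_z$, their totality (injectivity of $P_{\partial M}$), and the local bound of Lemma \ref{Le-24-1} to run the Vitali/density approximation. Because your target space $A^2(U,\gamma)$ is a Bergman space, membership reduces to ``holomorphic plus square-integrable,'' so you avoid all of the Hardy-space boundary bookkeeping that occupies the paper; the price is an appeal to the standard theory of Hilbert-space-valued holomorphic functions. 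Both routes lean on the same two inputs --- the evaluation bound of Lemma \ref{lower-evaluation} and the uniform slice estimate of Lemma \ref{Le-24-1} --- and both close the argument with Parseval and the completeness of the one-variable bases, so your proof is a valid and somewhat more economical alternative.
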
 
	\begin{proof}
		It is clear that $\left\{P_{\partial M}(e_m)\tilde{e}_l\right\}_{m,l\in\mathbb{N}_{>0}}$ is an orthonormal sequence by Fubini's theorem. In the following, we prove its completeness.
		
		For any $f\in H^2_\kappa(M\times U,\partial M\times U)$ satisfying
		\begin{equation}\label{cau}
			\int_{\partial M\times U}P_{\partial M}^{-1}(f(z,u))\overline{e_m(z)\tilde{e}_l(u)}\rho(z)\gamma(u)=0
		\end{equation}
		for every $m,l\in\mathbb{N}_{>0}$,  denote  
		$$f_l(z) :=\int_UP_{\partial M}^{-1}(f(z,u))\overline{\tilde{e}_l(u)}\gamma(u)dV_U$$
		on  a.e. $\partial M$ for any $l>0$. Using Fubini's theorem, we know that $f_l\in L^2(\partial M,\rho)$. In the following, we  prove that $f_l(z)\in H^2_\rho(M,\partial M)$ . 
		
		By Fubini's theorem, we know that $f_l(\cdot,\hat{z}_j)\in L^2(\partial D_j)$ for almost all $\hat{z}_j\in M_j$.
		By Lemma \ref{1:L4}, we have
		\begin{equation}\label{holo}
			\begin{split}
				\int_{\partial D_j}\frac{f_l(z_j,\hat{z}_j)}{z_j-w_j}|dz_j|&=\int_{\partial D_j}\frac{1}{z_j-w_j}\int_U\gamma_j(f(\cdot,\hat{z}_j,u))\overline{\tilde{e}_l(u)}\gamma(u)dV_U|dz_j|\\
				&=\int_U\overline{\tilde{e}_l(u)}\gamma(u)\int_{\partial D_j}\frac{\gamma_j(f(\cdot,\hat{z}_j,u))}{z_j-w_j}|dz_j|dV_U\\
				&=\int_Uf(w_j,\hat{z}_j,u)\gamma(u)\overline{\tilde{e}_l(u)}dV_U
			\end{split}
		\end{equation}
		for almost all $\hat{z}_j\in M_j$.
		Denote 
		$$f_l^*(z_j,\hat{z}_j) :=\int_Uf(z_j,\hat{z}_j,u)\overline{\tilde{e}_l(u)}\gamma(u)dV_U.$$
		
		\emph{Claim $\#1$}: $f_l^*(z_j,\hat{z}_j)$ can be defined on $M$.
		
		\emph{Proof of Claim $\#1$}: By Lemma \ref{lower-evaluation}, for any compact subset $K\subset M$, there exists a constant $C_K>0$ such that 
		\begin{equation}\label{eq-33-1}
			\sup\limits_{z\in K}|f(z,u)|^2\leq C_K\int_{\partial M}|P_{\partial M}^{-1}(f(z,u))|^2\rho d\mu.
		\end{equation} 
		Then it follows from Cauchy-Schwarz inequality that
		$$\bigg(\int_U|f(z_j,\hat{z}_j,u)\overline{\tilde{e}_l(u)}|\gamma(u)dV_U\bigg)^2\leq \int_{ U}|f(z,u)|^2\gamma(u)dV_U\int_{U}|\overline{\tilde{e}_l(u)}|^2\gamma(u)dV_U.$$
		For $\int_{U}|\overline{\tilde{e}_l(u)}|^2\gamma(u)dV_U=1$, combining with  inequality (\ref{eq-33-1}), we get
		$$\sup\limits_{z\in K}\bigg(\int_U|f(z_j,\hat{z}_j,u)\overline{\tilde{e}_l(u)}|\gamma(u)dV_U\bigg)^2\leq C_K\int_{ \partial M\times U}|P_{\partial M}^{-1}(f(z,u))|^2\rho d\mu\gamma(u)dV_U<\infty.$$
		Hence, we finish the proof.
		
		\emph{Claim $\#2$}: $f_l^*(z)$ is holomorphic on $M$.
		
		\emph{Proof of Claim$\#2$}: Let
		$$g(z_1,\cdots,z_n)=\int_{U}\frac{\partial f}{\partial z_1}(z_1,\cdots,z_n,u)\overline{\tilde{e}_l(u)}\gamma(u)dV_U.$$
		Now we fix $w_1\in D_1$ and choose $r$ such that  $\Delta_r:=\left\{z\in D_1:|z-w_1|< r\right\}\subset D_1$. $\partial\Delta_r$ denotes the boundary of $\Delta_r$. Then it follows from Cauchy's formula that
		$$\frac{\partial f}{\partial z}(z,\hat{z}_1,u)=\frac{1
		}{2\pi\sqrt{-1}}\int_{\partial\Delta_r}\frac{f(w,\hat{z}_1,u)}{(w-z)^2}dw$$
		for any $z\in\Delta_r\subset D_1$. Then for any $z\in\Delta_{\frac{r}{2}}$, we have
		\begin{equation*}
			|\frac{\partial f}{\partial z}(z,\hat{z}_1,u)|\leq \frac{4}{r^2}\sup\limits_{z\in \Delta_{r}}|f(z,\hat{z}_1,u)|,
		\end{equation*}
		which implies that $g(z)$ is well defined for any fixed $\hat{z}_1\in M_1$ by inequality (\ref{eq-33-1}).
		Furthermore, we have
		\begin{equation*}
			\begin{split}
				f(z,\hat{z}_1,u)-f(w_1,\hat{z}_1,u)-\frac{\partial f}{\partial z}(w_1,\hat{z}_1,u)(z-w_1)=\frac{(z-w_1)^2
				}{2\pi\sqrt{-1}}\int_{\partial\Delta_r}\frac{f(w,\hat{z}_1,u)}{(w-z)(w-w_1)^2}dw
			\end{split}
		\end{equation*}
		for $|z-w_1|<\frac{r}{2}$. Then we get
		\begin{equation}\label{ine-41-1}
			\begin{split}
				|f(z,\hat{z}_1,u)-f(w_1,\hat{z}_1,u)-\frac{\partial f}{\partial z}(w_1,\hat{z}_1,u)(z-w_1)|\leq\frac{2}{r^3}|z-w_1|^2\sup\limits_{w\in \Delta_{r}}|f(w,\hat{z}_1,u)|
			\end{split}
		\end{equation}
		for $|w-z|\geq |w_1-w|-|z-w_1|\geq \frac{r}{2}$.   
		Combining with inequality (\ref{ine-41-1}) and Cauchy-Schwarz inequality, we have
		\begin{equation}\label{ine-41-2}
			\begin{split}
				&|f_l^*(z,\hat{z}_1)-f_l^*(w_1,\hat{z}_1)-(z-w_1)g(w_1,\hat{z}_1)|\\
				&=|\int_{U}\big(f(z,\hat{z}_1,u)-f(w_1,\hat{z}_1,u)-\frac{\partial f}{\partial z}(w_1,\hat{z}_1,u)(z-w_1)\big)\overline{\tilde{e}_l(u)}\gamma(u)dV_U|\\
				&\leq \int_{ U}|f(z,\hat{z}_1,u)-f(w_1,\hat{z}_1,u)-\frac{\partial f}{\partial z}(w_1,\hat{z}_1,u)(z-w_1)||\overline{\tilde{e}_l(u)}|\gamma(u)dV_U\\
				&\leq \bigg(\int_{ U}|f(z,\hat{z}_1,u)-f(w_1,\hat{z}_1,u)-\frac{\partial f}{\partial z}(w_1,\hat{z}_1,u)(z-w_1)|^2\gamma(u)dV_U\bigg)^{1/2}\\
				&\leq |z-w_1|^2\frac{2}{r^3}\bigg(\int_{U}\sup\limits_{w\in \Delta_{r}}|f(w,\hat{z}_1,u)|^2\gamma(u)dV_U\bigg)^{1/2}.
			\end{split}
		\end{equation}      
		By inequality (\ref{eq-33-1}), we know that the last term  in inequality (\ref{ine-41-2}) is finite, which implies that
		$$\frac{\partial f_l^*}{\partial z_1}(z_1,\cdots,z_n)=g(z)=\int_{U}\frac{\partial f}{\partial z_1}(z_1,\cdots,z_n,u)\overline{\tilde{e}_l(u)}\gamma(u)dV_U.$$
		Similarly, for $1\leq j\leq n$, we have
		$$\frac{\partial f_l^*}{\partial z_j}(z_1,\cdots,z_n)=g(z_1,\cdots,z_n)=\int_{U}\frac{\partial f}{\partial z_j}(z_1,\cdots,z_n,u)\overline{\tilde{e}_l(u)}\gamma(u)dV_U.$$   
		Hence, $f_l^*(z_1,\cdots,z_n)$ is holomorphic with respect to each variable $z_j$ for $1\leq j\leq n$. Then it follows from Hartogs' theorem that $f_l^*(z_1,\cdots,z_n)$ is holomorphic on $M$.   We finish the proof of  \emph{Claim$\#2$}.
		
		Besides, we have
		\[
		\int_{\partial D_j}f_l(z_j,\hat{z}_j)\phi(z_j)|dz_j|=\int_{ U}\overline{\tilde{e}_l(u)}\gamma(u)dV_U\int_{\partial D_j}P_{\partial M}^{-1}(f(z_j,\hat{z}_j,u))\phi(z_j)|dz_j|=0
		\]
		for any $\phi\in A(D_j)$  and almost all $\hat z_j\in M_j$ by $(c)$ of Lemma \ref{1:L4}.
		Then we have $f_l^*(\cdot,\hat{z}_j)\in H^2(D_j)$ and 
		$$\gamma_j(f_l^*(\cdot,\hat{z}_j))=f_l(\cdot,\hat{z}_j)$$
		for almost all $\hat{z}_j\in M_j$ by $(c)$ of Lemma \ref{1:L4}  and equality \eqref{holo}. Then it follows from Lemma \ref{partial caculation} that $f_l^*(\cdot,\hat{z}_j)\in H^2(D_j)$ for all $\hat{z}_j\in M_j$. By definition, we know that $f_l\in H^2_\rho(M,\partial M)$.
		
		Due to $\ll f_l(z),e_m(z)\gg_{\partial M,\rho}=0$ by equality (\ref{cau}), we know that
		$f_l(z)=0$ on a.e. $\partial M$. Furthermore, it follows from the injectivity of $\gamma_j$ that $f_l^*(z)=0$. Since $\left\{e_l(u)\right\}$ is a complete orthonormal basis of $A^2(U,\gamma)$, we get $f(z,u)=0$.    
		The proof is finished.    
	\end{proof}
	
	For any $\alpha=(\alpha_1,\cdots,\alpha_m)\in\mathbb{N}^m$ and any $\beta=(\beta_1,\cdots,\beta_m)\in\mathbb{N}^m,$ 
	we say $\alpha <\beta$   if $|\alpha|< |\beta|$ or $|\alpha|=|\beta|$ and there exists some $k$, $1\leq k\leq m$, such that $\alpha_1=\beta_1,\cdots,\alpha_{k-1}= \beta_{k-1},\;\alpha_k<\beta_k$. For $f\in\mathcal{O}(U)$, denote by $\alpha=ord_{u_0}(f)$ if $f=a_{\alpha}(u-u_0)^{\alpha}+\sum_{\alpha\geq\beta}b_{\beta}(u-u_0)^{\beta}$ with $a_{\alpha}\neq 0$. 
	
	\begin{Lemma}[see \cite{modules-at-boundary}]
		\label{A_basis}
		Let $u_0\in U$. There exists a countable complete orthonormal basis $\left\{f_\alpha(z)\right\}_{\alpha\in S_2}$ of $A^2(U,\gamma)$ such that $\alpha =ord_{u_0}(f_\alpha)$, where $S_2\subset\mathbb{N}^m$.   
	\end{Lemma}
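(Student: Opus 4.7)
The plan is to construct the basis via a Gram--Schmidt-type procedure indexed by order of vanishing at $u_0$, using the total order on $\mathbb{N}^m$ introduced just above the statement of the lemma. The key observation is that admissibility of $\gamma$ makes every Taylor coefficient functional at $u_0$ bounded on $A^2(U,\gamma)$, which forces the natural ``vanishing to order $\alpha$'' subspaces to be closed and their successive quotients to be at most one-dimensional.

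Concretely, for each $\alpha\in\mathbb{N}^m$ I would set
\[
V_{\geq\alpha}:=\{f\in A^2(U,\gamma) : f\equiv 0 \text{ or } \mathrm{ord}_{u_0}(f)\geq \alpha\},\qquad V_{>\alpha}:=\{f\in A^2(U,\gamma) : f\equiv 0 \text{ or } \mathrm{ord}_{u_0}(f)>\alpha\}.
\]
Since only finitely many multi-indices precede $\alpha$ in the chosen total order, both $V_{\geq\alpha}$ and $V_{>\alpha}$ are intersections of finitely many kernels of Taylor-coefficient functionals. These functionals are continuous: admissibility of $\gamma$ gives (via Lemma~\ref{e-c}) a uniform bound on point evaluations on a compact polydisc around $u_0$, and Cauchy's integral formula on that polydisc turns such bounds into bounds on each $f\mapsto f^{(\beta)}(u_0)/\beta!$. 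Hence $V_{\geq\alpha}$ and $V_{>\alpha}$ are closed subspaces of the Hilbert space $A^2(U,\gamma)$.

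Next I would set $S_2:=\{\alpha\in\mathbb{N}^m : V_{\geq\alpha}\neq V_{>\alpha}\}$ and, for each $\alpha\in S_2$, choose a unit vector $f_\alpha$ spanning the orthogonal complement $V_{\geq\alpha}\ominus V_{>\alpha}$. Because $V_{\geq\alpha}/V_{>\alpha}$ injects into $\mathbb{C}$ via the $\alpha$-th Taylor coefficient, this complement is at most one-dimensional, so $f_\alpha$ is determined up to a unit scalar and satisfies $\mathrm{ord}_{u_0}(f_\alpha)=\alpha$. Orthonormality of $\{f_\alpha\}_{\alpha\in S_2}$ is automatic: if $\alpha<\beta$ in $S_2$ then $f_\beta\in V_{\geq\beta}\subseteq V_{>\alpha}$, and $f_\alpha\perp V_{>\alpha}$ by construction.

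For completeness, suppose $f\in A^2(U,\gamma)$ is orthogonal to every $f_\alpha$; I claim $f=0$. Otherwise let $\alpha_0=\mathrm{ord}_{u_0}(f)$. If $\alpha_0\notin S_2$ then $V_{\geq\alpha_0}=V_{>\alpha_0}$, contradicting $f\in V_{\geq\alpha_0}\setminus V_{>\alpha_0}$; if $\alpha_0\in S_2$ then the orthogonal decomposition $V_{\geq\alpha_0}=V_{>\alpha_0}\oplus \mathbb{C} f_{\alpha_0}$ writes $f=g+cf_{\alpha_0}$ with $c\neq 0$ (since the $\alpha_0$-coefficient of $f$ is nonzero), giving $\langle f,f_{\alpha_0}\rangle=c\neq 0$, a contradiction. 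Finally, $A^2(U,\gamma)\subset L^2(U,\gamma)$ is separable, so $S_2$ is necessarily countable. The main (and essentially only) subtle point is the continuity of the Taylor-coefficient functionals at $u_0$; once that is in hand, everything else is a routine Hilbert-space argument.
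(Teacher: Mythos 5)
Your proof is correct and follows essentially the same route as the paper: both index the basis by the set of achievable vanishing orders at $u_0$, take for each such $\alpha$ the (up to a unimodular scalar, unique) extremal element with leading order $\alpha$, derive orthogonality from the order relation, and prove completeness by looking at the leading Taylor coefficient of a putative orthogonal element. The only difference is cosmetic: you obtain $f_\alpha$ as the one-dimensional orthogonal complement $V_{\geq\alpha}\ominus V_{>\alpha}$ (using continuity of the Taylor-coefficient functionals and the projection theorem), whereas the paper produces the same function as a norm minimizer over the affine set $E^\alpha$ via Montel's theorem and Fatou's lemma.
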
 
	
	\begin{proof}
		For the convenience of reader, we recall the proof.
		For fixed $\alpha\in\mathbb{N}^m$, denote 
		$$E^\alpha\;:=\left\{f\in A^2(U,\gamma):\;f^{(\alpha)}(u_0)=1,\;\mbox{and}\;f^{(\beta)}(u_0)=0,\;\forall\;\beta<\alpha\right\},$$
		and
		$$J_\alpha\;:=\left\{f\in A^2(U,\gamma):\;f^{(\beta)}(u_0)=0,\;\forall\;\beta<\alpha\right\}.$$
		Let $S_1\;:=\left\{\alpha\in\mathbb{N}^m\;:E^\alpha\;=\emptyset\right\}$ and $S_2\;:=\mathbb{N}^m-S_1$.
		Note that $\alpha\in S_1\Longleftrightarrow\;$  $f^{(\alpha)}(u_0)=0 \;\mbox{for any}\; f\in J_\alpha$.
		
		For $\alpha\in S_2$, let
		$$A_\alpha\;:=\inf\left\{||f||^2_{U,\gamma}:f\in E^\alpha\right\}.$$
		Choose $\left\{f_k\right\}\subset E^\alpha$ satisfying that $||f_k||^2_{U,\gamma}\;\rightarrow A_\alpha$ as $k\rightarrow\infty$. As $\gamma$ is an admissible weight on $U$, we know that $\left\{f_k\right\}$ is uniformly bounded on each compact subset of $U$ by $(2)$ of Lemma \ref{e-c}. It follows from Montel's Theorem that there is a subsequence of $\left\{f_k\right\}$ which converges uniformly on each compact subset of $U$. Without loss of generality, assume that $f_k$ uniformly converges to $g_\alpha$ on each compact subset of $U$. Then we get that $g_\alpha\in\mathcal{O}(U)$ and
		\begin{equation*}
			\begin{split}
				&g_\alpha^{(\alpha)}(u_0)=1;\\
				&g_\alpha^{(\beta)}(u_0)=0, \;\forall\beta<\alpha.
			\end{split}
		\end{equation*}   
		On the another hand, we know that
		$$\int_{ U}|g_\alpha|^2\gamma dV_U\leq\liminf\limits_{k\rightarrow\infty}\int_{ U}|f_k|^2\gamma dV_U=A_\alpha $$
		by Fatou's lemma.
		Now we find a function $g_\alpha\in A^2(U,\gamma)$ satisfying  $g_\alpha\in E^\alpha$ and $||g_\alpha||^2_{U,\gamma}=A_\alpha$. And we know that $g_\alpha \not\equiv 0$ and $A_\alpha>0$ for $g_\alpha^{(\alpha)}(u_0)=1$.

		In the following, we prove that $\left\{g_\alpha\right\}_{\alpha\in S_2}$ is an orthogonal subset of $A^2(U,\gamma)$. 
		
		We claim that  for any $h\in A^2(U,\gamma)$ satisfying  $h^{(\beta)}(u_0)=0$ for any $\beta\leq\alpha$, we have $\ll g_\alpha,h\gg_{U,\gamma}=0$, where $\alpha\in S_2.$
		
		\emph{Proof of  the claim}:
		Note that for any $c\in\mathbb{C}$, we have
		\begin{equation*}
			(ch+g_\alpha)^{(\alpha)}(u_0)=1,\;(ch+g_\alpha)^{(\beta)}(u_0)=0 \;\;\;\forall \beta<\alpha.
		\end{equation*}
		Hence, $ch+g_\alpha\in E^\alpha$. Therefore, $||ch+g_\alpha||_{U,\gamma}\geq ||g_\alpha||_{U,\gamma}$ holds for any $c\in\mathbb{C}$, which implies that $\ll g_\alpha,h\gg_{U,\gamma}=0$. (In fact, we have proven that $g_\alpha$ is unique. Otherwise, there exists another $\hat{g}_\alpha\in E^\alpha$ such that $||\hat{g}_\alpha||^2_{U,\gamma}=A_\alpha$. Then let $h=g_\alpha-\hat{g}_\alpha$. we see that
		$h^{(\beta)}(z_0)=0$ for any $\beta\leq\alpha$. So,
		$$\ll g_\alpha,h\gg_{U,\gamma}=\ll \hat{g}_\alpha,h\gg_{U,\gamma}=0,$$
		$$\ll h,h\gg_{U,\gamma}=0,$$ which implies that $\hat{g}_\alpha={g}_\alpha.$) 
		
		For  $\alpha\in S_2$, set $h_\alpha=\frac{g_\alpha}{||g_\alpha||_{U,\gamma}}$. For the present, we prove that $\left\{h_\alpha\right\}_{\alpha\in S_2}$ is a complete orhtonormal basis of $A^2(U,\gamma)$. We have proven that the set is orthonormal, so it remains to show that the set is complete.
		
		For any $f\in A^2(U,\gamma)$ satisfying that
		\begin{equation}\label{eq-34-1}
			\ll f,h_\alpha\gg_{U,\gamma}=0
		\end{equation}
		for all $\alpha\in S_2$,  we prove $f=0$. Toward a contradiction, we   assume that $f \not\equiv0$ and $ord_{z_0}f=\alpha_0$. Then there exists a  constant $c_0\neq0$ such that
		$$(f-c_0h_{\alpha_0})^{(\beta)}(u_0)=0,\quad\forall\beta\leq\alpha_0.$$
		It follows from the above claim that $\ll f-c_0h_{\alpha_0},h_{\alpha_0}\gg_{U,\gamma}=0$,
		which implies that $c_0=0$ by equality (\ref{eq-34-1}). Hence, we finish the proof of the completeness of $\left\{h_\alpha\right\}_{\alpha\in S_2}$.    
	\end{proof}

	Let $w_0=(z_0,u_0)\in M\times U$.  Let $b_0$ be a holomorphic funtion  on a neighborhood of $u_0$, and let $l_0=\prod_{1\leq j\leq n}l_j$, where
	$l_j$ is a holomorphic function on a neighborhood of $z_j\in D_j$, Denote  $\hat{h}_0=l_0b_0$ on a neighborhood of $w_0$.   
	
	Let  $\beta=(\beta^\prime,\beta^{\prime\prime})$, where $\beta^\prime=(\beta_1,\cdots,\beta_{n})$, $\beta_j= ord_{z_j}(l_j)$ for $1\leq j\leq n$ and $\beta^{\prime\prime}=(\beta_{n+1},\cdots,\beta_{n+m})=ord_{u_0}b_0$. 
	Let $\tilde{\beta}^\prime=(\tilde{\beta_1},\cdots,\tilde{\beta_n})\in\mathbb{N}^n$ and $\tilde{\beta}^{\prime\prime}=(\tilde{\beta}_{n+1},\cdots,\tilde{\beta}_{n+m})\in\mathbb{N}^{m}$, which satisfy that $\tilde{\beta_j}\geq\beta_j$ for any $1\leq j\leq n$ and $\beta^{\prime\prime}\leq \tilde{\beta}^{\prime\prime}$.      Denote 
	$$I_1:=\left\{(g,z_0)\in \mathcal{O}_{M,z_0}: g=\sum_{\alpha\in \mathbb{N}^{n}}b_\alpha(w-z_0)^\alpha \mbox{ near }z_0 \mbox{ s.t. } b_\alpha=0 \mbox{ for }\alpha\in L_{\tilde{\beta}^\prime}\right\}$$
	and
	$$I_2:=\left\{(g,u_0)\in \mathcal{O}_{U,u_0}: g=\sum_{\alpha\in \mathbb{N}^{m}}b_\alpha(u-u_0)^\alpha \mbox{ near }u_0 \mbox{ s.t. } b_\alpha=0 \mbox{ for }\alpha\in L_{\tilde{\beta}^{\prime\prime}}\right\},$$
	where $L_{\tilde{\beta}^\prime}=\left\{(\alpha_{1},\cdots,\alpha_{n})\in \mathbb{N}^{n}:\alpha_j\leq \tilde{\beta}_j\;\mbox{for}\;1\leq j\leq n \right\}$ and $L_{\tilde{\beta}^{\prime\prime}}=\left\{\alpha\in \mathbb{N}^{m}:\alpha\le \tilde\beta^{\prime\prime} \right\}$. 
	Denote 
	$$I^\prime=\left\{(g,w_0)\in\mathcal{O}_{M,\times U,w_0} : g=\sum_{\alpha\in \mathbb{N}^{n+m}}b_\alpha(w-w_0)^\alpha\;\mbox{near}\;w_0\;\mbox{s.t.}\;b_\alpha=0\;\mbox{for}\;\alpha\in L_{\tilde{\beta}}\right\},$$
	where  $L_{\tilde{\beta}}=\left\{(\alpha_1,\cdots,\alpha_{m+n})\in \mathbb{N}^{n+m} :\alpha_j\leq \tilde{\beta}_j\;\mbox{for}\,1\leq j\leq  n\;\&\;(\alpha_{n+1},\cdots,\alpha_{n+m})\leq \beta^{\prime\prime}\right\}$ and $\tilde{\beta}=(\tilde{\beta}^\prime,\tilde{\beta}^{\prime\prime})\in\mathbb{N}^{n+m}$. It is clear that $(\hat{h}_0,w_0)\notin I^\prime$.

	Let $\gamma$  be  an admissible weight on $U$, and $\tilde\rho$ be an admissible weight on $M$.  
	Denote 
	$$B_{U,\gamma}^{I_2,b_0}(u_0):=\frac{1}{\inf\left\{\int_U|f|^2\gamma: f\in\mathcal{O}(U)\;\&\; (f-b_0,u_0)\in I_2\right\}}.$$
	Similarly, we can define $B_{M\times U,\tilde{\rho}\gamma}^{I^\prime,\hat{h}_0}(w_0)$ and $B_{M,\tilde{\rho}}^{I_1,l_0}(z_0)$.
	We have the following decomposition formula.

	\begin{Lemma} \label{Le-44-1}
		\begin{equation}\label{Berg-decomp}
			B_{M\times U,\tilde{\rho}\gamma}^{I^\prime,\hat{h}_0}(w_0)=B_{M,\tilde{\rho}}^{I_1,l_0}(z_0)B_{U,\gamma}^{I_2,b_0}(u_0)
		\end{equation}
		holds.
	\end{Lemma}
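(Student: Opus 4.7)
The strategy is to prove the identity as two matching inequalities, adapting the product formula for unconstrained Bergman kernels (Lemma \ref{Le-719}) to the ideal setting.

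For the inequality $B^{I^\prime,\hat h_0}_{M\times U,\tilde\rho\gamma}(w_0) \geq B^{I_1,l_0}_{M,\tilde\rho}(z_0)\cdot B^{I_2,b_0}_{U,\gamma}(u_0)$, I would take minimizing sequences $\{f_{1,k}\}$ and $\{f_{2,k}\}$ for the two factor problems, and consider the products $F_k(z,u) := f_{1,k}(z)f_{2,k}(u)$ as competitors for the combined problem. Admissibility follows from the decomposition
$$F_k - l_0 b_0 = (f_{1,k}-l_0)(f_{2,k}-b_0) + (f_{1,k}-l_0)\,b_0 + l_0\,(f_{2,k}-b_0)$$
together with a direct Taylor coefficient check at $w_0$: for any $(\gamma,\delta)\in L_{\tilde\beta}$ (so $\gamma_j\le \tilde\beta_j$ for $1\le j\le n$ and $\delta\le\beta^{\prime\prime}$), the factor $(f_{1,k}-l_0)$ contributes only indices $\gamma\notin L_{\tilde\beta^\prime}$ (by $I_1$) and $(f_{2,k}-b_0)$ only indices $\delta\notin L_{\tilde\beta^{\prime\prime}}$ (by $I_2$); combined with $\beta^{\prime\prime}\le\tilde\beta^{\prime\prime}$, this forces every summand's $(\gamma,\delta)$-Taylor coefficient to vanish, so $(F_k-\hat h_0,w_0)\in I^\prime$. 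Fubini gives $\int|F_k|^2\tilde\rho\gamma = \int|f_{1,k}|^2\tilde\rho\cdot\int|f_{2,k}|^2\gamma$, and letting $k\to\infty$ proves this direction.

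For the reverse inequality, the plan is to exploit the tensor-product Hilbert space structure $A^2(M\times U,\tilde\rho\gamma) \cong A^2(M,\tilde\rho) \hat\otimes A^2(U,\gamma)$ (reflected concretely in Lemma \ref{A-composite-basis}). Using the bases $\{g_\alpha\}_{\alpha\in S_1}$ and $\{h_\beta\}_{\beta\in S_2}$ from Lemma \ref{A_basis} with $ord_{z_0}(g_\alpha)=\alpha$, $ord_{u_0}(h_\beta)=\beta$, the product $\{g_\alpha h_\beta\}$ is an orthonormal basis. Expand the minimizer $F_0 = \sum c_{\alpha,\beta}\,g_\alpha h_\beta$; then $\|F_0\|^2 = \sum |c_{\alpha,\beta}|^2$, and the constraint $(F_0 - \hat h_0, w_0)\in I^\prime$ — i.e., that the Taylor coefficients of $F_0$ match those of $\hat h_0 = l_0(z)b_0(u)$ at every position $(\gamma,\delta)\in L_{\tilde\beta} = L_{\tilde\beta^\prime} \times \{\delta \leq \beta^{\prime\prime}\}$ — yields a finite system of affine equations on the $c_{\alpha,\beta}$. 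Because both $L_{\tilde\beta}$ and the Taylor coefficient data of $\hat h_0$ factor as a product in $z$ and $u$, the constraint system separates, and the minimum of $\|F_0\|^2$ over the combined constraint equals the product of the minima over the two factor constraints; this yields the reverse inequality.

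The main obstacle is the separation step: Lemma \ref{A_basis} only gives a triangular (not diagonal) link between the basis coefficients $c_{\alpha,\beta}$ and the Taylor coefficients of $F_0$ at positions in $L_{\tilde\beta}$, because each basis element's leading-order Taylor term is prescribed but its higher-order terms are generally nonzero. One must verify by induction along the lex-by-degree order from Lemma \ref{A_basis} that, after accounting for the product structure of both $L_{\tilde\beta}$ and $\hat h_0$, the triangular constraint system inverts to a clean separable system in the $z$- and $u$-indices, at which point the product of separate minima follows from a standard orthonormal-basis calculation.
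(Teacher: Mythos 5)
Your first inequality (products of competitors are competitors, so the joint infimum is at most the product of the factor infima) is correct and is exactly what the paper dispatches with ``by definitions''; the Taylor-coefficient check you give is fine since $L_{\tilde\beta}=L_{\tilde\beta'}\times\{\delta\le\beta''\}$ is a product set and $\beta''\le\tilde\beta''$. The gap is in the reverse inequality, and it sits precisely at the step you flag as ``the main obstacle.'' Your plan expands the minimizer in a doubly adapted basis $\{g_\alpha h_\beta\}$ with $ord_{z_0}(g_\alpha)=\alpha$ and hopes that the triangular relation between the coefficients $c_{\alpha,\beta}$ and the Taylor coefficients of $F_0$ at positions in $L_{\tilde\beta}$ can be inverted ``along the lex-by-degree order'' into a separable system. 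But Lemma \ref{A_basis} adapts the basis to the \emph{total} lex-by-degree order, whereas $L_{\tilde\beta'}$ is a componentwise box $\{\gamma:\gamma_j\le\tilde\beta_j\}$, which is \emph{not} an initial segment of that order (e.g.\ $(1,0)<(0,\tilde\beta_2)$ but $(1,0)\notin L_{\tilde\beta'}$ when $\tilde\beta_1=0$). Consequently the Taylor coefficient of $F_0$ at $\gamma\in L_{\tilde\beta'}$ involves $c_{\alpha,\beta}$ for all $\alpha\le\gamma$ in the total order, including indices $\alpha\notin L_{\tilde\beta'}$, so the system does not triangularize into a block of prescribed coefficients indexed by $L_{\tilde\beta'}\times\{\delta\le\beta''\}$ plus free ones, and the product of minima does not follow by the bookkeeping you describe. (The constraint map does have a tensor structure $T_1\otimes T_2$ acting on rank-one data $v_1\otimes v_2$, and one could rescue your route via the identity $\|(T_1\otimes T_2)^{+}(v_1\otimes v_2)\|=\|T_1^{+}v_1\|\,\|T_2^{+}v_2\|$ for minimal-norm solutions, but that is a different argument from the inductive inversion you propose and would need to be supplied.)

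The paper avoids this entirely by \emph{not} diagonalizing in the $z$-variable. It uses Lemma \ref{A_basis} only on $U$, takes an arbitrary complete orthonormal basis of $A^2(M,\tilde\rho)$, and writes $f=\sum_{\tau}f_\tau g_\tau$ with \emph{vector-valued} coefficients $f_\tau\in A^2(M,\tilde\rho)$. Here the $u$-index set $\{\delta\le\beta''\}$ \emph{is} an initial segment of the total order, so the constraint $(f-\hat h_0,w_0)\in I'$ does force $(f_\tau-c_\tau l_0,z_0)\in I_1$ for the relevant $\tau$; then Parseval gives $\|f\|^2\ge\sum_{\tau\in A}\|f_\tau\|^2\ge\bigl(\sum_{\tau\in A}|c_\tau|^2\bigr)\|f_0\|^2_{M,\tilde\rho}=\|f_1\|^2_{U,\gamma}\|f_0\|^2_{M,\tilde\rho}$ with $f_1=\sum_{\tau\in A}c_\tau g_\tau\in C_2$, which is exactly the reverse inequality. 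You should either adopt this one-sided expansion or replace your inversion step by the pseudoinverse/tensor argument; as written, the separation claim is not justified. (Also remember to treat the degenerate case where one of the competitor sets is empty.)
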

	\begin{proof}
		Using Lemma \ref{A_basis},  there is a   
		complete orthonormal basis $\left\{g_\tau(u)\right\}_{\tau\in S_2}$  of $A^2(U,\gamma)$ 
		such that 
		$ord_{u_0}(g_\tau)=\tau$, where  $S_2\subset\mathbb{N}^m$. Let  $\left\{f_\alpha(z)\right\}_{\alpha\in S_2^\prime}$ be a   complete orthonormal basis  of $A^2(M,\tilde{\rho})$, where $S_2^\prime\subset\mathbb{N}^n$.
		Then by Lemma \ref{A-composite-basis}, we know that $\left\{f_\alpha (z)g_\tau(u)\right\}_{\alpha\in S_2^\prime,\tau\in S_2}$ is a complete orthonormal basis of $A^2(M\times U,\tilde{\rho}\gamma)$.
		
		Denote   
		\begin{equation*}
			\begin{split}
				&A:=\left\{\tau\in S_2:\beta^{\prime\prime}\leq\tau\leq\tilde{\beta}^{\prime\prime}\right\},\\
				&B:=\left\{f\in A^2(M\times U,\tilde{\rho}\gamma):(f-\hat{h}_0,w_0)\in I^\prime\right\},\\
				&C_1:=\left\{f\in A^2(M,\tilde{\rho}):(f-l_0,z_0)\in I_1\right\},\\
				&C_2:= \left\{f
				\in A^2(U,\gamma):(f-b_0,u_0)\in I_2\right\}.
			\end{split}
		\end{equation*}        
		We say that $\inf\left\{||f||^2_{M\times U,\rho\gamma} :f\in A^2(M\times U,\tilde{\rho}\gamma)\&(f-\hat{h}_0,w_0)\in I^\prime\right\}=+\infty$ if and only if $B=\emptyset$, i.e. $B_{M\times U,\tilde{\rho}\gamma}^{I^\prime,\hat{h}_0}(w_0)=0$. 
		
		If $B_{M\times U,\tilde{\rho}\gamma}^{I^\prime,\hat{h}_0}(w_0)=0$,  it is clear that  at least one of $B_{M,\tilde{\rho}}^{I_1,l_0}(z_0)$ and $B_{U,\gamma}^{I_2,b_0}(u_0)$ is zero.  Hence 
		the right member of equality (\ref{Berg-decomp}) is also zero.

		Now assume that $B_{M\times U,\tilde{\rho}\gamma}^{I^\prime,\hat{h}_0}(w_0)>0$, we know that   
		$ B\neq\emptyset$.
		We claim that for any $f\in B$, there exist $f_0\in C_1$ and $f_1\in C_2$ such that
		$$||f||_{M\times U,\tilde{\rho}\gamma}^2\geq ||f_1||^2_{U,\gamma}||f_0||^2_{M,\tilde{\rho}}.$$
		
		We prove the claim as follows.
		Since $\left\{f_\alpha (z)g_\tau(u)\right\}_{\alpha\in S_2^\prime,\tau\in S_2}$ is a complete orthonormal basis of $A^2(M\times U,\tilde{\rho}\gamma)$, we have 
		$$f=\sum_{\tau\in S_2}f_\tau g_\tau,$$ 
		where $f_\tau\in A^2(M,\tilde{\rho})$.
		As $(f-\hat{h}_0,w_0)\in I^\prime$, there is a constant $c_\tau$ such that
		$$(f_\tau-c_\tau l_0,z_0)\in I_1$$ 
		for $\tau\in A$. Note that $\sum_{\tau\in A}|c_\tau|>0$. There exsits $f_0\in A^2(M,\tilde{\rho})$ such that $(f_0-l_0,z_0)\in I_1$ and
		$$||f_0||_{M,\tilde{\rho}}^2=\inf\left\{||f||_{M,\tilde{\rho}}^2:f\in A^2(M,\tilde{\rho})\&\;(f-l_0,z_0)\in I_1\right\}.$$ 
		Thus, we know that $(f-\sum_{\tau\leq\tau_0}c_\tau g_\tau f_0,w_0)\in I^\prime$ and
		\begin{equation}\label{38-4-ine-1}
			\begin{split}
				||f||_{M\times U,\tilde{\rho}\gamma}^2&\geq ||\sum_{\tau\in A}f_\tau g_\tau ||^2_{M\times U,\tilde{\rho}\gamma}\\
				&=\sum_{\tau\in A}||f_\tau  ||^2_{M,\tilde{\rho}}\\
				&\geq \sum_{\tau\in A}|c_\tau|^2||f_0  ||^2_{M,\tilde{\rho}}. 
			\end{split}
		\end{equation}
		Let 
		$$f_1:=\sum_{\tau\in A}c_\tau g_\tau.$$
		Then we have $(f_1-b_0,u_0)\in I_2$ and $f_1
		\in A^2(U,\gamma) $. Then it follows from inequality (\ref{38-4-ine-1}) that
		$$||f||_{M\times U,\tilde{\rho}\gamma}^2\geq ||f_1||^2_{U,\gamma}||f_0||^2_{M,\tilde{\rho}}.$$
		It follows  that
		\begin{equation*}
			\inf\left\{||f||_{M\times U,\tilde{\rho}\gamma}^2:f\in B\right\}\geq \inf\left\{||gh||_{M\times U,\tilde{\rho}\gamma}^2:g\in C_1  \& h
			\in C_2\right\}.
		\end{equation*}
		
		By definitions, we have 
		$$\inf\left\{||f||_{M\times U,\tilde{\rho}\gamma}^2:f\in B\right\}\leq \inf\left\{||gh||_{M\times U,\tilde{\rho}\gamma}^2:g\in C_1  \& h
		\in C_2\right\}.$$
		Hence, we get the equality
		$$\inf\left\{||f||_{M\times U,\tilde{\rho}\gamma}^2:f\in B\right\}= \inf\left\{||gh||_{M\times U,\tilde{\rho}\gamma}^2:g\in C_1  \& h
		\in C_2\right\}.$$
		Thus, equality (\ref{Berg-decomp}) holds.   
	\end{proof}

	The definitions of $K_{\partial M,\rho}^{I_1,l_0}(z_0)$ and $K_{\partial M\times U,\rho\gamma}^{I^\prime,\hat{h}_0}(w_0)$ can be seen in Section \ref{sec:1.2}.  We have  the following decomposition formula.
	\begin{Proposition}\label{pf of 1}
		\begin{equation}\label{1}
			K_{\partial M\times U,\rho\gamma}^{I^\prime,\hat{h}_0}(w_0)=K_{\partial M,\rho}^{I_1,l_0}(z_0)B^{I_2,b_0}_{U,\gamma}(u_0)
		\end{equation}
		holds.
	\end{Proposition}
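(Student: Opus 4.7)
My approach is to transcribe the argument of Lemma \ref{Le-44-1}, replacing the weighted Bergman space $A^2(M,\tilde\rho)$ by the Hardy space $H^2_\rho(M,\partial M)$ via the embedding $P_{\partial M}$; the Hilbert-space framework developed in Section \ref{sec:3} makes this transcription routine. The key enabler is Lemma \ref{basis-comp}, which furnishes a complete orthonormal basis of $H^2_\kappa(M\times U,\partial M\times U)$ of the tensor form $\{P_{\partial M}(e_m)\,\tilde e_l\}$; paired with the order-adapted orthonormal basis $\{g_\tau\}_{\tau\in S_2}$ of $A^2(U,\gamma)$ from Lemma \ref{A_basis} (with $\mathrm{ord}_{u_0}(g_\tau)=\tau$), this yields for every $f\in H^2_\kappa(M\times U,\partial M\times U)$ a unique expansion $f(z,u)=\sum_\tau F_\tau(z)\,g_\tau(u)$ with $P_{\partial M}^{-1}(F_\tau)\in H^2_\rho(M,\partial M)$ and a Parseval-type identity $\|f\|^2_{\partial M\times U,\rho\gamma}=\sum_\tau\|P_{\partial M}^{-1}(F_\tau)\|^2_{\partial M,\rho}$ by Fubini.

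For the inequality $K_{\partial M\times U,\rho\gamma}^{I',\hat h_0}(w_0)\ge K_{\partial M,\rho}^{I_1,l_0}(z_0)\cdot B_{U,\gamma}^{I_2,b_0}(u_0)$, I verify that products of admissible factors are admissible: given $f_0\in H^2_\rho(M,\partial M)$ with $(P_{\partial M}(f_0)-l_0,z_0)\in I_1$ and $g_0\in A^2(U,\gamma)$ with $(g_0-b_0,u_0)\in I_2$, the function $h:=P_{\partial M}(f_0)\,g_0$ lies in $H^2_\kappa$ with $\|h\|^2=\|f_0\|^2_{\partial M,\rho}\|g_0\|^2_{U,\gamma}$, and the splitting $h-\hat h_0=(P_{\partial M}(f_0)-l_0)\,g_0+l_0\,(g_0-b_0)$ places $(h-\hat h_0,w_0)$ in $I'$: the first summand has Taylor coefficients in $z$ vanishing on $L_{\tilde\beta'}$, and the second has Taylor coefficients in $u$ vanishing on $L_{\tilde\beta''}\supseteq\{\alpha''\le\beta''\}$ (using $\beta''\le\tilde\beta''$), either condition implying the joint condition $L_{\tilde\beta}$. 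Taking infima over $(f_0,g_0)$ gives the inequality. The degenerate case (either feasibility set empty) is handled by the same product construction, which would force the left feasibility set to be empty as well.

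The reverse inequality is the substantive step and follows the Bergman blueprint of Lemma \ref{Le-44-1}. Given $f$ with $(f-\hat h_0,w_0)\in I'$, expand $f=\sum_\tau F_\tau g_\tau$ and determine constants $c_\tau$, $\tau\in A:=\{\tau\in S_2:\beta''\le\tau\le\tilde\beta''\}$, by solving the lower-triangular system $\sum_{\tau\in A}c_\tau g_\tau\equiv b_0\pmod{I_2}$; this system is uniquely solvable with $c_{\beta''}\ne 0$ because $\mathrm{ord}_{u_0}(g_\tau)=\tau$ makes the matrix of Taylor data lower-triangular with nonzero diagonal entries. Extracting Taylor coefficients in $u$ of $f-l_0b_0$ at degrees $\sigma\le\beta''$ and resolving the resulting triangular system in $z$ modulo $I_1$ forces $(F_\tau-c_\tau l_0,z_0)\in I_1$ for each $\tau\in A$; combined with the minimizer $F_0^*$ realizing $K_{\partial M,\rho}^{I_1,l_0}(z_0)$, this yields $\|P_{\partial M}^{-1}(F_\tau)\|^2_{\partial M,\rho}\ge|c_\tau|^2/K_{\partial M,\rho}^{I_1,l_0}(z_0)$ (trivially when $c_\tau=0$). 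Summing and observing that $f_1:=\sum_{\tau\in A}c_\tau g_\tau\in A^2(U,\gamma)$ satisfies $(f_1-b_0,u_0)\in I_2$ with $\|f_1\|^2_{U,\gamma}\ge 1/B_{U,\gamma}^{I_2,b_0}(u_0)$ produces $\|f\|^2_{\partial M\times U,\rho\gamma}\ge 1/(K_{\partial M,\rho}^{I_1,l_0}(z_0)\cdot B_{U,\gamma}^{I_2,b_0}(u_0))$, and infimum over $f\in B$ closes the proof. The principal obstacle is the combinatorial bookkeeping in the triangular extraction of the $c_\tau$ from the $I'$-condition when $\tilde\beta''>\beta''$; since this is precisely the step executed in Lemma \ref{Le-44-1}, everything genuinely new is absorbed into Lemma \ref{basis-comp} and the Hardy-space Fubini computation developed earlier in this section.
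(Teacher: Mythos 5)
Your proposal is correct and follows essentially the same route as the paper: the paper's proof also transcribes Lemma \ref{Le-44-1} to the Hardy-space setting, using the tensor basis $\{P_{\partial M}(e_\alpha)g_\tau\}$ from Lemma \ref{basis-comp} together with the order-adapted basis of $A^2(U,\gamma)$ from Lemma \ref{A_basis}, expanding $f=\sum_\tau f_\tau g_\tau$, extracting constants $c_\tau$ with $(f_\tau-c_\tau l_0,z_0)\in I_1$ for $\tau\in A$, and bounding $\|f\|^2$ below by $\|f_1\|^2_{\partial M,\rho}\|f_2\|^2_{U,\gamma}$ with $f_2=\sum_{\tau\in A}c_\tau g_\tau$. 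Your explicit verification of the product (upper-bound) direction and of the triangular extraction of the $c_\tau$ only fills in details the paper leaves implicit.
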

	\begin{proof}
		The proof is similar to the proof of Lemma \ref{Le-44-1}.
		Let $\left\{e_\alpha\right\}_{\alpha\in S_1}$ be  a complete orthonormal basis of $H_\rho^2(M,\partial M)$, where  $S_1$ is a subset of $\mathbb{N}^n$. Let $\left\{{g}_\tau\right\}_{\tau\in S_2}$ be a complete orthonormal basis of $A^2(U,\gamma)$ such that $\tau=ord_{u_0}({g}_\tau)$ by Lemma \ref{A_basis}, where $S_2$ is a subset of $\mathbb{N}^m$. Then  we know that $\left\{e_\alpha^*{g_\tau}\right\}_{\alpha\in S_1,\tau\in S_2}$ is a complete orthonormal basis of $H^2_\kappa(M\times U,\partial M\times U)$ by Lemma \ref{basis-comp}.
		
		Denote 
		\begin{equation*}
			\begin{split}
				&A:=\left\{\tau\in S_2:\beta^{\prime\prime}\leq\tau\leq\tilde{\beta}^{\prime\prime}\right\},\\
				&B:=\left\{f\in H^2_\kappa( M\times U,\partial M\times U):(f-\hat{h}_0,w_0)\in I^\prime\right\},\\
				&C_1:=\left\{f\in H^2_\rho(M,\partial M):(P_{\partial M}(f)-l_0,z_0)\in I_1\right\},\\
				&C_2:=\left\{g\in A^2(U,\gamma):(f-b_0,u_0)\in I_2\right\}.
			\end{split}
		\end{equation*}         
		We say that $\inf\left\{||f||_{\partial M\times U,\kappa}: f\in B\right\}=+\infty$ if and only if $B=\emptyset$, i.e. $K_{\partial M\times U,\kappa}^{I^\prime,\hat{h}_0}(w_0)=0$.        
		
		If $K_{\partial M\times U,\kappa}^{I^\prime,\hat{h}_0}(w_0)=0$, it is clear that at least one of $B_{U,\gamma}^{I_2,b_0}(u_0)$ and $K_{\partial M,\rho}^{I_1,l_0}(z_0)$ equals zero.  Hence, the right member of equality (\ref{1}) is also zero.
		
		Now  assume that $K_{\partial M\times U,\kappa}^{I^\prime,\hat{h}_0}(w_0)>0$. 
		We claim that for any $f\in B$, there exist $f_1\in C_1$ and $f_2\in C_2$ such that
		$$||f||^2_{\partial M\times U,\kappa}\geq ||f_1||^2_{\partial M,\rho}||f_2||^2_{U,\gamma}.$$
		
		We prove the claim in the following.       
		As $\left\{e_\alpha^*{g_\tau}\right\}_{\alpha\in S_1,\tau\in S_2}$ is a complete orthonormal basis of $H^2_\kappa(M\times U,\partial M\times U)$, we have 
		$$f=\sum_{\tau}f_\tau g_\tau,$$ 
		where $P_{\partial M}^{-1}(f_\tau)\in H^2_\rho(M,\partial M)$.    Since $(f-\hat{h}_0,w_0)\in I^\prime$, there exists a constant $c_\tau$ such that
		$$(f_\tau-c_\tau l_0,z_0)\in I_1$$
		for any $\tau\in A$. Note that $\sum_{\tau\in A}|c_\tau|>0$. There exsits an $f_1\in H^2_\rho(M,\partial M)$ such that $(P_{\partial M}(f_1)-l_0,z_0)\in I_1$ and
		$$||f_1||_{\partial M,\rho}^2=\inf\left\{||f||_{\partial M,\rho}^2:f\in H^2_\rho(M,\partial M)\&\;(P_{\partial M}(f)-l_0,z_0)\in I_1\right\}.$$ 
		Thus, we know that $(f-\sum_{\tau\in A}c_\tau g_\tau P_{\partial M}(f_1),w_0)\in I^\prime$ and
		\begin{equation}\label{40-4-ine-1}
			\begin{split}
				||f||_{\partial M\times U,\kappa}^2&\geq ||\sum_{\tau\in A}P_{\partial M}^{-1}(f_\tau) g_\tau ||^2_{\partial M\times U,\kappa}\\
				&=\sum_{\tau\in A}||P_{\partial M}^{-1}(f_\tau)  ||^2_{\partial M,\rho}\\
				&\geq \sum_{\tau\in A}|c_\tau|^2||f_1  ||^2_{\partial M,\rho}. 
			\end{split}
		\end{equation}
		Let 
		$$f_2:=\sum_{\tau\in A}c_\tau g_\tau.$$
		Then we have $f_2\in C_2$. Then it follows from inequality (\ref{40-4-ine-1}) that
		$$||f||_{\partial M\times U,\kappa}^2\geq ||f_2||^2_{U,\gamma}||f_1||^2_{\partial M,\rho}.$$
		It follows  that
		\begin{equation*}
			\inf\left\{||f||_{\partial M\times U,\kappa}^2:f\in B\right\}\geq \inf\left\{||gh||_{\partial M\times U,\kappa}^2:g\in C_1  \& h
			\in C_2\right\}.
		\end{equation*}
		
		By definitions, we have
		$$\inf\left\{||f||_{\partial M\times U,\kappa}^2:f\in B\right\}\leq \inf\left\{||gh||_{\partial M\times U,\kappa}^2:g\in C_1  \& h
		\in C_2\right\}.$$
		Hence, we get the equality
		$$\inf\left\{||f||_{\partial M\times U,\kappa}^2:f\in B\right\}= \inf\left\{||gh||_{\partial M\times U,\kappa}^2:g\in C_1  \& h
		\in C_2\right\}.$$
		Thus, equality (\ref{1}) holds.      
	\end{proof}

	\subsection{Proof of Theorem \ref{main2}}
	In this section, we prove Theorem \ref{main2}.
	
	By Lemma \ref{Le-719} ($X_1=M,X_2=U,\varphi_1=\tilde{\rho},\varphi_2=\gamma$),
	we have the following Bergman decomposition formula
	\begin{equation}\nonumber
		B_{M\times U,\eta}((\zeta,u),(\overline{z},\overline{w}))=B_{M,\tilde{\rho}}(\zeta,\overline{z})B_{U,\gamma}(u,\overline{w}).
	\end{equation}
	Combining with the equality in Proposition \ref{Pro-28}
	\begin{equation}
		\begin{split}
			K_{\partial M\times U,\kappa}((\zeta,u),(\overline{z},\overline{w}))=K_{\partial M,\rho}(\zeta,\overline{z})B_{U,\gamma}(u,\overline{w}),
		\end{split}
		\nonumber
	\end{equation}
	we know that the inequality 
	\begin{equation}
		\bigg(\int_{0}^{+\infty}c(t)e^{-t}dt\bigg)\pi B_{M\times U,\eta}((z_0,u_0),(\overline{z_0},\overline{u_0}))\leq K_{M\times U,\kappa}((z_0,u_0),(\overline{z_0},\overline{u_0}))
		\label{eq:0827c}
	\end{equation}
	holds if and only if  the inequality  
	\begin{equation}
		K_{\partial M,\rho}(z_0)\geq\bigg(\int_{0}^{+\infty}c(t)e^{-t}dt\bigg)\pi B_{\tilde{\rho}}(z_0)
		\label{eq:0827d}
	\end{equation}
	holds by  the assumption that $B_{M\times U,\eta}((z_0,u_0),(\overline{z}_0,\overline{u}_0))>0$. 
	Furthermore, the $``="$ in inequality (\ref{eq:0827c}) holds if and only if the $``="$ holds in inequality (\ref{eq:0827d}) holds. We complete the proof by Theorem \ref{key-Theorem}.

	\subsection{Proof of Theorem \ref{key-Theorem14}} We follow the notations in Lemma \ref{Le-44-1} and Proposition \ref{pf of 1}.
	Using  the equality (see Proposition \ref{pf of 1})
	\begin{equation}
		K_{\partial M\times U,\rho\gamma}^{I^\prime,\hat{h}_0}(w_0)=K_{\partial M,\rho}^{I_1,l_0}(z_0)B^{I_2,b_0}_{U,\gamma}(u_0)
		\nonumber
	\end{equation}
	and the equality (see Lemma \ref{Le-44-1})
	\begin{equation}
		B_{M\times U,\tilde{\rho}\gamma}^{I^\prime,\hat{h}_0}(w_0)=B_{M,\tilde{\rho}}^{I_1,l_0}(z_0)B_{U,\gamma}^{I_2,b_0}(u_0),
		\nonumber   
	\end{equation}
	we know that the 
	inequality 
	\begin{equation}
		K_{\partial M\times U,\rho\gamma}^{I^\prime,\hat{h}_0}(w_0)\geq\left(\int_{0}^{+\infty}c(t)e^{-t}dt\right)\pi B_{M\times U,\tilde{\rho}\gamma}^{I^\prime,\hat{h}_0}(w_0)
		\label{eq:2818b}
	\end{equation}
	holds   
	if and only if the inequality  
	\begin{equation}
		K_{\partial M,\rho}^{I,h_0}(z_0)\geq\left(\int_{0}^{+\infty}c(t)e^{-t}dt\right)\pi B_{\tilde{\rho}}^{I,h_0}(z_0)
		\label{eq:2818a}
	\end{equation}
	holds  by  the assumption that $B_{M\times U,M,\tilde{\rho}\gamma}^{I^\prime,\hat{h}_0}(w_0)>0$. Furthermore, the $``="$ in   inequality \eqref{eq:2818b}  holds if and only if the $``="$ in  inequality (\ref{eq:2818a}) holds.  As $\mathcal{I}(\psi)_{z_0}\subset I_1$, we conclude  Theorem \ref{key-Theorem14} by Theorem \ref{key-Theorem2}.

	\section{Hardy spaces on $M\times U$ over $S\times U$}
	In this section, we firstly give some properties about space $H^2_{\lambda\gamma}(M\times U, S\times U)$.  Then, we give the proofs of Theorem \ref{k-main2} and Theorem \ref{Th4.2}. 
	
	\subsection{Some properties about space $H^2_{\lambda\gamma}(M\times U, S\times U)$}\label{sec:5.1}   
	
	\ 
	
	We firstly recall some notations.
	Let $S:=\prod_{1\leq j\leq n}\partial D_j$, where $D_j$ is a planar regular region with finite boundary components which are analytic Jordan curves and $\partial D_j$ denotes the boundary of $D_j$. Let $U$ be a domain in $\mathbb{C}^m$. Let $\lambda$ be a positive continuous function on $S$, and $\gamma$ be an admissible weight on $U$.
	
	Let $f\in L^2(S,\lambda d\sigma),$ where $d\sigma :=\frac{1}{(2\pi)^n}|dw_1|\cdots|dw_n|.$ We call $f\in H_\lambda^2(M,S)$ if there exists $\left\{f_m\right\}_{m\in\mathbb{Z}_{\geq 0}}\subset \mathcal{O}(M)\cap C(\overline{M})\cap L^2(S,\lambda d\sigma)$ such that $$\lim\limits_{m\rightarrow+\infty}||f_m-f||^2_{S,\lambda}=0,$$
	where $||g||_{S,\lambda} :=\big(\int_S|g|^2\lambda d\sigma\big)^{\frac{1}{2}}$ for any $g\in L^2(S,\lambda d\sigma)$ (see \cite{GY-Hardy and product}). There is an  injective linear map (see \cite{GY-Hardy and product})
	$P_S : H_\lambda^2(M,S)\rightarrow \mathcal{O}(M)$   satisfying that $P_S(f)=f$ for any $f\in \mathcal{O}(M)\cap C(\overline{M})\cap L^2(S,\lambda d\sigma)$. So $H_\lambda^2(M,S)$ can be seen as a subspace of $\mathcal{O}(M)$.

	\begin{Lemma} [see \cite{GY-Hardy and product}]
		\label{P_S-eva-bdd}
		For any compact subset $K$ of $M$, there exists a positive constant $C(K)$ such that
		\begin{equation}\nonumber
			|P_S(f(z))| \leq C(K) ||f||_{S,\lambda}
		\end{equation}
		holds for any $z\in K$ and $f\in H_\lambda^2(M,S)$.
		
	\end{Lemma}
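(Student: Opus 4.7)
The plan is to prove the pointwise bound first on the dense subspace $\mathcal{A} := \mathcal{O}(M)\cap C(\overline{M})\cap L^2(S,\lambda d\sigma)$ and then pass to the closure, exploiting that $P_S$ is defined so that $P_S(f)=f$ on $\mathcal{A}$. For $f\in\mathcal{A}$ and $z=(z_1,\ldots,z_n)\in M$, I would iterate the single-variable reproducing formula \eqref{1:E12} of Lemma \ref{1:L4}$(c)$ one coordinate at a time: freezing $\hat{z}_j$, the slice $f(\cdot,\hat{z}_j)$ lies in $A(D_j)\subset H^2(D_j)$ because $f$ is continuous on $\overline{M}$, so its nontangential boundary values match the continuous trace and \eqref{1:E12} applies. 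Repeated use of Fubini (justified by the joint continuity of $f$ on $\overline{M}$ and of each normal derivative on $\partial D_j\times K_j$) yields
\begin{equation*}
f(z)=\int_S f(\zeta)\prod_{j=1}^n\frac{\partial G_{D_j}(\zeta_j,z_j)}{\partial\nu_{\zeta_j}}\,d\sigma(\zeta),
\end{equation*}
after absorbing the factor $(2\pi)^{-n}$ into $d\sigma$.

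Next I would apply the Cauchy--Schwarz inequality with respect to $\lambda\,d\sigma$, splitting the integrand as $f\cdot\lambda^{1/2}$ times $\lambda^{-1/2}\prod_j(\partial G_{D_j}/\partial\nu_{\zeta_j})$, to obtain
\begin{equation*}
|f(z)|^2\leq\|f\|_{S,\lambda}^2\cdot\int_S\frac{1}{\lambda(\zeta)}\prod_{j=1}^n\Bigl(\frac{\partial G_{D_j}(\zeta_j,z_j)}{\partial\nu_{\zeta_j}}\Bigr)^{\!2}d\sigma(\zeta).
\end{equation*}
Since $\inf_S\lambda>0$ and, for each $1\leq j\leq n$, the function $(\zeta_j,z_j)\mapsto\partial G_{D_j}(\zeta_j,z_j)/\partial\nu_{\zeta_j}$ is jointly continuous (hence bounded) on $\partial D_j\times K_j$ for any compact $K_j\Subset D_j$, the right-hand integral is dominated by a constant $C(K)^2$ depending only on the compact set $K\Subset M$ and the lower bound of $\lambda$. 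This establishes $|f(z)|\leq C(K)\|f\|_{S,\lambda}$ for $f\in\mathcal{A}$.

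Finally, I would extend the estimate to all $f\in H^2_\lambda(M,S)$ by density. Picking $\{f_m\}\subset\mathcal{A}$ with $\|f_m-f\|_{S,\lambda}\to 0$, the inequality applied to $f_m-f_{m'}$ shows that $\{f_m\}$ is uniformly Cauchy on every compact $K\Subset M$, so it converges uniformly on $K$ to a holomorphic function, which is precisely $P_S(f)$ by the definition of the linear injection $P_S$. Passing to the limit in $|f_m(z)|\leq C(K)\|f_m\|_{S,\lambda}$ yields the desired bound for $P_S(f)$.

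The main obstacle is the iterated reproducing formula: I must check that one may apply Lemma \ref{1:L4}$(c)$ coordinate by coordinate and exchange the resulting iterated integrals. For $f\in\mathcal{A}$ this is essentially a continuity-and-Fubini argument, but it requires verifying at each stage that the intermediate function (obtained after integrating out some of the coordinates) is still holomorphic in the remaining variables and continuous up to the boundary in the current variable, so that the next application of \eqref{1:E12} is legitimate.
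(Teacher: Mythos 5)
The paper does not prove this lemma itself; it is quoted from \cite{GY-Hardy and product}, so there is no in-paper argument to compare against. Your proposal is essentially the standard (and, as far as I can tell, the cited reference's) route: establish the evaluation bound on the dense class $\mathcal{O}(M)\cap C(\overline M)\cap L^2(S,\lambda d\sigma)$ via the iterated harmonic-measure representation $f(z)=\int_S f(\zeta)\prod_j\frac{\partial G_{D_j}(\zeta_j,z_j)}{\partial\nu_{\zeta_j}}d\sigma(\zeta)$ and Cauchy--Schwarz against $\lambda$, using $\inf_S\lambda>0$ and the continuity of $\partial G_{D_j}/\partial\nu_{\zeta_j}$ on $\partial D_j\times K_j$ (guaranteed by the analyticity of the boundary curves), then pass to general $f$ by density. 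The steps you flag as needing care do go through: for $\zeta_1\in\partial D_1$ the slice $f(\zeta_1,\cdot)$ is the uniform limit of $f(z_1,\cdot)$ as $D_1\ni z_1\to\zeta_1$ (uniform continuity on the compact $\overline M$), hence holomorphic on $M_1$ and continuous on $\overline{M_1}$, so \eqref{1:E12} can be applied coordinate by coordinate and Fubini is harmless for continuous integrands on compact sets.

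Two small points. First, a slice of $f\in\mathcal{O}(M)\cap C(\overline M)$ lies in $H^2(D_j)$ (it is bounded) but not in $A(D_j)$ as the paper defines it (holomorphic on a neighborhood of $\overline{D_j}$); this does not affect the argument, since \eqref{1:E12} only needs $H^2(D_j)$ together with the fact that the nontangential boundary values of a function continuous up to the boundary are its continuous trace. Second, your final identification of the locally uniform limit of $\{f_m\}$ with $P_S(f)$ is not forced by the properties of $P_S$ listed in this paper (linearity, injectivity, and $P_S=\mathrm{id}$ on the dense class do not by themselves pin down $P_S$ on the completion); it holds because $P_S$ is in fact defined in \cite{GY-Hardy and product} as that locally uniform limit, with well-definedness supplied by exactly the estimate you prove on the dense class. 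With that understood, the proof is correct.
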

	
	$H^2_{\lambda\gamma}(M\times U, S\times U)$ denotes the set of holomorphic functions $f$ on $M\times U$ such that $P_S^{-1}(f(\cdot,u))\in H^2_\lambda(M,S)$ for every fixed $u\in U$, with finite norms
	$$||f||_{S\times U,\lambda\gamma}: =\bigg(\int_U\gamma(u)dV_U\int_S|P_S^{-1}(f(\cdot,u))|^2\lambda d\sigma\bigg)^{\frac{1}{2}}.$$ 
	
	For every $f\in H^2_{\lambda\gamma}(M\times U, S\times U)$, we define
	$$\hat{W}_f(u):=\int_S|P_S^{-1}(f(z,u))|^2\lambda(z)d\sigma(z).$$
	Now, we give an inequality as follows. 
	\begin{Lemma}\label{first-lem}
		For every compact subset $K$ of $U$, there exists a constant $L(K)$, independent of $f$, such that
		\begin{equation}\label{key-S}
			\hat{W}_f(u) \leq L(K)||f||^2_{S\times U,\lambda\gamma}
		\end{equation}
		for every $u\in K.$
	\end{Lemma}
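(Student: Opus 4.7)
The plan is to mimic the proof of Lemma \ref{2L:5}, replacing the one-variable $\partial D$-reproducing representation of a Hardy space function by its iterated analogue on the distinguished boundary $S=\prod_{j=1}^{n}\partial D_j$, and then combining this with the admissibility of $\gamma$ on $U$ exactly as in the one-variable case.

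First I would introduce exhaustions. For each $j$ let $\{D_{j,l}\}_{l\ge 1}$ be an increasing sequence of subdomains of $D_j$ with smooth boundaries $\partial D_{j,l}$ and $\bigcup_l D_{j,l}=D_j$, set $S_l:=\prod_j\partial D_{j,l}\subset M$, and fix $t=(t_1,\ldots,t_n)\in\prod_j D_{j,1}$. Define
\begin{equation*}
\hat{Q}_l(t,u):=\frac{1}{(2\pi)^n}\int_{S_l}|f(\zeta,u)|^2\prod_{j=1}^{n}\frac{\partial G_{D_{j,l}}(\zeta_j,t_j)}{\partial\nu_{\zeta_j}}|d\zeta_1|\cdots|d\zeta_n|
\end{equation*}
and
\begin{equation*}
\hat{Q}(t,u):=\frac{1}{(2\pi)^n}\int_{S}|P_S^{-1}(f(\zeta,u))|^2\prod_{j=1}^{n}\frac{\partial G_{D_{j}}(\zeta_j,t_j)}{\partial\nu_{\zeta_j}}|d\zeta_1|\cdots|d\zeta_n|.
\end{equation*}
By iterating Lemma \ref{P1} one variable at a time, using plurisubharmonicity of $|f(\cdot,u)|^2$ on $M$, one obtains $\hat{Q}_l(t,u)\uparrow\hat{Q}(t,u)$ for each $u\in U$ as $l\to\infty$.

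Next, iterating Lemma \ref{1:L4}(c) in each variable yields the product reproducing identity
\begin{equation*}
f(z,u)=\frac{1}{(2\pi)^n}\int_{S}P_S^{-1}(f(\zeta,u))\prod_{j=1}^{n}\frac{\partial G_{D_j}(\zeta_j,z_j)}{\partial\nu_{\zeta_j}}|d\zeta_1|\cdots|d\zeta_n|,
\end{equation*}
so by Cauchy--Schwarz $|f(z,u)|^2\leq\hat{Q}(z,u)$, and integrating against $\gamma(u)dV_U$ shows $f(z,\cdot)\in A^2(U,\gamma)$ for every $z\in M$. Admissibility of $\gamma$ (Lemma \ref{e-c}) then provides a constant $C_K>0$, independent of $z\in M$ and of $f$, with
\begin{equation*}
|f(z,w)|^2\leq C_K\int_{U}|f(z,u)|^2\gamma(u)dV_U\qquad (w\in K,\ z\in M).
\end{equation*}
Specializing $z\in S_l$ and integrating over $S_l$ against the positive weight $\prod_j\partial G_{D_{j,l}}(\zeta_j,t_j)/\partial\nu_{\zeta_j}$ gives $\hat{Q}_l(t,w)\leq C_K\int_U\hat{Q}_l(t,u)\gamma(u)dV_U$; letting $l\to\infty$ and invoking monotone convergence on both sides yields $\hat{Q}(t,w)\leq C_K\int_U\hat{Q}(t,u)\gamma(u)dV_U$ for every $w\in K$.

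Finally, since $\prod_j\partial G_{D_j}(\zeta_j,t_j)/\partial\nu_{\zeta_j}$ and $\lambda(\zeta)$ are both positive and continuous on the compact set $S$, there exist $0<m\leq M$ with $m\lambda(\zeta)\leq\prod_j\partial G_{D_j}(\zeta_j,t_j)/\partial\nu_{\zeta_j}\leq M\lambda(\zeta)$ on $S$; consequently $(2\pi)^n\hat{Q}(t,u)$ is comparable to $\hat{W}_f(u)$ up to these constants, and substituting into the previous inequality establishes \eqref{key-S} with $L(K)=C_KM/m$. The main obstacle is the monotone convergence step $\hat{Q}_l(t,u)\uparrow\hat{Q}(t,u)$: one has to reconcile the abstract definition of $P_S^{-1}$ as an $L^2(S,\lambda d\sigma)$-limit of elements of $\mathcal{O}(M)\cap C(\overline{M})\cap L^2(S,\lambda d\sigma)$ with the iterated slicewise application of the one-variable boundary-value theory used in Lemma \ref{P1}; once this compatibility is in place, the remainder is a straightforward adaptation of the proof of Lemma \ref{2L:5}.
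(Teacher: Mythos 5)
Your outline follows the same overall strategy as the paper's proof (exhaust each $D_j$ by the sublevel sets of its Green function, use plurisubharmonicity of $|f(\cdot,u)|^2$ to get monotonicity of the truncated boundary integrals, apply admissibility of $\gamma$ to pull the sup over $u\in K$ inside, pass to the limit by monotone convergence, and finally compare $\prod_j\partial G_{D_j}(\zeta_j,t_j)/\partial\nu_{\zeta_j}$ with $\lambda$ on the compact set $S$). However, there is a genuine gap exactly at the step you yourself flag as "the main obstacle": the convergence $\hat{Q}_l(t,u)\uparrow\hat{Q}(t,u)$, where the limit is expressed in terms of $P_S^{-1}(f(\cdot,u))$. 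You propose to obtain it "by iterating Lemma \ref{P1} one variable at a time," but this does not work as stated: unlike $H^2_\rho(M,\partial M)$, the space $H^2_\lambda(M,S)$ is \emph{not} defined by slicewise one-variable Hardy conditions; it is defined as the $L^2(S,\lambda d\sigma)$-closure of $\mathcal{O}(M)\cap C(\overline{M})\cap L^2(S,\lambda d\sigma)$, and $P_S^{-1}(f(\cdot,u))$ is only an abstract $L^2$-limit. For the same reason, the iterated reproducing identity on $S$ that you invoke from Lemma \ref{1:L4}(c) is itself unproved for elements of $H^2_\lambda(M,S)$ (and is in fact not needed: if $\int_U|f(z,u)|^2\gamma\,dV_U=+\infty$ the admissibility bound is vacuous, so one need not verify $f(z,\cdot)\in A^2(U,\gamma)$ beforehand).

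The missing ingredient, which is the actual content of the paper's argument, is a double-approximation and interchange of limits: pick $f_m(\cdot,u)\in\mathcal{O}(M)\cap C(\overline{M})\cap L^2(S,\lambda d\sigma)$ with $\|f_m(\cdot,u)-P_S^{-1}(f(\cdot,u))\|_{S,\lambda}\to 0$ (hence, by the boundedness of point evaluations on $H^2_\lambda(M,S)$, also locally uniformly on $M$); set $\hat{W}_{k,m}(u)$ to be the truncated integral with $f_m$ in place of $f$; show via Cauchy--Schwarz and a uniform bound on $\|f_m(\cdot,u)\|$ that $\hat{W}_{k,m}(u)\to\hat{W}_k(u)$ as $m\to\infty$ \emph{uniformly in $k$}; note that for each fixed $m$ one has $\lim_{k\to\infty}\hat{W}_{k,m}(u)=\int_S|f_m|^2\prod_j\frac{\partial G_{D_j}}{\partial\nu_{\zeta_j}}d\sigma$ because $f_m$ is continuous up to $\overline{M}$; and then interchange the limits in $k$ and $m$. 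Without this (or an equivalent substitute), the identification of $\lim_k\hat{Q}_l$ with the boundary integral of $|P_S^{-1}(f(\cdot,u))|^2$ over $S$ is unjustified, and the monotone convergence step on both sides of your inequality has no target to converge to.
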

	\begin{proof}
		As both $\prod_{1\leq l\leq n}\frac{\partial G_{D_{l,k}}}{\partial\nu_{z_l}}(z_l,w_l)$ and $\lambda$ have positive lower bounds and upper bounds, we only prove $(\ref{key-S})$ with weight $\prod_{1\leq l\leq n}\frac{\partial G_{D_{l,k}}}{\partial\nu_{z_l}}(z_l,w_l)\gamma$.
		For fixed $w=(w_1,\cdots,w_n)\in M$, let $D_{l,k}=\left\{z_l\in D_l|G_{D_l}(z_l,w_l)\leq \log(1-\frac{1}{k})\right\}$ with smooth boundary $\partial D_{l,k}$, which is  an increasing sequence of domains  with respect to $k$  such that $w_l\in D_{l,1}$ and $D_l=\bigcup_k D_{l,k}$.  It is well known that $G_{D_{l,k}}(\cdot,w_l)=G_{D_l}(\cdot,w_l)-\log(1-\frac{1}{k})$ is the Green function on $D_{l,k}$.
		Let
		$$\hat{W}_k(u):=\int_{\prod_{1\leq l\leq n}\partial D_{l,k}}|f(z,u)|^2\prod_{1\leq l\leq n}\frac{\partial G_{D_{l,k}}}{\partial\nu_{z_l}}(z_l,w_l)d\sigma.$$
		As $|f(z,u)|^2$ is plurisubharmonic in $z$ for fixed $u$, we know that $\hat{W}_k(u)$ is increasing with respect to $k$. Since $f(z,\cdot)\in \mathcal{O}(U)$ and $\gamma$ is an admissible weight on $U$, there exists a constant $C(K)$ (independent of $z$ and $f$) such that
		$$\sup\limits_{u\in K}|f(z,u)|^2 \leq C(K)\int_U|f(z,u)|^2\gamma(u)dV_U$$
		for every compact subset $K$ of $U$.
		Thus, we have
		\begin{equation}\label{transfer}
			\begin{split}
				\hat{W}_k(u)&\leq C(K)\int_{\prod_{1\leq l\leq n}\partial D_{l,k}}\int_U|f(z,u)|^2\gamma(u)dV_U\prod_{1\leq l\leq n}\frac{\partial G_{D_{l,k}}}{\partial\nu_{z_l}}(z_l,w_l)d\sigma\\
				&=C(K)\int_U\gamma(u)\int_{\prod_{1\leq l\leq n}\partial D_{l,k}}|f(z,u)|^2\prod_{1\leq l\leq n}\frac{\partial G_{D_{l,k}}}{\partial\nu_{z_l}}(z_l,w_l)d\sigma dV_U\\
				&=C(K)\int_U\gamma(u)\hat{W}_k(u)dV_U.
			\end{split}
		\end{equation}
		In the following, we  show that
		\begin{equation}\label{target}
			\hat{W}_k(u)\nearrow \int_S|P_S^{-1}f(z,u)|^2\prod_{1\leq l\leq n}\frac{\partial G_{D_{l}}}{\partial\nu_{z_l}}(z_l,w_l)d\sigma\quad (k\rightarrow\infty).
		\end{equation}  
		For each fixed $u\in U$,  $P_S^{-1}(f(z,u))\in H^2_\lambda(M,S)$. Then there exists $\left\{f_m(z,u)\right\}_{m\in\mathbb{Z}_{\geq 0}}\subset \mathcal{O}(M)\cap C(\overline{M})\cap L^2(S,\lambda d\sigma)$ such that
		\begin{equation}\label{app}
			\lim\limits_{m\rightarrow+\infty}||f_m(\cdot,u)-P_S^{-1}(f(\cdot,u))||^2_{S,\lambda}=0,
		\end{equation}
		and by Lemma \ref{P_S-eva-bdd}, we know that
		$\left\{f_m(\cdot,u)\right\}_{m\in\mathbb{Z}_{\geq 0}}$ converges uniformly to $f(\cdot,u)$ on every compact subset of $M$.
		Since $\lambda$ and $\prod_{1\leq l\leq n}\frac{\partial G_{D_{l}}}{\partial\nu_{z_l}}(z_l,w_l)$ are both positive and continuous on  $S$, using equality (\ref{app}), we  get 
		\begin{equation}\nonumber
			\lim\limits_{m\rightarrow+\infty}||f_m(\cdot,u)-P_S^{-1}(f(\cdot,u))||^2_{S,\prod_{1\leq l\leq n}\frac{\partial G_{D_{l}}}{\partial\nu_{z_l}}(z_l,w_l)}=0. 
		\end{equation} 
		Then there exists a constant $L(u)$ such that 
		$$||f_m(\cdot,u)||^2_{{S,\prod_{1\leq l\leq n}\frac{\partial G_{D_{l,k}}}{\partial\nu_{z_l}}(z_l,w_l)}}\leq L(u)$$
		for any $k$.
		And it is clear that
		\begin{equation}\label{usual-ine}
			\begin{split}
				&\int_{\prod_{1\leq l\leq n}\partial D_{l,k}}\bigg(|f_m(z,u)|+|f_n(z,u)|\bigg)^2\prod_{1\leq l\leq n}\frac{\partial G_{D_{l,k}}}{\partial\nu_{z_l}}(z_l,w_l)d\sigma\\
				&\leq 2\int_{\prod_{1\leq l\leq n}\partial D_{l,k}}\bigg(|f_m(z,u)|^2+|f_n(z,u)|^2\bigg)\prod_{1\leq l\leq n}\frac{\partial G_{D_{l,k}}}{\partial\nu_{z_l}}(z_l,w_l)d\sigma\\
				&\leq 4L(u).
			\end{split}
		\end{equation}
		For $f_m(\cdot,u)\in \mathcal{O}(M)\cap C(\overline{M})$, equality (\ref{target}) obviously holds as $P_S^{-1}(f_m(\cdot,u))=f_m(z,u)$ on $S$ and $\frac{\partial G_{D_{l,k}}}{\partial\nu_{z_l}}(z_l,w_l)=\frac{\partial G_{D_{l}}}{\partial\nu_{z_l}}(z_l,w_l)\in C(\overline{M})$.  Let
		$$\hat{W}_{k,m}(u)=\int_{\prod_{1\leq l\leq n}\partial D_{l,k}}|f_m(z,u)|^2\prod_{1\leq l\leq n}\frac{\partial G_{D_{l,k}}}{\partial\nu_{z_l}}(z_l,w_l)d\sigma.$$ 
		Then it follows from the Cauchy inequality and inequality (\ref{usual-ine}) that
		\begin{equation}\label{ine-32}
			\begin{split}
				&|\hat{W}_{k,m}(u)-\hat{W}_{k,n}(u)|\\
				&\leq \int_{\prod_{1\leq l\leq n}\partial D_{l,k}}\bigg||f_m(z,u)|^2-|f_n(z,u)|^2\bigg|\prod_{1\leq l\leq n}\frac{\partial G_{D_{l,k}}}{\partial\nu_{z_l}}(z_l,w_l)d\sigma\\
				&\leq \int_{\prod_{1\leq l\leq n}\partial D_{l,k}}\bigg|f_m(z,u)-f_n(z,u)\bigg|\bigg(|f_m(z,u)|+|f_n(z,u)|\bigg)\prod_{1\leq l\leq n}\frac{\partial G_{D_{l,k}}}{\partial\nu_{z_l}}(z_l,w_l)d\sigma\\
				&\leq \bigg(\int_{\prod_{1\leq l\leq n}\partial D_{l,k}}|a_{m,n}|^2c_kd\sigma\bigg)^{1/2}
				\times \bigg(\int_{\prod_{1\leq l\leq n}\partial D_{l,k}}(b_{m,n})^2c_kd\sigma\bigg)^{1/2}\\ &\leq   2\sqrt{L(u)}||f_m(\cdot,u)-f_n(\cdot,u)||_{{S,\prod_{1\leq l\leq n}\frac{\partial G_{D_{l}}}{\partial\nu_{z_l}}(z_l,w_l)}},         
			\end{split}
		\end{equation}
		where $a_{m,n}=f_m(z,u)-f_n(z,u),b_{m,n}=|f_m(z,u)|+|f_n(z,u)|,c_k=\prod_{1\leq l\leq n}\frac{\partial G_{D_{l,k}}}{\partial\nu_{z_l}}(z_l,w_l)$.
		It follows from equality $(\ref{app})$ and inequality (\ref{ine-32})  that $$\lim\limits_{m\rightarrow\infty}\hat{W}_{k,m}(u)=\hat{W}_{k}(u)$$ uniformly on $k=1,2,\cdots$.
		On the other hand, we have
		$$\lim\limits_{m\rightarrow\infty}\lim\limits_{k\rightarrow\infty}\hat{W}_{k,m}(u)=\int_S|P_S^{-1}f(z,u)|^2\prod_{1\leq l\leq n}\frac{\partial G_{D_{l}}}{\partial\nu_{z_l}}(z_l,w_l)d\sigma.$$
		So, we get
		\begin{equation}\nonumber
			\begin{split}
				\lim\limits_{k\rightarrow\infty}\hat{W}_k(u)&=
				\lim\limits_{k\rightarrow\infty}\lim\limits_{m\rightarrow\infty}\hat{W}_{k,m}(u)
				=\lim\limits_{m\rightarrow\infty}\lim\limits_{k\rightarrow\infty}\hat{W}_{k,m}(u)\\
				&=\int_S|P_S^{-1}f(z,u)|^2\prod_{1\leq l\leq n}\frac{\partial G_{D_{l}}}{\partial\nu_{z_l}}(z_l,w_l)d\sigma,
			\end{split}
		\end{equation}
		where the second $``="$  arises from which $\lim\limits_{m\rightarrow\infty}\hat{W}_{k,m}(u)=\hat{W}_{k}(u)$ uniformly on $k=1,2,\cdots$.

		Let $k\rightarrow\infty$
		in inequality (\ref{transfer}), we get the inequality 
		\begin{equation}\nonumber
			\begin{split}
				&\int_{S}|P_S^{-1}f(z,u)|^2\prod_{1\leq l\leq n}\frac{\partial G_{D_{l}}}{\partial\nu_{z_l}}(z_l,w_l)d\sigma\\
				&\leq C(K)\int_{S}\int_U|P_S^{-1}f(z,u)|^2\gamma(u)dV_U\prod_{1\leq l\leq n}\frac{\partial G_{D_{l}}}{\partial\nu_{z_l}}(z_l,w_l)d\sigma.
			\end{split}
		\end{equation}  
		Since  $\lambda$ and $\prod_{1\leq l\leq n}\frac{\partial G_{D_{l}}}{\partial\nu_{z_l}}(z_l,w_l)$ are both positive and continuous on compact $S$, inequality (\ref{key-S}) holds.
	\end{proof}
	Here we show that the evaluation $e_{(z,u)} :f\longmapsto f(z,u)$ from $H^2_{\lambda\gamma}(M\times U, S\times U)$ to $\mathbb{C}$ is a bounded  linear map for each fixed $(z,u)\in M\times U$.
	\begin{Lemma}\label{Le-33-1}
		For every compact subset $K=K_1\times K_2$ of $M\times U$, there exists a constant $C_2(K)$ such that
		\begin{equation}\label{eva-bdd}
			\sup\limits_{(z,u)\in K}  | f(z,u)|\leq C_2(K)||f||_{S\times U,\lambda\gamma}
		\end{equation}
		for every $f\in H^2_{\lambda\gamma}(M\times U, S\times U)$.
	\end{Lemma}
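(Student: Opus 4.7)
The plan is to prove Lemma \ref{Le-33-1} by combining the two already-established boundedness estimates: Lemma \ref{P_S-eva-bdd}, which bounds pointwise values of $P_S(f)$ on compact subsets of $M$ by the $H^2_\lambda(M,S)$-norm, and Lemma \ref{first-lem}, which bounds $\hat{W}_f(u) = \|P_S^{-1}(f(\cdot,u))\|^2_{S,\lambda}$ uniformly for $u$ in compact subsets of $U$ by the full norm $\|f\|^2_{S\times U,\lambda\gamma}$.

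The key observation is that for fixed $u \in U$, the function $g_u := P_S^{-1}(f(\cdot,u))$ lies in $H^2_\lambda(M,S)$ by the very definition of $H^2_{\lambda\gamma}(M\times U, S\times U)$, and $P_S(g_u)(z) = f(z,u)$ by construction. So for any $(z,u) \in K_1 \times K_2$, Lemma \ref{P_S-eva-bdd} applied to $g_u$ yields a constant $C(K_1)$, depending only on $K_1$, with
\begin{equation*}
|f(z,u)|^2 = |P_S(g_u)(z)|^2 \leq C(K_1)^2 \|g_u\|^2_{S,\lambda} = C(K_1)^2 \, \hat{W}_f(u).
\end{equation*}
Then applying Lemma \ref{first-lem} with the compact set $K_2 \subset U$ gives a constant $L(K_2)$ such that $\hat{W}_f(u) \leq L(K_2) \|f\|^2_{S\times U,\lambda\gamma}$ for every $u \in K_2$. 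Chaining the two estimates produces
\begin{equation*}
\sup_{(z,u) \in K_1 \times K_2} |f(z,u)|^2 \leq C(K_1)^2 L(K_2) \|f\|^2_{S\times U,\lambda\gamma},
\end{equation*}
so the desired inequality holds with $C_2(K) := C(K_1)\sqrt{L(K_2)}$.

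There is essentially no technical obstacle here: both ingredients have been proved just above, and the argument is a direct composition. The only point worth checking carefully is that $g_u \in H^2_\lambda(M,S)$ genuinely for every $u \in U$ (not just a.e.), which is immediate from the definition of $H^2_{\lambda\gamma}(M\times U, S\times U)$ at the start of Section \ref{sec:5.1}. Once this is observed, boundedness of the evaluation functional $e_{(z,u)}$ at any individual point $(z,u) \in M\times U$ is the special case $K_1 = \{z\}$, $K_2 = \{u\}$.
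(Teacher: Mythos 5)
Your argument is correct and is essentially the paper's own proof: a direct composition of the pointwise evaluation bound on $M$ (Lemma \ref{P_S-eva-bdd}) with a uniform bound in the $U$-variable, yielding the constant $C(K_1)\sqrt{L(K_2)}$. The only cosmetic difference is that the paper handles the $U$-direction by invoking the admissibility of $\gamma$ on $f(z,\cdot)$ directly instead of citing Lemma \ref{first-lem}; both routes give the same two-step estimate.
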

	\begin{proof} 
		As $\gamma$ is an admissible weight, we get
		\begin{equation*}
			\begin{split}
				\sup\limits_{u\in K_2}|f(z,u)|^2\leq C({K_2}) \int_U|f(z,w)|^2\gamma(w)dV_U.
			\end{split}  
		\end{equation*}
		By  Lemma \ref{P_S-eva-bdd}, we know that
		\begin{equation}
			\sup\limits_{z\in K_1}|f(z,u)|^2\leq C(K_1)\int_S|P_S^{-1}(f(z,u))|^2\lambda(z)d\sigma.
			\nonumber
		\end{equation}
		Combinig with the above two inequalities, we obtain that
		\begin{equation}
			\sup\limits_{(z,u)\in K}|f(z,u)|^2\leq  {C(K_1)C({K_2})}\int_U\int_S|P_S^{-1}(f(z,u))|^2\lambda(z)d\sigma(z)\gamma(u)dV_U.
			\nonumber
		\end{equation}
		Hence, inequality (\ref{eva-bdd}) holds.
	\end{proof}
	\begin{Lemma}\label{key-Coro}
		Let $\left\{f_m\right\}$ be a Cauchy sequence in $H^2_{\lambda\gamma}(M\times U, S\times U)$. Then $\left\{f_m\right\}$ converges to $f\in\mathcal{O}(M\times U)$ uniformly on each compact subset of $M\times U$, and satisfies that for fixed $u$, $P_S^{-1}(f(\cdot,u))\in H^2_\lambda(M,S)$ and
		\begin{equation}\label{key-equal}
			\lim\limits_{m\rightarrow+\infty}||P_S^{-1}(f_m(\cdot,u))-P_S^{-1}(f(\cdot,u))||_{S,\lambda}=0.
		\end{equation}
	\end{Lemma}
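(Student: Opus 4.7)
The plan is to mirror the argument used for Corollary \ref{2:C1} and Lemma \ref{key-Coro2}, replacing the role of the Hardy space of a single planar region by $H^2_\lambda(M,S)$ and using Lemmas \ref{first-lem} and \ref{P_S-eva-bdd} in place of Lemmas \ref{2L:5} and \ref{1:C1}.

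First I would fix $u\in U$ and apply Lemma \ref{first-lem} to the difference $f_m-f_n$: for any compact $K\subset U$ containing $u$, there exists $L(K)>0$, independent of $m,n$, such that
\[
\int_S |P_S^{-1}(f_m(\cdot,u)-f_n(\cdot,u))|^2\lambda\, d\sigma \leq L(K)\|f_m-f_n\|^2_{S\times U,\lambda\gamma}.
\]
Since $\{f_m\}$ is Cauchy in $H^2_{\lambda\gamma}(M\times U, S\times U)$, this shows that $\{P_S^{-1}(f_m(\cdot,u))\}$ is a Cauchy sequence in $H^2_\lambda(M,S)$. Because $H^2_\lambda(M,S)$ is a Hilbert space (see \cite{GY-Hardy and product}), there exists $g_u\in H^2_\lambda(M,S)$ with $\|P_S^{-1}(f_m(\cdot,u))-g_u\|_{S,\lambda}\to 0$.

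Next I would apply Lemma \ref{Le-33-1} to $f_m-f_n$: for every compact $K\subset M\times U$ there is a constant $C_2(K)$ with
\[
\sup_{(z,u)\in K}|f_m(z,u)-f_n(z,u)|\leq C_2(K)\|f_m-f_n\|_{S\times U,\lambda\gamma},
\]
so $\{f_m\}$ converges uniformly on compact subsets of $M\times U$ to a function $f$, which is therefore holomorphic on $M\times U$. For each fixed $u\in U$, Lemma \ref{P_S-eva-bdd} applied to $P_S^{-1}(f_m(\cdot,u))-g_u$ shows that $f_m(\cdot,u)=P_S(P_S^{-1}(f_m(\cdot,u)))$ converges to $P_S(g_u)$ uniformly on compact subsets of $M$; comparing with the uniform-on-compacta limit $f(\cdot,u)$ already identified, one has $f(\cdot,u)=P_S(g_u)$, hence $P_S^{-1}(f(\cdot,u))=g_u\in H^2_\lambda(M,S)$. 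The desired norm convergence \eqref{key-equal} then follows from the construction of $g_u$ in the first step.

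The only subtle point is the compatibility between the two notions of limit for $f_m(\cdot,u)$: the $\|\cdot\|_{S,\lambda}$ limit of $P_S^{-1}(f_m(\cdot,u))$ on the distinguished boundary, and the uniform-on-compacta limit of $f_m$ on $M\times U$. The reconciliation uses that $P_S$ is injective and sends $\|\cdot\|_{S,\lambda}$-convergence to uniform convergence on compact subsets of $M$ via Lemma \ref{P_S-eva-bdd}, so the two candidate limits of $f_m(\cdot,u)$ on $M$ must agree. I expect this identification step, rather than any estimation, to require the most careful bookkeeping, but it is essentially routine once Lemmas \ref{first-lem}, \ref{P_S-eva-bdd} and \ref{Le-33-1} are in hand.
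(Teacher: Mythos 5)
Your proposal is correct and follows essentially the same route as the paper's proof: Lemma \ref{first-lem} gives that $\{P_S^{-1}(f_m(\cdot,u))\}$ is Cauchy in the Hilbert space $H^2_\lambda(M,S)$, Lemma \ref{Le-33-1} gives the locally uniform limit $f\in\mathcal{O}(M\times U)$, and Lemma \ref{P_S-eva-bdd} reconciles the two limits so that $f(\cdot,u)=P_S(g_u)$. The identification step you flag as subtle is handled in the paper exactly as you describe.
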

	\begin{proof}
		By Lemma \ref{Le-33-1}, $\left\{f_m\right\}$ converges to a holomorphic function $f$ on $M\times U$ uniformly on each compact subset of $M\times U$. By Lemma \ref{first-lem}, we know $\left\{P_S^{-1}(f_m(\cdot,u))\right\}$ is a Cauchy sequence in $H^2_\lambda(M,S)$ for each fixed $u$. Then there exists  $g(\cdot,u)\in H^2_\lambda(M,S)$ such that
		$$\lim\limits_{m\rightarrow+\infty}||P_S^{-1}(f_m(\cdot,u))-g(\cdot,u)||_{S,\lambda}=0$$
		since $H^2_\lambda(M,S)$ is a Hilbert space.
		Besides, it follows from Lemma \ref{P_S-eva-bdd} that $\left\{f_m\right\}$ converges uniformly to $P_S(g)$ on each compact subset of $M$. Thus, for fixed $u\in U$, we have $f=P_S(g)$. Thus, Lemma \ref{key-Coro} holds.
	\end{proof}
	Now we show that $H^2_{\lambda\gamma}(M\times U, S\times U)$ is a Hilbert space.
	\begin{Proposition}
		$H^2_{\lambda\gamma}(M\times U, S\times U)$ is a Hilbert space with the following inner product:
		\begin{equation}
			\ll f,g\gg_{S\times U,\lambda\gamma}=\int_{S\times U}P_S^{-1}(f(z,u))\overline{P_S^{-1}(g(z,u))}\lambda\gamma d\sigma dV_U.
			\nonumber
		\end{equation}
	\end{Proposition}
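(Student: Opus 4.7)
The plan is to mirror the proof of Proposition \ref{completion} essentially verbatim, with $P_{\partial M}$, $\partial M$, $d\mu$, and Lemma \ref{Le-24-1}/\ref{key-Coro2}/\ref{evaluation-ine} replaced by their Szeg\H{o}-type counterparts $P_S$, $S$, $d\sigma$, and Lemmas \ref{first-lem}/\ref{key-Coro}/\ref{Le-33-1}. First I would verify the standard algebraic axioms of an inner product; the only nontrivial one is positive-definiteness, which follows from injectivity of the linear map $P_S\colon H^2_\lambda(M,S)\to\mathcal{O}(M)$: if $\|f\|_{S\times U,\lambda\gamma}=0$ then $P_S^{-1}(f(\cdot,u))=0$ in $L^2(S,\lambda d\sigma)$ for a.e.\ $u\in U$, forcing $f(\cdot,u)\equiv0$ on $M$ for a.e.\ $u$, and hence $f\equiv0$ on $M\times U$ by joint holomorphy.

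Next I would reduce completeness to the proven convergence package. Given a Cauchy sequence $\{f_m\}\subset H^2_{\lambda\gamma}(M\times U,S\times U)$, Lemma \ref{key-Coro} already produces a limit $f\in\mathcal{O}(M\times U)$, with $\{f_m\}\to f$ uniformly on compact subsets of $M\times U$, and with $P_S^{-1}(f(\cdot,u))\in H^2_\lambda(M,S)$ for every $u\in U$ together with the fiberwise $L^2$-convergence $\|P_S^{-1}(f_m(\cdot,u))-P_S^{-1}(f(\cdot,u))\|_{S,\lambda}\to 0$. The joint measurability of $P_S^{-1}(f(z,u))$ on $S\times U$ follows by passing to a subsequence that converges a.e.\ on $S\times U$ (a consequence of Fubini applied to $\int_U\|P_S^{-1}(f_{m_k}(\cdot,u))-P_S^{-1}(f(\cdot,u))\|^2_{S,\lambda}\gamma\,dV_U\to0$, which in turn follows from Step~3).

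Step 3 is to show $\|f\|_{S\times U,\lambda\gamma}<\infty$ and $\|f-f_m\|_{S\times U,\lambda\gamma}\to0$. Set
\[
W_m(u):=\int_S|P_S^{-1}(f_m(\cdot,u))|^2\lambda\,d\sigma,\qquad W(u):=\int_S|P_S^{-1}(f(\cdot,u))|^2\lambda\,d\sigma,
\]
so that $W_m(u)\to W(u)$ pointwise by Lemma \ref{key-Coro}. Using a uniform bound $\|f_m\|^2_{S\times U,\lambda\gamma}\le L$ together with Fatou's lemma and the Cauchy--Schwarz inequality, one bounds
\[
\int_U|W_m(u)-W_n(u)|\gamma\,dV_U\le 2\sqrt{L}\,\liminf_{m\to\infty}\|f_m-f_n\|_{S\times U,\lambda\gamma},
\]
exactly as in the display in Proposition \ref{completion}. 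Letting $m\to\infty$ gives $\int_U W(u)\gamma\,dV_U=\lim_n\int_U W_n(u)\gamma\,dV_U<+\infty$, i.e.\ $f\in H^2_{\lambda\gamma}(M\times U,S\times U)$.

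Finally, to obtain norm convergence, I would apply Fatou's lemma to the nonnegative functions $2(W_m+W)-g_m$ where $g_m(u):=\int_S|P_S^{-1}(f_m-f)(\cdot,u)|^2\lambda\,d\sigma\to 0$ pointwise and is dominated by $2(W_m+W)$; combined with the previous identity $\int_U(W_m+W)\gamma\,dV_U\to 2\int_U W\gamma\,dV_U$, this yields $\limsup_m\int_U g_m\gamma\,dV_U=0$, i.e.\ $\|f-f_m\|_{S\times U,\lambda\gamma}\to0$. The only real point requiring care, and the step I expect to be the main technical nuisance rather than a genuine obstacle, is confirming the joint measurability of $P_S^{-1}(f(z,u))$ on $S\times U$ (built into the definition of the space); this is handled by the subsequence argument mentioned above once the Fatou--Cauchy--Schwarz estimate is in place.
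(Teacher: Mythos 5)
Your proposal is correct and follows essentially the same route as the paper: the paper's proof likewise invokes Lemma \ref{key-Coro} for the fiberwise limit and then runs the identical Fatou--Cauchy--Schwarz argument on $\hat W_m$, $\hat W$, and $g_m$ to get norm convergence, exactly as in Proposition \ref{completion}. Your extra remarks on positive-definiteness via injectivity of $P_S$ and on joint measurability of $P_S^{-1}(f(z,u))$ are sound additions that the paper leaves implicit, but they do not change the approach.
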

	\begin{proof}
		Let $\left\{f_m\right\}$ be a Cauchy sequence in $H^2_{\lambda\gamma}(M\times U, S\times U)$. Then it follows from Lemma \ref{key-Coro} that the sequence converges to a holomorphic  function $f$ on $M\times U$, such that equality (\ref{key-equal}) holds. Thus, we only need to prove that
		\begin{equation}
			\lim\limits_{m\rightarrow+\infty}||f_m-f||_{S\times U,\lambda\gamma}=0.
			\nonumber
		\end{equation}
		Since $\left\{f_m\right\}$ is a Cauchy sequence in $H^2_{\lambda\gamma}(M\times U, S\times U)$, there exists a constant $L$ such that
		$$||f_m||^2_{S\times U,\lambda\gamma}\leq L.$$
		Thus, we have
		\begin{equation}\label{k-estimate}
			\begin{split}
				&\bigg(\int_U\int_{S}\bigg||P_S^{-1}(f_n(z,u))|+|P_S^{-1}(f_m(z,u))|\bigg|^2\lambda(z)\gamma(u)d\sigma(z)dV_U\bigg)^{1/2}\\
				&\leq \bigg(\int_U\int_{S}2(|P_S^{-1}(f_n(z,u))|^2+|P_S^{-1}(f_m(z,u))|^2)\lambda(z)\gamma(u)d\sigma(z)dV_U\bigg)^{1/2}\\
				&\leq {2}\sqrt{L}.
			\end{split}
		\end{equation} Denote  
		$$\hat{W}_m(u):=\int_S|P_S^{-1}(f_m(z,u))|^2\lambda(z)d\sigma$$ 
		and
		$$\hat{W}(u):=\int_S|P_S^{-1}(f(z,u))|^2\lambda(z)d\sigma.$$
		By equality (\ref{key-equal}), we know that
		\begin{equation}\label{key-e2}
			\lim\limits_{m\rightarrow+\infty}\hat{W}_m(u)=\hat{W}(u).
		\end{equation}        
		Besides, we have
		\begin{equation}\nonumber
			\begin{split}
				&\int_U|\hat{W}_n(u)-\hat{W}(u)|\gamma(u)dV_U\\ &=\int_U\bigg|\int_{S}\bigg(|P_S^{-1}(f_n(z,u))|^2-|P_S^{-1}(f(z,u))|^2)\bigg)\lambda(z)d\sigma(z)\bigg|\gamma(u)dV_U\\
				&\leq\liminf\limits_{m\rightarrow\infty}\int_U\bigg|\int_{S}\bigg(|P_S^{-1}(f_n(z,u))|^2-|P_S^{-1}(f_m(z,u))|^2)\bigg)\lambda(z)d\sigma(z)\bigg|\gamma(u)dV_U \\
				&\leq \liminf\limits_{m\rightarrow\infty}\int_U\int_{S}\bigg||P_S^{-1}(f_n(z,u))|^2-|P_S^{-1}(f_m(z,u))|^2\bigg|\lambda(z) d\sigma(z)\gamma(u)dV_U\\
				&=\liminf\limits_{m\rightarrow\infty}\int_U\int_{S}|d||a|\lambda(z)\gamma(u)d\sigma(z)dV_U\\
				&\leq \liminf\limits_{m\rightarrow\infty}\bigg(\int_U\int_{S}|d|^2\lambda(z)\gamma(u)d\sigma(z)dV_U\bigg)^{1/2}
				\bigg(\int_U\int_{S}|a|^2\lambda(z)\gamma(u)d\sigma(z)dV_U\bigg)^{1/2}  \\
				&\leq {2}\sqrt{L}\liminf\limits_{m\rightarrow\infty}||f_n-f_m||_{S\times U,\lambda\gamma}\quad\mbox{(by inequality (\ref{k-estimate}))},
			\end{split}
		\end{equation}
		where
		$d=(|P_S^{-1}(f_n(z,u))|-|P_S^{-1}(f_m(z,u))|),a=(|P_S^{-1}(f_n(z,u))|+|P_S^{-1}(f_n(z,u))|$,
		the first $``\leq"$ holds because of equality (\ref{key-e2}) and  Fatou's Lemma, and the third $``\leq"$ holds by the Cauchy-Schwartz inequality. Then it follows that
		$$\lim\limits_{n\rightarrow\infty}\int_U|\hat{W}_n(u)-\hat{W}(u)|\gamma(u)dV_U=0,$$
		which implies that
		\begin{equation}\label{W-int-equa}
			\lim\limits_{n\rightarrow\infty}\int_U\hat{W}_n(u)\gamma(u)dV_U=\int_U\hat{W}(u)\gamma(u)dV_U.
		\end{equation}
		Let
		$$g_m(u)=\int_S|P_S^{-1}(f_m(z,u))-P_S^{-1}(f(z,u))|^2\lambda(z)d\sigma(z).$$
		Then it follows from equality (\ref{key-equal}) that
		\begin{equation}\label{k-equal3}
			\lim\limits_{m\rightarrow+\infty}g_m(u)=0.
		\end{equation}
		On the other hand, we know that
		\begin{equation*}
			\begin{split}
				g_m(u)&=\int_S|P_S^{-1}(f_m(z,u))-P_S^{-1}(f(z,u))|^2\lambda(z)d\sigma(z)\\
				&\leq \int_S\big(|P_S^{-1}(f_m(z,u))|^2+|P_S^{-1}(f(z,u))|^2\big)\lambda(z)d\sigma(z)\\
				&=2(\hat{W}_m(u)+\hat{W}(u)),
			\end{split}
		\end{equation*}
		which implies that
		$$2(\hat{W}_m(u)+\hat{W}(u))-g_m(u)\geq 0.$$
		Then by Fatou's Lemma, we obtain
		\begin{equation*}
			\begin{split}
				&\int_U\liminf\limits_{m\rightarrow\infty}(2(\hat{W}_m(u)+\hat{W}(u))-g_m(u))\gamma(u)dV_U\\
				&\leq \liminf\limits_{m\rightarrow\infty}\int_U(2(\hat{W}_m(u)+\hat{W}(u))-g_m(u))\gamma(u)dV_U,
			\end{split}
		\end{equation*}
		which indicates that
		$$\limsup\limits_{m\rightarrow\infty}\int_Ug_m(u)\gamma(u)dV_U=0$$
		with the help of equality (\ref{W-int-equa}) and equality (\ref{k-equal3}). Namely,
		$$\lim\limits_{m\rightarrow\infty}||f_m-f||_{S\times U,\lambda\gamma}=0.$$
		The proof is finished.
	\end{proof}
	Since $H^2_{\lambda\gamma}(M\times U, S\times U)$ is a Hilbert space and the evaluation map from $H^2_{\lambda\gamma}(M\times U, S\times U)$ to $\mathbb{C}$ is  bounded, it follows from the Riesz representation theorem that  for each $(z,w)\in M\times U$, there is a unique $K_{S\times U,\lambda\gamma}((\cdot,\cdot),(\overline{z},\overline{w}))\in H^2_{\lambda\gamma}(M\times U, S\times U)$ such that
	$$f(z,w)=\int_{S\times U}P_S^{-1}(f(\zeta,u))\overline{P_S^{-1}(K_{S\times U,\lambda\gamma}((\zeta,u),(\overline{z},\overline{w})))}\lambda(\zeta)\gamma(u) d\sigma dV_U$$
	for $f\in H^2_{\lambda\gamma}(M\times U, S\times U)$.
	
	Now we give a decomposition formula for  $K_{S\times U,\lambda\gamma}((\cdot,\cdot),(\overline{z},\overline{w}))$.
	\begin{Proposition}\label{key-decomp1p}
		\begin{equation}\label{key-decomp1}
			K_{S\times U,\lambda\gamma}((\zeta,u),(\overline{z},\overline{w}))=K_{S,\lambda}(\zeta,\overline{z})B_{U,\gamma}(u,\overline{w}).
		\end{equation}
	\end{Proposition}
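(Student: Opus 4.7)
The plan is to mimic the proof of Proposition \ref{Pro-28}: verify that the right-hand side $K_{S,\lambda}(\zeta,\overline z)B_{U,\gamma}(u,\overline w)$ reproduces every element of $H^2_{\lambda\gamma}(M\times U, S\times U)$ at $(z,w)$, and then invoke uniqueness of the reproducing kernel. First I would fix $f\in H^2_{\lambda\gamma}(M\times U, S\times U)$ and show that $f(z,\cdot)\in A^2(U,\gamma)$ for every fixed $z\in M$. By Lemma \ref{P_S-eva-bdd}, for each compact $K_1\subset M$ there is a constant $C(K_1)$ such that $|f(z,u)|^2\le C(K_1)\int_S|P_S^{-1}(f(\cdot,u))|^2\lambda\,d\sigma$ for $z\in K_1$, and integrating this in $u$ against $\gamma(u)dV_U$ gives $\int_U|f(z,u)|^2\gamma(u)dV_U\le C(K_1)\|f\|^2_{S\times U,\lambda\gamma}<+\infty$, as desired.

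Next I would check that the candidate kernel $K_{S,\lambda}(\zeta,\overline z)B_{U,\gamma}(u,\overline w)$ lies in $H^2_{\lambda\gamma}(M\times U, S\times U)$. For fixed $u$, it is the scalar $B_{U,\gamma}(u,\overline w)$ times $K_{S,\lambda}(\zeta,\overline z)$, which belongs to $H^2_\lambda(M,S)$ by definition of the kernel $K_{S,\lambda}$; and $B_{U,\gamma}(\cdot,\overline w)\in A^2(U,\gamma)$, so the norm is finite. Then the reproducing property follows from a routine Fubini computation:
\begin{equation*}
\begin{aligned}
&\int_{S\times U}P_S^{-1}(f(\zeta,u))\overline{K_{S,\lambda}(\zeta,\overline z)B_{U,\gamma}(u,\overline w)}\lambda(\zeta)\gamma(u)\,d\sigma\,dV_U\\
&=\int_U\overline{B_{U,\gamma}(u,\overline w)}\gamma(u)\left(\int_S P_S^{-1}(f(\zeta,u))\overline{K_{S,\lambda}(\zeta,\overline z)}\lambda(\zeta)\,d\sigma\right)dV_U\\
&=\int_U f(z,u)\overline{B_{U,\gamma}(u,\overline w)}\gamma(u)\,dV_U=f(z,w),
\end{aligned}
\end{equation*}
where the inner integral is evaluated by the reproducing property of $K_{S,\lambda}$ in $H^2_\lambda(M,S)$ and the outer by the reproducing property of $B_{U,\gamma}$ in $A^2(U,\gamma)$.

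The only point that requires care is justifying the Fubini swap: it suffices to verify absolute integrability, which follows from the Cauchy--Schwarz inequality combined with $K_{S,\lambda}(\cdot,\overline z)\in H^2_\lambda(M,S)$, $B_{U,\gamma}(\cdot,\overline w)\in A^2(U,\gamma)$, and $f\in H^2_{\lambda\gamma}(M\times U,S\times U)$. Having established the reproducing identity, uniqueness of the kernel $K_{S\times U,\lambda\gamma}((\cdot,\cdot),(\overline z,\overline w))$ given by the Riesz representation theorem forces \eqref{key-decomp1}. I do not expect a serious obstacle here; the proof is entirely parallel to that of Proposition \ref{Pro-28}, with $H^2_\rho(M,\partial M)$ and $K_{\partial M,\rho}$ replaced by $H^2_\lambda(M,S)$ and $K_{S,\lambda}$, and Lemma \ref{lower-evaluation} replaced by Lemma \ref{P_S-eva-bdd}.
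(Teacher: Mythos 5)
Your proposal is correct and follows essentially the same route as the paper's proof: show $f(z,\cdot)\in A^2(U,\gamma)$ via Lemma \ref{P_S-eva-bdd}, apply the two reproducing properties through a Fubini computation, and conclude by uniqueness in the Riesz representation theorem. The extra care you take in checking that the candidate kernel lies in $H^2_{\lambda\gamma}(M\times U,S\times U)$ and in justifying the Fubini swap is a minor refinement of what the paper leaves implicit.
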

	\begin{proof}
		For any $f\in H^2_{\lambda\gamma}(M\times U, S\times U)$, we have $P_S^{-1}(f(\cdot,u))\in H^2_\lambda(M,S)$ for each fixed $u\in U$.
		By Lemma \ref{P_S-eva-bdd}, for every compact subset $K$ of $M$, we have
		$$\sup\limits_{z\in K}|f(z,u)|^2\leq C(K)\int_S|P_S^{-1}(f(\zeta,u))|^2\lambda(\zeta)d\sigma(\zeta).$$ 
		Then it follows that
		\begin{equation*}
			\int_U|f(z,u)|^2\gamma(u)dV_U\leq  {C(K)}\int_U\int_S|P_S^{-1}(f(\zeta,u))|^2 \lambda(\zeta)d\sigma(\zeta)\gamma(u)dV_U
			<+\infty, 
		\end{equation*}
		which shows that $f(z,\cdot)\in A^2(U,\gamma)$ for each fixed $z\in M$. 
		
		Hence, we have
		\begin{equation}
			\begin{split}
				&\int_{S\times U}P_S^{-1}(f(\zeta,u))\overline{P_S^{-1}(K_{S,\lambda}(\zeta,\overline{z}))B_{U,\gamma}(u,\overline{w})}\lambda\gamma d\sigma dV_U\\
				&=\int_U\overline{B_{U,\gamma}(u,\overline{w})}\gamma(u)dV_U\int_{S}P_S^{-1}(f(\zeta,u))\overline{P_S^{-1}(K_{S,\lambda}(\zeta,\overline{z}))}\lambda(z)d\sigma(z)\\
				&=\int_Uf(z,u)\overline{B_{U,\gamma}(u,\overline{w})}\gamma(u)dV_U=f(z,w).
			\end{split}
			\nonumber
		\end{equation}
		Now, we obtain equality (\ref{key-decomp1}) by the uniqueness part of the Riesz representation theorem.
	\end{proof}

	Let $\left\{e_i\right\}_{i\in\mathbb{N}}$ be a complete orthonormal basis of $H^2_\lambda(M,S)$, and $\left\{\tilde{e}_j\right\}_{j\in\mathbb{N}}$ be a complete orthonormal basis of $A^2(U,\gamma)$. 
	Then $\left\{P_S(e_i)\tilde{e}_j\right\}_{i,j\in\mathbb{N}}$ is a  orthonormal set in $H^2_{\lambda\gamma}(M\times U, S\times U)$. The following lemma shows that this orthonormal set is complete in $H^2_{\lambda\gamma}(M\times U, S\times U)$.
	
	\begin{Lemma}\label{S-U product}
		$\left\{P_S(e_i)\tilde{e}_j\right\}_{i,j\in\mathbb{N}}$ is an  orthonormal basis of $H^2_{\lambda\gamma}(M\times U, S\times U)$.
	\end{Lemma}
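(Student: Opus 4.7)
The plan is to verify orthonormality directly (straightforward from Fubini) and to prove completeness by the usual Hilbert-space argument: assume $f\in H^2_{\lambda\gamma}(M\times U,S\times U)$ is orthogonal to every $P_S(e_i)\tilde e_j$, and deduce $f\equiv 0$. By definition $P_S^{-1}(f(\cdot,u))\in H^2_\lambda(M,S)$ for each $u\in U$, so I can expand
$$P_S^{-1}(f(\cdot,u))=\sum_i c_i(u)\,e_i\qquad\text{in }H^2_\lambda(M,S),$$
where $c_i(u):=\ll P_S^{-1}(f(\cdot,u)),e_i\gg_{S,\lambda}$. Fubini turns the orthogonality hypothesis into $\int_U c_i(u)\overline{\tilde e_j(u)}\gamma(u)\,dV_U=0$ for all $i,j$, and Parseval combined with the definition of $\|f\|_{S\times U,\lambda\gamma}$ gives
$$\int_U\sum_i|c_i(u)|^2\gamma(u)\,dV_U=\|f\|^2_{S\times U,\lambda\gamma}<+\infty,$$
so each $c_i\in L^2(U,\gamma)$.

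The crucial point is to upgrade this to $c_i\in A^2(U,\gamma)$, that is, to show $c_i$ is holomorphic on $U$. The strategy is to approximate $e_i$ in $H^2_\lambda(M,S)$ by finite linear combinations $h_n=\sum_k a_k^{(n)} K_{z_k^{(n)}}$ of the reproducing elements $K_z:=P_S^{-1}(K_{S,\lambda}(\cdot,\overline z))\in H^2_\lambda(M,S)$; density of such combinations follows at once from injectivity of $P_S$, since any element orthogonal to every $K_z$ would satisfy $P_S(\cdot)(z)\equiv 0$. For each finite combination the reproducing property yields
$$\ll P_S^{-1}(f(\cdot,u)),h_n\gg_{S,\lambda}=\sum_k \overline{a_k^{(n)}}\,f(z_k^{(n)},u),$$
which is holomorphic in $u$ because $f\in\mathcal O(M\times U)$. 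The Cauchy--Schwarz estimate
$$\bigl|\ll P_S^{-1}(f(\cdot,u)),e_i-h_n\gg_{S,\lambda}\bigr|\leq \|P_S^{-1}(f(\cdot,u))\|_{S,\lambda}\,\|e_i-h_n\|_{S,\lambda},$$
together with the locally uniform bound $\|P_S^{-1}(f(\cdot,u))\|^2_{S,\lambda}=\hat W_f(u)\leq L(K)\|f\|^2_{S\times U,\lambda\gamma}$ from Lemma \ref{first-lem}, shows that $\ll P_S^{-1}(f(\cdot,u)),h_n\gg_{S,\lambda}\to c_i(u)$ uniformly on every compact subset of $U$. A uniform-on-compacta limit of holomorphic functions is holomorphic, hence $c_i\in\mathcal O(U)\cap L^2(U,\gamma)=A^2(U,\gamma)$. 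This is the step I expect to be the main obstacle, and it is also where the admissibility of $\gamma$ enters essentially, through Lemma \ref{first-lem}.

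Finally, completeness of $\{\tilde e_j\}$ in $A^2(U,\gamma)$ combined with $\int_U c_i\overline{\tilde e_j}\gamma\,dV_U=0$ for every $j$ forces $c_i\equiv 0$ for every $i$. Therefore $P_S^{-1}(f(\cdot,u))=0$ in $H^2_\lambda(M,S)$ for every $u\in U$, and injectivity of $P_S$ gives $f(\cdot,u)\equiv 0$ on $M$ for every $u$, i.e.\ $f\equiv 0$ on $M\times U$, which completes the proof.
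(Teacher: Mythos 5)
Your argument is correct, and it reaches the conclusion by a genuinely different route than the paper. The paper proceeds in three steps: it first constructs a special complete orthonormal basis $\{\tilde\sigma_k\}$ of $H^2_\lambda(M,S)$ by norm minimization over a countable dense set of points $\{z_k\}\subset M$, so that the coefficient functions $g_k(u)=\ll P_S^{-1}(f(\cdot,u)),\tilde\sigma_k\gg_{S,\lambda}$ can be recovered \emph{inductively} as finite linear combinations of the values $f(z_1,u),f(z_2,u),\dots$ (hence are visibly holomorphic); it then proves completeness of the products for this special basis, and finally passes to an arbitrary basis $\{e_m\}$ by a change of basis. You instead work with the arbitrary basis directly and obtain holomorphy of $c_i(u)=\ll P_S^{-1}(f(\cdot,u)),e_i\gg_{S,\lambda}$ by approximating $e_i$ in $H^2_\lambda(M,S)$ by finite combinations of the reproducing elements $K_z$ (whose existence and totality follow from Lemma \ref{P_S-eva-bdd}, the Riesz representation theorem, and injectivity of $P_S$), and then invoking the locally uniform bound $\hat W_f(u)\le L(K)\|f\|^2_{S\times U,\lambda\gamma}$ of Lemma \ref{first-lem} to upgrade norm convergence to locally uniform convergence of holomorphic functions of $u$. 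Both proofs ultimately rest on the same two ingredients --- boundedness of point evaluation on $H^2_\lambda(M,S)$ and the locally uniform control of $\hat W_f$ --- but your version dispenses with the special basis construction and the final change-of-basis step, which makes it somewhat shorter and arguably more transparent; the paper's version has the mild advantage of never needing to form the reproducing elements $K_z$ explicitly.
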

	\begin{proof}
		We divide the proof into three steps.
		
		\
		
		\emph{Step 1: constructing a complete orthonormal basis of $H^2_\lambda(M,S)$.}
		
		\
		
		We choose a countable dense subset $\left\{z_k\right\}_{k\in\mathbb{Z}_{>0}}$ of $M$. Denote 
		$$L_k\;:=\left\{f\in H^2_\lambda(M,S):f^*(z_j)=0\;\mbox{for any}\;1\leq j<k \mbox{ and }f^*(z_k)=1\right\}.$$
		Let $A=\left\{k\in \mathbb{Z}_{>0}:L_k=\emptyset\right\}$ and $B=\mathbb{Z}_{>0}\backslash A$. For $k\in B$, we select $\sigma_k\in H^2_\lambda(M,S)$ such that
		$||\sigma_k||^2_{S,\lambda}=\inf\left\{||f||^2_{S,\lambda}:f\in L_k\right\}$. Now we assert that $\left\{\sigma_k\right\}_{k\in B}$ is an orthonormal set of $H^2_\lambda(M,S)$. To see this, it suffices to note that
		$$||\sigma_k+c\sigma_l||^2_{S,\lambda}\geq||\sigma_k||^2_{S,\lambda}$$ 
		for any $c\in\mathbb{C}$ and $l>k$, which implies that $\ll \sigma_k,\sigma_l\gg_{S,\lambda}=0$. Now we set $\tilde\sigma_k=\frac{\sigma_k}{||\sigma_k||_{S,\lambda}}$ for any $k\in B$.
		
		In the following, we prove the set $\left\{\tilde\sigma_k\right\}_{k\in B}$ is complete in $H^2_\lambda(M,S)$.
		We prove this by contrdiction: if not, there exists a nonzero $f\in H^2_\lambda(M,S)$ such that 
		\begin{equation}\label{E-23-2}
			\ll f,\tilde\sigma_k\gg_{S,\lambda}=0
		\end{equation}
		for any $k\in B$.  Because of the density of $\left\{z_k\right\}_{k\in\mathbb{Z}_{>0}}$, we have $\left\{k\in B:f^*(z_k)\neq 0\right\}\not=\emptyset$. Denote  $$k_0:=\inf\left\{k\in B:f^*(z_k)\neq 0\right\}.$$  There exists a nonzero $c_0\in\mathbb{C}$ such that 
		$$\ll f-c_0\tilde\sigma_{k_0},\tilde\sigma_{k_0}\gg_{S,\lambda}=0,$$ 
		which contradicts to (\ref{E-23-2}). Hence, we finish the first step.
		
		\
		
		\emph{Step 2: we give a product property for the basis of $H^2_{\lambda\gamma}(M\times U,S\times U)$.}
		
		\
		
		Let $\left\{f_l\right\}_{l\in\mathbb{Z}_{>0}}$ be any complete orthonormal basis of $A^2(U,\lambda)$.
		It is clear that $\left\{f_lP_S(\tilde\sigma_k)\right\}_{l\in\mathbb{Z}_{>0},k\in B}$ is orthonormal. It remains to show that the set is complete in the space $H^2_{\lambda\gamma}(M\times U,S\times U)$. For any $f\in H^2_{\lambda\gamma}(M\times U,S\times U)$, we will prove that $f=0$ if $f$ satisfies that
		\begin{equation}\label{E-24-1}
			\ll f,f_lP_S(\tilde\sigma_k)\gg_{S\times U,\lambda\gamma}=0
		\end{equation}
		for any $l\in\mathbb{Z}_{>0},k\in B$. Let
		$$g_k(u)\;:=\int_{S}P_S^{-1}(f(\cdot,u))\overline{\tilde\sigma_k}\lambda d\sigma.$$
		By Fubini's Theorem, it is easy to see that $g_k(u)\in L^2(U,\gamma)$. In the following, we prove $g_k(u)$ is a holomorphic function on $U$.
		Since $\left\{\tilde\sigma_k\right\}_{k\in B}$ is a complete orthonormal basis of $H^2_\lambda(M,S)$, we have
		$$P_S^{-1}(f(\cdot,u))=\sum_{l\in\mathbb{Z}_{>0}}g_k(u)\tilde\sigma_k,$$
		which says that
		\begin{equation}\label{E-24-2}
			f(z,u)=\sum_{l\in\mathbb{Z}_{>0}}g_k(u)\tilde\sigma_k^*(z).
		\end{equation} 
		Substituting $z_1,z_2,\cdots$ into $z$ in equality (\ref{E-24-2}), we get that $g_k(u)\in\mathcal{O}(U)$. Combining with $g_k(u)\in L^2(U,\gamma)$, we have $g_k(u)\in A^2(U,\gamma)$. Then is follows from (\ref{E-24-1}) that
		$$\ll g_k(u),f_l\gg_{U,\gamma}=0,$$
		which implies that $g_k(u)=0$ for $k\in\mathbb{Z}_{>0}$ since $\left\{f_l\right\}_{l\in\mathbb{Z}_{>0}}$ is a complete orthonormal basis  for $A^2(U,\gamma)$.
		Furthermore, $P_S^{-1}(f(\cdot,u))=0$ since $\left\{{e}_k\right\}_{k\in B}$ is a complete orthonormal basis for $H^2_\lambda(M,S)$. Hence $f=0$. We finish the second step.
		
		\
		
		\emph{Step 3: we prove that $\left\{f_lP_S({e_m})\right\}$ is a complete orthonormal basis of $H^2_{\lambda\gamma}(M\times U,S\times U)$.}
		
		\
		
		For any $f\in H^2_{\lambda\gamma}(M\times U,S\times U)$, it follows from \emph{Step 2} that there exists $\left\{a_{l,k}\right\}\subset\mathbb{C}$ such that
		\begin{equation}\label{44-eq-1}
			f=\sum_{l,k}a_{l,k}f_lP_S(\tilde\sigma_k).
		\end{equation}
		Since $\left\{{e_m} \right\}$ is a complete orthonormal basis for $H^2_\lambda(M,S)$, there exists $\left\{{a_{k,m}}\right\}$ such that
		\begin{equation}\label{44-eq-2}
			\tilde\sigma_k=\sum_{m}\tilde{a}_{k,m}{e_m}.
		\end{equation}
		Substituting (\ref{44-eq-2}) into (\ref{44-eq-1}), we get
		\begin{equation*}
			f=\sum_{l,k,m}a_{l,k}\tilde{a}_{k,m}f_lP_S({e_m}).
		\end{equation*}
		We finish the proof.
	\end{proof}

	Let $$I_1=\left\{(g,z_0)\in \mathcal{O}_{z_0}: g=\sum_{\alpha\in \mathbb{N}^{n}}b_\alpha(w-z_0)^\alpha \mbox{near}\;z_0 \;\mbox{s.t.} b_\alpha=0 \;\mbox{for}\;\alpha\in L_{\tilde{\beta}^\prime}\right\},$$ where $L_{\tilde{\beta}^\prime}=\left\{\alpha=(\alpha_{1},\cdots,\alpha_{n})\in \mathbb{N}^{n}:\alpha_j\leq \tilde{\beta}_j\;\mbox{for}\;1\leq j\leq n \right\}.$   
	Let
	$$I_2=\left\{(g,u_0)\in\mathcal{O}_{u_0} : g=\sum_{\alpha\in \mathbb{N}^{m}}b_\alpha(u-u_0)^\alpha\;\mbox{near}\;u_0\;\mbox{s.t.}\;b_\alpha=0\;\mbox{for}\;\alpha\in L_{\tilde{\beta}^{\prime\prime}}\right\},$$
	where $L_{\tilde{\beta}^{\prime\prime}}=\left\{\alpha\in \mathbb{N}^{m} :\alpha\leq \tilde{\beta}^{\prime\prime}\right\}$. It is clear that $(b_0,u_0)\notin I_2$. Denote 
	$${B_{U,\gamma}^{I_2,b_0}}(u_0):
	=\frac{1}{\inf\left\{||f||^2_{U,\gamma} :f\in A^2(U,\gamma)\&(f-b_0,u_0)\in I_2\right\}}$$

	and
	$$K_{S,\lambda}^{I_1,l_0}(z_0):
	=\frac{1}{\inf\left\{||f||^2_{S,\lambda} :f\in H^2_\lambda(M,S)\&(P_{S}(f)-l_0,z_0)\in I_1\right\}}.$$

	Assume that $H^2_{\lambda\gamma}(M\times U, S\times U)\neq\left\{0\right\}$ and denote 
	$f^*:=P_S(f)$ for any $f\in H^2_\lambda(M,S)$.
	Now we prove the following equality.
	\begin{Proposition}\label{p:1211}
		\begin{equation}
			\label{S-decomp}
			K_{S\times U,\lambda\gamma}^{I^\prime,\hat{h}_0}(w_0)=K_{S,\lambda}^{I_1,l_0}(z_0)B_{U,\gamma}^{I_2,b_0}(u_0)
		\end{equation}
		holds. 
	\end{Proposition}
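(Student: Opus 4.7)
The plan is to imitate, with only cosmetic changes, the decomposition argument used to prove Proposition \ref{pf of 1} and Lemma \ref{Le-44-1}. The ingredients are already in place: Lemma \ref{A_basis} supplies a complete orthonormal basis $\{g_\tau\}_{\tau\in S_2}$ of $A^2(U,\gamma)$ indexed so that $\mathrm{ord}_{u_0}(g_\tau)=\tau$, and Lemma \ref{S-U product} says that if $\{e_\alpha\}_{\alpha\in S_1}$ is any complete orthonormal basis of $H^2_\lambda(M,S)$, then $\{P_S(e_\alpha)\,g_\tau\}_{\alpha\in S_1,\tau\in S_2}$ is a complete orthonormal basis of $H^2_{\lambda\gamma}(M\times U,S\times U)$.

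First I would introduce the three admissible sets
\begin{equation*}
\begin{split}
A&:=\{\tau\in S_2:\beta''\le\tau\le\tilde\beta''\},\\
B&:=\{f\in H^2_{\lambda\gamma}(M\times U,S\times U):(f-\hat h_0,w_0)\in I'\},\\
C_1&:=\{f\in H^2_\lambda(M,S):(P_S(f)-l_0,z_0)\in I_1\},\\
C_2&:=\{f\in A^2(U,\gamma):(f-b_0,u_0)\in I_2\},
\end{split}
\end{equation*}
and dispose of the degenerate case: if $B=\emptyset$ then $K^{I',\hat h_0}_{S\times U,\lambda\gamma}(w_0)=0$, and the product $\hat h_0=l_0b_0$ forces $C_1=\emptyset$ or $C_2=\emptyset$, so the right-hand side of \eqref{S-decomp} is also $0$. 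Henceforth assume $K^{I',\hat h_0}_{S\times U,\lambda\gamma}(w_0)>0$.

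Next, for any $f\in B$ I would expand $P_S^{-1}(f)=\sum_\tau P_S^{-1}(f_\tau)\,g_\tau$ with $f_\tau\in H^2_\lambda(M,S)$ using Lemma \ref{S-U product}, and the condition $(f-\hat h_0,w_0)\in I'$ together with $\mathrm{ord}_{u_0}(g_\tau)=\tau$ then produces constants $c_\tau\in\mathbb{C}$ with $(P_S(f_\tau)-c_\tau l_0,z_0)\in I_1$ for every $\tau\in A$, and $\sum_{\tau\in A}|c_\tau|>0$. Picking the minimizer $f_1\in C_1$ of $\|\cdot\|_{S,\lambda}$ and setting $f_2:=\sum_{\tau\in A}c_\tau g_\tau\in C_2$, orthonormality gives
\begin{equation*}
\|f\|_{S\times U,\lambda\gamma}^2\ge\sum_{\tau\in A}\|f_\tau\|_{S,\lambda}^2\ge\Bigl(\sum_{\tau\in A}|c_\tau|^2\Bigr)\|f_1\|_{S,\lambda}^2=\|f_2\|_{U,\gamma}^2\,\|f_1\|_{S,\lambda}^2,
\end{equation*}
so $\inf_{f\in B}\|f\|_{S\times U,\lambda\gamma}^2\ge\inf\{\|gh\|_{S\times U,\lambda\gamma}^2:g\in C_1,\ h\in C_2\}$. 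The reverse inequality is immediate, since for $g\in C_1$ and $h\in C_2$ the product $P_S(g)\cdot h$ lies in $B$ and Fubini gives $\|P_S(g)h\|_{S\times U,\lambda\gamma}^2=\|g\|_{S,\lambda}^2\|h\|_{U,\gamma}^2$. Taking infima and reciprocals yields \eqref{S-decomp}.

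The only genuinely nontrivial point, as in the earlier analogues, is the bookkeeping at the step where one passes from $(f-\hat h_0,w_0)\in I'$ to the pointwise conditions $(P_S(f_\tau)-c_\tau l_0,z_0)\in I_1$ for $\tau\in A$; this requires carefully matching the combinatorial definitions of $I'$, $L_{\tilde\beta}$, $L_{\tilde\beta'}$ and $L_{\tilde\beta''}$ against the order indexing coming from Lemma \ref{A_basis}, and verifying that coefficients $c_\tau$ for $\tau\notin A$ contribute nothing to the constraint. Since this is the same bookkeeping already executed in the proofs of Lemma \ref{Le-44-1} and Proposition \ref{pf of 1}, I expect no new obstacle and would simply point to those proofs for the detailed verification.
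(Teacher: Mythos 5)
Your proposal is correct and follows essentially the same route as the paper: the same sets $A$, $B$, $C_1$, $C_2$, the same use of Lemma \ref{A_basis} and Lemma \ref{S-U product} to build the product orthonormal basis, the same expansion $f=\sum_\tau f_\tau \tilde e_\tau$ with the extraction of constants $c_\tau$, and the same two-sided inequality between the infima. The paper itself states that its proof is a repetition of the arguments in Lemma \ref{Le-44-1} and Proposition \ref{pf of 1}, which is exactly what you do.
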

	\begin{proof}
		The proof is similar to the proofs of Lemma \ref{Le-44-1} and Proposition \ref{pf of 1}.
		Let $\left\{e_i\right\}_{i\in \mathbb{N}}$ be  a complete orthonormal basis of $H_\lambda^2(M,S)$. Let $\left\{\tilde{e}_\tau\right\}_{\tau\in S_2}$ be a complete orthonormal basis of $A^2(U,\gamma)$ such that $\tau=ord_{u_0}(\tilde{e}_\tau)$ by Lemma \ref{A_basis}, where $S_2$ is a subset of $\mathbb{N}^m$. Then  we know that $\left\{e_i^*{\tilde e_\tau}\right\}_{i\in\mathbb{N},\tau\in S_2}$ is a complete orthonormal basis of $H^2_{\lambda\gamma}(M\times U,S\times U)$ by Lemma \ref{S-U product}.
		
		Denote 
		\begin{equation*}
			\begin{split}
				&A:=\left\{\tau\in S_2:\beta^{\prime\prime}\leq\tau\leq\tilde{\beta}^{\prime\prime}\right\},\\
				&B:=\left\{f\in H^2_{\lambda\gamma}(M\times U,S\times U):(f-\hat{h}_0,w_0)\in I^\prime\right\},\\
				&C_1:=\left\{f\in H^2_\lambda(M,S):(f^*-l_0,z_0)\in I_1\right\},\\
				&C_2:=\left\{g\in A^2(U,\gamma):(f-b_0,u_0)\in I_2\right\}.
			\end{split}
		\end{equation*}         
		We say that $\inf\left\{||f||_{S\times U,\lambda\gamma}: f\in B\right\}=+\infty$ if and only if $B=\emptyset$, i.e. $K_{S\times U,\lambda\gamma}^{I^\prime,\hat{h}_0}(w_0)=0$.        
		
		If $K_{S\times U,\lambda\gamma}^{I^\prime,\hat{h}_0}(w_0)=0$, it is clear that at least one of $B_{U,\gamma}^{I_2,b_0}(u_0)$ and $K_{S,\lambda}^{I_1,l_0}(z_0)$ equals zero.  Hence, the right member of equality (\ref{S-decomp}) is also zero.
		
		Now  assume that $K_{S\times U,\lambda\gamma}^{I^\prime,\hat{h}_0}(w_0)>0$. 
		We claim that for any $f\in B$, there exists $f_1\in C_1$ and $f_2\in C_2$ such that
		$$||f||^2_{S\times U,\lambda\gamma}\geq ||f_1||^2_{S,\lambda}||f_2||^2_{U,\gamma}.$$
		
		We prove the claim in the following.       
		As $\left\{e_i^*{\tilde e_\tau}\right\}_{i\in\mathbb{N},\tau\in S_2}$ is a complete orthonormal basis of $H^2_{\lambda\gamma}(M\times U,S\times U)$, we have 
		$$f=\sum_{\tau}f_\tau \tilde e_\tau,$$ 
		where $P_{S}^{-1}(f_\tau)\in H^2_\lambda(M,S)$.    Since $(f-\hat{h}_0,w_0)\in I^\prime$, there exists a constant $c_\tau$ such that
		$$(f_\tau-c_\tau l_0,z_0)\in I_1$$
		for any $\tau\in A$. Note that $\sum_{\tau\in A}|c_\tau|>0$. There exsit $f_1\in H^2_\lambda(M,S)$ such that $(P_{S}(f_1)-l_0,z_0)\in I_1$ and
		$$||f_1||_{S,\lambda}^2=\inf\left\{||f||_{S,\lambda}^2:f\in H^2_\lambda(M,S)\&\;(P_{S}(f)-l_0,z_0)\in I_1\right\}.$$ 
		Thus, we know that $(f-\sum_{\tau\in A}c_\tau \tilde e_\tau P_{S}(f_1),w_0)\in I^\prime$ and
		\begin{equation}\label{55-4-ine-1}
			\begin{split}
				||f||_{S\times U,\lambda\gamma}^2&\geq ||\sum_{\tau\in A}P_{S}^{-1}(f_\tau) \tilde e_\tau ||^2_{S\times U,\lambda\gamma}\\
				&=\sum_{\tau\in A}||P_{S}^{-1}(f_\tau)  ||^2_{S,\lambda}\\
				&\geq \sum_{\tau\in A}|c_\tau|^2||f_1  ||^2_{S,\lambda}. 
			\end{split}
		\end{equation}
		Let 
		$$f_2:=\sum_{\tau\in A}c_\tau \tilde e_\tau.$$
		Then we have $f_2\in C_2$. Then it follows from inequality (\ref{55-4-ine-1}) that
		$$||f||_{S\times U,\lambda\gamma}^2\geq ||f_2||^2_{U,\gamma}||f_1||^2_{S,\lambda}.$$
		It follows  that
		\begin{equation*}
			\inf\left\{||f||_{S\times U,\lambda\gamma}^2:f\in B\right\}\geq \inf\left\{||gh||_{S\times U,\lambda\gamma}^2:g\in C_1  \& h
			\in C_2\right\}.
		\end{equation*}
		
		By definitions, we have
		$$\inf\left\{||f||_{S\times U,\lambda\gamma}^2:f\in B\right\}\leq \inf\left\{||gh||_{S\times U,\lambda\gamma}^2:g\in C_1  \& h
		\in C_2\right\}.$$
		Hence, we get the equality
		$$\inf\left\{||f||_{S\times U,\lambda\gamma}^2:f\in B\right\}= \inf\left\{||gh||_{S\times U,\lambda\gamma}^2:g\in C_1  \& h
		\in C_2\right\}.$$
		Thus, equality (\ref{S-decomp}) holds.      
	\end{proof}

	\subsection{Proof of Theorem \ref{k-main2} } 
	It follows from Proposition \ref{key-decomp1p} and Proposition \ref{Pro-28} that
	\begin{equation}
		K_{S\times U,\lambda\gamma}((\zeta,u),(\overline{z},\overline{w}))=K_{S,\lambda}(\zeta,\overline{z})B_{U,\gamma}(u,\overline{w})
		\nonumber
	\end{equation}
	and 
	\begin{equation}
		\begin{split}
			K_{\partial M\times U,\kappa}((\zeta,u),(\overline{z},\overline{w}))=K_{\partial M,\rho}(\zeta,\overline{z})B_{U,\gamma}(u,\overline{w}).
		\end{split}
		\nonumber
	\end{equation}
	Then the inequality  
	\begin{equation}
		K_{S\times U,\lambda\gamma}(z_0,u_0)\geq \bigg(\sum\limits_{1\leq j\leq n}\frac{1}{p_j}\bigg)\pi^{n-1}K_{\partial M\times U,\rho\gamma}(z_0,u_0)
		\label{eq:2418d}
	\end{equation}
	holds if and only if the inequality  
	\begin{equation}
		K_{S,\lambda}(z_0)\geq \bigg(\sum\limits_{1\leq j\leq n}\frac{1}{p_j}\bigg)\pi^{n-1}K_{\partial M,\rho}(z_0)
		\label{ine:2033}
	\end{equation}
	holds with the assumption that $K_{\partial M\times U,\rho\gamma}(z_0,u_0)>0$. Furthermore, the $``="$ in inequality \eqref{eq:2418d}  holds if and only if the $``="$
	in inequality \eqref{ine:2033} holds. Hence, we finish  the proof of Theorem \ref{k-main2} by Theorem \ref{key2-reference}.

	\subsection{Proof of Theorem \ref{Th4.2}}

	We follow the notations in Proposition \ref{p:1211}.
	Using  equality  (see Proposition \ref{pf of 1})
	\begin{equation}
		K_{\partial M\times U,\rho\gamma}^{I^\prime,\hat{h}_0}(w_0)=K_{\partial M,\rho}^{I_1,l_0}(z_0)B^{I_2,b_0}_{U,\gamma}(u_0)
		\nonumber
	\end{equation}
	and  equality (see Proposition \ref{p:1211})
	\begin{equation}
		K_{S\times U,\lambda\gamma}^{I^\prime,\hat{h}_0}(w_0)=K_{S,\lambda}^{I_1,l_0}(z_0)B_{U,\gamma}^{I_2,b_0}(u_0),
		\nonumber
	\end{equation}
	we know the inequality  
	\begin{equation}\label{45-1}
		\left(\prod\limits_{1\leq j\leq n}(\tilde{\beta_j}+1)\right)K_{S\times U,\lambda\gamma}^{I^\prime,\hat{h}_0}(z_0)\geq\left(\sum\limits_{1\leq j\leq n}\frac{\tilde{\beta_j}+1}{p_j}\right)\pi^{n-1}K_{\partial M\times U,\rho\gamma}^{I^\prime,\hat{h}_0}(z_0)
	\end{equation}
	holds if and only if the inequality 
	\begin{equation}\label{45-2}
		\left(\prod\limits_{1\leq j\leq n}(\tilde{\beta_j}+1)\right)K_{S,\lambda}^{I_1,l_0}(z_0)\geq\left(\sum\limits_{1\leq j\leq n}\frac{\tilde{\beta_j}+1}{p_j}\right)\pi^{n-1}K_{\partial M,\rho}^{I_1,l_0}(z_0)
	\end{equation}
	holds  by the assumption that $K_{\partial M\times U,\rho\gamma}^{I^\prime,\hat{h}_0}(w_0)>0$. Furthermore, the $``="$ in inequality (\ref{45-1}) holds if and only if the $``="$ in inequality (\ref{45-2}) holds. So we finish the proof of Theorem \ref{Th4.2} by Theorem \ref{S-M-th1}.

	\vspace{.1in} {\em Acknowledgements}. The authors would like to thank Dr. S.J. Bao and Dr. Z.T. Mi  for checking the manuscript. The first named author was supported by National Key R\&D Program of China 2021YFA1003100 and NSFC-12425101. The third author was supported by China Postdoctoral Science Foundation BX20230402 and 2023M743719.

	\bibliographystyle{references}
	\bibliography{xbib}

\begin{thebibliography}{100}
		
		
		\bibitem{modules-at-boundary}S.J. Bao, $L^2$ extension and effectiveness of strong openness property, PHD thesis, 2022.
		\bibitem{BGY-concavity5}S.J. Bao, Q.A. Guan and Z. Yuan, Concavity property of minimal $L^2$ integrals with Lebesgue measurable gain \uppercase\expandafter{\romannumeral5}--fibrations over open Riemann surfaces, J. Geom. Anal. 33 (2023), no. 6, Paper No. 179, 73 pp.
		
			\bibitem{BGY-concavity6}S.J. Bao, Q.A. Guan and Z. Yuan, Concavity property of minimal $L^2$ integrals with Lebesgue measurable gain \uppercase\expandafter{\romannumeral6}: fibrations over products of open Riemann surfaces, arXiv:2211.05255v2.
			
	\bibitem{BGMY-concavity7}S.J. Bao, Q.A. Guan, Z.T. Mi and Z. Yuan, Concavity property of minimal $L^2$ integrals with Lebesgue measurable gain \uppercase\expandafter{\romannumeral7}--negligible weights. The Bergman kernel and related topics, 1-103,
	Springer Proc. Math. Stat., 447, Springer, Singapore, [2024].
	
	\bibitem{Blocki-12}Z. Blocki, On the Ohsawa-Takegoshi extension theorem, Univ. Iag. Acta Math. 50 (2012), 53-61.
	
		
				\bibitem{Blocki-inv}Z. Blocki, Suita conjecture and the Ohsawa-Takegoshi extension theorem, Invent Math., 193 (2013), 149-158.
		
		\bibitem{demailly2010}J.-P. Demailly, Analytic Methods in Algebraic Geometry, Higher Education Press, Beijing, 2010.
		
		\bibitem{duren}P.L. Duren, Theory of $H^p$ spaces, Academic press, New York and London, 1970.
		\bibitem{chara}O. Forster, Lectures on Riemann surfaces, Grad. Text in Math.,81, Springer-Verlag, New York-Berlin, 1981.
		
			\bibitem{G16}Q.A. Guan,
		A sharp effectiveness result of Demailly's strong openness conjecture,
		Adv. Math. 348 (2019): 51-80.
		
		\bibitem{guan-19saitoh}Q.A. Guan, A proof of Saitoh's conjecture for conjugate Hardy $H^2$ kernels, 
		J. Math. Soc. Japan 71 (2019), no. 4, 1173-1179.
		
		\bibitem{guan}Q.A. Guan, Decreasing equisingular approximations with analytic singularities, J. Geom. Anal. 30 (2020), no.1, 484-492.
		
		
		
		
		
		
		\bibitem{boundary-minimal-concavity}Q.A. Guan, Z.T. Mi and Z. Yuan, Boundary points, minimal $L^2$ integrals and concavity property \uppercase\expandafter{\romannumeral3}---linearity on Riemann surfaces and fibrations over open Riemann surfaces. Acta Math. Sin. (Engl. Ser.) 40 (2024), no. 9, 2091-2152.
		
	    \bibitem{GM}Q.A. Guan and Z.T. Mi, Concavity of minimal $L^2$ integrals related to multiplier ideal
		sheaves, Peking Math. J., 6, 393–457 (2023)
		

		\bibitem{GMY-concavity2}Q.A. Guan, Z.T. Mi and Z. Yuan, Concavity property of minimal $L^2$ integrals with Lebesgue measurable gain \uppercase\expandafter{\romannumeral2}, Adv. Math. 450 (2024), Paper No. 109766, 61 pp.

		
		\bibitem{GY-weight}Q.A. Guan and Z. Yuan, A weighted version of Saitoh's conjecture, accepted by Publ. Res. Inst. Math. Sci., arXiv:2207.10976v2. 
		
		
		\bibitem{GY-Hardy and product}
		Q.A. Guan and Z. Yuan, Hardy space, kernel function and Saitoh's conjecture on products of planar domains, arXiv:2210.14579.
		
		
		
		\bibitem{GY-concavity}Q.A. Guan and Z. Yuan, Concavity property of minimal $L^2$ integrals with Lebesgue measurable gain,  Nagoya Math. J. 252 (2023), 842-905.
		
		\bibitem{GY-conc4}Q.A. Guan and Z. Yuan, Concavity property of minimal $L^2$ integrals with Lebesgue measurable gain \uppercase\expandafter{\romannumeral4}: product of open Riemann surfaces,  Peking Math. J. 7 (2024), no. 1, 91-154.
		
	    \bibitem{guan-zhou CRMATH2012}Q.A. Guan and X.Y. Zhou, Optimal constant problem in the $L^2$ extension theorem. C. R. Math. Acad. Sci. Paris Ser I, 2012,
		350: 753-756.
		
		\bibitem{GZsci}Q.A. Guan and X.Y. Zhou,
		Optimal constant in an $L^2$ extension problem and a proof of a conjecture of Ohsawa, Sci. China Math., 2015, 58(1):35-59.
		
				\bibitem{guan-zhou13ap}Q.A. Guan and X.Y. Zhou, A solution of an $L^{2}$ extension problem with an optimal estimate and applications,
		Ann. of Math. (2) 181 (2015), no. 3, 1139--1208.
		
				\bibitem{GZZCRMATH}Q.A. Guan, X.Y. Zhou and L.F. Zhu, On the Ohsawa-Takegoshi $L^2$ extension theorem and the
		twisted Bochner-Kodaira identity. C. R. Math. Acad. Sci. Paris 349(13–14), 797–800 (2011).
		
		
		
		\bibitem{LXZ}Z. Li, W. Xu and X.Y. Zhou, Xiangyu,
		On Demailly's $L^2$ extension theorem from non-reduced subvarieties,
		Math. Z. 305 (2023), no. 2, Paper No. 23, 22 pp.
		
		
		\bibitem{nehari}Z. Nehari,
		A class of domain functions and some allied extremal problems,
		Trans. Amer. Math. Soc. 69 (1950), 161-178.
		
		   \bibitem{OT87}T. Ohsawa and K. Takegoshi,  On the extension of $L^2$ holomorphic functions, Math. Z. 195 (1987), no. 2, 197-204.
		
		\bibitem{OhsawaObservation}T. Ohsawa,
		Addendum to ``On the Bergman kernel of hyperconvex domain'',
		Nagoya
		Math. J. 137 (1995), 145-148
		
		
		\bibitem{pasternak}Z. Pasternak-Winiarski,
		On weights which admit the reproducing kernel of Bergman type,
		Internat. J. Math. Math. Sci. 15 (1992), no. 1, 1-14.
		
		
		
		\bibitem{rudin}W. Rudin, Real and Complex Analysis (Third Edition). McGraw-Hill Book Co., New York, 1987. xiv+416 pp.
		
		\bibitem{rudin2} W. Rudin, Analytic Functions of Class $H_p$, Transactions of the American Mathematical Society, Jan., 1955, Vol. 78, No. 1 (Jan., 1955), pp. 46-66; published by American Mathematical Society.
		
		
		
		
		
		\bibitem{saitoh}S. Saitoh, Theory of reproducing kernels and its applications, Pitman Research Motes in Mathematics Series, 189, Longman Science $\&$ Technical, Harlow; copublished in the United States with John Wiley $\&$ Sons, Inc., New York, 1988, x+157 pp.
		
				\bibitem{S-O69}L. Sario and K. Oikawa, Capacity functions, Grundl. Math. Wissen. 149, Springer-Verlag, New York, 1969. Mr 0065652. Zbl 0059.06901.
		
		\bibitem{suita72}N. Suita, Capacities and kernels on Riemann surfaces, Arch. Rational Mech. Anal. 46 (1972), 212-217.
		\bibitem{x-z}W. Xu, X.Y. Zhou, Optimal $L^2$ extension of openness type, arxiv:2202.04791.  
		
		\bibitem{yamada}  A. Yamada, Topics related to reproducing kernels, theta functions and the Suita
		conjecture (Japanese), The theory of reproducing kernels and their applications
		(Kyoto, 1998), S\=urikaisekikenky\=usho K\=oky\=uroku, 1067 (1998), 39-47.
		
				\bibitem{ZGZ}L.F. Zhu, Q.A. Guan and X.Y. Zhou, On the Ohsawa–Takegoshi $L^2$
		extension theorem and the Bochner–Kodaira identity with non-smooth twist factor, J. Math.
		Pures Appl. 97 (2012), 579-601
		
	\end{thebibliography}

\end{document}